\newtheorem{theorem}{\sc Theorem}[section]
\newtheorem{proposition}[theorem]{\sc Proposition}
\newtheorem{propositiondef}[theorem]{\sc Proposition and Definition}
\newtheorem{lemma}[theorem]{\sc Lemma}
\newtheorem{corollary}[theorem]{\sc Corollary}
\theoremstyle{definition}
\newtheorem{definition}[theorem]{\sc Definition}
\newtheorem{definitions}[theorem]{\sc Definitions}
\newtheorem{example}[theorem]{\sc Example}
\newtheorem{examples}[theorem]{\sc Examples}
\theoremstyle{remark}
\newtheorem{remark}[theorem]{\sc Remark}
\newcommand{\tensor}[1]{\otimes_{\scriptscriptstyle{#1}}}
\newcommand{\Sf}[1]{\mathsf{#1}}
\newcommand{\fk}[1]{\mathfrak{#1}}
\renewcommand{\hom}[3]{\mathrm{Hom}_{\Sscript{#1}}\left(#2,\,#3\right)}
\newcommand{\bd}[1]{\boldsymbol{#1}}
\newcommand{\lar}[1]{\langle #1 \rangle}
\newcommand{\LR}[1]{\left\{\underset{}{} #1 \right\}}
\newcommand{\Go}{\cG_{\Sscript{0}}}
\newcommand{\Ga}{\cG_{\Sscript{1}}}
\newcommand{\Ho}{\cH_{\Sscript{0}}}
\newcommand{\Ha}{\cH_{\Sscript{1}}}
\newcommand{\Ko}{\cK_{\Sscript{0}}}
\newcommand{\Ka}{\cK_{\Sscript{1}}}
\newcommand{\Io}{\cI_{\Sscript{0}}}
\newcommand{\Ia}{\cI_{\Sscript{1}}}
\newcommand{\Ad}[1]{{\bd{ad}}_{\Sscript{g}}}
\newcommand{\Gop}{\cG^{\Sscript{op}}}
\newcommand{\simc}{\sim_{\Sscript{\rm{C}}}}
\newcommand{\SubG}{\text{\large $\mathcal{S}$}_{\Sscript{\cG}}}
\newcommand{\SubGx}[1]{\text{\large $\mathcal{S}$}_{\Sscript{\cG^{\langle #1\rangle}}}}
\newcommand{\tauk}{\tau_{\Sscript{\cK}}}
\newcommand{\tauh}{\tau_{\Sscript{\cH}}}
\newcommand{\bB}{\mathscr{B}}
\newcommand{\gG}{\mathscr{G}}
\newcommand{\lL}{\mathscr{L}}
\newcommand{\cA}{{\mathcal A}}
\newcommand{\cB}{{\mathcal B}}
\newcommand{\cF}{{\mathcal F}}
\newcommand{\cG}{{\mathcal G}}
\newcommand{\cH}{{\mathcal H}}
\newcommand{\cI}{{\mathcal I}}
\newcommand{\cK}{{\mathcal K}}
\newcommand{\cR}{{\mathcal R}}
\newcommand{\cX}{{\mathcal X}}
\newcommand{\Sets}{\mathsf{Sets}}
\newcommand{\rGsets}{{\emph{Sets}}\text{-}\cG}
\newcommand{\frGsets}{{\emph{sets}}\text{-}\cG}
\newcommand{\Sscript}[1]{\scriptscriptstyle{#1}}
\newcommand{\due}[3]{{}_{{#2 }} {#1}_{{ #3}}\,}
\newcommand{\Stab}[2]{\Stabi_{ \scriptscriptstyle{\mathcal{#2}}}\left({#1}\right)}
\newcommand{\Orbit}[2]{\Orb_{ \scriptscriptstyle{\mathcal{#2}}}\left({#1}\right)}
\newcommand{\rhaction}{\scriptsize{\leftharpoonup}}
\newcommand{\lgaction}{\scriptsize{\rightharpoonup}}
\newcommand{\rgaction}{\scriptsize{\leftharpoondown}}
\newcommand{\lhaction}{\scriptsize{\rightharpoondown}}
\newcommand{\Hlcoset}[1]{\cH[#1]}
\newcommand{\Glcoset}[1]{\cG[#1]}
\newcommand{\Hrcoset}[1]{[#1]\cH}
\newcommand{\LG}[1]{\mathscr{L}(\mathcal{#1})}
\newcommand{\GHom}[2]{{\rm Hom}_{\Sscript{\rGsets}}(#1,#2)}
\DeclareMathOperator{\Id}{Id}
\DeclareMathOperator{\pr}{pr}
\DeclareMathOperator{\Stabi}{\emph{Stab}}
\DeclareMathOperator{\Orb}{\emph{Orb}}
\DeclareMathOperator{\rep}{\emph{rep}}
\DeclareMathOperator{\Grou}{\mathbf{Grpd}}
\DeclareRobustCommand{\Cat}[1]{\mathcal{#1}}
\DeclareRobustCommand{\Rset}[1]{\emph{Sets}\text{-}{#1}}
\DeclareRobustCommand{\rset}[1]{\emph{sets}\text{-}{#1}}
\DeclareRobustCommand{\Uplus}{\mathbin{\scalebox{1.50}{\ensuremath{\uplus}}}}
\DeclareRobustCommand{\fpro}[1]{\underset{#1}{\times}}
\DeclareRobustCommand{\trg}[2]{\mathcal{G}_{{#1},\, {#2}}}
\DeclareMathOperator{\gRo}{\gG}
\DeclareRobustCommand{\gro}{\gRo}
\begin{document}
\allowdisplaybreaks

\title[On Burnside Theory for groupoids]{On Burnside Theory for groupoids}

\author{Laiachi El Kaoutit}
\address{Universidad de Granada, Departamento de \'{A}lgebra. Facultad de Ciencias, Fuente Nueva s/n. E-18071, Granada. Spain}
\email{kaoutit@ugr.es}
\urladdr{\url{http://www.ugr.es/~kaoutit/}}

\author{Leonardo Spinosa}
\address{University of Ferrara, Department of Mathematics and Computer Science.
Via Machiavelli 30, Ferrara, I-44121, Italy}
\email{leonardo.spinosa@unife.it}
\urladdr{\url{https://orcid.org/0000-0003-3220-6479}}

\date{\today}
\subjclass[2010]{Primary  18B40, 20L05, 19A22 ; Secondary 20C15, 20D05}
%\thanks{????}

\begin{abstract}
We explore the concept of \emph{conjugation} between  subgroupoids, providing several  characterizations of the conjugacy relation (Theorem \ref{thm:A} in \S \ref{ssec:DR}). We show that two finite groupoid-sets, over a locally strongly finite groupoid,  are isomorphic, if and only if, they have the same number of fixed points with respect to any subgroupoid with a single object (Theorem \ref{thm:B} in \S \ref{ssec:DR}).
Lastly, we examine the ghost map of a finite groupoid and the idempotents elements of its Burnside algebra. The exposition includes  an Appendix where we gather  the main general technical notions  that are needed along the paper. 
\end{abstract}

\keywords{The monoidal category of Groupoid-bisets; Conjugation between subgroupoids; Burnside Theorem; Burnside ring of finite groupoids;  Table of marks; The ghost map and the idempotents; Laplaza categories.}
\thanks{Research supported by the Spanish Ministerio de Econom\'{\i}a y Competitividad  and FEDER, grant MTM2016-77033-P. The work of Leonardo Spinosa was partially supported by   the  ``National Group for Algebraic and Geometric Structures, and their Applications'' (GNSAGA– INdAM)}
\maketitle
\vspace{-0.8cm}
\begin{small}
\tableofcontents
\end{small}

\pagestyle{headings}

\vspace{-1.2cm}

\section{Introduction}\label{sec:Intro}

We will describe the motivations behind our work, how the classical Burnside Theory   fits into the contemporary mathematical framework and we will delineate some  paradigms where this research fits into. Thereafter, we will reproduce our main results in sufficient details, aiming to make this  summary self-contained.

\subsection{Motivations and overview}\label{ssec:MO}
The Burnside theory is a classical part of the representation theory of finite groups and its first introduction has been done by Burnside in \cite{Burnside:1911}. Subsequently, other work has been realized in this direction: see, for example, \cite{DressSolvGrps} and \cite{Solomon:1967} among others. 
Apparently, there are two interrelated aspects of this theory. One of them is the well known  Burnside Theorem that codifies some basic  combinatorial properties of the lattice of subgroups of a given finite group, providing for instance its table of marks\footnote{As we will see here, this lattice can be viewed as a category whose arrows are equivariant maps between cosets. An entry in the table of marks is nothing but the number of arrows between two objects in this category. On the other hand, it is noteworthy to see Remark \ref{rem:contraejemplo} below, for some new observations on the classical Burnside Theorem for groups.}. The other aspect is the construction of the Burnside ring over the integers and its extension algebra over the rational numbers. Known is the fact that  two conjugated subgroups lead, via the isomorphism between their cosets,  to the same element in the Burnside ring.

It seems that, years after its discovery, Burnside ring has became a very powerful tool in different branches of pure mathematics. For instance,  in certain equivariant stable homotopies (e.g., that of the sphere in dimension zero \cite{Segal:1970}), the influence of the Burnside ring is conspicuously present so that, in particular, stable equivariant homotopy groups are modules over the Burnside ring (see \cite{TomDieck:1979} for further details). There are in the literature other more sophisticated versions of the Burnside ring of a finite group, like the ones  introduced and studied in \cite{Hartman/Yalcin:2007, Diaz/Libman:2009, GunnellsEtall}. 

From a categorical point of view, the Burnside ring can be constructed, with the help of the Grothendieck functor, from any skeletally small category with initial  object and finite co-products, which possesses a symmetric monoidal structure compatible with this co-product (known as \emph{Laplaza categories}, see the Appendices). Undoubtedly,  two such categories, that are connected by a convenient symmetric monoidal equivalence, they have,  up to  a possibly non canonical isomorphism, the same Burnside ring.  
A special case is when  the starting monoidal category is a certain category of representations over a specific object: a group, a groupoid, a 2-group, a 2-groupoid, etc. In this case one is  tempted to use this ring in order to decipher part of the structure of the handled object (for instance, it was proved in \cite{DressSolvGrps} that a group is solvable if and only if its Burnside ring is connected). Namely, this was perhaps the origin and the motivation behind  the classical Burnside theory for (abstract) groups described above.

Groupoids are natural generalization of groups and prove to be useful in different branches of mathematics, see  \cite{Brown:1987}, \cite{Cartier:2008} and \cite{WeiGrpdUnInExtSym} (and the references therein) for a brief survey. Specifically, a groupoid is defined as a small category whose every morphism is an isomorphism and can be thought as a ``group with many objects''.
In the same way, a group can be seen as a groupoid with only one object.
As explained in \cite{Brown:1987}, while a groupoid in its very simple facet can be seen as a disjoint union of groups, this forces unnatural choices of base points and obscures the overall structure of the situation.
Besides, even under this simplicity,  structured groupoids, like topological or differential groupoids, cannot even be thought like a disjoint union of topological or differential groups, respectively. Different specialists realized (see  for instance \cite{Brown:1987} and \cite[page 6-7]{Connes:1994}) in fact that the passage from groups to groupoids is not  a trivial research and have its own difficulties and  challenges.
Thus extending a certain well known result in groups context to the framework of groupoids is not an easy task and there are often difficulties to overcome.  

The research of this paper fits in this paradigm: our main aim here is to reproduce some classical results of the Burnside theory of groups, like the so called Burnside theorem and the existence of a ghost map,  in the groupoid context.
In order to do so, we will explore in depth, with several example and counterexamples, the concept of \emph{conjugation} of two subgroupoids of a given groupoid, illustrating instances of new phenomenons that are not present in the group context.
For example, unlike the classical case of groups or that of disjoint union of groups, there can be two subgroupoids which are conjugated without being isomorphic (see Example \ref{exam:Nconj} below).  Concerning the Burnside ring of a groupoid, it is observed that this ring  can be decomposed, although in  non canonical way,  as a direct product of the Burnside rings of its isotropy groups type: exactly one for each connected component (see \cite{Kaoutit/Spinosa:2019} for another approach to this ring).

In the appendix we will briefly recall some useful notions about monoidal structures and the concepts of ``rig''	 and Grothendieck functor.
We define a rig as a ring without negatives that is, without the inverses of the addition.
The Grothendieck functor enables us to ``add'' the additive inverses to a rig to obtain a ring.
It's exactly in this way that the ring of integers \(\mathbb{Z}\) is constructed from the natural numbers \(\mathbb{N}\), the quintessential example of rig.

\subsection{Description of the main results}\label{ssec:DR}
We proceed in describing with full details our main results. For a given groupoid $\cG=(\Ga,\Go)$ ($\Go$ is the set of objects and $\Ga$ the set of arrows), we denote by  $\cG(a,a')$  the set of arrows from $a$ to $a'$.  A subgroupoid $\cH$ of $\cG$  is a subcategory of the underlying category $\cG$, which is non empty and stable under inverses.  The notation $\big(\cG/\cH\big)^{\Sscript{\mathsf{R}}}$ (resp. $\big(\cG/\cH\big)^{\Sscript{\mathsf{L}}}$) stands for the set of right (resp. left) coset \cite{Kaoutit/Spinosa:2018} (see subsection \ref{ssec:conj} for the  precise definition).

The criteria of conjugacy between subgroupoids is given by the following first result, stated below as Theorem \ref{thm:LR}, and therein we refer to Definition \ref{def:Gset} for the precise notion of (right or left) $\cG$-set. 

{\renewcommand{\thetheorem}{{\bf A}}
\begin{theorem}\label{thm:A}
Let $\cH$ and $\cK$ be two subgroupoids of a given groupoid $\cG$ with canonical morphisms $\tauh: \cH \hookrightarrow \cG \hookleftarrow \cK: \tauk$. 
Then the following conditions are equivalent:
\begin{enumerate}[(i)]
\item $\big(\cG/\cH\big)^{\Sscript{\mathsf{R}}} \, \cong\,  \big(\cG/\cK\big)^{\Sscript{\mathsf{R}}}$  as right $\cG$-sets; 
\item There are morphisms of groupoids  $F: \cK \rightarrow \cH$ and $G: \cH  \to \cK$ together with two  natural transformations $\fk{g}: \tauh F \to \tauk$ and $\fk{f}: \tauk G \to \tauh$.
\item The subgroupoids $\cH$ and $\cK$ are conjugally equivalent (see Definition \ref{def:conj0}).
\item There are families \(\left(u_{\Sscript{b}}\right)_{b \in \Cat{K}_{ \scriptscriptstyle{0} }}\) and \(\left(g_{\Sscript{b}}\right)_{b \in \Cat{K}_{ \scriptscriptstyle{0} }}\) with \(u_{\Sscript{b}} \in \Cat{H}_{ \scriptscriptstyle{0} }\) and \(g_{\Sscript{b}} \in \Cat{G}\left({u_{\Sscript{b}}, b}\right)\) for every \(b \in \Cat{K}_{ \scriptscriptstyle{0} }\), such that:
\begin{enumerate}[(a)]
\item for each \(b_1, b_2 \in \Cat{K}_{ \scriptscriptstyle{0} }\) we have \(g_{\Sscript{b_2}}^{-1} \Cat{K}\left({b_1, b_2}\right) g_{\Sscript{b_1}} = \Cat{H}\left({u_{\Sscript{b_1}}, u_{\Sscript{b_2}}}\right)\);
\item for each \(u \in \Cat{H}_{ \scriptscriptstyle{0} }\) there is \(z \in \Cat{K}_{ \scriptscriptstyle{0} }\) such that \(\Cat{H}\left({u_z, u}\right) \neq \emptyset\).
\end{enumerate}
\item  $\big(\cG/\cH\big)^{\Sscript{\mathsf{L}}} \, \cong\,  \big(\cG/\cK\big)^{\Sscript{\mathsf{L}}} $ as left $\cG$-sets.
\end{enumerate}
\end{theorem}
}
As we mentioned above, in Example \ref{exam:Nconj} we show that, in contrast with the case of disjoint union of group, it could happen that two subgroupoids are conjugated without being isomorphic. In Example \ref{exam:IsotropyNotConj} we show that there is a groupoid with two conjugated subgroupoids such that not each isotropy group of the first subgroupoid is conjugated to each isotropy group of the second subgroupoid. Both Examples make manifest  the complexity of the study of the ``poset'' of all subgroupoids using the conjugacy relation.   

The subsequent result is our version of the classical Burnside Theorem. Given a right $\cG$-set $(X,\varsigma)$ and a subgroupoid $\cH$ of $\cG$ with a single object, the symbol $X^{\Sscript{\cH}}$ denotes the subset of the $\cH$-invariant elements of $X$, which can be identified with the set of all $\cG$-equivariant maps from the set of right  cosets $\cG/\cH$ to $X$.   

{\renewcommand{\thetheorem}{{\bf B}}
\begin{theorem}[Burnside Theorem]\label{thm:B}
Let $\cG$ be a locally strongly finite groupoid (Definition \ref{def:finiteG}). Consider  two  finite right \(\mathcal{G}\)-sets \(\left({X, \varsigma}\right)\) and \(\left({Y, \vartheta}\right)\). Then the following statements are equivalent.
\begin{enumerate}
\item The right \(\Cat{G}\)-sets \(\left({X, \varsigma}\right)\) and \(\left({Y, \vartheta}\right)\) are isomorphic.
\item For each subgroupoid \(\mathcal{H}\) of \(\mathcal{G}\) with a single object, we have that 	
\[
\left\lvert{X^{\Sscript{\cH} }}\right\rvert= \left\lvert{Y^{\Sscript{\cH}}}\right\rvert.
\]
\end{enumerate}
In particular, this applies to any strongly finite groupoid. 
\end{theorem}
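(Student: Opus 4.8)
The plan is to reduce the statement to linear algebra over $\mathbb{Z}$ via a table-of-marks argument, mimicking the classical proof but being careful about the groupoid subtleties. First I would fix a set $\Lambda$ of representatives for the conjugacy classes (in the sense of Theorem A, i.e. conjugal equivalence) of subgroupoids of $\cG$ with a single object; since $\cG$ is locally strongly finite, $\Lambda$ is finite and each $\cH\in\Lambda$ has $\cG/\cH$ finite. By the orbit decomposition recalled in the introduction (every orbit of a $\cG$-set is isomorphic to some $\cG/\cH$ with $\cH$ single-object, cf.\ \cite{Kaoutit/Spinosa:2018}), any finite right $\cG$-set $(X,\varsigma)$ decomposes as $X\cong\coprod_{\cH\in\Lambda}(\cG/\cH)^{\sqcup n_{\Sscript{\cH}}(X)}$ for uniquely determined non-negative integers $n_{\Sscript{\cH}}(X)$; uniqueness follows from Theorem A, which guarantees $\cG/\cH\cong\cG/\cK$ as $\cG$-sets iff $\cH,\cK$ are conjugally equivalent, so the $\cG/\cH$ for $\cH\in\Lambda$ are pairwise non-isomorphic and indecomposable. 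Thus $(1)$ is equivalent to the equality of all the multiplicity vectors: $n_{\Sscript{\cH}}(X)=n_{\Sscript{\cH}}(Y)$ for every $\cH\in\Lambda$.

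Next I would introduce the table of marks. For $\cH,\cK$ single-object subgroupoids set $m(\cK,\cH):=\lvert(\cG/\cH)^{\Sscript{\cK}}\rvert$, which by the identification recalled before the statement equals the number of $\cG$-equivariant maps $\cG/\cK\to\cG/\cH$, a finite number. The key observations are: (a) for a fixed single-object $\cK$, the function $X\mapsto\lvert X^{\Sscript{\cK}}\rvert$ is additive over disjoint unions of $\cG$-sets, so $\lvert X^{\Sscript{\cK}}\rvert=\sum_{\cH\in\Lambda}n_{\Sscript{\cH}}(X)\,m(\cK,\cH)$; and (b) $m(\cK,\cH)$ depends only on the conjugacy classes of $\cK$ and $\cH$ (again via Theorem A, since conjugally equivalent subgroupoids give isomorphic coset $\cG$-sets). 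Hence, writing $M=(m(\cK,\cH))_{\cK,\cH\in\Lambda}$ for the resulting square integer matrix indexed by $\Lambda$, condition $(2)$ says exactly that $M\cdot n(X)=M\cdot n(Y)$ where $n(X),n(Y)$ are the multiplicity vectors. To conclude $n(X)=n(Y)$ it suffices to show $M$ is invertible over $\mathbb{Q}$, i.e.\ $\det M\neq 0$.

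The main obstacle — and the heart of the proof — is establishing that $M$ is non-singular. In the classical group case one orders subgroups by a refinement of the inclusion/subconjugacy partial order and shows $M$ is triangular with nonzero diagonal entries $m(H,H)=\lvert N_G(H)/H\rvert\neq 0$. Here I would define a partial (pre)order on $\Lambda$ by declaring $\cK\preceq\cH$ whenever $(\cG/\cH)^{\Sscript{\cK}}\neq\emptyset$, equivalently whenever $\cK$ is subconjugate to $\cH$; one checks this is a genuine partial order on conjugacy classes (antisymmetry using that $\cG/\cH$ finite forces mutual subconjugacy to give isomorphic $\cG$-sets, hence conjugal equivalence by Theorem A, hence equal classes in $\Lambda$). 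Ordering $\Lambda$ compatibly with $\preceq$, the matrix $M$ becomes (upper- or lower-) triangular, since $m(\cK,\cH)=0$ unless $\cK\preceq\cH$. It then remains to verify the diagonal entries are nonzero: $m(\cH,\cH)=\lvert(\cG/\cH)^{\Sscript{\cH}}\rvert\geq 1$ because the equivariant identity map $\cG/\cH\to\cG/\cH$ is always present. Therefore $\det M=\prod_{\cH\in\Lambda}m(\cH,\cH)\neq 0$, $M$ is invertible over $\mathbb{Q}$, and $n(X)=n(Y)$, which by the decomposition above gives $X\cong Y$ and proves $(2)\Rightarrow(1)$; the implication $(1)\Rightarrow(2)$ is immediate since isomorphic $\cG$-sets have the same fixed-point cardinalities. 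The final sentence follows since a strongly finite groupoid is in particular locally strongly finite. The subtle points to check carefully, beyond the group case, are the well-definedness of everything on conjugacy classes (where Theorem A does all the work) and the antisymmetry of $\preceq$, which genuinely uses finiteness of the coset $\cG$-sets.
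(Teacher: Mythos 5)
Your proposal is correct and follows essentially the same route as the paper: orbit decomposition into cosets $\cG/\cH$ with well-defined multiplicities, additivity of fixed-point counts over disjoint unions, and invertibility of the table of marks via a triangularization whose antisymmetry step (mutual subconjugacy plus finiteness of isotropy groups forces conjugacy) is exactly the paper's Lemma \ref{lema:Triangular}. The only cosmetic difference is that you order all of $\Lambda$ at once by subconjugacy, whereas the paper first blocks the matrix by connected components and reduces each block to the classical group case.
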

}
The proof of Theorem \ref{thm:B} is based on the fact that the table of marks, or the matrix of marks, of $\cG$ can be shown to be a diagonal block matrix, where each block is a lower triangular matrix, which corresponds to  the matrix of marks of an isotropy type group (see Proposition \ref{prop:tableofmarks}).

Now, we turn our attention to the Burnside ring. Given a groupoid $\cG$ with finitely many objects, we fix a set $\rep(\Go)$ of representative objects modulo the regular action of $\cG$ over itself, by using either the source or the target. The \emph{Burnside ring of $\cG$} is, by definition, $\bB(\cG)=\gG\lL(\cG)$ (this  is a $\mathbb{Z}$-algebra), where $\lL(\cG)$ is the Burnside rig of $\cG$ constructed from the category of right $\cG$-sets  with underlying finite sets, and $\gG$ is the Grothendieck functor (see Appendices  \ref{ssec:Rig}  and \ref{ssec:GrothF}).

Given a groupoid \(\mathcal{G}\) with a finite set of objects, we have the following isomorphism of rings:
\[ \bB\left({\mathcal{G}}\right) \cong \prod_{a \, \in \, \rep(\Go)} \bB\left({\mathcal{G}^{ \scriptscriptstyle{a} }  }\right),
\]
where the right hand side term is the product of commutative rings and  each of the $\bB\left({\mathcal{G}^{ \scriptscriptstyle{a} }}\right)$'s is the Burnside ring of the isotropy group $\cG^{\Sscript{a}}$ of the representative object $a$. 
The stated isomorphism depends on a given choice of a set of representative, that is, the decomposition is not unique. This is in fact due to the non canonical equivalence of categories between the underlying categories of the groupoids $\cG$ and $\biguplus_{\Sscript{a \, \in \, \rep(\Go)}} \cG^{\Sscript{a}}$.

%\vspace{2cm}
%
%The following is a corollary of  Theorem \ref{tBurnTransCompProd} and Proposition \ref{pTransGrpdBurnRing}, stated as Corollary \ref{coro:main} below, gives the decomposition of $\bB(\cG)$ into a finite product of Burnside rings of groups, although, in a non canonical way: 
%
%
%{\renewcommand{\thetheorem}{{\bf C}}
%\begin{corollary}\label{coro:C}
%Given a groupoid \(\mathcal{G}\) with a finite set of objects, we have the following isomorphism of rings:
%\[ \bB\left({\mathcal{G}}\right) \cong \prod_{a \, \in \, \rep(\Go)} \bB\left({\mathcal{G}^{ \scriptscriptstyle{a} }  }\right),
%\]
%where the right hand side term is the product of commutative rings and  each of the $\bB\left({\mathcal{G}^{ \scriptscriptstyle{a} }}\right)$'s is the Burnside ring of the isotropy group $\cG^{\Sscript{a}}$ of the representative object $a$. 
%\end{corollary}
%}
%
%As one can see from this Corollary, 

Lastly, in analogy with finite group theory, one can also introduce, in the context of groupoids, the ghost map and show that it is injective as in the classical case. The idempotent of the $\mathbb{Q}$-algebra $\mathbb{Q}\tensor{\mathbb{Z}}\bB(\cG)$ are then computed by means of those of each component of the previous decomposition. All this results are  explicitly expounded in Section \ref{sec:Idem} below.

\section{Abstract groupoids: definitions, basic properties and examples}\label{sec:Grpd}
The material of this section, which will be used throughout the paper,   is somehow considered folklore and most of its content  can be found in \cite{Kaoutit/Kowalzig:14, ElKaoutit:2017} and \cite{Kaoutit/Spinosa:2018}. However, for the sake of completeness and for the convenience of the reader,  we are going to illustrate the basic notions, as well as some motivating examples, of the groupoid theory.

\subsection{Notations, basic notions and examples}\label{ssec:basic}
A \emph{groupoid}  is a small category where each morphism is an isomorphism. That is, a pair of two sets $\cG:=(\cG_{\Sscript{1}}, \cG_{\Sscript{0}})$ with diagram of sets
$$
\xymatrix@C=35pt{\cG_{\Sscript{1}}\ar@<0.70ex>@{->}|-{\scriptscriptstyle{{\Sf{s}}}}[r] \ar@<-0.70ex>@{->}|-{\scriptscriptstyle{{\Sf{t}}}}[r] & \ar@{->}|-{ \scriptscriptstyle{\iota}}[l] \cG_{\Sscript{0}}},
$$
where $\Sf{s}$ and $\Sf{t}$ are resp.~ the source and the target of a given arrow, and $\iota$ assigns to each object its identity arrow; together with an associative and unital multiplication  $\cG_{\Sscript{2}}:= \cG_{\Sscript{1}}\, \due  \times {\Sscript{{\Sf{s}}}} {\, \Sscript{{\Sf{t}}}} \, \cG_{\Sscript{1}} \to \cG_{\Sscript{1}}$  as well as a map $\cG_{\Sscript{1}} \to \cG_{\Sscript{1}}$ which associated to each arrow its inverse. Notice, that $\iota$ is an injective map, and so $\Go$ is identified with a subset of $\Ga$. 
A groupoid is then a small category with more structure, namely, the map which sends  each arrow to its inverse. We implicitly identify a groupoid with its underlying category. Interchanging the source and the target will lead to \emph{the opposite groupoid} which we denote by $\Gop$.

Given a groupoid $\cG$, consider two objects $x, y \in \cG_{\Sscript{0}}$: we denote by $\cG(x,y)$ the set of all arrows with source $x$ and target $y$.  \emph{The isotropy group of $\cG$ at $x$} is then the \emph{group of loops}:
\begin{equation}\label{Eq:isotropy}
\cG^{\Sscript{x}}:=\cG(x,x)\,=\,\Big\{ g \in \cG_{\Sscript{1}}|\, \Sf{t}(g)=\Sf{s}(g)=x \Big\}.
\end{equation}
Clearly each of the sets $\cG(x,y)$ is, by the groupoid multiplication, a left $\cG^{\Sscript{y}}$-set and right $\cG^{\Sscript{x}}$-set. In fact, each of the $\cG(x,y)$ sets is a $(\cG^{\Sscript{y}}, \cG^{\Sscript{x}})$-biset,  in the sense of \cite{Bouc:2010}. 
Two objects $x, x' \in \Go$ are said to be equivalent if and only if there is an arrow connecting them.  This in fact defines an equivalence relation whose quotient set is the set of all \emph{connected components} of $\cG$, which we denote by $\pi_{\Sscript{0}}(\cG):=\Go/\cG$.

Given a set $I$ and a family of groupoids $\{\cG^{\Sscript{(i)}}\}_{i \, \in \, I}$,  \emph{the coproduct groupoid} is a groupoid denoted by $\cG= \coprod_{i \, \in \, I} \cG^{\Sscript{(i)}}$ and defined by
$$
\Go\,=\, \underset{i \, \in \, I}{\biguplus} \, \cG^{\Sscript{(i)}}_{\Sscript{0}}, \qquad \cG(x,y)\,=\, \begin{cases} \cG^{\Sscript{(i)}}(x,y), & \text{ if }\; \exists \,  i\, \in I \text{ such that } x,y \in \cG^{\Sscript{(i)}}_{\Sscript{0}}  \\  \emptyset, & \text{otherwise}. \end{cases}
$$

\begin{definitions}\label{def:IsotropyConj}
Let $\cG$ be a groupoid. 
\begin{enumerate}
\item We say that $\cH$ is a \emph{subgroupoid of $\cG$}, provided that $\cH$ is a subcategory of the underlying category of $\cG$, which is also stable under the inverse map, that is, $h^{-1} \in \Ha$, for every $h \in \Ha$. For instance, any connected component of $\cG$ is a subgroupoid. On the other hand, a subgroup $H$ of an isotropy group $\cG^{\Sscript{x}}$, for an object $x \in \Go$, can be considered as a \emph{subgroupoid with only one object} of $\cG$. Conversely, any subgroupoid of $\cG$ with one object is of this form. 
\item Two isotropy groups $\cG^{\Sscript{x}}$ and $\cG^{\Sscript{x'}}$   are said to be \emph{conjugated} when there exists $g \in \cG(x,x')$ such that $\cG^{\Sscript{x}}\,=\, g^{-1}\cG^{\Sscript{x'}}g$ (equality as subsets of $\Ga$).    
\item $\cG$ is said to be \emph{transitive} (or \emph{connected}) if for any pair of objects $(x,x')$ we have $\cG(x,x') \neq \emptyset$; equivalently, the map $(\Sf{s}, \Sf{t}): \Ga \to \Go \times \Go$ is surjective.  In general, any groupoid can be seen as a coproduct of transitive groupoids: namely, its connected components.   
\end{enumerate}
\end{definitions}

A \emph{morphism of groupoids} $\phiup: \cH \to \cG$ is a functor between the underlying categories.  
Thus $\phi=(\phi_{\Sscript{0}}, \phi_{\Sscript{1}}): (\Ho, \Ha) \to (\Go, \Ga)$ is a pair of maps  compatible with multiplication and identity maps, and interchange the sources and the targets. In other words, the following diagram is commutative
$$
\xymatrix@C=35pt@R=15pt{\cH_{\Sscript{1}}\ar@<0.70ex>@{->}|-{\scriptscriptstyle{{\Sf{s}}}}[r] \ar@<-0.70ex>@{->}|-{\scriptscriptstyle{{\Sf{t}}}}[r] \ar@{->}_-{\phi_{\Sscript{1}}}[d] & \ar@{->}|-{ \scriptscriptstyle{\iota}}[l] \cH_{\Sscript{0}} \ar@{->}^-{\phi_{\Sscript{0}}}[d]   \\ \cG_{\Sscript{1}}\ar@<0.70ex>@{->}|-{\scriptscriptstyle{{\Sf{s}}}}[r] \ar@<-0.70ex>@{->}|-{\scriptscriptstyle{{\Sf{t}}}}[r] & \ar@{->}|-{ \scriptscriptstyle{\iota}}[l] \cG_{\Sscript{0}}. }
$$
Clearly any subgroupoid $\cH$ of $\cG$ induces a morphism $\tauup: \cH \hookrightarrow \cG$ of groupoids whose both maps $\tauup_{\Sscript{0}}$ and $\tauup_{\Sscript{1}}$ are injectives. On the other hand, it is obvious that any morphism $\phi$  induces  homomorphisms of groups between the isotropy groups: $\phiup^{\Sscript{y}}: \cH^{\Sscript{y}}  \to \cG^{\Sscript{\phiup_0(y)}}$, for every $y \in \cH_{\Sscript{0}}$. In order to illustrate the foregoing notions,  we quote here  some standard examples of groupoids and their morphisms.

\begin{example}[Trivial groupoid and product of groupoids]\label{exam:trivial}
Let $X$ be a set. Then the pair $(X, X)$ is obviously a groupoid (in fact a small discrete category, i.e., with only identities as arrows) with trivial structure. This is known as \emph{the trivial groupoid}. Note that, with this definition, the empty groupoid is the trivial groupoid $(\emptyset, \emptyset)$ which, by convention, is also considered as a  transitive groupoid.

The \emph{product $\cG \times \cH$ of two groupoids} $\cG$ and $\cH$ is the groupoid whose sets of objects and arrows, are respectively, the  Cartesian products  $\Go \times \Ho$ and  $\Ga \times \Ha$. The multiplication, inverse and unit arrow are canonically given as follows:
$$
(g,h) \, (g', h') \,=\, (gg', hh'), \quad 	\quad (g,h)^{-1}\,=\, (g^{-1}, h^{-1}), \quad \iota_{\Sscript{(x,\, u)}}\,=\, (\iota_{\Sscript{x}}, \iota_{\Sscript{u}}).
$$
\end{example}

\begin{example}[Action groupoid]\label{exam:action}
Any group $G$ can be considered as a groupoid by taking $G_{\Sscript{1}}=G$ and $G_{\Sscript{0}}=\{*\}$ (a set with one element). Now if $X$ is a right $G$-set with action $\rho: X\times G \to X$,  it is possible to define the  \emph{action groupoid $\gG$}, whose set of objects is $G_{\Sscript{0}}=X$ and whose set of arrows is $G_{\Sscript{1}}=X \times G$; the source and the target maps are, respectively, $\Sf{s}=\rho$ and $\Sf{t}=pr_{\Sscript{1}}$ and, lastly, the identity map sends $x$ to $(x,e)=\iota_{\Sscript{x}}$, where $e$ is the identity element of $G$. The multiplication is given by  $(x,g) (x',g')=(x,gg')$, whenever $xg=x'$, and the inverse is defined by $(x,g)^{-1}=(xg,g^{-1})$. Clearly the pair of maps $(pr_{\Sscript{2}}, *): \cG= (G_{\Sscript{1}}, G_{\Sscript{0}}) \to (G,\{*\})$  defines a morphism of groupoids. For a given $ x \in X$, the isotropy group $\cG^{\Sscript{x}}$ is clearly identified with  $\emph{Stab}_{\Sscript{G}}(x)=\{g \in G |\, gx=x\}$, the stabilizer subgroup of \(x\) in $G$.  
\end{example}

\begin{example}[Equivalence relation groupoid]\label{exam:X} 
Here is a standard class of examples of groupoids. Notice that in all these examples each of the isotropy groups is the trivial group. 
\begin{enumerate}[(1)] 
\item One can associated to  a given set $X$ the so called \emph{the groupoid  of pairs} (called \emph{fine groupoid} in \cite{Brown:1987} and \emph{simplicial groupoid} in \cite{Higgins:1971}): its set of arrows is defined by $G_{\Sscript{1}}=X \times X$ and the set of objects by $G_{\Sscript{0}}=X$. The source and the target are $\Sf{s}=pr_{\Sscript{2}}$ and $\Sf{t}=pr_{\Sscript{1}}$, the second and the first projections, and the  map of identity arrows $\iota$ is the diagonal map $x \mapsto (x,x)$. The multiplication and the inverse maps are given by 
$$
(x,x') \, (x',x'')\,=\, (x,x''),\quad \text{and} \quad (x,x')^{-1}\,=\, (x',x).
$$

\item Let $\nuup: X \to Y$ be a map.  Consider the fibre product $X\, \due \times {\Sscript{\nuup}} {\; \Sscript{\nuup}} \, X$  as a set of arrows of the groupoid $\xymatrix@C=35pt{ X\, \due \times {\Sscript{\nuup}} {\; \Sscript{\nuup}} \, X \ar@<0.8ex>@{->}|-{\scriptscriptstyle{pr_2}}[r] \ar@<-0.8ex>@{->}|-{\scriptscriptstyle{pr_1}}[r] & \ar@{->}|-{ \scriptscriptstyle{\iota}}[l] X, }$  where as before  $\Sf{s}=pr_{\Sscript{2}}$ and $\Sf{t}=pr_{\Sscript{1}}$,  and the  map of identity arrows $\iota$ is the diagonal map. The multiplication and the inverse are clear.  

\item Assume that $\cR \subseteq X \times X$ is an equivalence relation on the set $X$.  One can construct a groupoid $\xymatrix@C=35pt{\cR \ar@<0.8ex>@{->}|-{\scriptscriptstyle{pr_2}}[r] \ar@<-0.8ex>@{->}|-{\scriptscriptstyle{pr_1}}[r] & \ar@{->}|-{ \scriptscriptstyle{\iota}}[l] X, }$  with structure maps as before. This is  an important class of groupoids  known as \emph{the groupoid of equivalence relation} (or \emph{equivalence relation groupoid}). Obviously $(\cR, X) \hookrightarrow  (X\times X, X)$ is a morphism of groupoid, see for instance \cite[Exemple 1.4, page 301]{DemGab:GATIGAGGC}.
\end{enumerate}
\end{example}

\begin{example}[Induced groupoid]\label{exam:induced}
Let $\cG=(\cG_{\Sscript{1}},\cG_{\Sscript{0}})$ be a groupoid and $\varsigma:X \to \cG_{\Sscript{0}}$ a map. Consider the following  pair of sets:
$$
\cG^{\Sscript{\varsigma}}{}_{\Sscript{1}}:= X \,  \due \times {\Sscript{\varsigma}} {\, \Sscript{\Sf{t}}} \, \cG_{\Sscript{1}} \;  \due \times {\Sscript{\Sf{s}}} {\, \Sscript{\varsigma}}  \, X= \Big\{ (x,g,x') \in X\times \cG_{\Sscript{1}}\times X| \;\; \varsigma(x)=\Sf{t}(g), \varsigma(x')=\Sf{s}(g)  \Big\}, \quad \cG^{\Sscript{\varsigma}}{}_{\Sscript{0}}:=X.
$$
Then $\cG^{\Sscript{\varsigma}}{}=(\cG^{\Sscript{\varsigma}}{}_{\Sscript{1}}, \cG^{\Sscript{\varsigma}}{}_{\Sscript{0}})$ is a groupoid, with structure maps: $\Sf{s}= pr_{\Sscript{3}}$, $\Sf{t}= pr_{\Sscript{1}}$, $\iota_{\Sscript{x}}=(\varsigma(x), \iota_{\Sscript{\varsigma(x)}}, \varsigma(x))$, $x \in X$. The multiplication is defined by $(x,g,y) (x',g',y')= ( x,gg',y')$, whenever $y=x'$, and the inverse is given by $(x,g,y)^{-1}=(y,g^{-1},x)$. 
The groupoid $\cG^{\Sscript{\varsigma}}$ is known as \emph{the induced groupoid of $\cG$ by the map $\varsigma$}, (or \emph{ the pull-back groupoid of $\cG$ along $\varsigma$}, see   \cite{Higgins:1971} for dual notion).  Clearly, there  is a canonical morphism $\phi^{\Sscript{\varsigma}}:=(pr_{\Sscript{2}}, \varsigma): \cG^{\Sscript{\varsigma}} \to \cG$ of groupoids. 
\end{example}

\begin{remark}\label{pIsomTrGrpdGS}\label{pCharacTrGrpd}
A particular instance of an induced groupoid is the one when $\cG=G$ is a groupoid with one object. Thus for any group $G$ one can consider the Cartesian product $X \times G \times X$ as a groupoid with set of objects $X$. This groupoid, denoted by $\trg{G}{X}$,  is clearly  transitive with $G$ as a type of isotropy groups.  It is noteworthy to mention that the class of groupoids given in Example  \ref{exam:induced} characterizes, in fact, transitive groupoids. More precisely, any transitive groupoid is isomorphic, not in a canonical way, however, to some groupoid of the form $\trg{G}{X}$ with admissible choices \(X=\Cat{G}_{ \scriptscriptstyle{0} }\) and \(G=\Cat{G}^{ \scriptscriptstyle{x} }\) for any \(x \in \Cat{G}_{ \scriptscriptstyle{0} }\). 

Furthermore, given groups \(G\) and \(H\) and sets \(S\) and \(T\), it is easily shown that the  following statements are equivalent:
\begin{enumerate}
\item The groupoids \(\trg{G}{S}\) and \(\trg{H}{T}\) are isomorphic.
\item There is a bijection \(S \simeq T\) and an isomorphism of groups \(G \cong H\).
\end{enumerate}

\end{remark}

\subsection{Groupoid actions and equivariant maps}\label{ssec:Grpd1} The following crucial definition that we reproduce here from \cite{Kaoutit/Kowalzig:14, ElKaoutit:2017, Kaoutit/Spinosa:2018},  is a natural generalization to the context of groupoids, of the usual notion of group-set, see for instance \cite{Bouc:2010}. As was mentioned in \emph{loc.~cit} it is an abstract formulation of that given in \cite[Definition 1.6.1]{Mackenzie:2005} for Lie groupoids, and essentially the same definition based on the Sets-bundles notion given in  \cite[Definition 1.11]{Renault:1980}.
\begin{definition}\label{def:Gset}
Let  $\mathcal{G}$ be a groupoid and  $\varsigma:X \to \Go$ a map. We say that  $(X,\varsigma)$ is a \emph{right} $\cG$-\emph{set} (with a \emph{structure map} $\varsigma$), if there is a map (\emph{the action}) $\rho: X\, \due \times {\Sscript{\varsigma}} {\, \Sscript{{\Sf{t}}}} \, \Ga \to X$ sending $(x,g) \mapsto xg$ and  satisfying the following conditions:
\begin{enumerate}
\item $\Sf{s}(g)=\varsigma(xg)$, for any $x \in X$ and $g \in \Ga$ with $\varsigma(x)=\Sf{t}(g)$.
\item $x \, \iota_{\varsigma(x)}= x$, for every $x \in X$.
\item $ (xg)h= x(gh)$, for every $x \in X$, $g,h \in \Ga$ with $\varsigma(x)=\Sf{t}(g)$ and $\Sf{t}(h)=\Sf{s}(g)$.
\end{enumerate}
\end{definition}
In order to simplify the notation, the action map of a given right $\cG$-set $(X,\varsigma)$ will be omitted  and, by abuse of notation, we will simply refer to a right $\cG$-set $X$ without even mentioning the structure map. 
 A \emph{left action} is analogously defined by interchanging the source with the target and similar notations will be employed. 
 In general a set $X$ with a (right or left) groupoid action is just called \emph{a groupoid-set} but we will also employ the terminology: \emph{a set $X$ with a left (or right) $\cG$-action}. 
 
Obviously, any groupoid  $\cG$ acts  over itself on both sides by using the regular action, that is, the multiplication $\Ga \, \due \times {\Sscript{{\Sf{s}}}} {\, \Sscript{\Sf{t}}} \, \Ga \to \Ga$. This means that  $(\Ga, {s})$ is a right $\cG$-set and $(\Ga, {t})$ is a left $\cG$-set with this action. It is also clear that $(\Go, id_{\Sscript{\Go}})$ is a right $\cG$-set wit action given by
\begin{equation}\label{Eq:Go}
\Go \, {{}_{ \scriptscriptstyle{id} } {\times}_{ \scriptscriptstyle{\Sf{t}} }\,} \Ga \longrightarrow \Go, \quad \Big(  (a,g) \longmapsto ag=\Sf{s}(g) \Big).
\end{equation}

A \emph{morphism of  right $\cG$-sets} (or \emph{$\cG$-equivariant map})  $F: (X,\varsigma) \to (X',\varsigma')$ is a map $F:X \to X'$ such that the diagrams 

\begin{equation}\label{Eq:Gequi}
\begin{gathered}
\xymatrix@R=12pt{ & X \ar@{->}_-{\Sscript{\varsigma}}[ld]  \ar@{->}^-{F}[dd] & \\ \mathcal{G}_{\Sscript{0}}& & \\ & X' \ar@{->}^-{\Sscript{\varsigma'}}[lu]  & } \qquad  \qquad \xymatrix@R=12pt{X\, \due \times {\Sscript{\varsigma}} {\, \Sscript{\Sf{t}}} \,  \Ga \ar@{->}^-{}[rr]  \ar@{->}_-{\Sscript{F\, \times \, id}}[dd] & & X  \ar@{->}^-{\Sscript{F}}[dd] \\  & & \\ X'\, \due \times {\Sscript{\varsigma'}} {\, \Sscript{\Sf{t}}} \,  \Ga  \ar@{->}^-{}[rr] & & X'  } 
\end{gathered}
\end{equation}
commute.  
We denote by $\rGsets$ the category of right $\cG$-sets and by $\hom{\rGsets}{X}{X'}$ the set of all $\cG$-equivariant maps from $(X,\varsigma)$ to $(X',\varsigma')$. The category of left $\cG$-sets is analogously defined and it is isomorphic to the category of right $\cG$-sets, using the inverse map by switching the source with the target. It is noteworthy to mention that the definition of the category of groupoid-sets, as it has been recalled in Definition \ref{def:Gset}, can be rephrased using the core of the category of sets. To our purposes, it is advantageous to work with Definition \ref{def:Gset}, rather than this formal definition (see  \cite[Remark 2.6]{Kaoutit/Spinosa:2018} for more explanations).

\begin{example}\label{exam: HG}
Let $\phi: \cH \to \cG$ be a morphism of groupoids. Consider the triple $(\Ho\, \due \times {\Sscript{\phi_0}} {\, \Sscript{\Sf{t}}} \,  \Ga, pr_{\Sscript{1}}, \varsigma)$, where $\varsigma: \Ho\, \due \times {\Sscript{\phi_0}} {\, \Sscript{\Sf{t}}} \,  \Ga \to \Go$ sends $(u,g) \mapsto s(g)$, and $pr_{\Sscript{1}}$ is the first projection. Then the following maps 
\begin{equation}\label{Eq:HG}
\begin{gathered}
\xymatrix@R=0pt@C=10pt{ \Big(\Ho\, \due \times {\Sscript{\phi_0}} {\, \Sscript{\Sf{t}}} \,  \Ga\Big) \, \due \times {\Sscript{\varsigma}} {\, \Sscript{\Sf{t}}} \,  \Ga \ar@{->}^-{}[r] &  \Ho\, \due \times {\Sscript{\phi_0}} {\, \Sscript{\Sf{t}}} \,  \Ga,  \\ \Big((u,g'),g \Big)\ar@{|->}^-{}[r]  &  (u,g')  \rgaction g:=(u, g'g)}  \quad \xymatrix@R=0pt@C=10pt{ \Ha \, \due \times {\Sscript{\Sf{s}}} {\, \Sscript{pr_1}} \,  \Big(\Ho\, \due \times {\Sscript{\phi_0}} {\, \Sscript{\Sf{t}}} \,  \Ga\Big) \ar@{->}^-{}[r] &  \Ho\, \due \times {\Sscript{\phi_0}} {\, \Sscript{\Sf{t}}} \,  \Ga \\ \Big(h, (u,g)\Big) \ar@{|->}^-{}[r] &  h \lhaction (u,g):=(t(h), \phi(h)g) }
\end{gathered}
\end{equation}
define, respectively, a  structure of  right $\cG$-sets and  that of  left $\cH$-set.  Analogously,  the maps 
\begin{equation}\label{Eq:GH}
\begin{gathered}
\xymatrix@R=0pt@C=10pt{ \Big(\Ga\, \due \times {\Sscript{\Sf{s}}} {\, \Sscript{\phi_0}} \,  \Ho\Big) \, \due \times {\Sscript{pr_2}} {\, \Sscript{\Sf{t}}} \,  \Ha \ar@{->}^-{}[r] & \Ga\, \due \times {\Sscript{\Sf{s}}} {\, \Sscript{\phi_0}} \,  \Ho   \\ \Big((g,u),h \Big)\ar@{|->}^-{}[r]  &   (g,u) \rhaction h:= (g\phi(h), s(h))}  \;\;  \xymatrix@R=0pt@C=10pt{ \Ga \, \due \times {\Sscript{\Sf{s}}} {\, \Sscript{\vartheta}} \,  \Big(\Ga\, \due \times {\Sscript{\Sf{s}}} {\, \Sscript{\phi_0}} \,  \Ho\Big) \ar@{->}^-{}[r] &  \Ga\, \due \times {\Sscript{\Sf{s}}} {\, \Sscript{\phi_0}} \,  \Ho  \\ \Big(g, (g',u)\Big) \ar@{|->}^-{}[r] & g \lgaction (g',u):= (gg',u)}
\end{gathered}
\end{equation}
where $\vartheta: \Ga\, \due \times {\Sscript{\Sf{s}}} {\, \Sscript{\phi_0}} \,  \Ho \to \Go$ sends $(g,u) \mapsto t(g)$, define a left $\cH$-set and right $\cG$-set structures on $\Ga\, \due \times {\Sscript{\Sf{s}}} {\, \Sscript{\phi_0}} \,  \Ho$, respectively. 
This in particular can be applied to any morphism of groupoids of the form $(X,X)\to (Y \times Y, Y)$, $(x,x') \mapsto \big((f(x),f(x)), \, f(x')\big)$, where $f: X\to Y$ is any map. On the other hand, if $f$ is a $G$-equivariant map, for  a group $G$ acting on both $X$ and $Y$, then the above construction applies, as well, to the morphism of action groupoids $(G\times X, X) \to (G\times Y, Y)$ sending $\big(  (g,x), x' \big) \mapsto \big( (g,f(x)) , f(x')\big)$. 
\end{example}

The notion of groupoids-bisets is intuitively introduced. These are left and right groupoid-sets (over different groupoids) with a certain compatibility condition. We refer to \cite[Section 3.1]{Kaoutit/Spinosa:2018} or \cite[Definition 2.7]{ElKaoutit:2017} for  further details and we limit ourselves to give some examples. 

\begin{example}\label{exam:bisets}
Let $\phiup: \cH \to \cG$ be a morphism of groupoids. Consider, as in Example \ref{exam: HG},  the associated triples $(\Ho\, \due \times {\Sscript{\phiup_0}} {\, \Sscript{\Sf{t}}} \,  \Ga, \varsigma, pr_{\Sscript{1}})$ and  
$(\Ga\, \due \times {\Sscript{\Sf{s}}} {\, \Sscript{\phiup_0}} \,  \Ho, pr_{\Sscript{2}}, \vartheta)$ with actions defined as in eqautions \eqref{Eq:HG} and \eqref{Eq:GH}.  Then these triples are an $(\cH, \cG)$-biset and a $(\cG,\cH)$-biset, respectively. 
\end{example}

\subsection{Orbit sets and stabilizers}\label{ssec:Orbits}

Next we recall the notion (see, for instance, \cite[page~11]{Jelenc:2013} and \cite{Kaoutit/Spinosa:2018}) of the orbit set attached to a right groupoid-set.
This notion is a generalization of the orbit set in the context of group-sets.  Here we use the (right) translation groupoid to introduce this set.

Given  a right $\cG$-set  $(X,\varsigma)$, the \emph{orbit set}  $X/\cG$ of $(X,\varsigma)$ is the orbit set of the (right) translation groupoid  $X \rJoin \cG$, that is, $X/\cG=\pi_{\Sscript{0}}(X \rJoin \cG)$, the set of all connected  component.  For an element \(x \in X\),  the \emph{equivalence class}  of $x$, called the \emph{the orbit of $x$}, is denoted by 
\[ \Orb_{ \scriptscriptstyle{X \,  \rtimes \, \mathcal{G}} } (x)=\Set{ y \in X | \begin{aligned}
& \exists \, (x,g) \in \left(X \rtimes \mathcal{G}\right)_{ \scriptscriptstyle{1} } \, \text{ such that } \\
& x = \mathsf{t}^{\rtimes }(x,g) \, \text{ and }\, 
 y = \mathsf{s}^{\rtimes }(x,g)=xg
\end{aligned} } = \LR{ xg \in X |\; \mathsf{t}(g)=\varsigma(x) }
:=[x] \,  \mathcal{G}.
\]

Let us denote by $\rep_{\Sscript{\cG}}(X)$ a \emph{representative set} of the orbit set $X/\cG$. For instance, if $\cG=(X\times G, X)$ is an  action groupoid as in Example \ref{exam:action}, then obviously the orbit set of this groupoid coincides with the classical set of orbits  $X/G$. Of course, the orbit set of an equivalence relation groupoid $(\cR, X)$ (see Example \ref{exam:X}) is precisely the quotient set $X/\cR$.  
The left orbits sets for  left groupoids sets are analogously  defined by using the left translation groupoids. We will use the following notations for left orbits sets: given a left $\cG$-set $(Z,\vartheta)$, its orbit set will be denoted by $\cG \backslash Z$ and the orbit of an element $z \in Z$ by $\Glcoset{z}$.

A right $\cG$-sets is said to be \emph{transitive} if it has a single orbit, that is, if $X/\cG$ is a singleton, or equivalently its associated right translation groupoid $X \rJoin \cG$ is transitive.

Let \((X, \varsigma)\) be a right \(\mathcal{G}\)-set with action \(\rho \colon X  {{}_{ \scriptscriptstyle{\varsigma} } {\times }}_{ \scriptscriptstyle{\mathsf{t}} }\, \mathcal{G}_{ \scriptscriptstyle{1} } \longrightarrow X\). 
The \emph{stabilizer} \(\Stabi_{ \scriptscriptstyle{\mathcal{G}}}\left({x}\right) \) of \(x\) in \(\mathcal{G}\) is the groupoid with arrows 
\[ 
\left( \Stabi_{ \scriptscriptstyle{\mathcal{G}}}\left({x}\right)  \right)_{ \scriptscriptstyle{1} }
= \Set{ g \in \mathcal{G}_{ \scriptscriptstyle{1} }  |  \varsigma\left({x}\right)= \mathsf{t}\left({g}\right)  \quad \text{and} \quad x g = x }
\]
and objects 
\[ \left( \Stabi_{ \scriptscriptstyle{\mathcal{G}}}\left({x}\right)  \right)_{ \scriptscriptstyle{0} } 
= \Set{ u \in \mathcal{G}_{ \scriptscriptstyle{0} }  |  \exists g \in \mathcal{G}_{ \scriptscriptstyle{1} }(\varsigma\left({x}\right), u) : xg=x   } \subseteq \mathscr{O}_{ \scriptscriptstyle{\varsigma\left({x}\right)} } .
\]

Therefore, we have that
\[ \left( \Stabi_{ \scriptscriptstyle{\mathcal{G}}}\left({x}\right)  \right)_{ \scriptscriptstyle{0} } 
= \Set{\varsigma\left({x}\right)}
\qquad and \qquad
\Stabi_{\Sscript{\cG}}(x)^{\Sscript{\varsigma(x))}}=(\Stab{x}{G})_{\Sscript{1}}
 \le \mathcal{G}^{ \scriptscriptstyle{\varsigma\left({x}\right)} }.
\]
Thus $\Stab{x}{G}$ is a subgroupoid with one object, namely, $\varsigma(x)$.  Furthermore, as a subgroup of loops, $\Stab{x}{G}$ is identified with the isotropy group  $(X\rtimes \cG)^{\Sscript{x}}$ of the rigth translation groupoid $X \rJoin \cG$ (see \cite[Lemma 2.10]{Kaoutit/Spinosa:2018}).

\section{Monoidal equivalences between groupoid-sets}\label{sec:II}
This section contains the material and machinery that we are going to employ in performing Burnside theory for groupoids. This mainly consists in deciphering the monoidal structures of the category of groupoid-sets, in understanding the fixed point subsets functors and in characterising the conjugacy relation between subgroupoid with only one object. Notions like Laplaza categories, Laplaza functors and so on are exposed, in greater generality,  in the Appendix \ref{ssec:Laplaza}.

\subsection{The monoidal structures of the category of (right) $\cG$-sets and the induction functors}\label{ssec:SMC}
Let $\cG$ be a groupoid. Recall  that the category of right $\cG$-sets is a symmetric monoidal category with respect to the disjoint union $\Uplus$. This structure is given as follows: given two right $\cG$-set $(X, \varsigma)$ and $(Y,\vartheta)$, we set  \((X,\varsigma) \Uplus (Y, \vartheta) = (Z, \mu)\) where \(Z = X \Uplus Y\) and  the map $\muup: Z \to \Go$ is defined by the conditions $\muup_{|\, X}= \varsigma$ and $\muup_{|\, Y}= \vartheta$. The action is defined by:
$$
Z \, {{}_{ \scriptscriptstyle{\muup} } {\times}_{ \scriptscriptstyle{\Sf{t}} }\,} \Ga \longrightarrow Z, \quad \Big(  (z,g) \longmapsto zg \Big),
$$
where $zg$ stands for $xg$ if $z=x \in X$ or $yg$ if $z=y \in Y$. The identity object of this monoidal structure is the right $\cG$-set with an empty underlying set whose action is, by convention, the empty one.

On the other hand, the fibered product $-\underset{\Sscript{\Go}}{\times}-$ induces another symmetric monoidal structure: see, for instance, \cite[\S 2]{Kaoutit/Kowalzig:14}. Explicitly, the tensor product of $(X, \varsigma)$ and $(Y,\vartheta)$  is defined as follows:
$$
(X, \varsigma)\, \underset{\Sscript{\Go}}{\times} \, (Y,\vartheta) \,=\,  \big( X \underset{\Sscript{\Go}}{\times} Y, \varsigma\vartheta\big),
$$
where $\varsigma\vartheta: X \underset{\Sscript{\Go}}{\times} Y \to \Go$ sends $(x,y) \mapsto \varsigma(x)=\vartheta(y)$. The action, when it is possible,  is given by $(x,y) g\,=\, (xg, yg)$. The identity object is the right $\cG$-set $(\Go, id_{\Sscript{\Go}})$ with action  given as in \eqref{Eq:Go}. Furthermore, up to isomorphisms, $(\Go, id_{\Sscript{\Go}})$ is the only dualizable object with respect to this monoidal category. 

The compatibility between the two monoidal structure is expressed by the subsequent. 
\begin{lemma}\label{lRightGSetCoprod}
Given a groupoid \(\Cat{G}\), let be \(\left(\left({X_i, \varsigma_i}\right) \right)_{i\,  \in\,  I}\) and \(\left(\left({Y_j, \vartheta_j}\right) \right)_{j \,  \in \,  J}\) two families of right \(\Cat{G}\)-sets.
Then there is an isomorphism of right \(\Cat{G}\)-sets
\[ \biguplus_{\begin{subarray}{c}
i\,  \in \,  I \\
j \,  \in \,  J \end{subarray} } \left({ \left({X_i, \varsigma_i}\right) \fpro{\Cat{G}_{ \scriptscriptstyle{0} }} \left({Y_j, \vartheta_j}\right) }\right) \,\, \cong\,\,  \left({\biguplus_{i \, \in \,  I} \left({X_i, \varsigma_i}\right) }\right) \, \fpro{\Cat{G}_{ \scriptscriptstyle{0} } }\,   \left({\biguplus_{j \, \in\,  J} \left({Y_j, \vartheta_j}\right) }\right) .
\]
\end{lemma}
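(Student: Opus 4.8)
The plan is to prove the isomorphism in Lemma~\ref{lRightGSetCoprod} by exhibiting an explicit bijection of underlying sets, then checking that it respects the structure maps to $\Go$ and is $\cG$-equivariant. The core observation is a purely set-theoretic distributivity: for any indexed families of sets $(A_i)_{i\in I}$ and $(B_j)_{j\in J}$ equipped with maps $\alpha_i\colon A_i\to \Go$ and $\beta_j\colon B_j\to\Go$, one has a canonical bijection
\[
\biguplus_{(i,j)\in I\times J}\Big(A_i\,{}_{\alpha_i}\!\times_{\beta_j}B_j\Big)\;\xrightarrow{\ \sim\ }\;\Big(\biguplus_{i\in I}A_i\Big)\,{}_{\alpha}\!\times_{\beta}\,\Big(\biguplus_{j\in J}B_j\Big),
\]
where $\alpha,\beta$ are the maps induced on the disjoint unions by the conditions $\alpha_{|A_i}=\alpha_i$ and $\beta_{|B_j}=\beta_j$. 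The map sends an element $(a,b)$ living in the $(i,j)$-summand (so $\alpha_i(a)=\beta_j(b)$ in $\Go$) to the pair $(a,b)$ viewed inside the product of the big disjoint unions; its two-sided inverse takes a pair $(a,b)$ with $\alpha(a)=\beta(b)$, reads off the unique $i$ with $a\in A_i$ and the unique $j$ with $b\in B_j$, and places $(a,b)$ in the $(i,j)$-summand. That these are mutually inverse is immediate from the universal property of coproducts (an element of a disjoint union belongs to exactly one summand) together with the universal property of the fibered product.

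Concretely, I would first fix notation, writing $X=\biguplus_{i}X_i$ with induced structure map $\varsigma\colon X\to\Go$ and $Y=\biguplus_j Y_j$ with $\vartheta\colon Y\to\Go$, and recalling from subsection~\ref{ssec:SMC} the description of both $\Uplus$ and $\fpro{\Go}$ on objects. Second, I would define the candidate map $\Phi$ on underlying sets as above and verify it is well defined: an element of the left-hand side lying in the $(i,j)$-summand is a pair $(x,y)$ with $\varsigma_i(x)=\vartheta_j(y)$, hence $\varsigma(x)=\vartheta(y)$, so $(x,y)\in X\fpro{\Go}Y$. Third, I would check that $\Phi$ intertwines the structure maps: on the left-hand summand indexed by $(i,j)$ the structure map sends $(x,y)\mapsto\varsigma_i(x)=\vartheta_j(y)$, and on the right-hand side the structure map $\varsigma\vartheta$ sends $(x,y)\mapsto\varsigma(x)=\vartheta(y)$; these agree since $\varsigma$ restricts to $\varsigma_i$ on $X_i$ and $\vartheta$ restricts to $\vartheta_j$ on $Y_j$. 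Fourth, I would verify $\cG$-equivariance: whenever the action is defined, an arrow $g$ with $\mathsf{t}(g)=\varsigma_i(x)=\vartheta_j(y)$ acts on the left-hand side by $(x,y)g=(xg,yg)$, still in the $(i,j)$-summand because the $X_i$ and $Y_j$ are $\cG$-stable, and this matches the action $(x,y)g=(xg,yg)$ on the right-hand fibered product; the analogous check for the inverse map completes equivariance.

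There is no serious obstacle here: the statement is the compatibility (distributivity of the tensor product $\fpro{\Go}$ over the coproduct $\Uplus$) that makes $\rGsets$ a Laplaza category, and the proof is entirely formal once the bijection is written down. The only point requiring a modicum of care is bookkeeping with the fibered products over $\Go$ inside disjoint unions --- one must use that in the disjoint union $X=\biguplus_i X_i$ each element has a well-defined ``summand index'', so that the pairing condition $\varsigma(x)=\vartheta(y)$ on $X\fpro{\Go}Y$ automatically forces $\varsigma_i(x)=\vartheta_j(y)$ for the unique $i,j$ with $x\in X_i$, $y\in Y_j$, and conversely. I would also remark that naturality in the two families, though not explicitly claimed in the statement, follows by the same componentwise inspection, so the isomorphism is in fact canonical.
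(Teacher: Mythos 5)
Your proof is correct; the explicit bijection, the check on structure maps, and the equivariance verification are exactly the ``direct verification'' that the paper omits (its proof of this lemma reads only ``It is omitted, since it is a direct verification''). Nothing is missing, and your closing remark on naturality is a harmless bonus consistent with how the lemma is later used in establishing the Laplaza structure.
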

\begin{proof}
It is omitted, since it is a direct verification. 
\end{proof}

Let \(\phi \colon \Cat{H} \longrightarrow \Cat{G}\) be a morphism of groupoid.
We define the induced functor, referred to as the \emph{induction functor}: 
\[ \phi^\ast  \colon \Rset{\cG} \longrightarrow \Rset{\cH} ,
\]
which sends the right \(\Cat{G}\)-set \(\left({X, \varsigma}\right)\) to the right \(\Cat{H}\)-set \(\left({X  \, {{}_{ \scriptscriptstyle{\varsigma} } {\times}_{ \scriptscriptstyle{\phi_0} }\,}  \Cat{H}_{ \scriptscriptstyle{0} } , \pr_{ \scriptscriptstyle{2} }}\right)\) with action
\[ 
\begin{aligned}
& \left({X  \, {{}_{ \scriptscriptstyle{\varsigma} } {\times}_{ \scriptscriptstyle{\phi_0} }\,}  \Cat{H}_{ \scriptscriptstyle{0} }}\right)  \, {{}_{ \scriptscriptstyle{\pr_{ \scriptscriptstyle{2} } } } {\times}_{ \scriptscriptstyle{\mathsf{t} } }\,} \Cat{H}_{ \scriptscriptstyle{1} }  \longrightarrow X  \, {{}_{ \scriptscriptstyle{\varsigma} } {\times}_{ \scriptscriptstyle{\phi_0} }\,}  \Cat{H}_{ \scriptscriptstyle{0} }, \qquad 
& \Big((x,a), h) \longrightarrow (x \phi_{\Sscript{1}}(h), \mathsf{s}(h))\Big) .
\end{aligned}
\]
Given a morphism of right \(\Cat{G}\)-set \(f \colon \left({X, \varsigma}\right) \longrightarrow \left({Y, \vartheta}\right)\), we define the morphism
\[ 
\phi^\ast(f) \colon \phi^\ast\left({X, \varsigma}\right) \longrightarrow \phi^\ast\left({Y, \vartheta}\right) 
\]
as the morphism
\[ \begin{aligned}{f \times \Cat{H}_{ \scriptscriptstyle{0} } }  \colon & {\left({X  \, {{}_{ \scriptscriptstyle{\varsigma} } {\times }_{ \scriptscriptstyle{\phi_{ \scriptscriptstyle{0} }  } }\,} \Cat{H}_{ \scriptscriptstyle{0} } , \pr_{ \scriptscriptstyle{2} } }\right) } \longrightarrow {\left({Y  \, {{}_{ \scriptscriptstyle{\theta} } {\times}_{ \scriptscriptstyle{\phi_{ \scriptscriptstyle{0} } } }\,}\Cat{H}_{ \scriptscriptstyle{0} } , \pr_{ \scriptscriptstyle{2} } }\right) }, \qquad & \Big({\left({x, a}\right) }  \longrightarrow {\left({f(x), a}\right) } \Big).\end{aligned} 
\]
For instance, we have that 
$\phi^{\ast}(\Go, id_{\Sscript{\Go}})=\left({\Go  \, {{}_{ \scriptscriptstyle{id} } {\times}_{ \scriptscriptstyle{\phi_0} }\,}  \Cat{H}_{ \scriptscriptstyle{0} } , \pr_{ \scriptscriptstyle{2} }}\right)$. The following is a well known property of the induction functor (see \cite{Kaoutit/Kowalzig:14}).
However, for the sake of completeness and for the convenience of the reader, we give here an elementary proof.

\begin{proposition}\label{pMorGrpdIndFuRSets}
The functor  \(\phi^\ast\) is monoidal with respect to both \(\Uplus\) and the fibered product $-\underset{\Go}{\times}-$.
\end{proposition}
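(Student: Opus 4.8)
The plan is to establish the two monoidal compatibilities separately, in each case by producing an explicit natural isomorphism and checking the coherence axioms. Since $\phi^\ast$ is defined on objects by a fibered product with $\Cat{H}_{ \scriptscriptstyle{0} }$ over $\Cat{G}_{ \scriptscriptstyle{0} }$ via $\varsigma$ and $\phi_{ \scriptscriptstyle{0} }$, and both monoidal structures on $\Rset{\cG}$ and $\Rset{\cH}$ are built from disjoint unions and fibered products of sets, the whole statement should reduce to elementary set-theoretic bijections that one then checks are $\cH$-equivariant.

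First I would treat the coproduct $\Uplus$. For right $\cG$-sets $(X,\varsigma)$ and $(Y,\vartheta)$, the set underlying $\phi^\ast\big((X,\varsigma)\Uplus(Y,\vartheta)\big)$ is $(X\Uplus Y)\, {{}_{ \scriptscriptstyle{\muup} } {\times}_{ \scriptscriptstyle{\phi_0} }\,}\Cat{H}_{ \scriptscriptstyle{0} }$, and since fibering with $\Cat{H}_{ \scriptscriptstyle{0} }$ distributes over the disjoint union in the first coordinate, this is canonically $\big(X\, {{}_{ \scriptscriptstyle{\varsigma} } {\times}_{ \scriptscriptstyle{\phi_0} }\,}\Cat{H}_{ \scriptscriptstyle{0} }\big)\Uplus\big(Y\, {{}_{ \scriptscriptstyle{\vartheta} } {\times}_{ \scriptscriptstyle{\phi_0} }\,}\Cat{H}_{ \scriptscriptstyle{0} }\big)$, which is the underlying set of $\phi^\ast(X,\varsigma)\Uplus\phi^\ast(Y,\vartheta)$. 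I would check that this identity map is $\cH$-equivariant (immediate, as the $\cH$-action acts by $h$ on the $\Cat{H}_{ \scriptscriptstyle{0} }$-coordinate and through $\phi$ on the first coordinate, and this is defined componentwise on each summand), that it respects the structure maps $\pr_{ \scriptscriptstyle{2} }$, that it is natural in both arguments, and that $\phi^\ast$ sends the monoidal unit (empty $\cG$-set) to the monoidal unit (empty $\cH$-set); the associativity and unit coherence hexagons/triangles are then automatic since everything is literally an identification of subsets of a Cartesian product. For a family this is just Lemma \ref{lRightGSetCoprod} applied after the distributivity, so one could also phrase it via that lemma.

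Next I would treat the fibered product $-\underset{\Cat{G}_{ \scriptscriptstyle{0} }}{\times}-$. Here the underlying set of $\phi^\ast\big((X,\varsigma)\underset{\Cat{G}_{ \scriptscriptstyle{0} }}{\times}(Y,\vartheta)\big)$ is $\big(X\underset{\Cat{G}_{ \scriptscriptstyle{0} }}{\times}Y\big)\, {{}_{ \scriptscriptstyle{\varsigma\vartheta} } {\times}_{ \scriptscriptstyle{\phi_0} }\,}\Cat{H}_{ \scriptscriptstyle{0} }$, i.e.\ triples $(x,y,a)$ with $\varsigma(x)=\vartheta(y)=\phi_{ \scriptscriptstyle{0} }(a)$, whereas the underlying set of $\phi^\ast(X,\varsigma)\underset{\Cat{H}_{ \scriptscriptstyle{0} }}{\times}\phi^\ast(Y,\vartheta)$ is pairs $\big((x,a),(y,b)\big)$ with $a=b$ in $\Cat{H}_{ \scriptscriptstyle{0} }$, $\varsigma(x)=\phi_{ \scriptscriptstyle{0} }(a)$, $\vartheta(y)=\phi_{ \scriptscriptstyle{0} }(b)$; the map $(x,y,a)\mapsto\big((x,a),(y,a)\big)$ is an obvious bijection. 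Again I would verify $\cH$-equivariance (the action on the left is $(x,y,a)h=(x\phi_{ \scriptscriptstyle{1} }(h),y\phi_{ \scriptscriptstyle{1} }(h),\mathsf{s}(h))$, which matches the diagonal action on the right after the identification), compatibility with structure maps $\pr_{ \scriptscriptstyle{2} }$, naturality in both variables, and that the unit $(\Cat{G}_{ \scriptscriptstyle{0} },\id)$ is sent to something isomorphic to the unit $(\Cat{H}_{ \scriptscriptstyle{0} },\id)$ — indeed $\phi^\ast(\Cat{G}_{ \scriptscriptstyle{0} },\id)=\big(\Cat{G}_{ \scriptscriptstyle{0} }\, {{}_{ \scriptscriptstyle{id} } {\times}_{ \scriptscriptstyle{\phi_0} }\,}\Cat{H}_{ \scriptscriptstyle{0} },\pr_{ \scriptscriptstyle{2} }\big)\cong(\Cat{H}_{ \scriptscriptstyle{0} },\id)$ via $(\phi_{ \scriptscriptstyle{0} }(a),a)\mapsto a$, which one checks is $\cH$-equivariant. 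Finally the coherence conditions (pentagon-type compatibility of the associator, triangle for the unitors, and compatibility with the symmetry) hold because the isomorphisms in question are all restrictions of the canonical set-level associativity/commutativity bijections for Cartesian products, under which the coherence diagrams commute on the nose.

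I do not expect a genuine obstacle here — the content is that $\phi^\ast$ is manufactured entirely out of limits (fibered products) and that these commute with the limits and colimits defining the two monoidal products, so all the isomorphisms are forced and all coherence is inherited. The only mildly delicate point is bookkeeping: one must be careful that the structure maps $\pr_{ \scriptscriptstyle{2} }$ (to $\Cat{H}_{ \scriptscriptstyle{0} }$) and the $\cH$-actions (which act on the first coordinate through $\phi_{ \scriptscriptstyle{1} }$ and on the $\Cat{H}_{ \scriptscriptstyle{0} }$-coordinate by $\mathsf{s}$) really do transport correctly across the identifications, so I would write out equivariance explicitly in each of the two cases and leave the coherence axioms to the reader as routine diagram chases, possibly citing Appendix \ref{ssec:Laplaza} for the formal framework.
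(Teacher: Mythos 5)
Your proposal is correct and follows essentially the same route as the paper's proof: both verify strong monoidality for $\Uplus$ via the distributivity of the fibered product with $\Cat{H}_{\scriptscriptstyle{0}}$ over disjoint unions, and for $-\underset{\Go}{\times}-$ via the bijection $(x,y,a)\mapsto\big((x,a),(y,a)\big)$, together with the unit identifications $\phi^\ast(\emptyset,\emptyset)=(\emptyset,\emptyset)$ and $\phi^\ast(\Go,\Id)\cong(\Ho,\Id)$ via $\pr_{\scriptscriptstyle{2}}$. The paper likewise relegates equivariance, naturality and coherence to routine verification, so there is nothing to add.
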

\begin{proof}
The fact that \(\phi^\ast\) is well defined is routine computation and we leave it to the reader. 
Let us check  that \(\phi^\ast\) is monoidal with respect to \(\Uplus\).
Given right sets \(\left({X, \varsigma}\right)\) and \(\left({Y, \vartheta}\right)\) we have the natural isomorphisms
\begin{multline*}
 \quad \phi^\ast\left({ \left({X, \varsigma}\right) \Uplus \left({Y, \vartheta}\right) }\right) = \phi^\ast\left({X \Uplus Y, \varsigma \Uplus \vartheta}\right) 
= \left({ \left({X \Uplus Y}\right)  \, {{}_{ \scriptscriptstyle{\varsigma \uplus \vartheta} } {\times}_{ \scriptscriptstyle{\phi_{ \scriptscriptstyle{0} } } }\,} \Cat{H}_{ \scriptscriptstyle{0} } , \pr_{ \scriptscriptstyle{2} } }\right) \\ \cong \left({X  \, {{}_{ \scriptscriptstyle{\varsigma} } {\times}_{ \scriptscriptstyle{\phi_{ \scriptscriptstyle{0} } } }\,} \Cat{H}_{ \scriptscriptstyle{0} }  , \pr_{ \scriptscriptstyle{2} } }\right) \Uplus \left({Y  \, {{}_{ \scriptscriptstyle{\vartheta} } {\times}_{ \scriptscriptstyle{\phi_{ \scriptscriptstyle{0} } } }\,}  \Cat{H}_{ \scriptscriptstyle{0} } , \pr_{ \scriptscriptstyle{0} } }\right) 
= \phi^\ast\left({X, \varsigma}\right) \Uplus \phi^\ast\left({Y, \vartheta}\right) 
\end{multline*}
and, we also have that
\[ \phi^\ast\left({\emptyset, \emptyset}\right) = \left({ \emptyset  \, {{}_{ \scriptscriptstyle{\emptyset} } {\times}_{ \scriptscriptstyle{\phi_{ \scriptscriptstyle{0} }  } }\,} \Cat{H}_{ \scriptscriptstyle{0} }, \pr_{ \scriptscriptstyle{2} } }\right) 
= \left({\emptyset, \emptyset}\right) .
\]
Now we have to prove that \(\phi^\ast\) is monoidal with respect to the fibered product.
Given right \(\mathcal{G}\)-sets \(\left({X, \varsigma}\right)\) and \(\left({Y, \vartheta}\right)\) we have the natural isomorphisms
\begin{multline*}
 \quad \phi^\ast\left({ \left({X, \varsigma}\right) \fpro{\Cat{G}_{ \scriptscriptstyle{0} }} \left({Y, \vartheta}\right) }\right) = \phi^\ast\left({ X  \, {{}_{ \scriptscriptstyle{\varsigma} } {\times}_{ \scriptscriptstyle{\vartheta} }\,} Y, \varsigma \vartheta}\right)
= \left({\left({X  \, {{}_{ \scriptscriptstyle{\varsigma} } {\times}_{ \scriptscriptstyle{\vartheta} }\,} Y }\right)  \, {{}_{ \scriptscriptstyle{\varsigma \vartheta} } {\times}_{ \scriptscriptstyle{\phi_{ \scriptscriptstyle{0} } } }\,} \Cat{H}_{ \scriptscriptstyle{0} } , \pr_{ \scriptscriptstyle{2} } }\right)  \\
\cong \left({ \left({X  \, {{}_{ \scriptscriptstyle{\varsigma} } {\times}_{ \scriptscriptstyle{\phi_{ \scriptscriptstyle{0} } } }\,} \Cat{H}_{ \scriptscriptstyle{0} } }\right)  \, {{}_{ \scriptscriptstyle{\pr_{ \scriptscriptstyle{2} } } } {\times}_{ \scriptscriptstyle{\pr_{ \scriptscriptstyle{2 } } } }\,} \left({Y  \, {{}_{ \scriptscriptstyle{\vartheta} } {\times}_{ \scriptscriptstyle{\phi_{ \scriptscriptstyle{0} } } }\,}  \Cat{H}_{ \scriptscriptstyle{0} }  }\right), \left({\pr_{ \scriptscriptstyle{2} } }\right) \left({\pr_{ \scriptscriptstyle{2 } } }\right) }\right) 
 = \left({X  \, {{}_{ \scriptscriptstyle{ \varsigma} } {\times}_{ \scriptscriptstyle{\phi_{ \scriptscriptstyle{0} } } }\,} \Cat{H}_{ \scriptscriptstyle{0} }, \pr_{ \scriptscriptstyle{2} } }\right) \fpro{\Cat{H}_{ \scriptscriptstyle{0} } } \left({Y  \, {{}_{ \scriptscriptstyle{\vartheta} } {\times}_{ \scriptscriptstyle{\phi_{ \scriptscriptstyle{0} } } }\,}, \pr_{ \scriptscriptstyle{2} } }\right) 
= \phi^\ast\left({X, \varsigma}\right) \fpro{\Cat{H}_{ \scriptscriptstyle{0} }} \phi^\ast\left({Y, \vartheta}\right),
\end{multline*}
because an element of
\( \left({X  \, {{}_{ \scriptscriptstyle{\varsigma} } {\times}_{ \scriptscriptstyle{\phi_{ \scriptscriptstyle{0} } } }\,} \Cat{H}_{ \scriptscriptstyle{0} } }\right)  \, {{}_{ \scriptscriptstyle{\pr_{ \scriptscriptstyle{2} } } } {\times}_{ \scriptscriptstyle{\pr_{ \scriptscriptstyle{2 } } } }\,} \left({Y  \, {{}_{ \scriptscriptstyle{\vartheta} } {\times}_{ \scriptscriptstyle{\phi_{ \scriptscriptstyle{0} } } }\,}  \Cat{H}_{ \scriptscriptstyle{0} }  }\right)
\)
is of the kind \(\left({x,a, y, a}\right)\) with \(x \in X\), \(y \in Y\), \(a \in \Cat{H}_{ \scriptscriptstyle{0} }\). 
Therefore, we have a natural isomorphism $\phi^\ast\left({ \left({X, \varsigma}\right) \fpro{\Cat{G}_{ \scriptscriptstyle{0} }} \left({Y, \vartheta}\right) }\right) \cong  \phi^\ast\left({X, \varsigma}\right) \fpro{\Cat{H}_{ \scriptscriptstyle{0} }} \phi^\ast\left({Y, \vartheta}\right)$, for every pair of right $\cG$-sets $(X, \varsigma)$ and $(Y,\vartheta)$. On the other hand, we have that 
\( \phi^\ast\left({\Cat{G}_{ \scriptscriptstyle{0} }, \Id_{\Cat{G}_{ \scriptscriptstyle{0} } } }\right) = \left({\Cat{G}_{ \scriptscriptstyle{0} } \,  \, {{}_{ \scriptscriptstyle{\Id_{\Cat{G}_{ \scriptscriptstyle{0} } } } } {\times}_{ \scriptscriptstyle{\phi_{ \scriptscriptstyle{0} }} }\,} \Cat{H}_{ \scriptscriptstyle{0} } , \pr_{ \scriptscriptstyle{2} } }\right) 
\cong \left({\Cat{H}_{ \scriptscriptstyle{0} } , \Id_{\Cat{H}_{ \scriptscriptstyle{0} } } }\right),
\)
since the map $ pr_{\Sscript{2}}: \Cat{G}_{ \scriptscriptstyle{0} } \,  \, {{}_{ \scriptscriptstyle{Id } } {\times}_{ \scriptscriptstyle{\phi_{ \scriptscriptstyle{0} }} }\,} \Cat{H}_{ \scriptscriptstyle{0} }  \to \Ho$ establishes an isomorphism of right $\cH$-sets.
\end{proof}

In the terminology of the Appendix \ref{ssec:Laplaza}, the induced functor $\phi^*$ is then a Laplaza functor.

\begin{propositiondef}\label{dpDefIndTra}
Given groupoid \(\Cat{H}\) and \(\Cat{G}\), let  \(\phi, \psi \colon \Cat{H} \longrightarrow \Cat{G}\) be two morphisms of groupoids and consider a natural transformation \(\alpha \colon \phi \longrightarrow \psi\).
We define an \textit{induced natural transformation}
\[ 
\alpha^\ast \colon \phi^\ast \longrightarrow \psi^\ast
\]
between the induced functors
\[ \phi^\ast, \, \psi^\ast \colon \Rset{\cG} \longrightarrow \Rset{\cH} 
\]
as follows: for each right \(\Cat{G}\)-set \((X, \varsigma)\) and for each \((x,a) \in \phi^\ast (X, \varsigma)\) we set
\[  
\alpha^\ast_{(X, \, \varsigma)}: \phi^*(X,\varsigma) \longrightarrow  \psi^*(X,\varsigma),\quad  \Big((x,a) \longmapsto \alpha^\ast_{(X, \, \varsigma)} (x,a) = \left({ x \cdot (\alpha a)^{-1} , a}\right)\Big).
\]
Moreover, \(\alpha^\ast\) is a Laplaza transformation (see Appendix \ref{ssec:Laplaza} for the pertinent definition).
\end{propositiondef}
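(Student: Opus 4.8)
The plan is to check, in turn, the four things the statement packs together: that each $\alpha^\ast_{(X,\varsigma)}$ actually lands in $\psi^\ast(X,\varsigma)$; that it is a morphism of right $\cH$-sets; that the family $\big(\alpha^\ast_{(X,\varsigma)}\big)_{(X,\varsigma)}$ is natural in $(X,\varsigma)$; and that $\alpha^\ast$ is compatible with both monoidal structures and their unit objects in the required way, i.e.\ is a Laplaza transformation in the sense of Appendix~\ref{ssec:Laplaza}. The single structural input used throughout is that, since $\cG$ is a groupoid, the natural transformation $\alpha$ amounts to a family of arrows $\alpha a \in \cG\big(\phi_0(a),\psi_0(a)\big)$, $a\in\Ho$, and that naturality along an arrow $h\colon b\to a$ of $\cH$ reads $(\alpha a)\,\phi_1(h)=\psi_1(h)\,(\alpha b)$, or equivalently, after inverting, $\phi_1(h)\,(\alpha b)^{-1}=(\alpha a)^{-1}\,\psi_1(h)$.

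First I would observe that an element $(x,a)$ of $\phi^\ast(X,\varsigma)$ satisfies $\varsigma(x)=\phi_0(a)=\mathsf{s}(\alpha a)=\mathsf{t}\big((\alpha a)^{-1}\big)$, so $x\cdot(\alpha a)^{-1}$ is defined and has structure map $\mathsf{s}\big((\alpha a)^{-1}\big)=\mathsf{t}(\alpha a)=\psi_0(a)$; hence $\big(x\cdot(\alpha a)^{-1},a\big)\in\psi^\ast(X,\varsigma)$, which is well-definedness. For equivariance, compatibility with the structure maps is immediate since these are second projections and $\alpha^\ast$ leaves the second coordinate untouched. For the actions I would fix $(x,a)\in\phi^\ast(X,\varsigma)$ and $h\in\Ha$ with $\mathsf{t}(h)=a$, put $b:=\mathsf{s}(h)$, and compute both $\alpha^\ast_{(X,\varsigma)}\big((x,a)\cdot h\big)$ (where the first action is that of $\phi^\ast$, hence uses $\phi_1$) and $\alpha^\ast_{(X,\varsigma)}(x,a)\cdot h$ (where the action is that of $\psi^\ast$, hence uses $\psi_1$). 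By associativity of the $\cG$-action on $X$ these reduce, respectively, to $x\cdot\big(\phi_1(h)(\alpha b)^{-1}\big)$ and $x\cdot\big((\alpha a)^{-1}\psi_1(h)\big)$, and the two agree by the inverted naturality identity recalled above; the only care needed is to track sources and targets so that every composite in $\cG$ and every instance of the action is legitimate.

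Next, given a morphism of right $\cG$-sets $f\colon(X,\varsigma)\to(Y,\vartheta)$, both routes around the naturality square send $(x,a)$ to $\big(f(x)\cdot(\alpha a)^{-1},a\big)$: one route uses that $\phi^\ast(f)$ and $\psi^\ast(f)$ are $f\times\Ho$ and the other uses that $f$, being $\cG$-equivariant, commutes with $-\cdot(\alpha a)^{-1}$; this gives naturality of $\alpha^\ast$. For the Laplaza property I would unwind the explicit structure isomorphisms of $\phi^\ast$ and $\psi^\ast$ produced in the proof of Proposition~\ref{pMorGrpdIndFuRSets}. With respect to $\Uplus$ these isomorphisms are the obvious reindexing of a disjoint union and $\alpha^\ast$ is given by the same formula on each summand, so the coherence squares commute on the nose, and on the unit $(\emptyset,\emptyset)$ the empty map is forced. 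With respect to the fibered product, under the identification $(x,a,y,a)\leftrightarrow\big((x,y),a\big)$ both $\alpha^\ast_{(X,\varsigma)}\fpro{\Ho}\alpha^\ast_{(Y,\vartheta)}$ and the component of $\alpha^\ast$ at $(X,\varsigma)\fpro{\Go}(Y,\vartheta)$ send this element to $\big((x(\alpha a)^{-1},y(\alpha a)^{-1}),a\big)$, because the $\cG$-action on a fibered product is diagonal; and on the monoidal unit $(\Go,\Id)$ the computation $\phi_0(a)\cdot(\alpha a)^{-1}=\mathsf{s}\big((\alpha a)^{-1}\big)=\psi_0(a)$ shows that $\alpha^\ast_{(\Go,\Id)}$ corresponds to the identity under the isomorphisms $\phi^\ast(\Go,\Id)\cong(\Ho,\Id)\cong\psi^\ast(\Go,\Id)$.

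The only step where anything beyond bookkeeping happens is the equivariance computation: one has to place $\alpha a$ versus $(\alpha a)^{-1}$ correctly and invoke naturality in its inverted form, all while respecting composability in $\cG$ and in the $\cG$-action. Once that is organised the rest is direct inspection, which is presumably why the result is stated as a Proposition and Definition; I would accordingly present the equivariance verification in detail and dispatch the remaining checks quickly.
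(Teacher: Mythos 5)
Your proposal is correct and follows essentially the same route as the paper's proof: the same well-definedness computation via $\varsigma(x)=\phi_0(a)=\mathsf{s}(\alpha a)$, the same equivariance argument via the (inverted) naturality square $\phi_1(h)(\alpha b)^{-1}=(\alpha a)^{-1}\psi_1(h)$, and the same naturality check using $\cG$-equivariance of $f$. The only difference is that you spell out the Laplaza compatibility with $\Uplus$, the fibered product and their units, which the paper declares direct and leaves to the reader; your verification of it is accurate.
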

\begin{proof}
Given a right \(\Cat{G}\)-set \((X, \varsigma)\) and \((x,a) \in \phi^\ast(X, \varsigma)\), the situation is as follows:
\[ \xymatrix{ \varsigma(x) = \phi_{ \scriptscriptstyle{0} }(a)  \ar[rr]^{\alpha a} & & \psi_{ \scriptscriptstyle{0} }(a). }
\]
We have \(\varsigma(x) = \phi_{ \scriptscriptstyle{0} }(a) = \mathsf{s}(\alpha a) = \mathsf{t}\left({(\alpha a)^{-1}}\right)\) thus we can write \(x \cdot (\alpha a)^{-1}\) and we have
\[ \varsigma\left({ x \cdot (\alpha a)^{-1}}\right) = \mathsf{s}\left({(\alpha a)^{-1} }\right)
= \mathsf{t}(\alpha a) 
= \psi_{ \scriptscriptstyle{0} }(a).
\]
Therefore \(\alpha^\ast(X, \varsigma)\) is well defined.
We have to check that \(\phi^\ast(X, \varsigma)\) is a morphism of right \(\Cat{H}\)-sets.
The condition on the structure map is obviously satisfied.
Regarding the condition on the actions, let be \((x,a) \in \phi^\ast(X, \varsigma)\) and \(h \in \Cat{H}_{ \scriptscriptstyle{1} }\) such that \(a = \pr_{ \scriptscriptstyle{2} }(x,a) = \mathsf{t}(h)\): the arrow \(h \colon b \longrightarrow a\) is a morphism in \(\Cat{H}\) thus the following diagram is commutative:
\[ \xymatrix{
\phi_{ \scriptscriptstyle{0} }(b) \ar[rr]^{\alpha(b)} \ar[d]_{\phi_{ \scriptscriptstyle{1} }(h)} & & \psi_{ \scriptscriptstyle{0} }(b)  \ar[d]^{\psi_{ \scriptscriptstyle{1} }(h)}  \\
\phi_{ \scriptscriptstyle{0} }(a) \ar[rr]^{\alpha(a)} & & \psi_{ \scriptscriptstyle{0} }(a) .
}
\]
As a consequence we can compute
\begin{multline*}
 \quad \alpha^\ast_{(X,\,  \varsigma)}((x, a) h) = \alpha^\ast_{(X, \,\varsigma)} \left({x \phi_{ \scriptscriptstyle{1} }(h), b}\right) 
= \left({x \cdot \phi_{ \scriptscriptstyle{1} }(h) \cdot (\alpha b )^{-1}, b}\right) 
= \left({x \cdot (\alpha b)^{-1} \cdot \psi_{ \scriptscriptstyle{1} }(h), b }\right)  \\
= \left({x \cdot (\alpha b)^{-1}, a}\right)  \cdot h  
= \left({\alpha^\ast_{(X,\,  \varsigma)}(x,a)}\right) \cdot h,
\end{multline*}
which show that $\alpha^\ast_{(X,\, \varsigma)}$ is an $\cH$-equivariant map.

We have to check that \(\alpha^\ast\) is natural that is, given a morphism of right \(\Cat{G}\)-sets \(f \colon (X, \varsigma) \longrightarrow (Y,\vartheta)\), that the following diagram is commutative:
\[ \xymatrix{
\phi^\ast(X, \varsigma) = \left({X  \, {{}_{ \scriptscriptstyle{\varsigma} } {\times}_{ \scriptscriptstyle{\phi_{ \scriptscriptstyle{0} } } }\,} \Cat{H}_{ \scriptscriptstyle{0} }, \pr_{ \scriptscriptstyle{2} } }\right)  \ar[rrr]^{\alpha^\ast(X, \varsigma)} \ar[d]_{\phi^\ast(f) = f \times \Id_{\Cat{H}_{ \scriptscriptstyle{0} } } } &&& \psi^\ast(X, \varsigma) = \left({X  \, {{}_{ \scriptscriptstyle{\varsigma} } {\times}_{ \scriptscriptstyle{\psi_{ \scriptscriptstyle{0} } } }\,} \Cat{H}_{ \scriptscriptstyle{0} }, \pr_{ \scriptscriptstyle{2} } }\right)  \ar[d]^{\psi^\ast(g) = g \times \Id_{\Cat{H}_{ \scriptscriptstyle{0} } } }  \\
\phi^\ast(Y, \vartheta) = \left({Y  \, {{}_{ \scriptscriptstyle{\vartheta} } {\times}_{ \scriptscriptstyle{\phi_{ \scriptscriptstyle{0} } } }\,} \Cat{H}_{ \scriptscriptstyle{0} }, \pr_{ \scriptscriptstyle{2} } }\right) \ar[rrr]^{\alpha^\ast(Y, \vartheta)} &&& \psi^\ast(Y, \vartheta) = \left({Y  \, {{}_{ \scriptscriptstyle{\vartheta} } {\times}_{ \scriptscriptstyle{\psi_{ \scriptscriptstyle{0} } } }\,} \Cat{H}_{ \scriptscriptstyle{0} }, \pr_{ \scriptscriptstyle{2} } }\right).
}
\]
This follows from the following computation: Let be \((x, a) \in \phi^\ast(X, \varsigma)\), we have
\[ \begin{aligned}
\left( \psi^\ast f \right) \left({\alpha^\ast(X, \varsigma)}\right)(x, a) & = \left({\psi^\ast f}\right) \left({x \cdot (\alpha a)^{-1}, a}\right) 
= \left({f\left({x \cdot (\alpha a)^{-1}}\right), a}\right) 
= \left({f(x)(\alpha a)^{-1}, a}\right) \\
&= \alpha^\ast(Y, \vartheta)\left({f(x), a}\right) 
= \alpha^\ast(Y, \vartheta)\left({\phi^\ast f}\right)(x,a).
\end{aligned}
\]
The fact that \(\alpha^\ast\) is a Laplaza transformation, is directly proved and left to the reader. 
\end{proof}

\begin{proposition}\label{pIndFuIsomCompUn}
Given groupoids \(\Cat{K}\), \(\Cat{H}\) and \(\Cat{G}\), let's consider the following homomorphism of groupoids:
\[ 
\xymatrix{\Cat{K}  \ar[rr]^{\psi} && \Cat{H} \ar[rr]^{\phi} && \Cat{G} }.
\]
Then the following diagrams commute up to a natural isomorphism
\[ \xymatrix{
\Rset{G}  \ar[d]_{\phi^\ast} \ar@/^1.4pc/@{{}{ }{}}[dr]_{\cong}  \ar[rr]^{\left({\phi \psi}\right)^\ast} &  & \Rset{K}   \\
\Rset{H}   \ar[rru]_{\psi^\ast}  & & }
\qquad \text{and} \qquad
\xymatrix{\Rset{G} \ar@/_1.5pc/[rr]_{\Id_{\Rset{G} } } \ar@/^1.5pc/[rr]^{\left({\Id_{\Cat{G} } }\right)^\ast} & \cong  & \Rset{G} }
\]
That is, there are Laplaza natural isomorphisms
\[ \gamma \colon \left({\varphi \psi}\right)^\ast \longrightarrow \psi^\ast\phi^\ast
\qquad \text{and} \qquad
\beta \colon \left({\Id_{\Cat{G} } }\right)^\ast  \longrightarrow \Id_{\Rset{G} } .
\]
\end{proposition}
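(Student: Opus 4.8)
The plan is to construct the two natural isomorphisms $\gamma$ and $\beta$ explicitly from the definition of the induction functor and then check the required properties. For $\gamma$, start from the observation that, given a right $\cG$-set $(X,\varsigma)$, unwinding the definitions gives
\[
\left({\phi\psi}\right)^\ast(X,\varsigma) = \left({X \, {{}_{ \scriptscriptstyle{\varsigma} } {\times}_{ \scriptscriptstyle{\phi_0\psi_0} }\,} \Cat{K}_{\scriptscriptstyle 0}, \pr_{\scriptscriptstyle 2}}\right),
\qquad
\psi^\ast\phi^\ast(X,\varsigma) = \left({\left({X \, {{}_{ \scriptscriptstyle{\varsigma} } {\times}_{ \scriptscriptstyle{\phi_0} }\,} \Cat{H}_{\scriptscriptstyle 0}}\right) \, {{}_{ \scriptscriptstyle{\pr_2} } {\times}_{ \scriptscriptstyle{\psi_0} }\,} \Cat{K}_{\scriptscriptstyle 0}, \pr_{\scriptscriptstyle 2}}\right).
\]
An element of the latter is a triple $(x,b,c)$ with $x\in X$, $b\in\Cat{H}_{\scriptscriptstyle 0}$, $c\in\Cat{K}_{\scriptscriptstyle 0}$ subject to $\varsigma(x)=\phi_0(b)$ and $b=\psi_0(c)$; the second condition forces $b=\psi_0(c)$, so the assignment $(x,b,c)\mapsto(x,c)$ is a bijection onto $\left({\phi\psi}\right)^\ast(X,\varsigma)$ with inverse $(x,c)\mapsto(x,\psi_0(c),c)$. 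I would take $\gamma_{(X,\varsigma)}$ to be this map (in the direction $\left({\phi\psi}\right)^\ast \to \psi^\ast\phi^\ast$, i.e. $(x,c)\mapsto(x,\psi_0(c),c)$) and verify it is $\cK$-equivariant, using that $\left({\phi\psi}\right)_1 = \phi_1\psi_1$ so the two actions match on the nose. For $\beta$, the functor $\left({\Id_\cG}\right)^\ast$ sends $(X,\varsigma)$ to $\left({X \, {{}_{ \scriptscriptstyle{\varsigma} } {\times}_{ \scriptscriptstyle{\Id} }\,} \Cat{G}_{\scriptscriptstyle 0}, \pr_{\scriptscriptstyle 2}}\right)$, and the first projection $\pr_1\colon X \, {{}_{ \scriptscriptstyle{\varsigma} } {\times}_{ \scriptscriptstyle{\Id} }\,} \Cat{G}_{\scriptscriptstyle 0} \to X$ is a bijection (its inverse is $x\mapsto(x,\varsigma(x))$); I would set $\beta_{(X,\varsigma)}=\pr_1$ and check equivariance, which is immediate since $\phi_1=\Id$ in this case.

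Next I would verify naturality of both families: for a morphism $f\colon(X,\varsigma)\to(Y,\vartheta)$ of right $\cG$-sets, the relevant squares commute because $\gamma$ and $\beta$ are built from projections and insertions of object-components, while $\psi^\ast\phi^\ast(f)$, $\left({\phi\psi}\right)^\ast(f)$, etc., act as $f$ on the $X$-coordinate and as the identity on the remaining coordinates — so chasing $(x,c)$ (resp. $(x,a)$) through both paths gives the same triple (resp. element). This is the same style of one-line computation that appears in the proof of Proposition and Definition \ref{dpDefIndTra}, so I would present it compactly.

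Finally, I would check that $\gamma$ and $\beta$ are Laplaza natural isomorphisms, i.e. compatible with the monoidal structures $\Uplus$ and $-\underset{\Cat{G}_{\scriptscriptstyle 0}}{\times}-$ together with their unit objects. By Proposition \ref{pMorGrpdIndFuRSets} each of $\phi^\ast$, $\psi^\ast$, $\left({\phi\psi}\right)^\ast$ is a Laplaza functor, so $\psi^\ast\phi^\ast$ and $\left({\phi\psi}\right)^\ast$ carry canonical Laplaza structures; what remains is to confirm that the componentwise-defined bijections $\gamma_{(X,\varsigma)}$ and $\beta_{(X,\varsigma)}$ intertwine the structure isomorphisms. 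On a disjoint union the bijection respects the decomposition coordinatewise, on a fibered product it again only shuffles and duplicates object-components, and on the unit object $(\Cat{G}_{\scriptscriptstyle 0}, \Id)$ one checks directly that the chain of identifications used in Proposition \ref{pMorGrpdIndFuRSets} agrees with $\gamma$ (resp. $\beta$) — all of these are routine diagram chases. I expect the only mildly delicate point to be bookkeeping the fibered-product coherence isomorphisms consistently (making sure the ``four-tuple $(x,a,y,a)$'' description from Proposition \ref{pMorGrpdIndFuRSets} lines up with the triple description after applying $\gamma$); everything else is a direct verification, which I would state as such and leave to the reader in the paper's prevailing style.
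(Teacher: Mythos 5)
Your construction is correct and is essentially the paper's own proof: the paper likewise exhibits $\gamma$ componentwise as the projection/insertion bijection between $\left({X \, {{}_{ \scriptscriptstyle{\varsigma} } {\times}_{ \scriptscriptstyle{\phi_0\psi_0} }\,} \Cat{K}_{\scriptscriptstyle 0}, \pr_{\scriptscriptstyle 2}}\right)$ and the iterated fibered product of triples $(x,a,b)$, takes $\beta_{(X,\varsigma)}=\pr_1$, and leaves the naturality and Laplaza checks as immediate verifications. The only cosmetic difference is that the paper writes $\gamma$ in the opposite (inverse) direction to the one displayed in the statement, which is immaterial since it is an isomorphism.
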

\begin{proof}
Given a  homomorphism \(f \colon \left({X, \varsigma}\right) \longrightarrow \left({Y, \theta}\right)\) in \(\Rset{G}\), we have
\[  \begin{gathered}
\xymatrix{ \left({X  \, {{}_{ \scriptscriptstyle{\varsigma} } {\times}_{ \scriptscriptstyle{\varphi \psi } }\,} \Cat{K}_{ \scriptscriptstyle{0} }, \pr_{ \scriptscriptstyle{2} }  }\right) =   \left({\varphi \psi}\right)^\ast \left({X, \varsigma}\right)  \ar[rr]^{\left({\varphi \psi}\right)^\ast(f) } & & \left({\varphi \psi}\right)^\ast \left({Y, \theta}\right) = \left({Y  \, {{}_{ \scriptscriptstyle{\theta} } {\times}_{ \scriptscriptstyle{\varphi \psi} }\,} \Cat{K}_{ \scriptscriptstyle{0} } , \pr_{ \scriptscriptstyle{2} } }\right)   }  \\
\xymatrix{ (x,a)  \ar[rr]^{ } & & \left({f(x),a}\right)  } 
\end{gathered}
\]
and
\[ \begin{gathered}
\xymatrix@R=1pc{
\psi^\ast\left({X  \, {{}_{ \scriptscriptstyle{\varsigma} } {\times}_{ \scriptscriptstyle{\varphi_{ \scriptscriptstyle{0} } } }\,} \Cat{H}_{ \scriptscriptstyle{0} } , \pr_{ \scriptscriptstyle{2} } }\right)  = \psi^\ast\phi^\ast\left({X, \varsigma}\right)  \ar[rrr]^{\psi^\ast\phi^\ast(f) } \ar@{=}[d]_{} &&& \psi^\ast\varphi^\ast\left({Y, \theta}\right) = \psi^\ast\left({Y  \, {{}_{ \scriptscriptstyle{\theta} } {\times}_{ \scriptscriptstyle{\varphi_{ \scriptscriptstyle{0} } } }\,} \Cat{H}_{ \scriptscriptstyle{0} }, \pr_{ \scriptscriptstyle{2} } }\right) \ar@{=}[d]^{}  \\
\left({\left({X  \, {{}_{ \scriptscriptstyle{\varsigma} } {\times}_{ \scriptscriptstyle{\varphi_{ \scriptscriptstyle{0} } } }\,}  \Cat{H}_{ \scriptscriptstyle{0} } }\right)  \, {{}_{ \scriptscriptstyle{\pr_{ \scriptscriptstyle{2} } } } {\times}_{ \scriptscriptstyle{\psi_{ \scriptscriptstyle{0} } } }\,} \Cat{K}_{ \scriptscriptstyle{0} } , \pr_{ \scriptscriptstyle{3} } }\right)   &&&  \left({\left({Y  \, {{}_{ \scriptscriptstyle{\theta} } {\times}_{ \scriptscriptstyle{\varphi_{ \scriptscriptstyle{0} } } }\,}  \Cat{H}_{ \scriptscriptstyle{0} } }\right)  \, {{}_{ \scriptscriptstyle{\pr_{ \scriptscriptstyle{2} } } } {\times}_{ \scriptscriptstyle{\psi_{ \scriptscriptstyle{0} } } }\,} \Cat{K}_{ \scriptscriptstyle{0} } , \pr_{ \scriptscriptstyle{3} } }\right)  
} \\
\xymatrix{ (x,a,b)  \ar[rrr]^{ } &&& \left({\varphi^\ast(x,a), b }\right) = \left({f(x), a, b}\right) .
}
\end{gathered}
\]
This shows the first claim. As for the second one, for any  right \(\Cat{G}\)-set \(\left({X, \varsigma}\right)\),  we consider the following $\cG$-equivariant maps:
\[ 
\begin{aligned}{\gamma_{\Sscript{\left({X, \varsigma}\right)}} }  \colon & {\psi^\ast\phi^\ast_{\Sscript{\left({X, \varsigma}\right)}} } \longrightarrow {\left({\varphi\psi}\right)^\ast_{\Sscript{\left({X, \varsigma}\right)}} } \\ & {(x,a,b)}  \longrightarrow {(x,b)} 
\end{aligned}  \qquad 
\begin{aligned}{\beta_{\Sscript{\left({X, \varsigma}\right)}} }  \colon & {\left({\Id_{\Cat{G} } }\right)^\ast_{\Sscript{\left({X, \varsigma}\right)}} } \longrightarrow { \left({X, \varsigma}\right) } \\ & {(x,a)}  \longrightarrow {x}\end{aligned}
\]
which gives us  the desired natural transformations.   
The proof of the fact that  \(\gamma\) and \(\beta\) are Laplaza isomorphism is immediate.
\end{proof}

\begin{proposition}
\label{pPropIndFun}
Given groupoids \(\Cat{H}\) and \(\Cat{G}\) and morphism of groupoids \(\phi, \psi, \mu \colon \Cat{H} \longrightarrow \Cat{G}\), let's consider natural transformations \(\alpha \colon\phi \longrightarrow \psi\) and \(\beta \colon \psi \longrightarrow \mu\).
Then the following diagrams are commutative:
\[  \begin{minipage}[c]{0.3\linewidth}
\centering
\xymatrix{
\phi^\ast \ar[d]_{\alpha^\ast}  \ar[rr]^{\left({\beta \alpha}\right)^\ast} &  & \mu^\ast  \\
\psi^\ast  \ar[rru]_{\beta^\ast}    }
\end{minipage}\hfill
\begin{minipage}[c]{0.2\linewidth}
\centering
\text{and} 
\end{minipage}\hfill
\begin{minipage}[c]{0.3\linewidth}
\centering
\xymatrix{\phi^\ast  \ar@/^1pc/[rr]^{\left({\Id_{\varphi} }\right)^\ast} \ar@/_1pc/[rr]_{ \Id_{\varphi^\ast}  }  & &\phi^\ast } 
\end{minipage}
\]
Moreover, if \(\alpha\) is a natural isomorphism, then we have \(\left({\alpha^{-1}}\right)^\ast = \left({\alpha^\ast}\right)^{-1}\).
\end{proposition}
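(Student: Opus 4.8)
The plan is to reduce each of the three assertions to a one-line check on elements, using the explicit formula $\alpha^\ast_{(X,\varsigma)}(x,a)=\bigl(x\cdot(\alpha a)^{-1},a\bigr)$ from Proposition and Definition \ref{dpDefIndTra}. That result already provides that $\alpha^\ast$, $\beta^\ast$, $(\beta\alpha)^\ast$ and $(\Id_{\phi})^\ast$ are genuine (Laplaza) natural transformations between the relevant induced functors, so to see that two of them coincide it suffices to check that their components agree at every right $\Cat{G}$-set, and each component is a map of sets which we evaluate on an arbitrary pair $(x,a)$. In particular nothing extra needs to be said about naturality or Laplaza compatibility; only a plain equality of set maps has to be produced.

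For the triangular diagram I would fix a right $\Cat{G}$-set $(X,\varsigma)$ and a point $(x,a)\in\phi^\ast(X,\varsigma)$, so that $\varsigma(x)=\phi_{0}(a)$. The vertical composite $\beta\alpha\colon\phi\to\mu$ has component $(\beta\alpha)a=(\beta a)(\alpha a)$ in $\Cat{G}(\phi_{0}(a),\mu_{0}(a))$, whence $\bigl((\beta\alpha)a\bigr)^{-1}=(\alpha a)^{-1}(\beta a)^{-1}$. Then, using the associativity axiom of the action of Definition \ref{def:Gset},
\[
(\beta\alpha)^\ast_{(X,\varsigma)}(x,a)=\bigl(x\cdot(\alpha a)^{-1}(\beta a)^{-1},a\bigr)=\bigl((x\cdot(\alpha a)^{-1})\cdot(\beta a)^{-1},a\bigr)=\beta^\ast_{(X,\varsigma)}\bigl(x\cdot(\alpha a)^{-1},a\bigr)=\bigl(\beta^\ast_{(X,\varsigma)}\circ\alpha^\ast_{(X,\varsigma)}\bigr)(x,a),
\]
which is exactly the commutativity $(\beta\alpha)^\ast=\beta^\ast\alpha^\ast$. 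For the second diagram, the component of $\Id_{\phi}$ at $a$ is the identity arrow $\iota_{\phi_{0}(a)}$, which is its own inverse; hence, by the unit axiom of Definition \ref{def:Gset} together with $\varsigma(x)=\phi_{0}(a)$,
\[
(\Id_{\phi})^\ast_{(X,\varsigma)}(x,a)=\bigl(x\cdot\iota_{\phi_{0}(a)},a\bigr)=(x,a),
\]
so $(\Id_{\phi})^\ast=\Id_{\phi^\ast}$.

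The last claim would follow formally from the first two, with no further computation: if $\alpha$ is a natural isomorphism, its inverse $\alpha^{-1}\colon\psi\to\phi$ has components $(\alpha^{-1})a=(\alpha a)^{-1}$, and applying the triangle identity just proved to the composable pairs $(\alpha^{-1},\alpha)$ and $(\alpha,\alpha^{-1})$, together with the identity case, gives $(\alpha^{-1})^\ast\circ\alpha^\ast=(\alpha^{-1}\alpha)^\ast=(\Id_{\phi})^\ast=\Id_{\phi^\ast}$ and likewise $\alpha^\ast\circ(\alpha^{-1})^\ast=(\alpha\alpha^{-1})^\ast=\Id_{\psi^\ast}$, so that $(\alpha^{-1})^\ast=(\alpha^\ast)^{-1}$.

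I do not anticipate any real obstacle. The only points that genuinely require care are keeping the convention for vertical composition of natural transformations consistent — so that $\bigl((\beta\alpha)a\bigr)^{-1}$ splits in the order matching $\beta^\ast\circ\alpha^\ast$ and not its reverse — and invoking the unit and associativity axioms of Definition \ref{def:Gset} at precisely the right steps of the element computation.
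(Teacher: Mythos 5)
Your proof is correct, and it is exactly the elementwise verification that the paper leaves implicit by declaring the proposition ``Straightforward'': the key points (the order in which $\bigl((\beta\alpha)a\bigr)^{-1}$ factors so as to match $\beta^\ast\circ\alpha^\ast$, and the use of the unit and associativity axioms of the action) are all handled correctly. Nothing further is needed.
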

\begin{proof}
Straightforward. 
\end{proof}

We finish this subsection with the followings useful results. 

\begin{proposition}\label{pRestrGrpdRSet}
Given a groupoid \(\Cat{G}\), let \(\Cat{A}\) be a subgroupoid of \(\Cat{G}\).
Then the functor
\[ \begin{aligned}{F}  \colon & {\Rset{\cG} } \longrightarrow {\Rset{\cA} } \\ & {\left({X, \varsigma}\right) }  \longrightarrow {\left({Y = \varsigma^{-1}\left({\mathcal{A}_{ \scriptscriptstyle{0} } }\right) , \vartheta = \left.{\varsigma}\right|_{\varsigma^{-1}\left({\mathcal{A}_{ \scriptscriptstyle{0} } }\right)} }\right), }\end{aligned} 
\]
opportunely defined on morphisms, leads to a  Laplaza functor.
\end{proposition}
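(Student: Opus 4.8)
The plan is to recognise $F$ as the induction functor attached to the inclusion and then invoke Proposition~\ref{pMorGrpdIndFuRSets}. Let $\tauup\colon\cA\hookrightarrow\cG$ denote the canonical inclusion morphism of groupoids, so that $\tauup_{\Sscript{0}}\colon\cA_{\Sscript{0}}\hookrightarrow\Go$ and $\tauup_{\Sscript{1}}\colon\cA_{\Sscript{1}}\hookrightarrow\Ga$ are the set inclusions. By the description in Subsection~\ref{ssec:SMC}, the associated induction functor $\tauup^{\ast}\colon\Rset{\cG}\to\Rset{\cA}$ sends $(X,\varsigma)$ to $\bigl(X\,\due{\times}{\Sscript{\varsigma}}{\Sscript{\tauup_{\Sscript{0}}}}\,\cA_{\Sscript{0}},\,\pr_{\Sscript{2}}\bigr)$. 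First I would observe that $(x,a)\mapsto x$ defines a bijection $X\,\due{\times}{\Sscript{\varsigma}}{\Sscript{\tauup_{\Sscript{0}}}}\,\cA_{\Sscript{0}}\cong\varsigma^{-1}(\cA_{\Sscript{0}})$, with inverse $x\mapsto(x,\varsigma(x))$, since on the fibre product the second component is forced to equal $\varsigma(x)$. Under this bijection the structure map $\pr_{\Sscript{2}}$ becomes $\varsigma|_{\varsigma^{-1}(\cA_{\Sscript{0}})}=\vartheta$, and the $\cA$-action $(x,a)\cdot h=(x\,\tauup_{\Sscript{1}}(h),\Sf{s}(h))$ becomes $y\cdot h=yh$, the restriction of the $\cG$-action. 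Hence, up to this canonical natural isomorphism, $F$ coincides with $\tauup^{\ast}$. This also pins down the ``opportune'' definition on morphisms: for a $\cG$-equivariant map $f\colon(X,\varsigma)\to(X',\varsigma')$ one sets $F(f)=f|_{\varsigma^{-1}(\cA_{\Sscript{0}})}$, which lands in $(\varsigma')^{-1}(\cA_{\Sscript{0}})$ because $\varsigma'\circ f=\varsigma$, and which is $\cA$-equivariant because $f$ is $\cG$-equivariant and the $\cA$-actions are restrictions of the $\cG$-actions; functoriality of $F$ is then immediate.

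Since $\tauup^{\ast}$ is a Laplaza functor (Proposition~\ref{pMorGrpdIndFuRSets} together with the observation stated right after it), and the Laplaza structure transports along the natural isomorphism of the previous paragraph, it follows that $F$ is a Laplaza functor. If one prefers to verify this on the explicitly given $F$, the monoidal comparisons turn out to be strict and are checked directly. For $\Uplus$: writing $(X,\varsigma)\Uplus(Y,\vartheta)=(X\Uplus Y,\mu)$ one has $\mu^{-1}(\cA_{\Sscript{0}})=\varsigma^{-1}(\cA_{\Sscript{0}})\Uplus\vartheta^{-1}(\cA_{\Sscript{0}})$ as right $\cA$-sets, and $F$ carries the empty $\cG$-set to the empty $\cA$-set, so $F$ is strictly monoidal for $\Uplus$. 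For the fibred product one must keep track of the change of base object $\Go\rightsquigarrow\cA_{\Sscript{0}}$, but one still has $(\varsigma\vartheta)^{-1}(\cA_{\Sscript{0}})=\{(x,y)\mid\varsigma(x)=\vartheta(y)\in\cA_{\Sscript{0}}\}$, which is precisely $\varsigma^{-1}(\cA_{\Sscript{0}})\fpro{\cA_{\Sscript{0}}}\vartheta^{-1}(\cA_{\Sscript{0}})=F(X,\varsigma)\fpro{\cA_{\Sscript{0}}}F(Y,\vartheta)$, while $F(\Go,\id_{\Go})=(\cA_{\Sscript{0}},\id_{\cA_{\Sscript{0}}})$ is the unit of $\bigl(\Rset{\cA},\fpro{\cA_{\Sscript{0}}}\bigr)$; hence $F$ is strictly monoidal for the fibred product as well.

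It then remains to check that these identifications are compatible with the remaining coherence data of a Laplaza functor — associativity, symmetry and the left/right unit isomorphisms for each of the two tensor products, together with the distributivity isomorphisms relating $\Uplus$ and $\fpro{}$, as recalled in Appendix~\ref{ssec:Laplaza}. Because on underlying sets $F$ merely restricts to the subset cut out by a preimage condition, and every structural map of both monoidal structures on $\Rset{\cG}$ and $\Rset{\cA}$ is assembled from projections, inclusions and the actions, each such coherence square commutes on the nose; alternatively, all of this is automatic once one knows that the natural isomorphism $\tauup^{\ast}\cong F$ is a Laplaza isomorphism. I do not expect a genuine obstacle here: the only delicate point is the change of unit object from $\Go$ to $\cA_{\Sscript{0}}$ in the fibred-product structure, and that is precisely why it is cleanest to route the argument through the inclusion $\tauup$ and Proposition~\ref{pMorGrpdIndFuRSets} rather than re-verifying the Laplaza axioms by hand.
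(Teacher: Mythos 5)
Your proposal is correct, but your primary argument follows a genuinely different route from the paper's. The paper proves the statement by brute force: it computes directly that $F$ is \emph{strictly} monoidal for $\Uplus$ (showing $F(X_1,\varsigma_1)\Uplus F(X_2,\varsigma_2)$ literally equals $F\left({\left({X_1,\varsigma_1}\right)\Uplus\left({X_2,\varsigma_2}\right)}\right)$ and $F(\emptyset,\emptyset)=(\emptyset,\emptyset)$) and strictly monoidal for the fibred product (showing $F(X_1,\varsigma_1)\fpro{\cA_{\Sscript{0}}}F(X_2,\varsigma_2)=F\left({\left({X_1,\varsigma_1}\right)\fpro{\Go}\left({X_2,\varsigma_2}\right)}\right)$ and $F(\Go,\Id_{\Go})=(\cA_{\Sscript{0}},\Id_{\cA_{\Sscript{0}}})$), declaring the coherence conditions immediate — this is exactly the ``direct verification'' you sketch in your second paragraph as a fallback. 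Your main argument instead identifies $F$ with the induction functor $\tauup^{\ast}$ along the inclusion $\tauup\colon\cA\hookrightarrow\cG$ via the canonical bijection $X\,\due{\times}{\Sscript{\varsigma}}{\Sscript{\tauup_0}}\,\cA_{\Sscript{0}}\cong\varsigma^{-1}(\cA_{\Sscript{0}})$, and then cites Proposition~\ref{pMorGrpdIndFuRSets}; this identification is correct (the second component of the fibre product is forced to equal $\varsigma(x)$, and structure map and action transport as you say), there is no circularity since Proposition~\ref{pMorGrpdIndFuRSets} precedes and does not use the present statement, and the reduction is arguably more conceptual — it explains \emph{why} $F$ is Laplaza rather than recomputing it. What it costs you is the obligation, which you do acknowledge, to check that the natural isomorphism $\tauup^{\ast}\cong F$ is itself a Laplaza (monoidal) isomorphism so that the structure transports; the paper's direct computation avoids that extra layer and moreover yields the slightly stronger conclusion that $F$ is \emph{strict} monoidal, which your fallback verification also recovers. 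Either route is sound.
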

\begin{proof}
The fact that \(F\) is a well defined  functor is obvious. 
We have to prove that \(F\) is monoidal with respect to the disjoint union.
So, let be \(\left({X_1, \varsigma_1}\right), \left({X_2, \varsigma_2}\right) \in \Rset{G}\): we have
\[ \begin{aligned}
 \quad F\left({X_1, \varsigma}\right) \Uplus F\left({X_2, \varsigma_2}\right) 
& = \left({\varsigma_1^{-1}\left({\mathcal{A}_{ \scriptscriptstyle{0} }}\right), \left.{\varsigma_1}\right|_{\varsigma_1^{-1}\left({\mathcal{A}_{ \scriptscriptstyle{0} }}\right)} }\right) \Uplus \left({\varsigma_2^{-1}\left({\mathcal{A}_{ \scriptscriptstyle{0} }}\right), \left.{\varsigma_2}\right|_{\varsigma_2^{-1}\left({\mathcal{A}_{ \scriptscriptstyle{0} }}\right)} }\right)  \\
& = \left({\varsigma_1^{-1}\left({\mathcal{A}_{ \scriptscriptstyle{0} }}\right) \Uplus \varsigma_2^{-1}\left({\mathcal{A}_{ \scriptscriptstyle{0} }}\right), \left.{\varsigma_1}\right|_{\varsigma_1^{-1}\left({\mathcal{A}_{ \scriptscriptstyle{0} }}\right)} \Uplus \left.{\varsigma_2}\right|_{\varsigma_2^{-1}\left({\mathcal{A}_{ \scriptscriptstyle{0} }}\right)}  }\right)  \\
&= \left({\left({\varsigma_1 \Uplus \varsigma_2}\right)^{-1}\left({\mathcal{A}_{ \scriptscriptstyle{0} }}\right), \left.{\left( \varsigma_1 \Uplus \varsigma_2 \right)}\right|_{\left({\varsigma_1 \Uplus \varsigma_2}\right)^{-1}\left({\mathcal{A}_{ \scriptscriptstyle{0} }}\right)} }\right) \\
&= F\left({X_1 \Uplus X_2, \varsigma_1 \Uplus \varsigma_2}\right) 
= F\left({\left({X_1, \varsigma_1}\right) \Uplus \left({X_2, \varsigma_2}\right) }\right) 
\end{aligned}
\]
and evidently, we have $F\left({\emptyset, \emptyset}\right)=  \left({\emptyset, \emptyset}\right)$.
Therefore, since the coherency conditions are immediate to verify, \(F\) is monoidal strict with respect to \(\Uplus\).
It remains to check  that \(F\) is monoidal with respect to the fiber product.
For each \(\left({X_1, \varsigma_1}\right), \left({X_2, \varsigma_2}\right) \in \Rset{G}\),  we have, using the notations \(\vartheta_i=\left.{\varsigma_i}\right|_{\varsigma_i^{-1}\left({\mathcal{A}_{ \scriptscriptstyle{0} } }\right)}\) for \({i}\in \Set{{1},{2}}\),
\[ \begin{aligned}
\quad  F\left({X_1, \varsigma_1}\right) \fpro{\Cat{A}_{ \scriptscriptstyle{0} }} F\left({X_2, \varsigma_2}\right) 
& = \left({\varsigma_1^{-1}\left({\mathcal{A}_{ \scriptscriptstyle{0} } }\right), \vartheta_1}\right) \fpro{\Cat{A}_{ \scriptscriptstyle{0} }} \left({\varsigma_2^{-1}\left({\mathcal{A}_{ \scriptscriptstyle{0} } }\right), \vartheta_2}\right)  \\
&= \left({\varsigma_1^{-1}\left({\mathcal{A}_{ \scriptscriptstyle{0} } }\right) \, {  \, {{}_{ \scriptscriptstyle{\vartheta_1} } {\times}_{ \scriptscriptstyle{\vartheta_2} }\,} } \varsigma_2^{-1}\left({\mathcal{A}_{ \scriptscriptstyle{0} } }\right), \vartheta_1 \vartheta_2}\right)  
\\ &=\left( \left({\varsigma_1 \varsigma_2}\right)^{-1}\left({\mathcal{A}_{ \scriptscriptstyle{0} } }\right), \left.{\varsigma_1 \varsigma_2}\right|_{\left({\varsigma_1 \varsigma_2}\right)^{-1}\left({\mathcal{A}_{ \scriptscriptstyle{0} } }\right)} \right) \\
&=F\left({X_1  \, {{}_{ \scriptscriptstyle{\varsigma_1} } {\times}_{ \scriptscriptstyle{\varsigma_2} }\,} X_2, \varsigma_1 \varsigma_2}\right) 
= F\left({\left({X_1, \varsigma_1}\right) \fpro{\Cat{G}_{ \scriptscriptstyle{0} }}, \left({X_2, \varsigma_2}\right) }\right) 
\end{aligned}
\]
and, obviously, we have that $ F\left({\mathcal{G}_{ \scriptscriptstyle{0} }, \Id_{\mathcal{G}_{ \scriptscriptstyle{0} } } }\right) = \left({\mathcal{A}_{ \scriptscriptstyle{0} }, \Id_{\mathcal{A}_{ \scriptscriptstyle{0} }} }\right) $.
Therefore, since the coherency conditions are immediate to verify, \(F\) is monoidal strict with respect to the fiber product and the proof is concluded.
\end{proof}

\subsection{Monoidal equivalences and category decompositions}\label{ssec:product}
We announce several monoidal decompositions (up to equivalence of categories)  of certain categories of groupoids-sets into a product of categories that will be used in the forthecoming sections. 

\begin{proposition}\label{pSetsCoprGrpd}
Let $\{\cG_i\}_{i \, \in\, I}$ be a family of groupoids and $\{ i_j : \cG_j \to \cG\}_{j\, \in \, I}$  be their coproduct in \(\Grou\).
Then we have a Laplaza isomorphism of categories:
\[ 
\Rset{\Big(\coprod_{j \,  \in \,  I} \Cat{G}_j\Big)} \cong \prod_{j \, \in \,  I} \Rset{\cG_j}.
\]
\end{proposition}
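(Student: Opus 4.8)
The plan is to write the equivalence down explicitly and check it is a strict monoidal isomorphism for both tensor structures. Recall first that the coproduct groupoid $\cG=\coprod_{j\in I}\cG_j$ has object set $\Go=\biguplus_{j\in I}(\cG_j)_{\Sscript{0}}$ and no arrows between objects lying in different components; in particular each image $i_j(\cG_j)$ is a subgroupoid of $\cG$. Given a right $\cG$-set $(X,\varsigma)$, put $X_j=\varsigma^{-1}\big((\cG_j)_{\Sscript{0}}\big)$ and $\varsigma_j=\left.\varsigma\right|_{X_j}$. Since $\varsigma(xg)=\Sf{s}(g)$ lies in the same connected component of $\cG$ as $\varsigma(x)=\Sf{t}(g)$ whenever $xg$ is defined, each $X_j$ is stable under the action, so $(X_j,\varsigma_j)$ is a right $\cG_j$-set; this is precisely the image of $(X,\varsigma)$ under the restriction functor of Proposition \ref{pRestrGrpdRSet} attached to the subgroupoid $i_j(\cG_j)$. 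I would then define
\[
\Phi\colon \Rset{\Big(\coprod_{j\in I}\cG_j\Big)}\longrightarrow \prod_{j\in I}\Rset{\cG_j},\qquad (X,\varsigma)\longmapsto \big((X_j,\varsigma_j)\big)_{j\in I},
\]
and $\Phi(f)=\big(\left.f\right|_{X_j}\big)_{j\in I}$ on morphisms (the restriction makes sense because $f$ commutes with the structure maps, so $f(X_j)\subseteq Y_j$). In the other direction, given a family $\big((Y_j,\vartheta_j)\big)_{j\in I}$, I would form $Y=\biguplus_{j\in I}Y_j$ with $\vartheta=\biguplus_{j\in I}\vartheta_j\colon Y\to\biguplus_{j\in I}(\cG_j)_{\Sscript{0}}=\Go$, the action being carried out inside each summand; this gives a functor $\Psi$, acting as $\biguplus_j g_j$ on a family of morphisms $(g_j)_j$.

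Next I would check that $\Phi$ and $\Psi$ are mutually inverse. The subsets $\big(\varsigma^{-1}((\cG_j)_{\Sscript{0}})\big)_{j\in I}$ partition $X$, so $\Psi\Phi(X,\varsigma)=(X,\varsigma)$ literally; conversely $\varsigma^{-1}((\cG_j)_{\Sscript{0}})=Y_j$ when $\varsigma=\biguplus_j\vartheta_j$, so $\Phi\Psi=\Id$; the corresponding identities on morphisms are immediate. Hence $\Phi$ is an isomorphism of categories.

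It then remains to see that $\Phi$ is a Laplaza functor, i.e.\ strict monoidal for both $\Uplus$ and $-\fpro{\Go}-$ and compatible with the distributivity constraints. For $\Uplus$ this is immediate, since $\varsigma^{-1}$ commutes with disjoint unions: $\Phi\big((X,\varsigma)\Uplus(X',\varsigma')\big)=\big((X_j\uplus X'_j,\varsigma_j\uplus\varsigma'_j)\big)_j$, and $\Phi(\emptyset,\emptyset)$ is the family of empty $\cG_j$-sets, which is the unit of $\prod_{j\in I}\Rset{\cG_j}$; alternatively one may invoke Proposition \ref{pRestrGrpdRSet} componentwise. For the fibered product the single point that requires a word is that a pair $(x,y)\in X\fpro{\Go}Y$ satisfies $\varsigma(x)=\vartheta(y)$, which forces $x$ and $y$ to lie over the \emph{same} component; hence
\[
X\fpro{\Go}Y\;=\;\biguplus_{j\in I}\Big(X_j\fpro{(\cG_j)_{\Sscript{0}}}Y_j\Big)
\]
as right $\cG$-sets, which is exactly the assertion that $\Phi$ sends $(X,\varsigma)\fpro{\Go}(Y,\vartheta)$ to the componentwise fibered product of $\Phi(X,\varsigma)$ and $\Phi(Y,\vartheta)$, while $\Phi(\Go,\Id_{\Go})=\big(((\cG_j)_{\Sscript{0}},\Id)\big)_j$ is the unit in the target. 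The associativity, symmetry and distributivity isomorphisms on both sides are the canonical ones for (co)products and fibered products of sets, so they are matched on the nose; thus $\Phi$ is a Laplaza isomorphism. The only genuinely substantive step is this last compatibility with $-\fpro{\Go}-$, namely recognizing that the absence of arrows between distinct components makes all cross terms $X_j\fpro{\Go}Y_k$ with $j\neq k$ empty; everything else is routine bookkeeping with disjoint unions.
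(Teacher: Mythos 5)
Your proposal is correct and follows essentially the same route as the paper: the paper constructs the same pair of mutually inverse functors (its $T$ is your $\Psi$, its $S$ is your $\Phi$) and then, rather than checking monoidality of $\Phi$ by hand, deduces that $S$ is Laplaza from Proposition \ref{pRestrGrpdRSet} applied componentwise and concludes via Corollary \ref{cAdjEquivInvLaplaza}. Your direct verification, including the observation that the cross terms $X_j\fpro{\Go}Y_k$ with $j\neq k$ are empty, is exactly the content hidden in that citation.
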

\begin{proof}
We define a functor
\[ 
T \colon \prod_{j \, \in \, I} \Rset{\Cat{G}_j} \longrightarrow  \Rset{\Big(\coprod_{j \, \in \,  I} \Cat{G}_j\Big)}  \,=\, \Rset{\cG}
\]
in the following way.
Let be \(\left(\left({X_j, \varsigma_j}\right) \right)_{j \,\in \, I}\) an object of $\prod_{j \,\in \, I} \Rset{\Cat{G}_j}$.
We define \(\widehat{\left({X_j, \varsigma_j}\right) } = \left({X_j, \widehat{\varsigma_j} }\right) \in \Rset{\cG}\) as follows.
The structure map \(\widehat{\varsigma_j} \colon X_j \longrightarrow \Cat{G}\) is such that for every \(x \in X_j\), \(\widehat{\varsigma_j}(x)= \varsigma_j(x)\).
The action
\[ \widehat{\rho_j} \colon X_j  \, {{}_{ \scriptscriptstyle{ \widehat{\varsigma_j} } } {\times}_{ \scriptscriptstyle{\mathsf{t} } }\,} \Cat{G}_{ \scriptscriptstyle{1} } \longrightarrow X_j
\]
is such that for every \(x \in X_j\) and \(g \in \Cat{G}_{ \scriptscriptstyle{1} }\) such that \(\widehat{\varsigma_j}(x)=\mathsf{t}(g)\) we have \(\widehat{\rho_j}(x,g)=\rho_j(x,g)\) where \(\rho_j \colon X_j  \, {{}_{ \scriptscriptstyle{\varsigma_j} } {\times}_{ \scriptscriptstyle{\mathsf{t} } }\,} \left({\Cat{G}_j}\right)_{ \scriptscriptstyle{1} } \longrightarrow X_j\) is the action of \(\left({X_j, \varsigma_j}\right)\).
Now we set
\[ T\left({\left(\left({X_j, \varsigma_j}\right) \right)_{j \,\in \,  I}}\right) = \biguplus_{j \, \in \,  I} \widehat{\left({X_j, \varsigma_j}\right) }.
\]
It is clear that \(T\) becomes a functor in the expected way.

In the other direction, we define
\[ 
S \colon  \Rset{\big(\coprod_{j \, \in \,  I} \Cat{G}_j\big)}   \longrightarrow  \prod_{j \, \in \,  I} \Rset{\Cat{G}_j}
\]
as follows.
Given \(\left({X, \varsigma}\right) \in \Rset{G}\), we set
\[ S\left({X, \varsigma}\right) = \left(\left({\varsigma^{-1}\left(\left({\Cat{G}_j}\right)_{ \scriptscriptstyle{0} } \right), \left.{\varsigma}\right|_{ \varsigma^{-1}\left(\left({\Cat{G}_j}\right)_{ \scriptscriptstyle{0} }\right) } }\right) \right)_{j \,\in \, I} .
\]
It is clear that \(S\) becomes  a functor in the expected way and that \(T\) and \(S\) are isomorphism of categories such that \(S= T^{-1}\).
To conclude, thanks to Corollary~\(\ref{cAdjEquivInvLaplaza}\), is it enough to prove that \(S\) is a Laplaza functor, but this follows from Proposition~\(\ref{pRestrGrpdRSet}\).
\end{proof}

\begin{proposition}
\label{pIsoGrpdIsoCatR}
Let \(\Cat{H}\) and \(\Cat{G}\) be isomorphic groupoids.
Then there is a Laplaza isomorphism of categories \(\Rset{\Cat{G}} \cong \Rset{\Cat{H}}\).
\end{proposition}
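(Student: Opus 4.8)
Fix an isomorphism of groupoids $\phi=(\phi_{\Sscript{0}},\phi_{\Sscript{1}})\colon\cH\to\cG$ and write $\psi=\phi^{-1}\colon\cG\to\cH$; in particular both $\phi_{\Sscript{0}}$ and $\phi_{\Sscript{1}}$ are bijections, $\psi_{\Sscript{0}}=\phi_{\Sscript{0}}^{-1}$ and $\psi_{\Sscript{1}}=\phi_{\Sscript{1}}^{-1}$. The plan is to produce an explicit functor $T\colon\Rset{\cG}\to\Rset{\cH}$ together with an explicit inverse $S$, and then to check that $T$ is a Laplaza functor; the statement will then follow from Corollary~\ref{cAdjEquivInvLaplaza}, exactly as in the proof of Proposition~\ref{pSetsCoprGrpd}. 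One could instead invoke Propositions~\ref{pMorGrpdIndFuRSets} and~\ref{pIndFuIsomCompUn}, which already show that the induction functors $\phi^{\ast}$ and $\psi^{\ast}$ are mutually quasi-inverse Laplaza functors; but that route yields only a Laplaza \emph{equivalence}, whereas to obtain an honest isomorphism of categories it is cleaner to transport the structure along $\phi$ without touching underlying sets.

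First I would define $T$ by keeping the underlying set fixed and carrying the structure along $\phi$: to a right $\cG$-set $(X,\varsigma)$ assign $T(X,\varsigma)=(X,\,\phi_{\Sscript{0}}^{-1}\circ\varsigma)$, with $\cH$-action $x\cdot h:=x\cdot\phi_{\Sscript{1}}(h)$ whenever $\mathsf{t}(h)=\phi_{\Sscript{0}}^{-1}(\varsigma(x))$. Since $\phi$ is a functor, $\mathsf{t}(\phi_{\Sscript{1}}(h))=\phi_{\Sscript{0}}(\mathsf{t}(h))=\varsigma(x)$, so the expression makes sense, and the three conditions of Definition~\ref{def:Gset} transfer verbatim (identities and composites are preserved by $\phi_{\Sscript{1}}$). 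On morphisms $T$ is the identity on underlying maps; equivariance and compatibility with structure maps are immediate because the $\cH$-action was defined through $\phi_{\Sscript{1}}$. Running the same recipe with $\psi$ in place of $\phi$ gives a functor $S\colon\Rset{\cH}\to\Rset{\cG}$, and since $\phi_{\Sscript{1}}\psi_{\Sscript{1}}$ and $\psi_{\Sscript{1}}\phi_{\Sscript{1}}$ are identity maps (and likewise on objects), one gets $S\circ T=\Id_{\Rset{\cG}}$ and $T\circ S=\Id_{\Rset{\cH}}$ on the nose; thus $T$ is an isomorphism of categories.

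Next I would check that $T$ is a Laplaza functor, i.e.\ strong monoidal with respect to both $\Uplus$ and the fibered product $-\fpro{\Go}-$, compatibly with the coherence data. For $\Uplus$ this is strict: the underlying set of $T\big((X,\varsigma)\Uplus(Y,\vartheta)\big)$ is $X\Uplus Y$ with structure map $\phi_{\Sscript{0}}^{-1}\circ(\varsigma\Uplus\vartheta)=(\phi_{\Sscript{0}}^{-1}\varsigma)\Uplus(\phi_{\Sscript{0}}^{-1}\vartheta)$, the action splits in the evident way, and $T(\emptyset,\emptyset)=(\emptyset,\emptyset)$. For the fibered product, the underlying set of $T\big((X,\varsigma)\fpro{\Go}(Y,\vartheta)\big)$ is $\{(x,y)\mid\varsigma(x)=\vartheta(y)\}=\{(x,y)\mid\phi_{\Sscript{0}}^{-1}\varsigma(x)=\phi_{\Sscript{0}}^{-1}\vartheta(y)\}$, which is precisely the underlying set of $T(X,\varsigma)\fpro{\Ho}T(Y,\vartheta)$, and the two actions coincide; compatibility with the fibered product on morphisms is then strict under this identification. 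The one non-strict point is the unit for the fibered product: $T(\Go,\Id_{\Go})=(\Go,\phi_{\Sscript{0}}^{-1})$ is not equal to the unit $(\Ho,\Id_{\Ho})$ of $\Rset{\cH}$ but is canonically isomorphic to it via the equivariant bijection $\phi_{\Sscript{0}}^{-1}\colon\Go\to\Ho$; I would record this comparison isomorphism and observe that, under the identifications above, all Laplaza coherence diagrams for $T$ reduce to the already established ones for $\Rset{\cG}$.

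Finally I would invoke Corollary~\ref{cAdjEquivInvLaplaza} to promote the isomorphism of categories $T$, now known to be Laplaza, to a Laplaza isomorphism $\Rset{\cG}\cong\Rset{\cH}$, which is the assertion. The argument is essentially bookkeeping and I do not expect any real difficulty; the closest thing to an obstacle is the mildly non-strict unit for the fibered-product structure just discussed, where one must exhibit the comparison isomorphism $\phi_{\Sscript{0}}^{-1}$ explicitly rather than claiming strictness.
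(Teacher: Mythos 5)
Your proposal is correct; the paper itself dismisses this proposition with ``It is an immediate verification,'' and your transport-of-structure argument (carrying the structure map and action along $\phi_{\Sscript{0}}^{-1}$ and $\phi_{\Sscript{1}}$ while fixing the underlying sets, then checking strong monoidality for both $\Uplus$ and the fibered product) is exactly the verification being omitted. Your care about the non-strict unit $T(\Go,\Id_{\Go})=(\Go,\phi_{\Sscript{0}}^{-1})\cong(\Ho,\Id_{\Ho})$ is a correct and worthwhile detail that the paper glosses over.
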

\begin{proof}
It is an immediate verification.
\end{proof}

Given a groupoid $\cG$ and a fixed object $x \in \Go$, we denote with $\cG^{\Sscript{(x)}}$ the one object subgroupoid with isotropy group \(\cG^{\Sscript{x}}\).

\begin{theorem}
\label{tEquivCatTransGrpd}
Given a transitive  and not empty groupoid \(\mathcal{G}\), let be \(a \in \mathcal{G}_{ \scriptscriptstyle{0} }\).
Then there is an equivalence of Laplaza categories
\[ \Rset{\Cat{G}} \simeq  \Rset{\Cat{G}^{\Sscript{(a)}}} .
\]
\end{theorem}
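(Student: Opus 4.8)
The plan is to exhibit an explicit pair of quasi-inverse Laplaza functors between $\Rset{\Cat{G}}$ and $\Rset{\Cat{G}^{\Sscript{(a)}}}$, where $\Cat{G}^{\Sscript{(a)}}$ is the one-object subgroupoid of $\cG$ on the object $a$ with isotropy group $\cG^{\Sscript{a}}$. First I would use that $\cG$ is transitive and nonempty to \emph{choose}, for every object $x \in \Go$, an arrow $p_x \in \cG(a,x)$, with $p_a = \iota_a$. This choice is exactly the non-canonical ingredient (it realises the isomorphism $\cG \cong \trg{\cG^{\Sscript{a}}}{\Go}$ mentioned in Remark~\ref{pCharacTrGrpd}), and every map below will depend on it. Let $\tauup \colon \cG^{\Sscript{(a)}} \hookrightarrow \cG$ denote the inclusion.

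The restriction direction is already available: take $F \colon \Rset{\Cat{G}} \to \Rset{\Cat{G}^{\Sscript{(a)}}}$ to be the Laplaza functor of Proposition~\ref{pRestrGrpdRSet} applied to the subgroupoid $\cA = \cG^{\Sscript{(a)}}$, so $F(X,\varsigma) = \big(\varsigma^{-1}(\{a\}),\, \varsigma|_{\varsigma^{-1}(\{a\})}\big)$, a set with a plain right $\cG^{\Sscript{a}}$-action. In the other direction I would define $E \colon \Rset{\Cat{G}^{\Sscript{(a)}}} \to \Rset{\Cat{G}}$ by "inducing up along the chosen section": given a right $\cG^{\Sscript{a}}$-set $(V,\ast)$ (structure map constant at $a$), set $E(V) = V \times \Go$ with structure map $\pr_2$ and action $(v,x)\cdot g = \big(v\cdot (p_x\, g\, p_{\Sf s(g)}^{-1}),\, \Sf s(g)\big)$ for $g$ with $\Sf t(g)=x$; the element $p_x\, g\, p_{\Sf s(g)}^{-1}$ lies in $\cG^{\Sscript{a}}$, so this is well defined, and the groupoid axioms for the action follow from the cocycle-type identity $p_x (gh) p_{\Sf s(h)}^{-1} = \big(p_x g p_{\Sf s(g)}^{-1}\big)\big(p_{\Sf s(g)} h p_{\Sf s(h)}^{-1}\big)$. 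On morphisms $E$ acts as $f \mapsto f \times \Id_{\Go}$.

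Next I would check the two natural isomorphisms $F E \cong \Id$ and $E F \cong \Id$. For $FE$: $FE(V) = \{(v,x) : x = a\} = V\times\{a\} \cong V$ as $\cG^{\Sscript{a}}$-sets, since on that fibre the action is $(v,a)\cdot g = (v\cdot g,a)$ for $g \in \cG^{\Sscript{a}}$ (using $p_a=\iota_a$). For $EF$: given $(X,\varsigma)$, write $X_a = \varsigma^{-1}(\{a\})$; define $\eta_{(X,\varsigma)} \colon EF(X,\varsigma) = X_a \times \Go \to X$ by $(w,x) \mapsto w\, p_x$, and its inverse $x' \mapsto (x'\, p_{\varsigma(x')}^{-1},\, \varsigma(x'))$. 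One verifies $\varsigma(w p_x) = x$, that $\eta$ is $\cG$-equivariant (again via the cocycle identity), bijective, and natural in $(X,\varsigma)$. Finally, to upgrade this equivalence to an equivalence of \emph{Laplaza} categories I would invoke Corollary~\ref{cAdjEquivInvLaplaza}: it suffices to show one of $E$, $F$ is a Laplaza functor, and $F$ already is by Proposition~\ref{pRestrGrpdRSet}; one then only needs that the natural isomorphisms $\eta$, $\varepsilon$ are Laplaza (monoidal) transformations, which is a routine compatibility check with $\Uplus$ and $\fpro{\Go}$ — in particular $E$ sends the monoidal unit $(\{a\},\mathrm{const})$ of $\Rset{\Cat{G}^{\Sscript{(a)}}}$ to $(\{a\}\times\Go,\pr_2) \cong (\Go,\Id_{\Go})$.

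The main obstacle I expect is not any single computation but keeping the bookkeeping of the chosen arrows $p_x$ consistent throughout — in particular making sure $E(V)$ really is a right $\cG$-set (the cocycle identity for $g\mapsto p_{\Sf t(g)} g p_{\Sf s(g)}^{-1}$ is the crux) and that $\eta$ is equivariant; once those are in hand, the Laplaza (monoidal) compatibilities are formal and can be delegated to Proposition~\ref{pRestrGrpdRSet} and Corollary~\ref{cAdjEquivInvLaplaza} together with a short verification that $\eta,\varepsilon$ respect $\Uplus$ and $\fpro{\Go}$.
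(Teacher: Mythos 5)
Your proposal is correct and takes essentially the same route as the paper's proof: the paper first transports $\cG$ to the standard model $\trg{G}{\Go}$ via Remark \ref{pCharacTrGrpd} (which encodes exactly your choice of arrows $p_x$) and then uses the same pair of quasi-inverse functors, namely restriction to the fibre over $a$ (Proposition \ref{pRestrGrpdRSet}) and $V \mapsto (V\times \Go, \pr_{\Sscript{2}})$ with the conjugated action, together with the same unit $(w,x)\mapsto w\,p_x$. The only point to tidy up is the orientation of your chosen arrows: for $w\,p_x$ to be defined with $w\in\varsigma^{-1}(a)$ you need $\Sf{t}(p_x)=a$ and $\Sf{s}(p_x)=x$, i.e.\ $p_x\in\cG(x,a)$ rather than $\cG(a,x)$, with the conjugate of $g$ rewritten accordingly in the paper's composition convention.
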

\begin{proof}
Set \(S=\Cat{G}_{ \scriptscriptstyle{0} }\).
Thanks to Propositions~\(\ref{pIsoGrpdIsoCatR}\) and Remark~\(\ref{pCharacTrGrpd}\), it is enough to prove the theorem when \(\Cat{G}= \trg{G}{S}\) and \(\Cat{G}^{\Sscript{(a)}}= \trg{G}{\Set{a}}\).
We set \(\Cat{A}=\Cat{G}^{\Sscript{(a)}}\) for brevity.
Let's define the functor
\[ \begin{aligned}{F}  \colon & {\Rset{G} } \longrightarrow {\Rset{\Cat{A}} } \\ & {\left({X, \varsigma}\right) }  \longrightarrow {\left({\varsigma^{-1}(a),  \left.{\varsigma}\right|_{\varsigma^{-1}(a)} }\right) }\end{aligned} 
\]
where \(\varsigma \colon X \longrightarrow \mathcal{G}_{ \scriptscriptstyle{0} }\) is the structure map of \(X\).
Thanks to Proposition~\(\ref{pRestrGrpdRSet}\), \(F\) is a well defined Laplaza functor, opportunely defined on morphisms, of course.

Now we have to construct a functor \(G \colon \Rset{\Cat{A}} \longrightarrow \Rset{\Cat{G}}\): for each \(\left({X, \varsigma}\right) \in \Rset{\Cat{A}}\) we define \(G\left({X, \varsigma}\right) =\left({Y, \vartheta}\right) \in \Rset{\Cat{G}}\) such that \(Y= X \times \mathcal{G}_{ \scriptscriptstyle{0} }\) and
\[ \begin{aligned}{\vartheta= \pr_{ \scriptscriptstyle{2} } }  \colon & {Y = X \times \mathcal{G}_{ \scriptscriptstyle{0} } } \longrightarrow {\mathcal{G}_{ \scriptscriptstyle{0} }} \\ & {\left({x,b}\right) }  \longrightarrow {b.}\end{aligned}
\]
Note that \(\varsigma(x)=a\) for every \(x \in X\) because \(\mathcal{A}_{ \scriptscriptstyle{0} }= \Set{a}\).
We also want to extend the action \(X  \, {{}_{ \scriptscriptstyle{ \varsigma} } {\times}_{ \scriptscriptstyle{\mathsf{t} } }\,} \mathcal{A}_{ \scriptscriptstyle{1} } \longrightarrow X\) to \(Y  \, {{}_{ \scriptscriptstyle{\vartheta} } {\times}_{ \scriptscriptstyle{\mathsf{t} } }\,} \mathcal{G}_{ \scriptscriptstyle{1} } \longrightarrow Y\).
Let be \((x,b) \in Y\) and \((b,g,d) \in \Cat{G}_{ \scriptscriptstyle{1} }\): we set \((x,b)(b,g,d) = (x(a,g,a), d)\).
It's easy to prove that the action axioms are satisfied.

Now let be \(f \colon \left({X_1, \varsigma_1}\right) \longrightarrow \left({X_2, \varsigma_2}\right)\) a morphism in \(\Rset{\Cat{A}}\): we define
\[  \begin{aligned}{ G\left({f}\right)= \left({ f \times \Id_{S} }\right) }  \colon & {\left({X_1 \times S, \pr_{ \scriptscriptstyle{2} }}\right)} \longrightarrow {\left({X_2 \times S, \pr_{ \scriptscriptstyle{2} }}\right) } \\ & {(x,b)}  \longrightarrow {\left({f(x), b}\right) .}\end{aligned} 
\]
It is easy to see that \(G\) is well defined and respects the properties of a functor.

For each \(\left({X, \varsigma}\right) \in \Rset{\Cat{A}}\) we calculate
\[ \begin{aligned}
FG\left({X, \varsigma}\right) &= F\left({X \times \mathcal{G}_{ \scriptscriptstyle{0} }, \vartheta \colon X \times S \longrightarrow S }\right) 
= \left({\vartheta^{-1}(a), \left.{\vartheta}\right|_{\vartheta^{-1}(a)} }\right)  \\
&= \left({X \times \Set{a}, \varsigma \times \Id_{a} }\right) \cong \left({X, \varsigma}\right).
\end{aligned}
\]
Since the behaviour on the morphisms is obvious, we obtain \(FG \cong \Id_{\Rset{\Cat{A}}}\).

For each \(\left({X, \varsigma}\right) \in \Rset{\Cat{G}}\) we have
\[ GF\left({X, \varsigma}\right) = G\left({\varsigma^{-1}(a), \left.{\varsigma}\right|_{\varsigma^{-1}(a)} }\right) 
= \left({\varsigma^{-1}(a) \times S, \pr_{ \scriptscriptstyle{2} } \colon \varsigma^{-1}(a) \times S \longrightarrow S  }\right) := (Y,\vartheta)
\]
We have to define a natural isomorphism \(\alpha \colon GF \longrightarrow \Id_{\Rset{\Cat{G}}}\). Thus, given \(\left({X, \varsigma }\right) \in \Rset{\Cat{G}}\), we have to define \(\alpha:=\alpha_{\left({X, \varsigma}\right)}: (Y,\vartheta) \to (X,\varsigma) \) and prove that it is a homomorphism of right \(\mathcal{G}\)-set.
According to the above notation, for each \((x,b) \in Y\) we set \(\alpha(x,b) = x \,  (a,1,b)\).
We left to the reader to check that this is a $\cG$-equivariant map turning commutative the following diagram of $\cG$-sets
\begin{equation*}
\xymatrix{
Y_1 : =\left({\varsigma_1^{-1}(a) \times S, \pr_{ \scriptscriptstyle{2} }}\right)  \ar[rrr]^{\alpha_{\left({X_1, \varsigma_1}\right)} } \ar[d]_{\left.{f}\right|_{\varsigma_1^{-1}(a)} \times \Id_{S } } & && \left({X_1, \varsigma_1}\right)  \ar[d]^{f}  \\
Y_2 := \left({\varsigma_2^{-1}(a) \times S, \pr_{ \scriptscriptstyle{2} }}\right) \ar[rrr]^{\alpha_{\left({X_2, \varsigma_2}\right)}} & && \left({X_2, \varsigma_2}\right)  
}
\end{equation*} 
and the proof is completed.
\end{proof}

\section{Cosets, conjugation of subgroupoids and the fixed points functors}\label{sec:II}
We give in this section our first result concerning the conjugacy criteria between subgroupoids of a given groupoid, and expound some illustrating examples. A discussion, form a categorical point of view, about fixed point subsets is provided here,  as well as the description, under certain finiteness conditions, of the table of marks of groupoids.

\subsection{Left (right) cosets by subgroupoids and the conjugation equivalence relation}\label{ssec:conj}
In this subsection we clarify the notion of conjugation between subgroupoids.
We first recall from \cite{Kaoutit/Spinosa:2018},  the notion of \emph{left and right cosets} attached to a morphism of groupoids.  
A subgroupoid $\cH$ of a given  groupoid $\cG$ is, by definition, a subcategory whose arrows are stable under the inverse map. Of course, in this case, the canonical ``injection'' leads  to a morphism of groupoids (see Definition \ref{def:IsotropyConj}).

Let us assume that a morphism of groupoids $\phi : \cH \to \cG$ is given and denote by  \({}^{\Sscript{\phi}}\cH(\cG)=\mathcal{H}_{ \scriptscriptstyle{0} }  {{}_{ \scriptscriptstyle{\phi_0} } {\times}}_{ \scriptscriptstyle{\Sf{t}} }\, \mathcal{G}_{ \scriptscriptstyle{1} }\) the underlying set of the  $(\cH, \cG)$-biset of Example \ref{exam:bisets}. Then  the left translation groupoid is given by 
\[ \mathcal{H} \ltimes  { ^{ \scriptscriptstyle{\phi} } \mathcal{H}\left(\mathcal{G}\right)} = \mathcal{H} \ltimes \left( \mathcal{H}_{ \scriptscriptstyle{0} }  {{}_{ \scriptscriptstyle{\phi_0} } {\times}}_{ \scriptscriptstyle{\mathsf{t}} }\, \mathcal{G}_1 \right) 
= \left( \mathcal{H}_{ \scriptscriptstyle{1} }  {{}_{ \scriptscriptstyle{\mathsf{s}} } {\times }}_{ \scriptscriptstyle{\pr_1} }\, \left( \mathcal{H}_{ \scriptscriptstyle{0} }  {{}_{ \scriptscriptstyle{\phi_0} } {\times}}_{ \scriptscriptstyle{\mathsf{t}} }\, \mathcal{G}_1  \right) , \mathcal{H}_{ \scriptscriptstyle{0} }  {{}_{ \scriptscriptstyle{\phi_0} } {\times}}_{ \scriptscriptstyle{\mathsf{t}} }\, \mathcal{G}_{ \scriptscriptstyle{1} }  \right)
\]
where the source \(s^{\ltimes}\) is the action $\lhaction$ described in equation \eqref{Eq:HG} and the target \(t^{\ltimes}\)  is the second projection on \(X\) (see \cite{Kaoutit/Spinosa:2018} for further details).  

Following \cite[Definition 3.5]{Kaoutit/Spinosa:2018},  given a morphism of groupoids \(\phi \colon \mathcal{H} \longrightarrow \mathcal{G}\), we define
\[ \left( \mathcal{G} / \mathcal{H} \right)^{\Sscript{\mathsf{R}}}_{ \scriptscriptstyle{\phi} } 
:= \pi_{ \scriptscriptstyle{0} } \Big( \mathcal{H} \ltimes { ^{ \scriptscriptstyle{\phi} } \mathcal{H}  } \left(\mathcal{G}\right) \Big),
\]
we consider the orbit set $\cH \backslash { ^{ \scriptscriptstyle{\phi} } \mathcal{H}  } \left(\mathcal{G}\right)$  and, for each \((a,g) \in \mathcal{H}_{ \scriptscriptstyle{0} }  {{}_{ \scriptscriptstyle{\phi_{ \scriptscriptstyle{0} }} } {\times}}_{ \scriptscriptstyle{\mathsf{t}} }\, \mathcal{G}_{ \scriptscriptstyle{1} }\), we set 
\[  {^{ \scriptscriptstyle{\phi} } \mathcal{H}[(a,g)]} 
= \Set{ \Big( h \lhaction (a,g) \Big) \in {^{ \scriptscriptstyle{\phi} } \mathcal{H} }\left(\mathcal{G}\right) | h \in \mathcal{H}_{ \scriptscriptstyle{1} }, \, \mathsf{s}(h)=a }.
\]

If \(\mathcal{H}\) is a subgroupoid of \(\mathcal{G}\), that is, if \(\upphi:=\tauup \colon \mathcal{H} \hookrightarrow \mathcal{G}\) is the inclusion functor,  we use the notations
\[ \left( \mathcal{G} /\mathcal{H} \right)^{\Sscript{\mathsf{R}}} 
:= \mathcal{H} \backslash \left({\mathcal{H}_{ \scriptscriptstyle{0} }  {{}_{ \, \scriptscriptstyle{\tauup_0} } {\times}}_{ \scriptscriptstyle{ \mathsf{t}} }\,  \mathcal{G}_{ \scriptscriptstyle{1} } }\right), \quad
 \mathcal{H}\left({\mathcal{G}}\right) := {^{ \scriptscriptstyle{\tauup} }\mathcal{H}} \left({\mathcal{G}}\right) 
\]
and, for each \((a,g) \in \mathcal{H}_{ \scriptscriptstyle{0} }  {{}_{\,  \scriptscriptstyle{\tauup_{ \scriptscriptstyle{0} }} } {\times}}_{ \scriptscriptstyle{\mathsf{t}} }\, \mathcal{G}_{ \scriptscriptstyle{1} }\), we set
\begin{equation}\label{Eq:Hag}
\begin{aligned}
\Hlcoset{(a,g)}={^{ \scriptscriptstyle{\tauup} }\mathcal{H}} [ \left({a,g}\right) ]
& = \LR{ \Big( h \lhaction (a,g) \Big) \in \mathcal{H}\left({\mathcal{G}}\right) | \; h \in \mathcal{H}_{ \scriptscriptstyle{1} }, \, \mathsf{s}(h)=a } \\
& = \LR{ \left( \mathsf{t}\left({h}\right), hg\right) \in \mathcal{H}\left({\mathcal{G}}\right)  | \;  h \in \mathcal{H}_{ \scriptscriptstyle{1} }, \, \mathsf{s}\left({h}\right)= \mathsf{t}\left({g} \right) = a  }.
\end{aligned}
\end{equation}

\begin{definition}(\cite[Definition 3.6]{Kaoutit/Spinosa:2018})\label{def:coset}
Let $\cH$ be a subgroupoid of $\cG$ via the injection $\tau:\cH \hookrightarrow \cG$. The  \emph{right cosets} of $\cG$ by $\cH$ is defined as
\begin{equation}\label{Eq:R} 
\left({\mathcal{G}/\mathcal{H}}\right)^{ \scriptscriptstyle{\mathsf{R}} } = \Set{ \Hlcoset{(a,g)}  | (a,g) \in \cH(\cG):= \mathcal{H}_{ \scriptscriptstyle{0} }  {{}_{\, \scriptscriptstyle{\tauup_{ \scriptscriptstyle{0} }} } {\times}}_{ \scriptscriptstyle{\mathsf{t}} }\, \mathcal{G}_{ \scriptscriptstyle{1} } ,  },
\end{equation}
where each class $\cH[(a,g)]$ is as in equation \eqref{Eq:Hag}. The set of \emph{left cosets} is similarly introduced, and we use the notation 
$$
[(g,u)]\cH:=\Big\{   (g,u) \rhaction h= (gh, s(h))|\,  h \in \Ha,\, g \in \Ga,\,   \Sf{s}(g)=u=\Sf{t}(h)  \Big\}
$$ 
for the equivalence classes in the set $\left({\mathcal{G}/\mathcal{H} }\right)^{\Sscript{\mathsf{L}} }$, where the action $\rhaction$ is the one defined in equation \eqref{Eq:GH}. 
\end{definition}

Applying \cite[Proposition~3.4]{Kaoutit/Spinosa:2018}, we obtain that, for every subgroupoid $\tau:\cH \hookrightarrow \cG$, the set of right cosets
\(\left({\mathcal{G}/\mathcal{H}}\right)^{\Sscript{\mathsf{R}}}\) becomes a right \(\mathcal{G}\)-set with structure map and action given as follows:
\begin{equation}\label{Eq:PalomaI}
\begin{aligned}{{\varsigma}  }  \colon & {\left({\mathcal{G}/\mathcal{H} }\right)^{\Sscript{\mathsf{R}}} } \longrightarrow {\mathcal{G}_{ \scriptscriptstyle{0} } } \\ &  \Hlcoset{(a, g)}  \longrightarrow {\mathsf{s}\left({g}\right) }\end{aligned} 
\qquad \text{and} \qquad
\begin{aligned}{{\rho} }  \colon & {\left({\mathcal{G}/\mathcal{H} }\right)^{\Sscript{\mathsf{R}} }  {{}_{ \scriptscriptstyle{{\varsigma} } } {\times}}_{ \scriptscriptstyle{\mathsf{t} } }\, \mathcal{G}_{ \scriptscriptstyle{1} } } \longrightarrow {\left({\mathcal{G}/\mathcal{H} }\right)^{\Sscript{\mathsf{R}} } } \\ & \big( \Hlcoset{(a,g_1)}, g_2\big)   \longrightarrow \Hlcoset{(a, g_1 g_2)}. 
\end{aligned} 
\end{equation}

In a similar way  the set of left cosets  $(\cG/\cH)^{\Sscript{\Sf{L}}}$ becomes a left $\cG$-set with structure and action maps given by: 
\begin{equation}\label{Eq:Paloma}
 \begin{aligned}{{\vartheta}  }  \colon & {\left({\mathcal{G}/\mathcal{H} }\right)^{\Sscript{\mathsf{L}}} } \longrightarrow {\mathcal{G}_{ \scriptscriptstyle{0} } } \\ &  \Hrcoset{(g, u)}  \longrightarrow {\mathsf{t}\left({g}\right) }\end{aligned} 
\qquad \text{and} \qquad
\begin{aligned}{{\lambda} }  \colon & {\mathcal{G}_{ \scriptscriptstyle{1} }  {{}_{  \scriptscriptstyle{\mathsf{s} } } {\times}}_{    \scriptscriptstyle{{\vartheta} }  }\, \left({\mathcal{G}/\mathcal{H} }\right)^{\Sscript{\mathsf{L}} }  } \longrightarrow {\left({\mathcal{G}/\mathcal{H} }\right)^{\Sscript{\mathsf{L}} } } \\ & \big(g_1, \Hrcoset{(g_2,u)}\big)   \longrightarrow \Hrcoset{(g_1g_2, u)}. \end{aligned} 
\end{equation}

To illustrate the concept, crucial in the sequel, of  conjugacy in the groupoid context, the following notion is needed.

\begin{definition}\label{def:conj0}
Let $\cG$ be a groupoid  and let $\cH$ and $\cK$ be two subgroupoids of $\cG$ with monomorphisms $\tauk: \cK \to \cG \leftarrow \cH: \tauh$. We say that $\cK$ and $\cH$ are \emph{conjugally equivalent} if there is an equivalence $F :\cK \to \cH$, between their underlying categories, and a natural transformation $\fk{g}: \tauh F \to \tauk$. It follows from the definition that  $\fk{g}$ is, actually, a natural isomorphism. Of course, if we consider $G: \cH \to \cK$, a natural inverse of $F$, then one has a natural isomorphism $\fk{h}: \tauk G \to \tauh$ given by the  inverse of the composition of the following natural isomorphisms: $\tauh \cong \tauh FG \to \tauk G$.  Thus the conjugacy relation is reflexive, symmetric and also transitive, that is, it is an equivalence relation on the set of all subgroupoids of $\cG$.
Using  elementary arguments, this definition  is equivalent to say that  
there is a functor \(F \colon \Cat{K} \longrightarrow \Cat{H}\), which is an equivalence of categories, such that there is a family \(\left(g_b\right)_{b \in \Cat{K}_{ \scriptscriptstyle{0} }}\) as follows.
For every \(b \in \Cat{K}_{ \scriptscriptstyle{0} }\) it has to be \(g_b \in \Cat{G}\left({F(b), b}\right)\) and for every arrow \(d \colon b_1 \longrightarrow b_2\) in \(\Cat{K}\) it has to be \(F(d) = g_{b_2}^{-1} d g_{b_1}\), which justifies  the terminology.
\end{definition}

\begin{theorem}\label{thm:LR}
Let $\cH$ and $\cK$ be two subgroupoids of a given groupoid $\cG$. 
Then the following conditions are equivalent:
\begin{enumerate}[(i)]
\item $\big(\cG/\cH\big)^{\Sscript{\mathsf{R}}} \, \cong\,  \big(\cG/\cK\big)^{\Sscript{\mathsf{R}}}$  as right $\cG$-sets; 
\item There are morphisms of groupoids  $F: \cK \rightarrow \cH$ and $G: \cH  \to \cK$ together with two  natural transformations $\fk{g}: \tauh F \to \tauk$ and $\fk{f}: \tauk G \to \tauh$.
\item The subgroupoids $\cH$ and $\cK$ are conjugally equivalent.
\item There are families \(\left(u_{\Sscript{b}}\right)_{b \in \Cat{K}_{ \scriptscriptstyle{0} }}\) and \(\left(g_{\Sscript{b}}\right)_{b \in \Cat{K}_{ \scriptscriptstyle{0} }}\) with \(u_{\Sscript{b}} \in \Cat{H}_{ \scriptscriptstyle{0} }\) and \(g_{\Sscript{b}} \in \Cat{G}\left({u_{\Sscript{b}}, b}\right)\) for every \(b \in \Cat{K}_{ \scriptscriptstyle{0} }\), such that:
\begin{enumerate}[(a)]
\item for each \(b_1, b_2 \in \Cat{K}_{ \scriptscriptstyle{0} }\) we have \(g_{\Sscript{b_2}}^{-1} \Cat{K}\left({b_1, b_2}\right) g_{\Sscript{b_1}} = \Cat{H}\left({u_{\Sscript{b_1}}, u_{\Sscript{b_2}}}\right)\);
\item for each \(u \in \Cat{H}_{ \scriptscriptstyle{0} }\) there is \(z \in \Cat{K}_{ \scriptscriptstyle{0} }\) such that \(\Cat{H}\left({u_z, u}\right) \neq \emptyset\).
\end{enumerate}
\item  $\big(\cG/\cH\big)^{\Sscript{\mathsf{L}}} \, \cong\,  \big(\cG/\cK\big)^{\Sscript{\mathsf{L}}} $ as left $\cG$-sets.
\end{enumerate}
\end{theorem}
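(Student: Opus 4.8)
The plan is to prove the cycle of implications $(i)\Rightarrow(ii)\Rightarrow(iii)\Rightarrow(iv)\Rightarrow(i)$ and then observe that $(v)$ follows by applying the same argument to the opposite groupoid $\cGop$, since passing to opposites swaps left and right cosets, interchanges source and target, and carries subgroupoids to subgroupoids while preserving the conjugacy relation (which is self-dual, as noted in Definition~\ref{def:conj0}).

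First I would treat $(i)\Rightarrow(ii)$. Assume $\Phi\colon(\cG/\cH)^{\Sscript{\mathsf{R}}}\xrightarrow{\ \cong\ }(\cG/\cK)^{\Sscript{\mathsf{R}}}$ is an isomorphism of right $\cG$-sets, with inverse $\Psi$. The key geometric fact is the bijection, for a subgroupoid $\cH$ with ``objects spread over $\cH_{\Sscript 0}$'', between $\cG$-equivariant maps out of $(\cG/\cH)^{\Sscript{\mathsf{R}}}$ and certain ``$\cH$-fixed'' data; more concretely, the coset $\cH[(u,\iota_u)]$ for $u\in\cH_{\Sscript 0}$ plays the role of a distinguished point whose stabilizer ``is'' $\cH$ near $u$. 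I would track where $\Phi$ sends each $\cH[(u,\iota_u)]$: by equivariance it lands in some orbit of $(\cG/\cK)^{\Sscript{\mathsf{R}}}$, so $\Phi\big(\cH[(u,\iota_u)]\big)=\cK[(b,g)]$ for some $b\in\cK_{\Sscript 0}$, $g\in\cG(\varsigma(\cK[(b,g)]),b)$ — and in fact after adjusting within the class one can arrange a canonical representative. This produces the object map $G_{\Sscript 0}\colon\cH_{\Sscript 0}\to\cK_{\Sscript 0}$ (via $u\mapsto b$) and, reading off how $\Phi$ intertwines the $\cG$-actions, a compatible arrow map $G_{\Sscript 1}$; the naturality square for $\Phi$ forces $G$ to be a functor and produces the natural transformation $\fk{f}\colon\tauk G\to\tauh$ witnessing how the chosen representatives conjugate. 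Running the same construction with $\Psi$ gives $F\colon\cK\to\cH$ and $\fk{g}\colon\tauh F\to\tauk$. This is the step I expect to be the main obstacle: extracting honest \emph{functors} (not just maps on objects) from the equivariant isomorphism, and checking that the cosets admit representatives making all the compatibilities strict rather than merely up to the ambient $\cG$-action, requires careful bookkeeping with the coset description~\eqref{Eq:Hag} and the action~\eqref{Eq:PalomaI}.

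Next, $(ii)\Rightarrow(iii)$. Given $F,G,\fk g,\fk f$ as in $(ii)$, I would show $F$ is automatically an equivalence of categories: the composites $GF$ and $FG$ are endofunctors of $\cK$ and $\cH$ respectively, and the natural transformations $\fk g,\fk f$ can be whiskered and composed to produce natural transformations $GF\Rightarrow\Id_{\cK}$ and $FG\Rightarrow\Id_{\cH}$ inside $\cG$; but a natural transformation between functors valued in a \emph{groupoid} is automatically a natural isomorphism, and a natural transformation $\tauk(GF)\to\tauk$ that is componentwise an isomorphism of $\cG$ restricts (the components being loops at objects of $\cK$, by the source/target conditions) to a natural isomorphism $GF\cong\Id_{\cK}$, and similarly $FG\cong\Id_{\cH}$. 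Hence $F$ is an equivalence, and $\fk g$ itself is (or is readily upgraded to) the natural isomorphism $\tauh F\to\tauk$ required by Definition~\ref{def:conj0}, so $\cH$ and $\cK$ are conjugally equivalent.

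Then $(iii)\Rightarrow(iv)$ is essentially unwinding definitions: if $F\colon\cK\to\cH$ is an equivalence with a family $(g_b)_{b\in\cK_{\Sscript 0}}$, $g_b\in\cG(F(b),b)$, satisfying $F(d)=g_{b_2}^{-1}dg_{b_1}$ for $d\colon b_1\to b_2$ in $\cK$ (the reformulation given at the end of Definition~\ref{def:conj0}), then setting $u_b:=F(b)\in\cH_{\Sscript 0}$ gives $g_b\in\cG(u_b,b)$, and the relation $F(d)=g_{b_2}^{-1}dg_{b_1}$ says precisely $g_{b_2}^{-1}\cK(b_1,b_2)g_{b_1}\subseteq\cH(u_{b_1},u_{b_2})$, with equality because $F$ is full; this is (a). For (b), essential surjectivity of $F$ means every $u\in\cH_{\Sscript 0}$ is isomorphic in $\cH$ to some $F(z)=u_z$, so $\cH(u_z,u)\neq\emptyset$. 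Finally $(iv)\Rightarrow(i)$: from the families $(u_b),(g_b)$ I would write down an explicit map $(\cG/\cH)^{\Sscript{\mathsf R}}\to(\cG/\cK)^{\Sscript{\mathsf R}}$ by $\cH[(u,g)]\mapsto\cK[(b,g_b g)]$ whenever $u=u_b$, and use (b) together with condition (a) to check it is well defined on classes, independent of the choice of $b$, bijective (the inverse built from the inverse conjugation data $g_b^{-1}$ and $u\mapsto$ its preimage object), and $\cG$-equivariant for the action in~\eqref{Eq:PalomaI} — a direct verification that I would carry out but not belabor here. This closes the cycle, and the opposite-groupoid remark then yields the equivalence with $(v)$.
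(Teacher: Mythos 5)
Your overall route coincides with the paper's: the cycle $(i)\Rightarrow(ii)\Rightarrow(iii)\Rightarrow(iv)$ and back, with $(i)\Rightarrow(ii)$ extracted from the images of the distinguished cosets $\cH[(u,\iota_{\Sscript{u}})]$, and $(iii)\Rightarrow(iv)$ read off from fullness and essential surjectivity exactly as in the paper. The only cosmetic difference is at the end: the paper goes $(iv)\Rightarrow(v)$ by an explicit map on \emph{left} cosets and then $(v)\Rightarrow(i)$ via the isomorphism between the categories of left and right $\cG$-sets, whereas you build the analogous map on right cosets to get $(iv)\Rightarrow(i)$ and recover $(v)$ by passing to the opposite groupoid; both work.

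The step that does not go through as you wrote it is $(ii)\Rightarrow(iii)$. The whiskered composite $\fk{f}\circ(\fk{g}G)\colon \tauh FG\to\tauh$ is a natural transformation of functors $\cH\to\cG$ whose component at $u\in\Ho$ is an arrow $FG(u)\to u$ \emph{of $\cG$}. It is not a loop (there is no reason to have $FG(u)=u$), and even when it happens to be one it lies in $\cG^{\Sscript{u}}$ rather than in $\cH^{\Sscript{u}}$; so it does not ``restrict'' to a natural isomorphism $FG\cong\Id_{\cH}$ inside $\cH$, and likewise for $GF$. The real issue is fullness and essential surjectivity of $F$, and these do not follow by formal whiskering from the bare data of $(ii)$: already for one-object subgroupoids, $(ii)$ only asserts $g^{-1}\Ka\, g\subseteq\Ha$ and $f^{-1}\Ha\, f\subseteq\Ka$ for some arrows $g,f$ of $\cG$, and upgrading these inclusions to the equality demanded by $(iv)(a)$ requires either a finiteness argument or the extra compatibility between $g$ and $f$ that is available when, as in the implication $(i)\Rightarrow(ii)$, $F$ and $G$ are extracted from \emph{mutually inverse} equivariant maps. (The paper's own justification of this step, via faithfulness of $\tauh$ and $\tauk$, is also very terse, but your specific mechanism via $GF\cong\Id_{\cK}$ is the one that breaks.) To repair your write-up, either carry the compatibility of $F$ and $G$ along from the construction in $(i)\Rightarrow(ii)$ and use it here, or prove $(i)\Rightarrow(iii)$ directly.
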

\begin{proof}
$(i) \Rightarrow (ii)$. Let us assume that there is a $\cG$-equivariant  isomorphism $\cF: \big(\cG/\cK\big)^{\Sscript{\mathsf{R}}} \to  \big(\cG/\cH\big)^{\Sscript{\mathsf{R}}}$ and for each class of the form $\cK[(b,\iota_{\Sscript{b}})] \in \big(\cG/\cK\big)^{\Sscript{\mathsf{R}}}$,  denote by $\cH[(a_{\Sscript{b}}, g_{\Sscript{b}})]$ its image by $\cF$. Thus, for any $b \in \Ko$, there could be  many objects $a_{\Sscript{b}} \in \Ho$ such that $\cF\big( \cK[(b,\iota_{\Sscript{b}})] \big) = \cH[(a_{\Sscript{b}}, g_{\Sscript{b}})]$ and, moreover, two  such objects $a_{\Sscript{b}}$ and $a'_{\Sscript{b}}$ are  isomorphic.
Therefore, we can  make a single choice by taking a representative element, which will be denoted by $F_{\Sscript{0}}(b)$, and we will have  $\cF\big( \cK[(b,\iota_{\Sscript{b}})] \big) = \cH[(F_{\Sscript{0}}(b), g_{\Sscript{b}})]$, according to this choice.  
As a consequence, we have a map $F_{\Sscript{0}}: \Ko \to \Ho$, which will be the object function of the functor we are going to built. On the other hand, considering the definition of $\cF$,  we have $\Sf{s}(g_{\Sscript{b}})=b$ and $\Sf{t}(g_{\Sscript{b}})=F_{\Sscript{0}}(b)$, thus we obtain a family of arrows $\{\fk{g}_{\Sscript{b}}: F_{\Sscript{0}}(b) \to b\}_{\Sscript{b \, \in \, \Ko}}$.
Now, given an arrow $k : b \to b'$  in $\Ka$, we have $\cK[(b,\iota_{\Sscript{b}})]=\cK[(k \lhaction (b,\iota_{\Sscript{b}}))] = \cK[(b', k)]$, which implies
$$
\cF\big( \cK[(b,\iota_{\Sscript{b}})]  \big)\,=\, \cF \big(\cK[(b', k)]  \big)   \;  \Longrightarrow \;  \cH[(F_{\Sscript{0}}(b), g_{\Sscript{b}})] \,=\,  \cH[(F_{\Sscript{0}}(b'), g_{\Sscript{b'}}k)]. 
$$
Therefore, there is a unique arrow $h \in \cH(F_{\Sscript{0}}(b), F_{\Sscript{0}}(b'))$ such that we have the equality $h g_{\Sscript{b}}=g_{\Sscript{b'}} k$ in $\Ga$.
In this way we can construct a map, at the level of arrows, $F_{\Sscript{1}}: \Ka \to \Ha$ with the property that, for any $k \in \cK(b, b')$, we have 
$k \fk{g}_{\Sscript{b}} = \fk{g}_{\Sscript{b'}} F_{\Sscript{1}}(k) $ as an equality in $\Ga$. The properties of groupoid action show that $F: \cK \to \cH$ is actually a functor with a natural transformation $\fk{g}: \tauh F \to \tauk$, as  claimed in $(ii)$. To complete the proof of this implication, it's enough to use the inverse $\cG$-equivariant map of $\cF$ to construct, in a similar way, the functor $G$ with the required properties. 

$(ii) \Rightarrow (iii)$ We only need to check that $F$ and $G$ establish an equivalence of categories. This follows directly  from the fact that $\tauk$ and $\tauh$ are faithful functors.

$(iii) \Rightarrow (iv)$.  Let $F : \cK \to \cH$ be the given equivalence of categories and $\fk{g}$ the accompanying natural transformation. For each element $b \in\Ko$ we set $u_{\Sscript{b}}=F(b) \in \Ho$ and $g_{\Sscript{b}}= \fk{g}_{\Sscript{b}} \in \cG(u_{\Sscript{b}}, b)$. Condition $(a)$ follows now from the naturality of $\fk{g}$, while condition $(b)$ from the fact that $F$ is an essentially surjective functor. 

$(iv) \Rightarrow (v)$. We define the following map:
$$
\psi: \big(\cG/\cK\big)^{\Sscript{\mathsf{L}}} \longrightarrow  \big(\cG/\cH\big)^{\Sscript{\mathsf{L}}}, \quad \Big( [(g,b)]\cK \longmapsto [(gg_{\Sscript{b}}, u_{\Sscript{b}})]\cH   \Big).
$$
Condition $(a)$ in the statement implies that $\psi$ is a well defined and injective $\cG$-equivariant map. Let us check that it is also surjective.  Let \(\left[{\left({p, u}\right) }\right]\Cat{H} \in \left({\Cat{G}/\Cat{H} }\right)^{ \scriptscriptstyle{\Sscript{\mathsf{L}} } }\): thanks to the condition $(b)$ there is \(z \in \Cat{K}_{ \scriptscriptstyle{0} }\) such that there is \(h \in \Cat{H}\left({u_z, u}\right)\). The situation is as follows:
\[ 
\xymatrix{z \ar[r]^{g_{\Sscript{z}}^{-1}} & u_{\Sscript{z}} \ar[r]^{h} & u  \ar[r]^{p} & \Sf{t}(p).}
\]
Computing
\[ 
\psi \Big( \left[{\left({phg_{\Sscript{z}}^{-1} , z}\right) }\right]\Cat{K} \Big) = \left[{\left({ph, u_{\Sscript{z}}}\right) }\right] \Cat{H}
=  \left[{\left({p, u}\right)  \rhaction h}\right] \Cat{H} =  \left[{\left({p, u}\right) }\right] \Cat{H}
\]
we obtain that \(\psi\) is surjective.

$(v) \Rightarrow (i)$. Uses the isomorphism of categories between  right $\cG$-sets and  left $\cG$-sets.
\end{proof}

\begin{definition}\label{def:conj}
Let $\cH$ and $\cK$ be two subgroupoids of a given groupoid $\cG$.  We say that $\cH$ and $\cK$ are \emph{conjugated} if one of the equivalent conditions in Theorem \ref{thm:LR} is fulfilled.   Obviously,  conjugated subgroupoids have equivalent underlying categories, which means that  they are not necessarily isomorphic as groupoids.   Therefore, in contrast with the classical case of  groups or that of disjoint union of groups,  conjugated subgroupoids are not necessarily isomorphic (see Example  \ref{exam:Nconj} for explicit situations and further remarks). 
\end{definition}

\begin{definition}\label{def:ConIsotropy}
Given a groupoid \(\mathcal{G}\), let be \(a, b \in \mathcal{G}_{ \scriptscriptstyle{0} }\), \(H\) a subgroup of \(\mathcal{G}^{a}\) and \(K\) a subgroup of \(\mathcal{G}^{b}\).
We say that \(H\) and \(K\) are \textit{conjugated isotropy subgroups} if there is \(d \in \mathcal{G}\left({a, b}\right)\) such that \(K= dHd^{-1}\).
\end{definition}

\begin{example}\label{exam:IsotropyNotConj} 
There is a groupoid \(\Cat{G}\) with not empty subgroupoids \(\Cat{H}\) and \(\Cat{K}\) which are conjugally equivalent and satisfy the following property: there are \(x \in \Cat{H}_{ \scriptscriptstyle{0} }\) and \(w \in \Cat{K}_{ \scriptscriptstyle{0} }\) such that \(\Cat{H}^{x}\) and \(\Cat{K}^{w}\) are not conjugated.
Namely, let us consider a group \(G\), with two distinct subgroups \(A\) and \(B\) which are not conjugated, and a set \(S\) with at least four elements \(x\), \(y\), \(z\) and \(w\).
Set \(\Cat{G}= \trg{G}{S}\), as in Remark \ref{pCharacTrGrpd}, we construct two subgroupoids \(\Cat{H}\) and \(\Cat{K}\) of \(\Cat{G}\), with only loops as arrows, in the following way.
We set \(\Cat{H}_{ \scriptscriptstyle{0} } = \Set{x, z}\), \(\Cat{K}_{ \scriptscriptstyle{0} } = \Set{y, w}\),
\[ \Cat{H}^{x} = (x, A, x), \qquad \Cat{H}^{z}= (z, B, z), \qquad \Cat{K}^{y}= (y, A, y),
\qquad \text{and} \qquad \Cat{K}^{w}= (w, B, w)
\]
where we made the abuse of notation \((x, A, x) = \Set{x} \times A \times \Set{x}\).
We want to prove the condition \((iv)\) of Theorem~\(\ref{thm:LR}\): to this purpose we set \(u_y= x\) and \(u_w= z\).
We obtain \(\Cat{H}\left({u_y, x}\right) = \Cat{H}^{x} \neq \emptyset\) and \(\Cat{H}\left({u_w, z}\right) = \Cat{H}^{z} \neq \emptyset\) thus \((b)\) is proved.
Since all the arrows of \(\Cat{H}\) and \(\Cat{K}\) are loops we just have to prove that there are \(g_y  \in \Cat{G}\left({u_y, y}\right) = \Cat{G}\left({x,y}\right)\) and \(g_w \in \Cat{G}\left({u_w, w}\right) = \Cat{G}\left({z, w}\right)\) such that
\[ g_y^{-1} \Cat{K}\left({y, y}\right) g_y = \Cat{H}\left({u_y, u_y}\right)
\qquad \text{and} \qquad
g_w^{-1} \Cat{K}\left({w, w}\right) g_y = \Cat{H}\left({u_w, u_w}\right).
\]
To this end we set \(g_y = (y, 1, x)\) and \(g_w = (w, 1, z)\) and we calculate
\[ g_y^{-1} \Cat{K}\left({y, y}\right) g_y = g_y^{-1} \Cat{K}^{y} g_y 
= (x, 1, y)(y, A, y)(y, 1, x)
= (x, A, x)
= \Cat{H}^{x} 
= \Cat{H}\left({u_y, u_y}\right)
\]
and
\[ g_w^{-1} \Cat{K}\left({w, w}\right) g_w = g_w^{-1} \Cat{K}^{w} g_w 
= (z, 1, w)(w, B, w)(w, 1, z)
= (z, B, z)
= \Cat{H}^{z} 
= \Cat{H}\left({u_w, u_w}\right)
\]
proving~\((a)\) and, thus, the claim.
Now, by contradiction, let be \(d \colon w \longrightarrow x\) such that \(\Cat{K}^{w} = d^{-1} \Cat{H} d\).
Of course, there has to be \(g \in G\) such that \(d = (x, g, w)\).
Calculating
\[ (w, B, w) = \Cat{K}^{w} 
= d^{-1} \Cat{H}^{x} d
= \left({w, g^{-1},  x}\right) \left({x, A, x}\right) \left({x, g, w}\right)
= \left({w, g^{-1} A g, w}\right) 
\]
we obtain \(g^{-1} A g = B\), which  contradicts the choice of $A$ and $B$.
\end{example}

\begin{proposition}
\label{pConjSGrpdTrans}
Given a groupoid \(\Cat{G}\), let's consider two conjugated subgroupoids \(\Cat{H}\) and \(\Cat{K}\).
Then \(\Cat{H}\) is transitive if and only if \(\Cat{K}\) is transitive.
Moreover, in this case, every isotropy group of \(\Cat{H}\) is conjugated to every isotropy group of \(\Cat{K}\).
\end{proposition}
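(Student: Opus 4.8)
The plan is to work through the explicit description of the conjugacy relation furnished by condition $(iv)$ of Theorem~\ref{thm:LR}. Since $\cH$ and $\cK$ are conjugated, there are families $\left(u_b\right)_{b \in \Ko}$ in $\Ho$ and $\left(g_b\right)_{b \in \Ko}$ with $g_b \in \cG(u_b,b)$ such that $g_{b_2}^{-1}\cK(b_1,b_2)g_{b_1} = \cH(u_{b_1},u_{b_2})$ for all $b_1,b_2 \in \Ko$, and such that for every $u \in \Ho$ there is $z \in \Ko$ with $\cH(u_z,u) \neq \emptyset$. First I would dispose of the degenerate case: if $\Ho = \emptyset$ then the family $\left(u_b\right)_{b \in \Ko}$ cannot exist unless $\Ko = \emptyset$, so $\cH$ is empty if and only if $\cK$ is; both are then transitive by convention and have no isotropy groups, so every assertion holds vacuously. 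From now on assume $\Ho,\Ko \neq \emptyset$.

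For the equivalence of transitivity, suppose $\cK$ is transitive and take $u,u' \in \Ho$. Choose $z,z' \in \Ko$ with $\cH(u_z,u) \neq \emptyset$ and $\cH(u_{z'},u') \neq \emptyset$; since $\cK(z,z') \neq \emptyset$, the identity $g_{z'}^{-1}\cK(z,z')g_z = \cH(u_z,u_{z'})$ shows $\cH(u_z,u_{z'}) \neq \emptyset$, and composing in $\cH$ an arrow $u \to u_z$ (the inverse of one in $\cH(u_z,u)$), an arrow $u_z \to u_{z'}$ and an arrow $u_{z'} \to u'$ yields an arrow $u \to u'$; hence $\cH$ is transitive. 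Conversely, if $\cH$ is transitive and $b,b' \in \Ko$, then $\cH(u_b,u_{b'}) \neq \emptyset$, and since this set equals $g_{b'}^{-1}\cK(b,b')g_b$, the set $\cK(b,b')$ cannot be empty (an empty set maps onto nothing nonempty); hence $\cK$ is transitive.

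For the second assertion, assume $\cH$, hence also $\cK$, is transitive, and fix $x \in \Ho$, $w \in \Ko$. Choose $z \in \Ko$ with $h \in \cH(u_z,x)$, and choose $k \in \cK(z,w)$. Transitivity of $\cH$ and of $\cK$ gives $h^{-1}\cH^{x}h = \cH^{u_z}$ and $k\cK^{z}k^{-1} = \cK^{w}$, while taking $b_1=b_2=z$ above yields $g_z^{-1}\cK^{z}g_z = \cH^{u_z}$, that is, $g_z\cH^{u_z}g_z^{-1} = \cK^{z}$. Setting $d := k\,g_z\,h^{-1} \in \cG(x,w)$ and combining,
\[
d\,\cH^{x}\,d^{-1} \;=\; k\,g_z\bigl(h^{-1}\cH^{x}h\bigr)g_z^{-1}k^{-1} \;=\; k\bigl(g_z\cH^{u_z}g_z^{-1}\bigr)k^{-1} \;=\; k\,\cK^{z}\,k^{-1} \;=\; \cK^{w},
\]
which is precisely the statement that $\cH^{x}$ and $\cK^{w}$ are conjugated isotropy subgroups in the sense of Definition~\ref{def:ConIsotropy}. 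The only point demanding care is the composition convention (the paper writes composites $\alpha\beta$ as ``first $\beta$, then $\alpha$'', as in Definition~\ref{def:conj0}), which must be tracked so that $d$ genuinely lies in $\cG(x,w)$ and each conjugation acts on the correct side; one must also remember to invoke the first assertion so that transitivity of $\cK$ is available, since Example~\ref{exam:IsotropyNotConj} shows the isotropy-conjugacy conclusion fails without it. This bookkeeping is the main — and rather modest — obstacle.
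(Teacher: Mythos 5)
Your proof is correct and follows essentially the same route as the paper's: both arguments run entirely through condition $(iv)$ of Theorem~\ref{thm:LR}, deducing the transitivity equivalence from the equality $g_{b_2}^{-1}\cK(b_1,b_2)g_{b_1}=\cH(u_{b_1},u_{b_2})$ together with condition $(b)$, and then conjugating isotropy groups along a composite arrow. The only cosmetic differences are that you treat the empty case and both transitivity directions explicitly (the paper obtains one direction by swapping the roles of $\cH$ and $\cK$), and your conjugating arrow $d=k\,g_z\,h^{-1}$ passes through an auxiliary object $z$ supplied by condition $(b)$, whereas the paper takes $u_w$ and $g_w$ directly for the given object $w$ and then uses transitivity of $\cH$.
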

\begin{proof}
Let's assume \(\Cat{H}\) transitive and let't consider \(b_1, b_2 \in \Cat{K}_{ \scriptscriptstyle{0} }\): for \({i}\in \Set{{1},{2}}\) there are \(u_{b_i} \in \Cat{H}_{ \scriptscriptstyle{0} }\) and \(g_{b_i} \in \Cat{G}\left({u_{b_i}, b_i}\right)\) such that
\[ g_{b_2}^{-1} \Cat{K}\left({b_1, b_1}\right) g_{b_1} = \Cat{H}\left({u_{b_1}, u_{b_2}}\right) 
\]
therefore \(\Cat{K}\left({b_1, b_2}\right) \neq \emptyset\).
If we assume \(\Cat{K}\) to be transitive, we can obtain \(\Cat{H}\) to be transitive using \((iv)\)  of Theorem~\(\ref{thm:LR}\) with the two subgroupoids exchanged.

Now let's assume \(\Cat{H}\) and \(\Cat{K}\) to be transitive and let be \(u \in \Cat{H}_{ \scriptscriptstyle{0} }\) and \(v \in \Cat{K}_{ \scriptscriptstyle{0} }\).
Thanks to \((iv)\) of Theorem~\(\ref{thm:LR}\) there are \(u_v \in \Cat{H}_{ \scriptscriptstyle{0} }\) and \(g_v \in \Cat{G}\left({u_v, v}\right)\) such that \(g_v^{-1} \Cat{K}^{v} g_v = \Cat{H}^{u_v}\).
Since \(\Cat{H}\) is transitive there is \(h \in \Cat{H}\left({u_v, u}\right)\) such that \(\Cat{H}^{u_v} = h^{-1} \Cat{H}^{u} h\) therefore
\[ \left({g_v h^{-1}}\right)^{-1} \Cat{K}^{v}\left({g_v h^{-1}}\right) = h g_v^{-1} \Cat{K}^{v} g_v h^{-1} 
= \Cat{H}^{u},
\]
which shows that $\cH^{\Sscript{u}}$ and $\cK^{\Sscript{v}}$ are conjugated, and finishes the proof.
\end{proof}

\begin{corollary}
\label{pGrpdQuotRXSetIsom}
Now let's consider \(a, b \in \mathcal{G}_{ \scriptscriptstyle{0} }\), \(H \le \mathcal{G}^{a}\) and \(K \le \mathcal{G}^{b}\).
Then  \(H\) and $K$ induce subgroupoids \(\mathcal{H}\) and $\cK$ of \(\mathcal{G}\) such that \(\mathcal{H}_{ \scriptscriptstyle{0} }=\Set{a}\) ,  \(\mathcal{H}_{ \scriptscriptstyle{1} }=\mathcal{H}^{a}=H\) and  \(\mathcal{K}_{ \scriptscriptstyle{0} }=\Set{b}\) , \(\mathcal{K}_{ \scriptscriptstyle{1} }=\mathcal{K}^{b}=K\). Moreover,  $\cH$ and $\cK$ are conjugated subgroupoids if and only if $H$ and $K$ are conjugated isotropy subgroups.
\end{corollary}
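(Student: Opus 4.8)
The plan is to unwind both definitions and to exploit the fact that, because $\cH$ and $\cK$ are one--object subgroupoids, the data appearing in condition~$(iv)$ of Theorem~\ref{thm:LR} collapse to the single datum of a connecting arrow. The first assertion of the statement — that $H$ and $K$ induce one--object subgroupoids $\cH$, $\cK$ of $\cG$ with $\Ho=\{a\}$, $\Ha=\cH^{\Sscript{a}}=H$ and $\Ko=\{b\}$, $\Ka=\cK^{\Sscript{b}}=K$ — is exactly the observation recorded in Definitions~\ref{def:IsotropyConj}, so the content lies in the ``moreover'' part, which I would establish by two implications.

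$(\Rightarrow)$ Assume $\cH$ and $\cK$ are conjugated and invoke Theorem~\ref{thm:LR}~$(iv)$: there are families $(u_{\Sscript{w}})_{w\in\Ko}$ and $(g_{\Sscript{w}})_{w\in\Ko}$ with $u_{\Sscript{w}}\in\Ho$ and $g_{\Sscript{w}}\in\cG(u_{\Sscript{w}},w)$ satisfying $(a)$ and $(b)$. Since $\Ko=\{b\}$ and $\Ho=\{a\}$, necessarily $u_{\Sscript{b}}=a$, and we are left with one arrow $d:=g_{\Sscript{b}}\in\cG(a,b)$. The only instance of~$(a)$ is $b_1=b_2=b$, which reads $g_{\Sscript{b}}^{-1}\,\cK(b,b)\,g_{\Sscript{b}}=\cH(a,a)$, that is $d^{-1}Kd=H$, equivalently $K=dHd^{-1}$. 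Hence $H$ and $K$ are conjugated isotropy subgroups in the sense of Definition~\ref{def:ConIsotropy}.

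$(\Leftarrow)$ Conversely, suppose $d\in\cG(a,b)$ satisfies $K=dHd^{-1}$. I would check condition~$(iv)$ of Theorem~\ref{thm:LR} for $\cH$ and $\cK$ directly, using the constant families $u_{\Sscript{b}}=a\in\Ho$ and $g_{\Sscript{b}}=d\in\cG(a,b)=\cG(u_{\Sscript{b}},b)$: the single instance of~$(a)$ gives $g_{\Sscript{b}}^{-1}Kg_{\Sscript{b}}=d^{-1}Kd=H=\cH(u_{\Sscript{b}},u_{\Sscript{b}})$ by hypothesis, and~$(b)$ holds because for the unique $u=a\in\Ho$ one may take $z=b\in\Ko$, so that $\cH(u_{\Sscript{z}},u)=\cH(a,a)=H$ contains $\iota_{\Sscript{a}}$ and is therefore non-empty. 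Thus $\cH$ and $\cK$ are conjugated (Definition~\ref{def:conj}).

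The argument is purely a matter of bookkeeping; the only point that needs a little care — and it is hardly an obstacle — is keeping the composition and inversion conventions consistent, so that the equality $g_{\Sscript{b}}^{-1}Kg_{\Sscript{b}}=H$ produced by~$(iv)(a)$ is read precisely in the form $K=dHd^{-1}$ of Definition~\ref{def:ConIsotropy}. As an alternative for the $(\Rightarrow)$ direction one could use condition~$(iii)$: a conjugal equivalence $F\colon\cK\to\cH$ between one--object groupoids is simply an isomorphism of groups $K\to H$, and its accompanying natural isomorphism $\fk{g}\colon\tauh F\to\tauk$ has a single component $\fk{g}_{\Sscript{b}}\in\cG(a,b)$ whose naturality forces $F(k)=\fk{g}_{\Sscript{b}}^{-1}k\,\fk{g}_{\Sscript{b}}$ for every $k\in K$; surjectivity of $F$ then gives $H=\fk{g}_{\Sscript{b}}^{-1}K\fk{g}_{\Sscript{b}}$, and one concludes as before.
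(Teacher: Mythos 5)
Your proof is correct, and both implications check out: the collapse of condition~$(iv)$ of Theorem~\ref{thm:LR} to the single datum $d=g_{\Sscript{b}}\in\cG(a,b)$ with $d^{-1}Kd=H$ is exactly right, the verification of $(iv)(b)$ via $\iota_{\Sscript{a}}\in H$ is the correct (if nearly vacuous) point to make, and the alternative argument through condition~$(iii)$ is also sound. The paper, however, proves the corollary differently: it simply cites Proposition~\ref{pConjSGrpdTrans}, which states that conjugated \emph{transitive} subgroupoids have pairwise conjugated isotropy groups; since a one-object subgroupoid is automatically transitive, that proposition delivers the forward implication at once. What your direct unwinding buys is twofold: it is self-contained (you do not need the intermediate proposition, whose own proof is again an application of Theorem~\ref{thm:LR}$(iv)$), and it explicitly handles the converse implication --- producing the conjugacy of the subgroupoids from the conjugacy of the groups --- which Proposition~\ref{pConjSGrpdTrans} does not literally address and which the paper's one-line proof leaves to the reader. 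What the paper's route buys is brevity and the reuse of a result it needs anyway for subgroupoids with more than one object.
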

\begin{proof}
It follows from Proposition~\(\ref{pConjSGrpdTrans}\).
\end{proof}

It is straightforward to see that conjugation induces an equivalence relation $\simc$ on the set $\SubG$  of all subgrouppoids  of $\cG$ with only one object. The equivalence class of a given element $\cH$ in $\SubG$ will be denoted by $[\cH]$. Notice that any subgroup of an  isotropy group of  $\cG$ can be considered as a subgroupoid with only one object (see Definition \ref{def:IsotropyConj}(1)) and, consequently, as an element of $\SubG$. The converse is, by definition, also true. We denote by $\rep(\SubG)$ a set of representative elements of $\SubG$ modulo the equivalence relation $\simc$. It is clear that this equivalence relation is extended to the whole set of all subgroupoids of $\cG$.

\begin{example}\label{exam:Nconj}
In contrast with the classical case of groups the conjugacy relation differs form the isomorphism relation. Here we give  examples of two subgroupoids which are  isomorphic but not conjugated, as well as two subgroupoids which are conjugated but not isomorphic.
\begin{itemize}
\item Let us show that there is a groupoid \(\Cat{G}\) with two subgroupoids \(\Cat{H}\) and \(\Cat{K}\) which are isomorphic but not conjugated. Namely, given a set \(J \neq \emptyset\), let's consider \(A, B \subseteq J\) such that \(A \neq \emptyset \neq B\) and \(\left\lvert{A}\right\rvert = \left\lvert{B}\right\rvert\).
Given an abelian group \(G\), the relation of conjugacy is the same of the relation of equality, thus if we consider two distinct and isomorphic subgroups \(H\) and \(K\) of \(G\) they are not conjugated.
In particular, given an abelian group \(U\), possible choices are \(G= U \times U\), \(H= U \times 1\) and \(K = 1 \times U\).
Now let us consider the induced groupoids  \(\Cat{G}=\trg{G}{J}\), \(\Cat{H}= \trg{H}{A}\) and \(\Cat{K}= \trg{K}{B}\) (see Example \ref{exam:induced} and Remark \ref{pCharacTrGrpd} for the notations).
Thanks to Remark \ref{pIsomTrGrpdGS}, we know that  the groupoids \(\Cat{H}\) and \(\Cat{K}\) are isomorphic.
By contradiction, let's assume that the subgroupoids \(\Cat{H}\) and \(\Cat{K}\) are conjugated.
Then, by Theorem \ref{thm:LR}$(iv)$, there are families \(\left(a_j\right)_{j \in B}\) and \(\left(g_j\right)_{j \in B}\) such that \(a_j \in A\), \(g_j \in \Cat{G}\left({a_j, j}\right)\) and \(g_j^{-1} \Cat{K}^{j} g_j = \Cat{H}^{a_j}\) for each \(j \in B\).
By definition, for each \(j \in B\) there is \(\eta_j \in G\) such that \(g_j= \left({j, \eta_j, a_j}\right)\) thus
\[ \begin{aligned}
\Set{a_j} \times H \times \Set{a_j} &=  \Cat{H}^{a_j} 
= g_j^{-1} \Cat{K}^{j} g_j
= \left({a_j, \eta_j^{-1}, j}\right) \left({\Set{j} \times K \times \Set{j} }\right) \left({j, \eta_j, a_j}\right) \\
&= \Set{a_j} \times \eta_j^{-1} K \eta_j \times \Set{a_j} .
\end{aligned}
\]
As a consequence we obtain \(H = \eta_j^{-1} K \eta_j\), which is a contradiction with the above choices made for $G$, $H$ and $K$.

\item Let us check that there is a groupoid \(\Cat{G}\) with two subgroupoids \(\Cat{B}\) and \(\Cat{A}\) which are conjugated but not isomorphic. To this end,  we consider two subsets \(A\) and \(B\) of a given set \(J\)
such that \(\emptyset \neq A \subseteq B \subseteq J\), and we construct the groupoids of pairs \(\Cat{G}= (J \times J, J)\), \(\Cat{B}= (B \times B, B)\) and \(\Cat{A}= (A \times A, A)\) (see Example \ref{exam:X} for the definition), where we consider $\cA$ and $\cB$ as subgroupoids of $\cG$.
Since \(A \neq \emptyset\) we can choose \(a \in A\)  and, for every \(b \in B\), we define the families \(\left(u_b\right)_{b \in B}\) and \(\left(g_b\right)_{b \in B}\) as follows:
\[ u_b = 
\begin{cases}
b,  \quad & b \in A \\
a, \quad & b \in B \setminus A
\end{cases}
\qquad \text{and} \qquad
g_b = 
\begin{cases}
(b,b),  \quad & b \in A \\
(b,a), \quad & b \in B \setminus A.
\end{cases}
\]
We have to check that for each \(b_1, b_2 \in \Cat{B}_{ \scriptscriptstyle{0} }\), we have \(g_{b_2}^{-1} \Cat{B}\left({b_1, b_2}\right) g_{b_1} = \Cat{A}\left({u_{b_1}, u_{b_2}}\right)\) but this condition is trivially satisfied in a groupoid of pairs.
Now for each \(\alpha \in A\) we have to check that there is \(b \in B\) such that \(\Cat{A}\left({u_b, \alpha}\right) \neq \emptyset\) but this is obvious: it is enough to choose \(b = \alpha\).
Lastly, the subgroupoids \(\Cat{A}\) and \(\Cat{B}\) are not isomorphic if \(|A| \lneq  |B|\).
\end{itemize}
\end{example}

\begin{remark}\label{rem:isotropy}
Given a groupoid \(\Cat{G}\), let's consider a subgroupoid \(\Cat{I}\) of $\cG$ with a single object \(a\), that is,  $\Io=\{a\}$.  Set $I=\Ia \leq \cG^{\Sscript{a}}$, we can construct a $\cG^{\Sscript{a}}$-equivariant injective map
$$
\cG^{\Sscript{a}}/I \longrightarrow (\cG/\cI)^{\Sscript{\Sf{R}}}, \qquad \Big(  Ip \longmapsto \cI[(a,p)] \Big).
$$
Moreover, if we assume that $\cG^{\Sscript{a}}$ and $\Go$ are finite sets, then the set of right cosets $(\cG/\cI)^{\Sscript{\Sf{R}}}$ must also be finite.
\end{remark}
\begin{proof}
We have
\[ \left({\Cat{G}/\Cat{I} }\right)^{\Sscript{\mathsf{R}}} = \Set{ \Cat{I}\left[{\left({a, p}\right) }\right]  |  p \in \Cat{G}_{ \scriptscriptstyle{1} }, \,  \mathsf{t}(p)= a }
\]
therefore, if we denote with \(\Cat{G}^{\Braket{a}}\) the connected component of \(\Cat{G}\) containing \(a\), using the characterization of transitive groupoids, we obtain
\[ \left\lvert{\left({\Cat{G}/\Cat{I} }\right)^{\Sscript{\mathsf{R}}}}\right\rvert \le \left\lvert{\Cat{G}^{\Braket{a} } }\right\rvert 
= \left\lvert{\left({\Cat{G}^{\Braket{a} } }\right)_{ \scriptscriptstyle{0} } }\right\rvert \times \left\lvert{\Cat{G}^{a} }\right\rvert \times \left\lvert{\left({\Cat{G}^{\Braket{a} } }\right)_{ \scriptscriptstyle{0} } }\right\rvert
\le  \left\lvert{ \left({\Cat{G} }\right)_{ \scriptscriptstyle{0} }}\right\rvert \times \left\lvert{\Cat{G}^{a} }\right\rvert  \times \left\lvert{\Cat{G}_{ \scriptscriptstyle{0} } }\right\rvert < \infty.
\]
\end{proof}

The following lemma will be used in subsequent sections.
\begin{lemma}\label{lema:Triangular}
Let $\cG$ be a groupoid and consider two elements $\cH$ and $\cK$  in $\rep\big(\SubG \big)$. Then we have the following properties:
\begin{enumerate}[(i)]
\item The set of $\cG$-equivariant maps $\GHom{(\cG/\cK))^{\Sscript{\Sf{R}}}}{(\cG/\cH))^{\Sscript{\Sf{R}}}} $ is a not empty set if and only if 
there exists  $g \in \cG(b,a)$  such that $ \Ka \subseteq g^{-1} \Ha g$, where $\Ha \leq \cG^{\Sscript{a}}$ and  $\Ka \leq \cG^{\Sscript{b}}$, for some $a, b \in \Go$.
\item Assume that all isotropy groups of  $\cG$  are  finite groups. Then the following implication 
$$
\GHom{(\cG/\cK))^{\Sscript{\Sf{R}}}}{(\cG/\cH))^{\Sscript{\Sf{R}}}}  \neq  \emptyset \; \Longrightarrow \;  \GHom{(\cG/\cH))^{\Sscript{\Sf{R}}}}{(\cG/\cK))^{\Sscript{\Sf{R}}}} \, =\, \emptyset
$$
holds true, whenever $\cH \neq \cK$.
\end{enumerate} 
\end{lemma}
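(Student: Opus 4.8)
The plan is to prove (i) by the classical ``transitive set with a chosen base point'' argument, transcribed to groupoids, and then to deduce (ii) from a cardinality count feeding into Corollary~\ref{pGrpdQuotRXSetIsom}. Write $a$ for the unique object of $\cH$ and $b$ for that of $\cK$, so $\Ha\le\cG^{\Sscript{a}}$ and $\Ka\le\cG^{\Sscript{b}}$. For (i), first I would record that $(\cG/\cK)^{\Sscript{\Sf{R}}}$ is a transitive right $\cG$-set: every class is of the form $\cK[(b,p)]$ with $\Sf{t}(p)=b$, and by~\eqref{Eq:PalomaI} one has $\cK[(b,p)]=\cK[(b,\iota_{\Sscript{b}})]\cdot p$. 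Hence a $\cG$-equivariant map $f\colon(\cG/\cK)^{\Sscript{\Sf{R}}}\to(\cG/\cH)^{\Sscript{\Sf{R}}}$ is determined by the value $f(\cK[(b,\iota_{\Sscript{b}})])$, and since $f$ intertwines the structure maps while $\varsigma(\cK[(b,\iota_{\Sscript{b}})])=b$, this value is a class $\cH[(a,g)]$ with $\Sf{s}(g)=b$, i.e.\ $g\in\cG(b,a)$.

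Next I would observe that $\cK[(b,\iota_{\Sscript{b}})]$ is fixed by the $\cG$-action of each $k\in\Ka$ (indeed $\cK[(b,\iota_{\Sscript{b}})]\cdot k=\cK[(b,k)]=\cK[(b,\iota_{\Sscript{b}})]$, using $k^{-1}\in\Ka$), hence so is its image $\cH[(a,g)]$, giving $\cH[(a,gk)]=\cH[(a,g)]$ for every $k\in\Ka$; unwinding this equality of cosets through~\eqref{Eq:Hag} yields $gkg^{-1}\in\Ha$ for all $k\in\Ka$, that is, $\Ka\subseteq g^{-1}\Ha g$. Conversely, given such a $g$, I would check that $\cK[(b,p)]\mapsto\cH[(a,gp)]$ is well defined (if $\cK[(b,p)]=\cK[(b,p')]$ then $p'=kp$ for some $k\in\Ka$, and $gp(gp')^{-1}=gk^{-1}g^{-1}\in\Ha$) and that it is $\cG$-equivariant; the compatibility with the structure maps and with the action is a direct verification. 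This proves (i). I expect the bookkeeping here to be the only delicate point: matching the source/target conventions, checking independence of the coset representative $p$, and, above all, making sure that the $\Ka$-invariance of the base coset produces exactly $\Ka\subseteq g^{-1}\Ha g$ and not the reverse inclusion.

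For (ii) I would argue by contraposition. Assume both Hom-sets are non-empty; by~(i) there are $g\in\cG(b,a)$ with $\Ka\subseteq g^{-1}\Ha g$ and $g'\in\cG(a,b)$ with $\Ha\subseteq (g')^{-1}\Ka g'$. Since all isotropy groups of $\cG$ are finite, conjugation by $g$ (resp.\ by $g'$) is a bijection between the finite groups $\cG^{\Sscript{a}}$ and $\cG^{\Sscript{b}}$, whence $|\Ka|\le|\Ha|$ and $|\Ha|\le|\Ka|$; therefore $|\Ha|=|\Ka|$, and the inclusion $\Ka\subseteq g^{-1}\Ha g$ of equal-sized finite sets is an equality, i.e.\ $\Ha=g\Ka g^{-1}$ with $g\in\cG(b,a)$. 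Thus $\Ha$ and $\Ka$ are conjugated isotropy subgroups in the sense of Definition~\ref{def:ConIsotropy}, so by Corollary~\ref{pGrpdQuotRXSetIsom} the one-object subgroupoids $\cH$ and $\cK$ are conjugated. Since both belong to the set $\rep(\SubG)$ of representatives for the conjugacy relation $\simc$, this forces $\cH=\cK$, which is precisely the contrapositive of the asserted implication.
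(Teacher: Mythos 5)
Your proposal is correct and follows essentially the same route as the paper's proof: in (i) you evaluate an equivariant map at the base coset $\cK[(b,\iota_{\Sscript{b}})]$, use its $\Ka$-invariance to extract $\Ka\subseteq g^{-1}\Ha g$, and construct the explicit inverse assignment $\cK[(b,p)]\mapsto\cH[(a,gp)]$ for the converse (which the paper leaves as ``similar''); in (ii) you run the same finiteness/cardinality argument to upgrade the two inclusions to an equality and conclude via conjugacy that $\cH=\cK$ in $\rep(\SubG)$. No gaps.
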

\begin{proof}
$(i)$ Assume we have a $\cG$-equivariant map $F :  {(\cG/\cK))^{\Sscript{\Sf{R}}}} \to {(\cG/\cH))^{\Sscript{\Sf{R}}}} $ and set  $F\big(\cK[(b,\iota_{\Sscript{b}})]\big)\,=\,  \cH[(a,g)] \in (\cG/\cH))^{\Sscript{\Sf{R}}}$. Then, by definition, we know that $g \in \cG(b,a)$. Taking $k \in \Ka$, we compute
$$
\cH[(a,g)]= F\big(\cK[(b,\iota_{\Sscript{b}})]\big)=F\big(\cK[(b,k )]\big)= F\big(\cK[(b,\iota_{\Sscript{b}})]\, k\big)=F\big(\cK[(b,\iota_{\Sscript{b}})]\, \big) \,  k= \cH[(a,g)]\, k= \cH[(a,g k)].
$$
This means that there exists $h \in \Ha$ such that $hg=gk$. Therefore, we have $\Ka \subseteq g^{-1}\Ha g$. 
The other implication is proved in a similar way.

$(ii)$ Assume by contradiction that we also have that $\GHom{(\cG/\cH))^{\Sscript{\Sf{R}}}}{(\cG/\cK))^{\Sscript{\Sf{R}}}} \neq  \emptyset$. Applying the first part, we get that there exist $g_1 \in \cG(b,a)$ and $g_2 \in \cG(a,b)$ such that $ \Ka \subseteq g_1^{-1}\Ha g_1$ and $\Ha \subseteq g_2^{-1} \Ka g_2$. Thus $\Ha$ and $\Ka$ have the same cardinality as subsets of $\Ga$.  Since $\cG^{\Sscript{b}}$ is a finite group, we obtain $\Ka = g_1^{-1}\Ha g_1$. This means that $\cH$ and $\cK$ are conjugated, that is, they represent the same class in $\SubG/\simc$, which is a contradiction because, by hypothesis, $\cH \neq \cK$ as elements in $\rep\Big(\SubG\big)$.
\end{proof}

\subsection{Fixed points subsets of groupoid-sets and the table of marks of finite groupoids}\label{ssec:Fixed}
This subsection deals with  the fixed point subsets of groupoid-sets under subgroupoid actions and discusses their functorial properties. Moreover we give the analogue notion of what is known in the classical theory as \emph{the table of marks} attached to a given (finite) groupoid  \cite{Burnside:1911}. 

\begin{definition}\label{def:fixed}
Given a groupoid \(\mathcal{G}\), let \(\mathcal{H}\) be a subgroupoid of \(\mathcal{G}\) and let \(\left({X, \varsigma}\right)\) be a right \(\mathcal{G}\)-set.
We define the \emph{set of fixed points by \(\mathcal{H}\) in \(X\)} as
\[ 
X^{\Sscript{\cH}} = \Big\{ x \in X | \, \forall h \in \mathcal{H}_{ \scriptscriptstyle{1} } \text{ such that } \varsigma\left({x}\right)= \mathsf{t}\left({h}\right),\, \text{we have that } \, xh= x\Big\}.
\]
Notice that not any subgroupoid is allowed to stabilize the elements of a given right $\cG$-set $(X,\varsigma)$. More precisely, the set of fixed point $X^{\Sscript{\cH}}$ can be introduced only for those  subgroupoids $\cH$ satisfying the condition $\Ho \cap \varsigma(X) \neq \emptyset$ and possessing at most one object. If this is not the case, then it implicitly stands from the definition that we are setting $X^{\Sscript{\cH}} = \emptyset$.
\end{definition}

On the other hand, if $\cH$ and $\cH'$ are conjugated subgroupoids of $\cG$ with only one object (see Corollary \ref{pGrpdQuotRXSetIsom}), then $X^{\Sscript{\cH}}$ and $X^{\Sscript{\cH'}}$ are clearly in bijection.
In the following  result we collect the most useful properties of the functor of fixed points subsets under subgroupoid (with only one object) actions. 

\begin{proposition}\label{prop:Hom}
Let $\cH$ be a subgroupoid of $\cG$ with only one object. Then we have the following natural isomorphism 
$$
\GHom{(\cG/\cH)^{\Sscript{\Sf{R}}}}{X} \, \simeq \, X^{\Sscript{\cH}},
$$
of sets, for every right $\cG$-set $(X,\varsigma)$. In particular, if  \(F \colon (X, \varsigma) \longrightarrow (Y, \vartheta)\) is a  \(\mathcal{G}\)-equivariant map, then \(F\) induces a function
\[ 
{\tilde{F} }  \colon  {X^{\Sscript{\mathcal{H}}} } \longrightarrow {Y^{\Sscript{\mathcal{H}}} }, \qquad  \Big(  {x}  \longrightarrow {\tilde{F}\left({x}\right)= F\left({x}\right) } \Big)
\]
such that if \(F\) is an isomorphism of $\cG$-sets, then  \(\tilde{F}\) is a bijection. \\ Furthermore, 
given a disjoint union $X = \Uplus_{\Sscript{ i \, \in \, I}} X_i$ of right $\cG$-sets,  we have a natural bijection 
\begin{equation}\label{Eq:XHI}
X^{\Sscript{\cH}} \, \simeq\,  \underset{\Sscript{ i \, \in \, I}}{\Uplus} \,  X_i^{\Sscript{\cH}}.
\end{equation}
In particular, the functor $\GHom{(\cG/\cH)^{\Sscript{\Sf{R}}}}{-}: \rGsets \to \Sets$ commutes with arbitrary coproducts.  
\end{proposition}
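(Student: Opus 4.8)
The statement has three parts, all of which should be proved by manipulating the defining relations. The plan is to start from the bijection $\GHom{(\cG/\cH)^{\Sscript{\Sf{R}}}}{X} \simeq X^{\Sscript{\cH}}$, then deduce the functoriality statement and the coproduct formula as corollaries (the last clause about the $\Hom$-functor commuting with coproducts is then immediate from Yoneda plus the coproduct formula, or directly from the universal property of $\Uplus$).

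First I would write $\cH$ explicitly: since $\cH$ has a single object, say $a \in \Go$, it is determined by a subgroup $H := \Ha \le \cG^{\Sscript{a}}$, and $(\cG/\cH)^{\Sscript{\Sf{R}}}$ consists of the classes $\cH[(a,p)]$ with $p \in \Ga$, $\Sf{t}(p) = a$, where $\cH[(a,p)] = \cH[(a,p')]$ iff $p' = hp$ for some $h \in H$. Given a $\cG$-equivariant map $F \colon (\cG/\cH)^{\Sscript{\Sf{R}}} \to X$, I send it to $x_F := F(\cH[(a,\iota_{\Sscript{a}})]) \in X$. Equivariance of $F$ forces $\varsigma(x_F) = \Sf{s}(\iota_{\Sscript{a}}) = a$, and for any $h \in H \subseteq \cG^{\Sscript{a}}$ we have $x_F h = F(\cH[(a,\iota_{\Sscript{a}})])\,h = F(\cH[(a,\iota_{\Sscript{a}})\,h]) = F(\cH[(a,h)]) = F(\cH[(a,\iota_{\Sscript{a}})]) = x_F$, so $x_F \in X^{\Sscript{\cH}}$ (recalling from Definition~\ref{def:fixed} that the condition only tests arrows $h$ with $\Sf{t}(h) = \varsigma(x_F) = a$, i.e. exactly the elements of $H$). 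Conversely, given $x \in X^{\Sscript{\cH}}$ (so $\varsigma(x) = a$ and $xh = x$ for all $h \in H$), define $F_x(\cH[(a,p)]) := xp$; the condition $xh = x$ for $h \in H$ is precisely what makes this well defined on classes, and one checks $\varsigma(xp) = \Sf{s}(p)$ and $F_x(\cH[(a,p)]\,g) = F_x(\cH[(a,pg)]) = x(pg) = (xp)g$, so $F_x$ is $\cG$-equivariant. The two assignments $F \mapsto x_F$ and $x \mapsto F_x$ are mutually inverse: $x_{F_x} = x\iota_{\Sscript{a}} = x$, and $F_{x_F} = F$ because both agree on the generating classes $\cH[(a,\iota_{\Sscript{a}})]$ after translating. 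Naturality in $X$ is the observation that post-composing $F$ with a $\cG$-equivariant $g \colon X \to Y$ sends $x_F$ to $g(x_F)$, which matches the restriction of $g$ to fixed-point sets; this simultaneously yields the functorial map $\tilde F$ and the fact that isomorphisms go to bijections (as $\widetilde{(-)}$ is clearly functorial and $\widetilde{\id} = \id$).

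For the coproduct formula $X^{\Sscript{\cH}} \simeq \Uplus_{i \in I} X_i^{\Sscript{\cH}}$ when $X = \Uplus_{i} X_i$, I would argue directly: the structure map $\muup$ of $X$ restricts to $\varsigma_i$ on each $X_i$, and an element $x \in X$ lies in a unique $X_i$; since the action preserves the decomposition (an element of $X_i$ acted on by an arrow stays in $X_i$, by the construction of the action on $\Uplus$ in \S\ref{ssec:SMC}), the condition ``$xh = x$ for all $h \in H$ with $\Sf{t}(h) = \muup(x)$'' is tested entirely inside $X_i$ and is equivalent to $x \in X_i^{\Sscript{\cH}}$. Hence $X^{\Sscript{\cH}} = \Uplus_i X_i^{\Sscript{\cH}}$ as subsets, naturally. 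The final sentence follows: $\GHom{(\cG/\cH)^{\Sscript{\Sf{R}}}}{\Uplus_i X_i} \simeq (\Uplus_i X_i)^{\Sscript{\cH}} \simeq \Uplus_i X_i^{\Sscript{\cH}} \simeq \Uplus_i \GHom{(\cG/\cH)^{\Sscript{\Sf{R}}}}{X_i}$, using the first isomorphism twice.

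**Main obstacle.** There is no deep obstacle here; the proof is a sequence of bookkeeping verifications. The one point that needs care is the bookkeeping around the structure maps and the domain condition in Definition~\ref{def:fixed}: one must check at each stage that $\varsigma(x_F) = a$ (resp. $\varsigma(x) = a$) so that the ``$xh = x$'' condition is being tested against exactly the subgroup $H = \Ha$ and not a larger set of arrows, and that well-definedness of $F_x$ on cosets uses precisely the fixed-point condition. Getting the direction of the actions and the conventions $\cH[(a,p)]\,g = \cH[(a,pg)]$ (from \eqref{Eq:PalomaI}) consistent is the only place an error could slip in; everything else is formal.
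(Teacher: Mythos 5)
Your proposal is correct and follows essentially the same route as the paper: the mutually inverse assignments $F \mapsto F\big(\cH[(a,\iota_{\Sscript{a}})]\big)$ and $x \mapsto \big[\cH[(a,p)] \mapsto xp\big]$ are exactly the maps the paper writes down, and the remaining verifications (well-definedness on cosets via the fixed-point condition, equivariance, naturality, and the componentwise argument for the coproduct formula) are the ``direct verification'' the paper leaves to the reader. Nothing is missing.
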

\begin{proof}
The crucial natural isomorphism is given as follows:
$$
\xymatrix@R=0pt{  \GHom{(\cG/\cH)^{\Sscript{\Sf{R}}}}{X} \ar@{->}^-{}[rr] & &   X^{\Sscript{\cH}}  \\  f \ar@{|->}[rr] & & f \big( \cH[(a,\iota_{a})] \big) \\ \Big[ \cH[(a,g)] \mapsto xg \Big] & & x \ar@{|->}[ll]   }
$$
where $\Ho=\{a\}$ and the notation is the pertinent one. The rest of the proof is now a direct verification.
\end{proof}

\begin{corollary}\label{coro:cuadro}
Let $\cH$ and $\cH'$ be two subgroupoids of $\cG$ both with only one object.  Assume that we have a $\cG$-equivariant map $F: (\cG/\cH)^{\Sscript{\Sf{R}}} \to (\cG/ \cH')^{\Sscript{\Sf{R}}}$. Then, for any right $\cG$-set $(X,\varsigma)$, we have a commutative diagram:
\begin{equation}\label{Eq:cuadro}
\begin{gathered}
\xymatrix{  \GHom{(\cG/\cH')^{\Sscript{\Sf{R}}}}{X}  \ar@{->}^-{\cong}[rr]   \ar@{->}_-{\Sscript{\GHom{F}{X}}}[d]  && X^{\Sscript{\cH'}}   \ar@{-->}^-{\Sscript{X^F}}[d] \\  \GHom{(\cG/\cH)^{\Sscript{\Sf{R}}}}{X}   \ar@{->}^-{\cong}[rr] & & X^{\Sscript{\cH}} .}
\end{gathered}
\end{equation}
In particular, if $\cH$ and $\cH'$ are conjugated,  we have a bijection $X^{\Sscript{\cH}} \simeq X^{\Sscript{\cH'}}$.
\end{corollary}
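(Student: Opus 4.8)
The plan is to obtain everything by transporting the contravariant functor $\GHom{-}{X}$ across the natural isomorphism of Proposition~\ref{prop:Hom}. Write $\Ho=\{a\}$ and $\cH'_{\Sscript{0}}=\{a'\}$, and let $\Phi_X\colon \GHom{(\cG/\cH)^{\Sscript{\Sf{R}}}}{X}\xrightarrow{\,\sim\,}X^{\Sscript{\cH}}$ and $\Phi'_X\colon \GHom{(\cG/\cH')^{\Sscript{\Sf{R}}}}{X}\xrightarrow{\,\sim\,}X^{\Sscript{\cH'}}$ be the isomorphisms supplied there, namely $\Phi_X(f)=f\bigl(\cH[(a,\iota_{a})]\bigr)$ and analogously for $\Phi'_X$; these are the two horizontal arrows of \eqref{Eq:cuadro}. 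First I would simply \emph{define} $X^F:=\Phi_X\circ\GHom{F}{X}\circ(\Phi'_X)^{-1}\colon X^{\Sscript{\cH'}}\longrightarrow X^{\Sscript{\cH}}$, so that the square \eqref{Eq:cuadro} commutes by construction; no separate verification of commutativity is then needed, only the identification of this $X^F$ in terms of $X$ and (a single value of) $F$.

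Next I would record an $F$-free description of $X^F$, in the spirit of the computations in Lemma~\ref{lema:Triangular}. Since $F$ respects structure maps, $F\bigl(\cH[(a,\iota_{a})]\bigr)$ lies over $a$, hence equals $\cH'[(a',p)]$ for an arrow $p\in\cG(a,a')$, uniquely determined modulo $\cH'_{\Sscript{1}}$. Unwinding the three maps above on an element $x\in X^{\Sscript{\cH'}}$ — first $(\Phi'_X)^{-1}$ sends $x$ to the equivariant map $\cH'[(a',g)]\mapsto xg$, then $\GHom{F}{X}$ precomposes with $F$, then $\Phi_X$ evaluates at $\cH[(a,\iota_{a})]$ — yields $X^F(x)=xp$. (That $xp\in X^{\Sscript{\cH}}$ is automatic, $X^F$ being a composite of well-defined maps, but one can also see it directly: for $h\in\cH_{\Sscript{1}}$ one has $\cH[(a,\iota_{a})]=\cH[(a,\iota_{a})]\,h$, so applying $F$ gives $\cH'[(a',p)]=\cH'[(a',ph)]$, i.e.\ $ph=h'p$ for some $h'\in\cH'_{\Sscript{1}}$, whence $(xp)h=x(h'p)=(xh')p=xp$.) Naturality of $X^F$ in $X$, when needed later, is inherited from that of the $\Phi$'s and of precomposition with $F$.

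For the last assertion, suppose $\cH$ and $\cH'$ are conjugated. Then Theorem~\ref{thm:LR} provides an \emph{isomorphism} of right $\cG$-sets $(\cG/\cH)^{\Sscript{\Sf{R}}}\cong(\cG/\cH')^{\Sscript{\Sf{R}}}$; taking $F$ to be such an isomorphism, $\GHom{F}{X}$ is a bijection (its inverse being precomposition with $F^{-1}$), and hence $X^F=\Phi_X\circ\GHom{F}{X}\circ(\Phi'_X)^{-1}$ is a bijection $X^{\Sscript{\cH}}\simeq X^{\Sscript{\cH'}}$. The argument is essentially formal: everything rests on Proposition~\ref{prop:Hom} and Theorem~\ref{thm:LR}, and the only point requiring a little care is the bookkeeping in the middle paragraph, namely that across the isomorphisms of Proposition~\ref{prop:Hom} the map $\GHom{F}{X}$ corresponds to right multiplication by the single arrow $p$.
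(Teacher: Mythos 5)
Your proof is correct and follows essentially the same route as the paper: both identify the single arrow $p\in\cG(a,a')$ determined by $F\bigl(\cH[(a,\iota_{a})]\bigr)=\cH'[(a',p)]$ and recognize $X^{F}$ as right translation $x\mapsto xp$, with commutativity of \eqref{Eq:cuadro} then immediate from Proposition \ref{prop:Hom}. Your extra checks (well-definedness of $p$ modulo $\cH'_{\Sscript{1}}$, the direct verification that $xp\in X^{\Sscript{\cH}}$) are sound and merely make explicit what the paper leaves to the reader.
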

\begin{proof}
Set $\Ho=\{a\}$, $\cH'_{\Sscript{0}}=\{a'\}$ and let $g \in \cG(a,a')$ be the arrow attached to the $\cG$-equivariant map $F$, that is, $g$ is determined by the equality $F\big(\cH[(a, \iota_{\Sscript{a}})]\big)=\cH'[(a',g)]$. Then the stated map $X^{\Sscript{F}}: X^{\Sscript{\cH'}} \to X^{\Sscript{\cH}}$ is a given by $x \mapsto xg$. The desired diagram commutativity is now clear from the the involved maps. The particular case is a direct consequence of the first claim.  
\end{proof}

\begin{remark}\label{rem:XH}
Let $\cG$ be a groupoid and consider as before the set $\SubG$ of all subgroupoids with only one object.  One can define a category whose objects set is $\SubG$  and,  given two objects $\cH, \cH' \in \SubG$, the set of arrows from $\cH'$ to $\cH$ is the set of all $\cG$-equivariant maps $\GHom{(\cG/\cH)^{\Sscript{\Sf{R}}}}{(\cG/\cH')^{\Sscript{\Sf{R}}}}$. This category is also denoted by $\SubG$. In this way, the set $\rep(\SubG)$ is then identified with the skeleton   of the category  $\SubG$. On the other hand,  for any right $\cG$-set $(X, \varsigma)$ we obtain as, in  Corollary \ref{coro:cuadro},  a  functor $\cH \to X^{\Sscript{\cH}} $, which is naturally isomorphic to the functor $\cH \to \GHom{(\cG/\cH)^{\Sscript{\Sf{R}}}}{X}$.
\end{remark}

Next we discuss  the cardinality of the fixed point subsets of right $\cG$-cosets by subgroupoids with a single object, that is, by elements of $\SubG$.   First, we give the definition of the notion of  finite groupoid, which we will deal with. 

\begin{definition}\label{def:finiteG}
Given a groupoid $\cG$, we say that $\cG$ is \emph{strongly finite} if its set of arrows $\Ga$ is finite. This obviously implies that $\Go$ and $\pi_{\Sscript{0}}(\cG)$ are finite sets.  We say that $\cG$ is \emph{locally strongly  finite} provided that the category $\SubG$ of Remark \ref{rem:XH} is \emph{skeletally finite}  and  each of the isotropy groups of $\cG$ is a finite set. Here skeletally finite means that $\SubG/\simc$ is a finite set.  Evidently, any strongly finite groupoid is  locally strongly finite.  On the other hand it could happens that a groupoid has each of its isotropy groups finite, but  $\rep(\SubG)$ is not. To see this, it suffices to look at the class of  not transitive groupoids with trivial isotropy groups and with an infinite  number of connected components. More precisely, one can take a groupoid of the form $\uplus_{i \, \in \,  I} \cX_i$, where $I$ is an infinite set and each of the groupoids $\cX_i$ is  one of those  expounded in  Example \ref{exam:X}. 
\end{definition}

Let $\cH$ be a subgroupoid of $\cG$. From now on we will denote by $\cG/\cH: = (\cG/\cH)^{\Sscript{\Sf{R}}}$ the set of right $\cG$-cosets.

\begin{proposition}\label{prop:Rep}
Let $\cG$ be a groupoid and consider  the quotient sets $\SubG/\simc$ and $\pi_{\Sscript{0}}(\cG)$. For each $a \in \pi_{\Sscript{0}}(\cG)$ we denote as before by $\cG^{\Sscript{\lar{a}}}$  the connected component  subgroupoid of $\cG$ containing $a$ (this is clearly a transitive groupoid).  We  consider in a canonical way $\SubGx{a}/\simc$ as a subset  of $\SubG/\simc$. Then: 
\begin{enumerate}[(i)]
\item We have a disjoint union 
$$
\SubG/\simc \,\, =\,\, \biguplus_{ a \, \in \,  \pi_{\Sscript{0}}(\cG)} \Big( \SubGx{a}/\simc \Big).
$$
\item If $\cG$ is locally strongly finite, then  $\pi_{\Sscript{0}}(\cG)$ is a finite set and so is each of the quotient sets $\SubGx{a}/\simc$.  
\item If $\cG$ is locally strongly finite, then the set of all $\cG$-equivariant maps $\GHom{\cG/\cH}{\cG/\cK}$ is finite, for every $\cH, \cK \in \SubG$.
\end{enumerate}
\end{proposition}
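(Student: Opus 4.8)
The plan is to treat the three assertions in order: (i) records how conjugacy of one-object subgroupoids interacts with connected components, (ii) is a short consequence of (i) and Definition \ref{def:finiteG}, and (iii) is a finiteness count obtained through Proposition \ref{prop:Hom}.

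For (i), the point is that conjugacy ``localizes'' to connected components. Each $\cH \in \SubG$ has a single object, lying in a unique connected component; and if $\cH, \cK \in \SubG$ have objects $x$ and $y$ and are conjugated, then condition $(iv)$ of Theorem \ref{thm:LR} (equivalently Corollary \ref{pGrpdQuotRXSetIsom}) forces $\cG(x,y) \neq \emptyset$, so $x$ and $y$ lie in the same component. This yields a well-defined map $\SubG/\simc \longrightarrow \pi_{\Sscript{0}}(\cG)$ sending the class of $\cH$ to the component of its object. Moreover, every one-object subgroupoid whose object lies in $\cG^{\Sscript{\lar{a}}}$ is in fact a subgroupoid of $\cG^{\Sscript{\lar{a}}}$, and conjugacy of such subgroupoids inside $\cG^{\Sscript{\lar{a}}}$ coincides with conjugacy inside $\cG$ (both are governed by the same hom-sets of equivariant maps of cosets, since between objects of $\cG^{\Sscript{\lar{a}}}$ the arrows of $\cG^{\Sscript{\lar{a}}}$ and of $\cG$ agree). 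Hence $\SubGx{a}/\simc$ injects into $\SubG/\simc$ with image exactly the fibre over $a$, and collecting these fibres gives the asserted disjoint union. The only genuinely delicate point of the whole proof lies here — checking that this map $\SubGx{a}/\simc \to \SubG/\simc$ is injective and that the images for distinct $a$ are pairwise disjoint — but both reduce to the observation just made, namely that a conjugacy between one-object subgroupoids produces an arrow of $\cG$ joining their objects.

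For (ii), Definition \ref{def:finiteG} gives that $\SubG/\simc$ is finite when $\cG$ is locally strongly finite. Each connected component $\cG^{\Sscript{\lar{a}}}$ is non-empty, hence contains at least the one-object subgroupoid with trivial isotropy group at one of its objects, so every summand $\SubGx{a}/\simc$ in the decomposition of (i) is non-empty. Since a disjoint union of non-empty sets is finite only when the index set and every summand are finite, both $\pi_{\Sscript{0}}(\cG)$ and all the quotient sets $\SubGx{a}/\simc$ are finite.

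For (iii), write $\cH_{\Sscript{0}} = \{a\}$, $\cK_{\Sscript{0}} = \{b\}$ and $K = \cK_{\Sscript{1}} \le \cG^{\Sscript{b}}$. Proposition \ref{prop:Hom}, applied to the right $\cG$-set $(\cG/\cK)^{\Sscript{\Sf{R}}}$, gives a bijection $\GHom{\cG/\cH}{\cG/\cK} \simeq \big((\cG/\cK)^{\Sscript{\Sf{R}}}\big)^{\Sscript{\cH}}$ which sends an equivariant map $f$ to $f\big(\cH[(a,\iota_{\Sscript{a}})]\big)$. Since $f$ respects structure maps and the structure map of $\cG/\cK$ sends $\cK[(b,g)]$ to $\Sf{s}(g)$ by \eqref{Eq:PalomaI}, every element of the image is a coset $\cK[(b,g)]$ with $\Sf{s}(g) = a$, i.e. with $g \in \cG(a,b)$. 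Distinct such cosets correspond to distinct left $K$-orbits inside $\cG(a,b)$, so there are at most $|\cG(a,b)|$ of them; and $|\cG(a,b)|$ is $0$ when $a$ and $b$ lie in different components and equals $|\cG^{\Sscript{b}}|$ otherwise, which is finite because $\cG$ is locally strongly finite. Hence $\GHom{\cG/\cH}{\cG/\cK}$ is finite, completing the proof.
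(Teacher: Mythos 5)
Your proof is correct and follows essentially the same route as the paper, which simply declares (i) and (ii) straightforward and deduces (iii) from Lemma \ref{lema:Triangular}(i) together with finiteness of the isotropy groups. Your use of Proposition \ref{prop:Hom} in place of Lemma \ref{lema:Triangular}(i) for part (iii) is an interchangeable variant of the same underlying observation — an equivariant map out of $\cG/\cH$ is determined by the image of the basepoint coset, which is constrained to lie over $a$ and hence indexed by the finite set $\cG(a,b)$ — and your fleshed-out arguments for (i) and (ii) are exactly the "straightforward" checks the paper omits.
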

\begin{proof}
Parts $(i)$ and $(ii)$ are straightforward. Applying Lemma \ref{lema:Triangular}(i), one deduces $(iii)$, as each of the isotropy groups is assumed to be finite. 
\end{proof}

Given a locally strongly finite groupoid $\cG$, let us fix a set of representatives $\rep(\SubG)$ and a set of representatives of the quotient set $\pi_{\Sscript{0}}(\cG)$, whose elements we call $a_{\Sscript{1}}, \cdots , a_{\Sscript{n}} \in\Go$.
According to Proposition \ref{prop:Rep}  (i), we can write 
\begin{equation}\label{Eq:SubG}
\rep(\SubG) \,\, =\,\, \biguplus_{i=1}^{n} \rep(\SubGx{a_i}),
\end{equation}
where each of the $\cG^{\Sscript{\lar{a_i}}}$ is a transitive groupoid (i.e., the connected component containing $a_i$). Furthermore, once fixed the choice of $\rep(\SubG)$,  we can consider the following  family of positive integers:
$$
\Sf{m}_{\Sscript{(\cH,\, \cK)}} \,=\,  \left\lvert{ \GHom{\cG/\cH}{\cG/\cK}   }\right\rvert, \qquad \forall\,  \cH, \cK \in \rep\big(\SubG\big),
$$
and   by Proposition \ref{prop:Hom}, we know that these entries are
$$
\Sf{m}_{\Sscript{\left({\mathcal{H}, \mathcal{K} }\right)}} =  \left\lvert{ \left({{\mathcal{G}/\mathcal{K} }}\right)^{\Sscript{\cH}} }\right\rvert, \quad \forall \, \cH, \cK \in \rep(\SubG).
$$
This, in conjunction with Lemma \ref{lema:Triangular}, shows that the  natural numbers $\{\Sf{m}_{\Sscript{(\cH,\, \cK)}} \}_{\Sscript{\cH,\, \cK}\, \in \,  \rep\big(\SubG\big)}$ satisfy the following conditions:
\begin{equation}\label{Eq:Mhk}
\Sf{m}_{\Sscript{(\cH,\, \cK)}} \,  \Sf{m}_{\Sscript{(\cK,\, \cH)}}\,=\, 0 ,\; \forall\, \cH \neq \cK \quad \text{ and } \quad \Sf{m}_{\Sscript{(\cH,\, \cH)}}\,\neq\, 0,\;  \forall \, \cH, 
\end{equation}
where $\cH = \cK$ in $\rep\big(\SubG\big)$ means that $\cH$  and $\cK$ are conjugated (or isomorphic as objects in the category $\SubG$ of Remark \ref{rem:XH}). 
The table that we want to construct in the sequel, which will be formed by those coefficients (where \(\mathcal{H}\) denotes the row position  and \(\mathcal{K}\) denotes the column one), is what we can call, in analogy with the classical case \cite[\S 180]{Burnside:1911}, \emph{the table of marks of the groupoid} $\cG$.

\begin{proposition}[The table of marks of a finite groupoid]\label{prop:tableofmarks}
Let $\cG$ be a locally strongly finite groupoid. Then the fixed set of representatives $\rep(\SubG)$, can be endowed with a total order $\preceq$ satisfying the following property: for every $\cH ,\cK \in \rep(\SubG) $, we have 
$$
 \cH \preceq \cK \, \Longrightarrow \,  \begin{cases}  \Sf{m}_{\Sscript{(\cH,\, \cK)}} = 0 & \text{ if } \, \cH \neq \cK \\ \Sf{m}_{\Sscript{(\cH,\, \cK)}} \neq 0   &   \text{ if }\,  \cH = \cK. \end{cases} 
$$

In particular, under this choice of ordering, the table (or matrix) of marks of $\cG$ has the following form:
$$
\Big(  \Sf{m}_{\Sscript{(\cH,\, \cK)}} \Big)_{\cH,\, \cK \, \in \, \rep(\SubG)} \, \, = \, \, \left( 
\begin{array}{c|c|c}
M_1 & \boldsymbol{0}  & \boldsymbol{0} \\ \hline  \boldsymbol{0} & \ddots & \boldsymbol{0} \\ \hline  \boldsymbol{0} & \boldsymbol{0} & M_n
\end{array}
\right),
$$
where $n$ is the number of connected components of $\cG$ and each of the matrix $M_i$, $i=1,\cdots, n$ is a lower triangular matrix with each diagonal entry different from zero. 
\end{proposition}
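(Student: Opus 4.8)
The plan is to linearly extend the natural preorder on $\rep(\SubG)$ given by the existence of $\cG$-equivariant maps between coset spaces, and then read off the block form from the decomposition of $\rep(\SubG)$ along connected components. Throughout I write $\cG/\cH=(\cG/\cH)^{\Sscript{\Sf{R}}}$, and recall that $\Sf{m}_{\Sscript{(\cH,\cK)}}=\left\lvert\GHom{\cG/\cH}{\cG/\cK}\right\rvert$ is a finite natural number (Proposition \ref{prop:Rep}(iii)). First I would introduce on $\rep(\SubG)$ the relation $\cK\leq_{\Sscript{0}}\cH$ defined by $\GHom{\cG/\cH}{\cG/\cK}\neq\emptyset$, equivalently $\Sf{m}_{\Sscript{(\cH,\cK)}}\neq0$. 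It is reflexive, since the identity map of $\cG/\cH$ lies in $\GHom{\cG/\cH}{\cG/\cH}$, and transitive, since a composite of $\cG$-equivariant maps is $\cG$-equivariant. For antisymmetry, if $\cK\leq_{\Sscript{0}}\cH$ and $\cH\leq_{\Sscript{0}}\cK$ with $\cH,\cK\in\rep(\SubG)$, then — because $\cG$ is locally strongly finite, hence all its isotropy groups are finite — Lemma \ref{lema:Triangular}(ii) forces $\cH=\cK$. Thus $\leq_{\Sscript{0}}$ is a partial order on the set $\rep(\SubG)$, which is finite because $\SubG/\simc$ is finite by the definition of locally strongly finite (see also Proposition \ref{prop:Rep}(ii)).

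Next I would record the vanishing across connected components. By Proposition \ref{prop:Rep}(i) one has $\rep(\SubG)=\biguplus_{i=1}^{n}\rep(\SubGx{a_i})$ with $\{a_1,\dots,a_n\}$ a set of representatives of $\pi_{\Sscript{0}}(\cG)$, and each $\rep(\SubGx{a_i})$ is nonempty (it contains the class of the discrete subgroupoid $(\{a_i\},\{\iota_{\Sscript{a_i}}\})$). If $\cH\in\rep(\SubGx{a_i})$ has object $a$ and $\cK\in\rep(\SubGx{a_j})$ has object $b$ with $i\neq j$, then the structure map of $\cG/\cH$ takes values in the connected component of $a$ (every $g$ with $\Sf{t}(g)=a$ has $\Sf{s}(g)$ in that component), and likewise that of $\cG/\cK$ takes values in the component of $b$; since a $\cG$-equivariant map intertwines the structure maps and $\cG/\cH\neq\emptyset$, this is impossible, so $\GHom{\cG/\cH}{\cG/\cK}=\emptyset$, i.e. $\Sf{m}_{\Sscript{(\cH,\cK)}}=0$. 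Hence $\leq_{\Sscript{0}}$ never relates elements of distinct blocks.

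Then I would build the order and conclude. Fix an arbitrary enumeration $1,\dots,n$ of the components and, for each $i$, a linear extension $\preceq_i$ of the restriction of $\leq_{\Sscript{0}}$ to the finite set $\rep(\SubGx{a_i})$. Declare $\cH\preceq\cK$ when either $\cH\in\rep(\SubGx{a_i})$ and $\cK\in\rep(\SubGx{a_j})$ with $i<j$, or $\cH,\cK\in\rep(\SubGx{a_i})$ with $\cH\preceq_i\cK$; by the previous paragraph this is a total order extending $\leq_{\Sscript{0}}$. For the stated implication: if $\cH\preceq\cK$, $\cH\neq\cK$ and $\Sf{m}_{\Sscript{(\cH,\cK)}}\neq0$, then $\cK\leq_{\Sscript{0}}\cH$, so $\cK\preceq\cH$, contradicting $\cH\prec\cK$; thus $\Sf{m}_{\Sscript{(\cH,\cK)}}=0$, while $\Sf{m}_{\Sscript{(\cH,\cH)}}\neq0$ by \eqref{Eq:Mhk}. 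Finally, listing $\rep(\SubG)$ in increasing $\preceq$-order the blocks $\rep(\SubGx{a_i})$ appear consecutively and, by the component argument, every entry with row and column in different blocks vanishes, so the matrix $\big(\Sf{m}_{\Sscript{(\cH,\cK)}}\big)$ is block diagonal with blocks $M_1,\dots,M_n$; inside $M_i$ rows and columns are indexed by $\preceq_i$, which extends $\leq_{\Sscript{0}}$, so every entry strictly above the diagonal vanishes and the diagonal entries are nonzero, i.e. $M_i$ is lower triangular with nonzero diagonal.

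The only substantive point is the antisymmetry of $\leq_{\Sscript{0}}$ in the first step: that is exactly Lemma \ref{lema:Triangular}(ii), and it genuinely depends on the finiteness of the isotropy groups (two inclusions of finite groups $\Ka\subseteq g_1^{-1}\Ha g_1$ and $\Ha\subseteq g_2^{-1}\Ka g_2$ force equality). Everything else is routine bookkeeping together with the elementary fact that a partial order on a finite set admits a linear extension; I do not expect any genuine obstacle beyond being careful with the $\mathcal{H}$-row / $\mathcal{K}$-column convention so that the triangularity comes out on the correct side.
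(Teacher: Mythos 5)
Your proof is correct, and it reaches the conclusion by a genuinely different route than the paper's for the key step, namely the construction of the order within a single connected component. The paper reduces that case to the classical one: it invokes the Laplaza equivalence $\Rset{\cG^{\lar{a_i}}}\simeq\Rset{\cG^{\Sscript{(a_i)}}}$ of Theorem \ref{tEquivCatTransGrpd} to pass to the isotropy group, and then appeals to Burnside's original ordering of conjugacy classes of subgroups by cardinality \cite{Burnside:1911}, so that each block $M_i$ is literally identified with the table of marks of a finite group; the cross-component vanishing is obtained from Lemma \ref{lema:Triangular}(i), much as in your structure-map argument. You instead work intrinsically on all of $\rep(\SubG)$ at once: you observe that ``$\GHom{\cG/\cH}{\cG/\cK}\neq\emptyset$'' defines a preorder whose antisymmetry on $\rep(\SubG)$ is exactly Lemma \ref{lema:Triangular}(ii), and you take a linear extension block by block. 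What your approach buys is self-containedness (no detour through the equivalence of categories nor through Burnside's book) and, more importantly, unambiguous control of the direction of triangularity: since $\Sf{m}_{\Sscript{(\cH,\,\cK)}}\neq 0$ forces $\cK\preceq\cH$, the nonzero off-diagonal entries lie below the diagonal under the row-$\cH$/column-$\cK$ convention. This care is not superfluous, because the paper's own sketch is loose precisely here (it does not say whether the cardinalities are compared increasingly or decreasingly, and its closing sentence even says ``upper triangular'' while the statement asserts lower triangular). What the paper's reduction buys in exchange is the explicit identification of each diagonal block with the classical table of marks of the corresponding isotropy group, which is conceptually useful later even though the statement itself only needs triangularity with nonzero diagonal.
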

\begin{proof}
To construct this total order on the finite set $\rep(\SubG)$, one proceeds as follows. 
If the handled groupoid $\cG$ has only one object, then we are in the classical situation  of a finite group and the total ordering is exactly given by comaring the cardinality of representatives subgroups of this group modulo the conjugation relation. The details are expounded in \cite[pages 236 and 237]{Burnside:1911}, and the result is one of the matrices $M_i$'s.  Regarding the case when $\cG$ is a transitive groupoid, one can employ, for instance, Theorem~\ref{tEquivCatTransGrpd} to reduce this case to the  particular one of finite groups and proceed as in the classical case. 

As for the general case, one uses equation \eqref{Eq:SubG} to decomposes $\rep(\SubG)$ into a finite disjoint union of finite sets $\{\rep(\SubGx{a_i})\}_{\Sscript{i=1,\cdots, n}}$, where $n$ is the number of connected components of $\cG$. In this way one can extended the total ordering of each piece $\rep(\SubGx{a_i})$ to the whole set $\rep(\SubG)$, since each of the $\cG^{\Sscript{\lar{a_i}}}$'s is a transitive groupoid  (following, for example, the order $1<2<\cdots<n$ between the pieces). The resulting matrix (or the table of marks) of $\cG$ will be a diagonal block-matrix whose blocks correspond to the matrix of $\cG^{(a_i)}$ and, such that, outside of these blocks, only zeros will appear. Given two distinct elements  $\cH, \cK \in \rep(\SubG)$ with  $\Ho=\{a\}$ and $\Ko=\{b\}$  such that $a$ and $b$ are not connected, it is necessary, by Corollary \ref{lema:Triangular}(i), to have  $\Sf{m}_{\Sscript{(\cH,\, \cK)}}=\Sf{m}_{\Sscript{(\cK,\, \cH)}}=0$.  Thus, the whole matrix will also be upper triangular with non zero entries in the diagonal as stated. 
\end{proof}

\section{Burnside Theorem for groupoid-sets: General and finite cases}\label{sec:BM}
Before introducing the ghost function  for (finite) groupoids, an analogue of Burnside Theorem for right groupoid-sets   will be accomplished in this section. The classical  situation of groups is described as follows. Take two right $G$-sets $X$ and $Y$ and assume that their fixed point subsets under any subgroup are in bijection, that is, $X^{\Sscript{H}} \simeq Y^{\Sscript{H}}$, for any subgroup $H$ of $G$.  Under this assumption, in general $X$ and $Y$ are not  necessarily isomorphic as right $G$-sets.  The main objective of the Burnside Theorem (see  \cite[Theorem I, page 238]{Burnside:1911} or, for instance, \cite[Theorem 2.4.5]{Bouc:2010}) is to seek further conditions under which  $X$ and $Y$  become isomorphic as right $G$-sets. 
From a categorical point of view, one can assume, in the previous situation, a stronger hypothesis, namely,    that the functors $H \to X^{\Sscript{H}}$ and $H \to Y^{\Sscript{H}}$ are naturally isomorphic (see Remark \ref{rem:XH} for the definition of these functors). Nevertheless, this is equivalent to say that the functors $\{e\} \to X^{\Sscript{\{e\}}}$ and $\{e\} \to Y^{\Sscript{\{e\}}}$ are naturally isomorphic (here we're taking   the full subcategory of the category of subgroups of $G$, with only one object  $e$  the neutral element of $G$) which, as we will see below, is equivalent to say that $X$ and $Y$ are isomorphic as right $G$-sets.  In this direction, it is not clear, at least to us, whether the condition  $X^{\Sscript{H}} \simeq Y^{\Sscript{H}}$, for every subgroup $H$ of $G$, implies that the  functors $H \to X^{\Sscript{H}}$ and $H \to Y^{\Sscript{H}}$ are naturally isomorphic (it seems that, without passing through the classical Burnside's theorem, this is not known even for the finite case, that is, when $G$, $X$ and $Y$ are finite sets). 
All this suggests that,  in the context of groupoid-sets, one should treat separately the case when the fixed point subsets functors are naturally isomorphic.

\subsection{The general case: Two $\cG$-sets with natural bijection between fixed points subsets}\label{ssec:Gcase}
Let us  first explain what is the meaning of the  natural bijection, between the fixed points subsets, that was mentioned above.

\begin{definition}\label{def:natural}
Let $(X, \varsigma)$ and $(Y, \vartheta)$ be two right $\cG$-sets. We say that \emph{$(X, \varsigma)$ and $(Y, \vartheta)$ have naturally the same fixed points subsets}, provided there is a natural bijection $X^{\Sscript{\cH}} \simeq Y^{\Sscript{\cH}}$, for every subgroupoid $\cH$ of $\cG$ with only one object. This means  that we have a commutative diagram
\begin{equation}\label{Eq:natural}
\begin{gathered}
\xymatrix@C=25pt{  X^{\Sscript{\cH'}}   \ar@{->}_-{\simeq}[d] \ar@{->}^-{\Sscript{X^F}}[rr] &   & X^{\Sscript{\cH}}    \ar@{->}^-{\simeq}[d]   \\   Y^{\Sscript{\cH'}}  \ar@{->}^-{\Sscript{Y^F}}[rr]    &   &  Y^{\Sscript{\cH}}    }
\end{gathered}
\end{equation}
for any $\cG$-equivariant map $F: \cG/\cH \to \cG/\cH'$ between cosets of subgroupoids with only one object, where $X^{\Sscript{F}}$ and $Y^{\Sscript{F}}$  are the maps given as in the proof of Corollary \ref{coro:cuadro}.  
\end{definition}

\begin{remark}\label{rem:Def}
In the case of groups, if we assume that two right $G$-sets have naturally the same fixed points subsets as in Definition  \ref{def:natural}, then this, in particular, implies that $X^{\Sscript{\{e\}}} \simeq Y^{\Sscript{\{e\}}}$ in a natural way ($e$ is the neutral element of $G$). Thus, for any $g \in G$, the right translation map $x \mapsto xg$ from $G$ to $G$ gives arise to a $G$-equivariant map $F : G/\{e \} \to G/\{e\}$ which, by the commutativity of diagram \eqref{Eq:natural}, shows that $X$ and $Y$ are isomorphic as right $G$-sets. Thus, in the group context, two right $G$-sets are isomorphic if and only if they have naturally the same fixed points subsets. The case of groupoids is a bit more elaborate, as we will see in the sequel. 
\end{remark}

Using the previous definition we can show the following result.

\begin{proposition}\label{prop:2to1}
 Let $\cG$ be a groupoid and let's consider  two right $\cG$-sets $(X, \varsigma)$ and $(Y,\vartheta)$.  Then the following statements are equivalent.
 \begin{enumerate}[(i)]
\item  $(X, \varsigma)$ and $(Y,\vartheta)$ have naturally  the same fixed points subsets under the action of each one object subgroupoid   (Definition \ref{def:natural}); 
\item $(X, \varsigma)$ and $(Y,\vartheta)$ are isomorphic as $\cG$-sets.
\end{enumerate}
\end{proposition}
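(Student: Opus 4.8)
The plan is to prove the two implications separately. The implication $(ii) \Rightarrow (i)$ is the easy direction: if $\Phi \colon (X,\varsigma) \to (Y,\vartheta)$ is an isomorphism of right $\cG$-sets, then by Proposition~\ref{prop:Hom} it induces bijections $\widetilde{\Phi} \colon X^{\Sscript{\cH}} \to Y^{\Sscript{\cH}}$ for every one-object subgroupoid $\cH$, and these are compatible with any $\cG$-equivariant map $F \colon \cG/\cH \to \cG/\cH'$ because the maps $X^{\Sscript{F}}$ and $Y^{\Sscript{F}}$ are both given by right translation along the same arrow $g \in \cG(a,a')$ attached to $F$ (see the proof of Corollary~\ref{coro:cuadro}); since $\Phi$ is $\cG$-equivariant, $\widetilde{\Phi}(xg) = \Phi(x)g$, which is exactly the commutativity of diagram~\eqref{Eq:natural}. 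So the naturality square commutes on the nose.

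For the harder implication $(i) \Rightarrow (ii)$, the idea is to reconstruct an isomorphism from the fixed-point data restricted to the ``trivial'' one-object subgroupoids. First I would decompose both $\cG$-sets into orbits: write $X = \Uplus_{i} X_i$ and $Y = \Uplus_{j} Y_j$ as disjoint unions of transitive $\cG$-sets, so by \cite{Kaoutit/Spinosa:2018} each orbit is of the form $(\cG/\cI)^{\Sscript{\Sf{R}}}$ for a one-object subgroupoid $\cI$ (a subgroup of some isotropy group $\cG^{\Sscript{a}}$). The key observation is that the ``trivial'' subgroupoids $\cE_a$ — the ones with a single object $a$ and only the identity arrow, i.e.\ $\cE_a = \{\iota_a\}$ — already see enough: for such $\cH = \cE_a$, the set $X^{\Sscript{\cE_a}}$ is just $\varsigma^{-1}(a)$, the fiber over $a$, by Definition~\ref{def:fixed}, and via Proposition~\ref{prop:Hom} it is $\GHom{(\cG/\cE_a)^{\Sscript{\Sf{R}}}}{X}$. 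Moreover a $\cG$-equivariant map $F \colon \cG/\cE_a \to \cG/\cE_{a'}$ is exactly the choice of an arrow $g \in \cG(a,a')$ (since $(\cG/\cE_a)^{\Sscript{\Sf{R}}} = (\Ga, \Sf{s})$ restricted to the connected component of $a$), and under the identification the induced map $X^{\Sscript{\cE_{a'}}} \to X^{\Sscript{\cE_a}}$ is $x \mapsto xg$. Hence the hypothesis $(i)$, restricted to trivial subgroupoids, says precisely that the two families of fibers $\{\varsigma^{-1}(a)\}_{a \in \Go}$ and $\{\vartheta^{-1}(a)\}_{a \in \Go}$ are in bijection compatibly with the right translation maps $x \mapsto xg$ for every $g \in \Ga$ — and that is literally the data of a $\cG$-equivariant bijection $X \to Y$.

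So the concrete plan is: for each $a \in \Go$ let $\Phi_a \colon \varsigma^{-1}(a) = X^{\Sscript{\cE_a}} \to Y^{\Sscript{\cE_a}} = \vartheta^{-1}(a)$ be the bijection provided by hypothesis $(i)$ applied to $\cH = \cE_a$; glue these into a single bijection $\Phi \colon X \to Y$ (this is legitimate since $X = \Uplus_{a \in \Go} \varsigma^{-1}(a)$, using the coproduct compatibility~\eqref{Eq:XHI} of Proposition~\ref{prop:Hom}), and note $\vartheta \circ \Phi = \varsigma$ by construction. It remains to check $\Phi$ is $\cG$-equivariant, i.e.\ $\Phi(xg) = \Phi(x)g$ for $x \in X$, $g \in \Ga$ with $\varsigma(x) = \Sf{t}(g)$. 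Writing $a = \Sf{t}(g)$, $a' = \Sf{s}(g)$, the arrow $g$ defines a $\cG$-equivariant map $F \colon \cG/\cE_{a'} \to \cG/\cE_a$ — careful with the direction: $F(\cE_{a'}[(a',\iota_{a'})]) = \cE_a[(a,g)]$ — whose induced maps $X^{\Sscript{F}}$ and $Y^{\Sscript{F}}$ are right translation by $g$, and the commutativity of~\eqref{Eq:natural} for this $F$ gives $\Phi_{a'}(xg) = \Phi_a(x)g$, which is exactly $\Phi(xg) = \Phi(x)g$. The main obstacle I anticipate is purely bookkeeping: making sure the identification $X^{\Sscript{\cE_a}} \cong \varsigma^{-1}(a)$ and the description of $\cG$-equivariant maps between the trivial cosets $(\cG/\cE_a)^{\Sscript{\Sf{R}}}$ (and in particular which arrow of $\cG$ corresponds to which map, and in which variance) are pinned down correctly, so that the naturality square~\eqref{Eq:natural} translates into equivariance without a sign/direction error. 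Once that dictionary is set up, the proof is essentially a gluing argument with no further content.
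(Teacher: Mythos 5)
Your proof is correct, but it takes a genuinely different route from the one in the paper. For $(i)\Rightarrow(ii)$ the paper argues through representable functors: it assembles the hypothesised bijections into a natural isomorphism $\phi_{\Sscript{\cG/\cH}}\colon \GHom{\cG/\cH}{X}\to\GHom{\cG/\cH}{Y}$ over \emph{all} one-object subgroupoids $\cH$, extends it to arbitrary right $\cG$-sets $(Z,\zeta)$ via the orbit decomposition $Z\cong\Uplus_{z}\,\cG/\Stab{z}{G}$, and concludes $X\cong Y$ by a Yoneda-type argument. You instead exploit only the \emph{trivial} one-object subgroupoids $\cE_a=\{\iota_a\}$, identify $X^{\Sscript{\cE_a}}$ with the fibre $\varsigma^{-1}(a)$ (this is the reading of Definition \ref{def:fixed} forced by Proposition \ref{prop:Hom}, since the literal definition would vacuously include elements outside the fibre), observe that $\cG$-equivariant maps between the cosets $(\cG/\cE_{a'})^{\Sscript{\Sf{R}}}$ and $(\cG/\cE_a)^{\Sscript{\Sf{R}}}$ are exactly arrows of $\cG$ acting by right translation, and glue the fibre bijections into an equivariant map directly. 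Your direction bookkeeping checks out: for $g$ with $\Sf{t}(g)=a$, $\Sf{s}(g)=a'$, the naturality square \eqref{Eq:natural} for the induced $F\colon\cG/\cE_{a'}\to\cG/\cE_a$ reads $\Phi_{a'}(xg)=\Phi_a(x)g$, which is equivariance. Your argument is more elementary and yields slightly more: it shows the naturality hypothesis is only needed over the trivial subgroupoids and the translation maps among their cosets, which is precisely the groupoid analogue of the observation made for groups in Remark \ref{rem:Def}. The paper's route, by contrast, packages the statement as a natural isomorphism of hom-functors, which is the form reused later.
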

\begin{proof}
$(ii) \Rightarrow (i)$. It is clearly deduced from Proposition \ref{prop:Hom},  Corollary \ref{coro:cuadro} and Remark \ref{rem:XH}. 

$(i) \Rightarrow (ii)$.  Given such an $\cH$ we have, by Proposition \ref{prop:Hom}, the following a commutative diagram  
$$
\xymatrix{  \GHom{\cG/\cH}{X} \ar@{->}^-{\cong}[rr]  \ar@{->}_-{\phi_{\Sscript{\cG/\cH}}}[d] & &   X^{\Sscript{\cH}} \ar@{->}^-{\simeq}[d] \\ \GHom{\cG/\cH}{Y} \ar@{->}^-{\cong}[rr] & &   Y^{\Sscript{\cH}} . }
$$
Let us check that $\phi_{-}$ establishes a natural transformation (isomorphism indeed) over the class of right $\cG$-sets which are right cosets by one object subgroupoids. Thus, given another subgroupoid with only one object $\cH'$ together with a $\cG$-equivariant maps  $F: \cG/\cH \to \cG/\cH'$  
$$
\xymatrix@C=35pt{ & X^{\Sscript{\cH'}} \ar@{->}^-{\simeq}[ld]  \ar@{->}^-{\simeq}[dd] \ar@{->}^-{\Sscript{X^F}}[rrr] &   &  & X^{\Sscript{\cH}}  \ar@{->}^-{\simeq}[rd]  \ar@{->}^-{\simeq}[dd] &  \\   \GHom{\cG/\cH'}{X}   \ar@{->}_-{\phi_{\Sscript{\cG/\cH'}}}[dd] \ar@{->}^-{\Sscript{\GHom{F}{X}}}[rrrrr] & & &  & &   \GHom{\cG/\cH}{X} \ar@{->}^-{\phi_{\Sscript{\cG/\cH}}}[dd]  \\  & Y^{\Sscript{\cH'}}  \ar@{->}^-{\Sscript{Y^F}}[rrr]  \ar@{->}^-{\simeq}[ld]  &   &  & Y^{\Sscript{\cH}}  \ar@{->}^-{\simeq}[rd] &  \\  \GHom{\cG/\cH'}{Y} \ar@{->}^-{\Sscript{\GHom{F}{Y}}}[rrrrr] & &  &  & &  \GHom{\cG/\cH}{Y}  }
$$
we need to show that the front rectangle is commutative. However, this follows immediately from Corollary \ref{coro:cuadro}, since we already know by assumptions that the rear square commutes, and the desired natural isomorphism $\phi_{-}$ is derived.

Now, let us consider an arbitrary $\cG$-set $(Z, \zeta)$. We know from \cite[Corollary 3.11]{Kaoutit/Spinosa:2018} that 
$$ 
Z \, \cong \, \biguplus_{z\, \in \,   \rep_{\scriptscriptstyle{\mathcal{G}}}(Z) } \cG/ \Stab{z}{G}
$$
and, for each subgroupoid $\cH$ of $\cG$ with a single object,   we have a commutative diagram
$$
\xymatrix{ \GHom{Z}{X} \ar@{-->}^-{\Sscript{\phi_{Z}}}[rr]   \ar@{->}_-{\simeq}[d] & & \GHom{Z}{Y} \ar@{->}^-{\simeq}[d] \\  \underset{z \, \in \, \rep_{\Sscript{\cG}}(Z) }{\prod} \GHom{\cG/\Stab{z}{G}}{X} \ar@{->}_-{\cong}^-{ \Sscript{\underset{z \, \in \, \rep_{\Sscript{\cG}}(Z) }{\prod} \phi_{\cG/\Stab{z}{G}}}}[rr]  & &  \underset{z \, \in \, \rep_{\Sscript{\cG}}(Z) }{\prod} \GHom{\cG/\Stab{z}{G}}{Y} . }
$$
This leads to a natural isomorphism $\GHom{Z}{X} \simeq \GHom{Z}{Y}$ for each $\cG$-set $(Z, \zeta)$.
As a consequence we obtain $(X,\varsigma) \cong (Y,\vartheta)$ as right $\cG$-sets, as claimed. 
\end{proof}

\begin{remark}\label{rem:contraejemplo}
Combining Propositions \ref{prop:2to1} and \ref{prop:Hom}, we have that  two $\cG$-sets are isomorphic if and only if their fixed points sets are in a natural bijection, in the sense of Definition \ref{def:natural}. It could happens that two $\cG$-sets have bijective fixed points subsets but not in a natural way, that is, there is no choice of a family of bijections which turns the diagrams \eqref{Eq:natural} commutative (up to our knowledge, this is not even known for the case of groups). Since we do  have neither a counterexample nor  a complete proof for the fact that these diagrams are always commutative, once a bijection is given between the fixed points subsets, it is wise to consider the proof of the case when diagrams \eqref{Eq:natural} do not commute. 
Of course, in this case, the proof of Proposition \ref{prop:2to1} does not work and the converse of the previous equivalence fails. Finiteness conditions should be imposed, in order to provide the proof of the converse implication. This  seems to explain  the notable difficulty of the classical Burnside theory. 
\end{remark}

\subsection{The finite case: Two finite $\cG$-sets with bijective fixed points subsets}\label{ssec:Pcase}
Next, we will try to find sufficient conditions under which two finite $\cG$-sets, whose fixed points subsets have the same cardinality, should be isomorphic;  this will be the Burnside Theorem we are looking for. 

Given groupoid $\cG$, recall that $\SubG$ denotes  its set of  subgroupoids with only one object  and $\simc$ is the equivalence relation on this set given by conjugation.

\begin{lemma}\label{lema:IMp}
Let $(X, \varsigma)$ be a right $\cG$-set and let's consider $x, x' \in X$. Then if $x$ and $x'$ belong to the same orbit,  $\Stab{x}{G}$ and $\Stab{x'}{G}$ are conjugated subgroupoids of $\cG$. Furthermore, the canonical map $X \to \SubG$ sending $x \mapsto \Stab{x}{G} \leq \cG^{\Sscript{\varsigma(x)}}$, which factors through the quotient sets $X/\cG \to \SubG/\simc$, leads to a well defined map
$$
\wp_{\Sscript{X}}: \rep_{\Sscript{\cG}}(X) \longrightarrow \SubG/\simc, \quad \Big( x \longmapsto [\Stab{x}{G}] \Big).
$$
\end{lemma}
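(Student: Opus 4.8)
The plan is to reduce everything to Corollary~\ref{pGrpdQuotRXSetIsom}, which already tells us that two one-object subgroupoids are conjugated if and only if their isotropy groups are conjugated isotropy subgroups in the sense of Definition~\ref{def:ConIsotropy}; so the only genuine computation is to exhibit, for two elements lying in the same orbit, the arrow conjugating their stabilizer groups.

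First I would unwind the hypothesis. If $x'$ lies in $[x]\,\cG$, the orbit of $x$ described in Subsection~\ref{ssec:Orbits}, then $x' = xg$ for some $g \in \cG_{\Sscript{1}}$ with $\mathsf{t}(g) = \varsigma(x)$, and consequently $\varsigma(x') = \mathsf{s}(g)$, so that $g^{-1} \in \cG(\varsigma(x), \varsigma(x'))$. Recall that $\Stab{x}{G}$ is the one-object subgroupoid with object $\varsigma(x)$ whose arrow group is $(\Stab{x}{G})_{\Sscript{1}} = \{\, h \in \cG^{\Sscript{\varsigma(x)}} \mid xh = x \,\} \leq \cG^{\Sscript{\varsigma(x)}}$, and likewise $\Stab{x'}{G}$ has object $\varsigma(x') = \mathsf{s}(g)$. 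Next I would verify the equality
\[
(\Stab{x'}{G})_{\Sscript{1}} \;=\; g^{-1}\,(\Stab{x}{G})_{\Sscript{1}}\,g ,
\]
which is a direct manipulation with the action axioms: for a loop $k$ at $\mathsf{s}(g)$ one checks (keeping track of sources and targets) that $gkg^{-1} \in \cG^{\Sscript{\varsigma(x)}}$, and then, using $(xg)k = x(gk)$, associativity of the groupoid multiplication, $gg^{-1} = \iota_{\varsigma(x)}$, and $x\,\iota_{\varsigma(x)} = x$, the relation $x'k = x'$ turns out to be equivalent to $x(gkg^{-1}) = x$, i.e.\ to $gkg^{-1} \in (\Stab{x}{G})_{\Sscript{1}}$. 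Hence $(\Stab{x}{G})_{\Sscript{1}}$ and $(\Stab{x'}{G})_{\Sscript{1}}$ are conjugated isotropy subgroups via $g^{-1}$, and Corollary~\ref{pGrpdQuotRXSetIsom} then gives that $\Stab{x}{G}$ and $\Stab{x'}{G}$ are conjugated subgroupoids of $\cG$.

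For the last assertion, the previous step shows that the composite of $x \mapsto \Stab{x}{G}$ with the projection $\SubG \to \SubG/\simc$ is constant on each orbit of $X$; it therefore factors through the orbit set, giving a map $X/\cG \to \SubG/\simc$. Precomposing this with the canonical bijection between $\rep_{\Sscript{\cG}}(X)$ and $X/\cG$ produces the asserted map $\wp_{\Sscript{X}}$, with $\wp_{\Sscript{X}}(x) = [\Stab{x}{G}]$. I do not expect a serious obstacle here: the only point that requires care is the source/target bookkeeping for the composites $gk$ and $gkg^{-1}$ together with the correct use of the action axioms in the identity $(\Stab{x'}{G})_{\Sscript{1}} = g^{-1}(\Stab{x}{G})_{\Sscript{1}} g$; everything else is immediate once Corollary~\ref{pGrpdQuotRXSetIsom} is invoked.
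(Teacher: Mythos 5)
Your proof is correct and is exactly the argument the paper intends (its own proof is simply declared ``Straightforward''): the conjugation identity $(\Stab{x'}{G})_{\Sscript{1}} = g^{-1}(\Stab{x}{G})_{\Sscript{1}}\,g$ via the action axioms, followed by Corollary~\ref{pGrpdQuotRXSetIsom} and the factorization through the orbit set. The source/target bookkeeping you flag does check out ($g^{-1}\in\cG(\varsigma(x),\varsigma(x'))$ and $gkg^{-1}\in\cG^{\Sscript{\varsigma(x)}}$), so there is nothing to add.
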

\begin{proof}
Straightforward.
\end{proof}

Using this lemma, one can construct the following map with values in the natural numbers: given a right $\cG$-set $(X, \varsigma)$ (with a countable  underlying set $X$), we define
\begin{equation}\label{Eq:awp}
\Sf{a}_{\Sscript{X}}: \SubG \longrightarrow \mathbb{N}, \qquad \Big(  \cH \longmapsto | \wp_{\Sscript{X}}^{-1}\big(  [\cH]\big)|\Big).
\end{equation}
Of course we get that  $\Sf{a}_{\Sscript{X}}(\cH)=0$ if no representative element $x$ in $\rep_{\Sscript{\cG}}(X)$ has its orbit $\Orbit{x}{G}$ isomorphic to the coset $\cG/\cH$, or equivalently, if its stabilizer $\Stab{x}{G}$ is not conjugated with $\cH$. 

For any set $I$ and $Z$ any right $\cG$-set  we denote by $Z^{(I)}$ the disjoint union of $I$ copies of $Z$, that is, the coproduct,  in the category of right $\cG$-sets, of $Z$ with itself $I$-times. If  $I$ has a finite cardinal,  say $n \in \mathbb{N}$,  then we denote  this coproduct by $n\,  Z$, with the convention $0\,  Z=(\emptyset, \emptyset)$.

We know (see for instance \cite[Corollary 3.11]{Kaoutit/Spinosa:2018}) that the category of right $\cG$-sets has a cogenerator object given by the right $\cG$-set  $ \underset{\Sscript{\cH \, \in \, \SubG}}{\Uplus} \, \cG/\cH$ (the disjoint union of all the cosets of the form  $\cG/\cH$ where $\cH \in \SubG$). 
So given a right $\cG$-set $(X,\varsigma)$ with a countable underlying set (or it set of representatives modulo the $\cG$-action is countable),  then   we have a monomorphism of right $\cG$-sets
$$
\xymatrix{ \jmath: X\,  \ar@{^{(}->}^-{}[rr] &&  \underset{\Sscript{\cH \, \in \, \SubG}}{\Uplus} \, (\cG/\cH)^{(I_{\Sscript{\cH,\, X}})},   }
$$ 
whose image can be written as follows. First, we have the following isomorphism of $\cG$-sets:
 $$
 \underset{\Sscript{\cH \, \in \, \SubG}}{\Uplus} \, (\cG/\cH)^{(I_{\Sscript{\cH,\, X}})} \, \, \cong \,  \, \underset{\Sscript{\cK \, \in \, \rep\big(\SubG\big)}}{\Uplus} \, \, 	\big(\cG/\cK\big)^{(J_{\Sscript{\cK,\, X}})},
 $$
 where the cardinality of each of the sets  $J_{\Sscript{\cK,\, X}}$'s is of  the form $|J_{\Sscript{\cK,\, X}} |= |I_{\Sscript{\cK,\, X}} |\,  |[\cK]|$, where $|[\cK]|$ is the cardinal of the equivalence class represented by $\cK$ in the quotient set $ \SubG/\simc$.

Given an element $\cK \in \rep\big( \SubG\big)$, define the following natural numbers: 
$$
\Sf{n}_ {\Sscript{\cK}}(X) \,=\,  \begin{cases}
|J_{\Sscript{\cK,\, X}} |, \quad \text{ if } \,  \, \jmath(X) \cap (\cG/\cK)^{(J_{\Sscript{\cK,\, X}})} \neq \emptyset \\ 0, \quad \text{otherwise}.
 \end{cases}
 $$

 \begin{lemma}\label{lema:AN}
 Keep the above notations. Then, for every element $\cK \in \rep\big( \SubG\big)$,  we have  
 $$
 \Sf{a}_{\Sscript{X}}(\cK) \, \leq \, \Sf{n}_ {\Sscript{\cK}}(X)\quad \text{ and } \quad   \Big( \Sf{a}_{\Sscript{X}}(\cK) =0 \, \Leftrightarrow \, \Sf{n}_ {\Sscript{\cK}}(X)=0\Big).
 $$
 Furthermore, we have isomorphisms of right $\cG$-sets:
 \begin{equation}\label{Eq:ISO}
 X \, \cong \,  \jmath(X)\, \cong \, \underset{{\cK \, \in \, \rep\big(\SubG\big)}}{\Uplus}  \Sf{a}_X(\cK) \, \cG/\cK,
 \end{equation}
 where the map $\Sf{a}_{\Sscript{X}}$ is the one of equation \eqref{Eq:awp}. 
 \end{lemma}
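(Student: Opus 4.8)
The statement to prove is Lemma~\ref{lema:AN}, which asserts three things about a right $\cG$-set $(X,\varsigma)$ with countable underlying set: (1) $\Sf{a}_{\Sscript{X}}(\cK) \le \Sf{n}_{\Sscript{\cK}}(X)$ for every $\cK\in\rep(\SubG)$; (2) $\Sf{a}_{\Sscript{X}}(\cK)=0$ if and only if $\Sf{n}_{\Sscript{\cK}}(X)=0$; and (3) the isomorphism of right $\cG$-sets $X \cong \jmath(X) \cong \Uplus_{\cK\in\rep(\SubG)}\Sf{a}_X(\cK)\,\cG/\cK$ of equation~\eqref{Eq:ISO}. The natural starting point is the orbit decomposition of $X$. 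By \cite[Corollary~3.11]{Kaoutit/Spinosa:2018}, recalled in the proof of Proposition~\ref{prop:2to1}, we have $X \cong \biguplus_{z\in\rep_{\scriptscriptstyle\cG}(X)}\cG/\Stab{z}{G}$, where each $\Stab{z}{G}$ is a subgroupoid of $\cG$ with a single object. I would first prove (3) directly from this decomposition: group the representatives $z$ according to the conjugacy class of $\Stab{z}{G}$ — that is, use the map $\wp_{\Sscript{X}}$ of Lemma~\ref{lema:IMp} — so that the number of $z$'s mapping to a fixed $[\cK]\in\SubG/\simc$ is exactly $\Sf{a}_{\Sscript{X}}(\cK)$ by the definition~\eqref{Eq:awp}. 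Since conjugated subgroupoids with a single object give isomorphic cosets (Corollary~\ref{pGrpdQuotRXSetIsom} together with Corollary~\ref{coro:cuadro}, or more directly Theorem~\ref{thm:LR}$(iii)\Leftrightarrow(i)$), each such $\cG/\Stab{z}{G}$ is isomorphic to $\cG/\cK$, and collecting terms yields $X \cong \Uplus_{\cK\in\rep(\SubG)}\Sf{a}_X(\cK)\,\cG/\cK$.

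Next I would handle the comparison with $\Sf{n}_{\Sscript{\cK}}(X)$. The monomorphism $\jmath\colon X \hookrightarrow \Uplus_{\cH\in\SubG}(\cG/\cH)^{(I_{\Sscript{\cH,X}})}$ exists because $\Uplus_{\cH\in\SubG}\cG/\cH$ is a cogenerator (again \cite[Corollary~3.11]{Kaoutit/Spinosa:2018}); rewriting the target over $\rep(\SubG)$ gives $\Uplus_{\cK\in\rep(\SubG)}(\cG/\cK)^{(J_{\Sscript{\cK,X}})}$ with $|J_{\Sscript{\cK,X}}| = |I_{\Sscript{\cK,X}}|\cdot|[\cK]|$. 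Using the isomorphism from (3), $\jmath$ restricted to the $\cG/\cK$-isotypic part of $X$ (a disjoint union of $\Sf{a}_X(\cK)$ copies of $\cG/\cK$) must land, by $\cG$-equivariance and the fact that $\cG/\cK$ is a transitive $\cG$-set with the ``correct'' stabilizer, inside the $(\cG/\cK)^{(J_{\Sscript{\cK,X}})}$ summand (a $\cG$-equivariant map out of a transitive $\cG$-set $\cG/\cK$ lands in a single orbit of the target, and Lemma~\ref{lema:Triangular}$(i)$ forces that orbit to be a copy of $\cG/\cK'$ with $\cK$ conjugate to $\cK'$, hence $\cK'=\cK$ in $\rep(\SubG)$). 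Therefore the image $\jmath(X)\cap(\cG/\cK)^{(J_{\Sscript{\cK,X}})}$ contains $\Sf{a}_X(\cK)$ disjoint copies of $\cG/\cK$, giving $\Sf{a}_{\Sscript{X}}(\cK)\le|J_{\Sscript{\cK,X}}|$; and when $\Sf{a}_X(\cK)\ge 1$ this intersection is nonempty, so $\Sf{n}_{\Sscript{\cK}}(X)=|J_{\Sscript{\cK,X}}|$, whence $\Sf{a}_{\Sscript{X}}(\cK)\le\Sf{n}_{\Sscript{\cK}}(X)$. Conversely, if $\Sf{a}_X(\cK)=0$ then no orbit of $X$ is a copy of $\cG/\cK$; since $\jmath$ is a monomorphism of $\cG$-sets and every element of $\jmath(X)$ lies in an orbit isomorphic to some $\cG/\cK''$ appearing in the decomposition of $X$, the intersection $\jmath(X)\cap(\cG/\cK)^{(J_{\Sscript{\cK,X}})}$ is empty, so $\Sf{n}_{\Sscript{\cK}}(X)=0$. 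This establishes both (1) and the equivalence (2).

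\textbf{Main obstacle.} The delicate point is the claim that $\jmath$ sends the $\cG/\cK$-isotypic part of $X$ \emph{into} the $(\cG/\cK)^{(J_{\Sscript{\cK,X}})}$-summand of the target, i.e.\ that $\jmath$ respects the isotypic decomposition. This requires knowing that a $\cG$-equivariant map from the transitive $\cG$-set $\cG/\cK$ to $\cG/\cK'$ (with $\cK,\cK'\in\rep(\SubG)$ single-object subgroupoids) exists only when $\cK$ and $\cK'$ are conjugate, which is precisely the content of Lemma~\ref{lema:Triangular}$(i)$ combined with the observation that, for single-object subgroupoids of comparable ``size'' sitting in the same conjugacy pattern, the containment $\Ka\subseteq g^{-1}\Ha g$ going both ways forces equality (this is where the argument is cleanest if one does not need finiteness — but note the decomposition~\eqref{Eq:ISO} itself only needs countability, and the inequality $\Sf{a}_X(\cK)\le\Sf{n}_{\Sscript{\cK}}(X)$ follows from the orbit count alone without invoking the two-sided containment). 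I expect the proof to be essentially a bookkeeping exercise once the orbit decomposition and the cogenerator embedding are in hand, with the only real care needed in matching up indices between the $I$-indexed and $\rep(\SubG)$-indexed forms of the cogenerator and in justifying that $\jmath$ cannot ``mix'' distinct conjugacy types.
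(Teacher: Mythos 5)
Your proposal is correct, and it is worth noting that the paper itself offers no argument here: its proof of Lemma \ref{lema:AN} is the single line ``It is immediate.'' Your write-up supplies exactly the bookkeeping the authors suppress, namely the orbit decomposition $X \cong \biguplus_{z}\cG/\Stab{z}{G}$, the regrouping of orbits by the conjugacy class of the stabilizer via $\wp_{\Sscript{X}}$ (which gives \eqref{Eq:ISO} directly), and the verification that $\jmath$ cannot mix isotypic components. One remark on your ``main obstacle'': the two-sided containment you worry about does not require finiteness at all, so the argument is cleaner than you suggest. If $F\colon \cG/\cK\to\cG/\cH$ is the equivariant map determined by $F(\cK[(b,\iota_{\Sscript{b}})])=\cH[(a,g)]$, then well-definedness gives $\Ka\subseteq g^{-1}\Ha g$ (Lemma \ref{lema:Triangular}(i)), while injectivity of $F$ at the base point $\cK[(b,\iota_{\Sscript{b}})]$ gives the reverse containment $g^{-1}\Ha g\subseteq\Ka$: indeed $\cH[(a,hg)]=\cH[(a,g)]$ for every $h\in\Ha$, so injectivity forces $\cK[(b,g^{-1}hg)]=\cK[(b,\iota_{\Sscript{b}})]$, i.e.\ $g^{-1}hg\in\Ka$. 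Hence any \emph{injective} equivariant map between cosets of single-object subgroupoids already forces the two subgroupoids to be conjugate, with no appeal to cardinality comparisons; since $\jmath$ is a monomorphism, its restriction to each orbit is injective and the isotypic components are automatically respected. With that observation your proof of both the inequality $\Sf{a}_{\Sscript{X}}(\cK)\leq\Sf{n}_{\Sscript{\cK}}(X)$ and the equivalence $\Sf{a}_{\Sscript{X}}(\cK)=0\Leftrightarrow\Sf{n}_{\Sscript{\cK}}(X)=0$ is complete in the stated generality (countable $X$), not merely in the finite case.
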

 \begin{proof}
 It is immediate.
 \end{proof}

Observe that if the underlying set $X$ of $(X, \varsigma)$ is finite, then there are finitely many elements $\cK \in \rep\big( \SubG\big)$ with the property $\Sf{a}_{\Sscript{X}}(\cK) \neq 0$. Thus, in the finite $\cG$-sets case, the support sets $\{\cK \in \rep\big(\SubG\big)|\, \Sf{a}_{\Sscript{X}}(\cK) \neq 0\}$  have to be finite as well.

The subsequent theorem is the main result of this section. 

\begin{theorem}[Burnside Theorem]\label{tThmBurnside}
Let $\cG$ be a locally strongly finite groupoid (Definition \ref{def:finiteG}). Consider  two  finite right \(\mathcal{G}\)-sets \(\left({X, \varsigma}\right)\) and \(\left({Y, \vartheta}\right)\). Then the following statements are equivalent.
\begin{enumerate}
\item The right \(\Cat{G}\)-sets \(\left({X, \varsigma}\right)\) and \(\left({Y, \vartheta}\right)\) are isomorphic.
\item For each subgroupoid \(\mathcal{H}\) of \(\mathcal{G}\) with a single object, we have that 	
\[
\left\lvert{X^{\Sscript{\cH} }}\right\rvert= \left\lvert{Y^{\Sscript{\cH}}}\right\rvert.
\]
\end{enumerate}
In particular, this applies to any strongly finite groupoid. 
\end{theorem}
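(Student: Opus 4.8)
The implication $(1)\Rightarrow(2)$ is immediate: an isomorphism of right $\cG$-sets induces, by Proposition~\ref{prop:Hom} (or directly by Corollary~\ref{coro:cuadro}), a bijection $X^{\Sscript{\cH}}\simeq Y^{\Sscript{\cH}}$ for every one-object subgroupoid $\cH$, hence equality of cardinalities. The real content is $(2)\Rightarrow(1)$, and the strategy is to reduce it to a triangular linear-algebra argument exactly as in the classical Burnside theorem, using the block-triangular structure of the table of marks obtained in Proposition~\ref{prop:tableofmarks}.

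\textbf{Key steps for $(2)\Rightarrow(1)$.} First I would invoke Lemma~\ref{lema:AN}, equation~\eqref{Eq:ISO}, to write both $\cG$-sets canonically as finite disjoint unions
\[
X\,\cong\,\underset{\cK\,\in\,\rep(\SubG)}{\Uplus}\Sf{a}_X(\cK)\,\cG/\cK,
\qquad
Y\,\cong\,\underset{\cK\,\in\,\rep(\SubG)}{\Uplus}\Sf{a}_Y(\cK)\,\cG/\cK,
\]
where (since $\cG$ is locally strongly finite and $X,Y$ are finite) the coefficient functions $\Sf{a}_X,\Sf{a}_Y\colon\rep(\SubG)\to\mathbb{N}$ have finite support. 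By Proposition~\ref{pSetsCoprGrpd} and Proposition~\ref{prop:Rep}(i), the whole problem decomposes over the finitely many connected components of $\cG$, so it suffices to treat each transitive component separately; by Theorem~\ref{tEquivCatTransGrpd} each transitive component is Laplaza-equivalent to (the category of right sets over) a one-object subgroupoid, i.e.\ to a finite group, and the equivalence of categories carries fixed-point cardinalities to fixed-point cardinalities and orbit decompositions to orbit decompositions. Thus the statement for a transitive groupoid is \emph{literally} the classical Burnside theorem for the finite group $\cG^{\Sscript{a}}$, which we may cite (\cite[Theorem I, page 238]{Burnside:1911}).

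Alternatively — and this is the self-contained route I would actually write out — I would argue directly with the table of marks. For a fixed connected component, order $\rep(\SubGx{a_i})$ by the total order $\preceq$ of Proposition~\ref{prop:tableofmarks}, so that $\Sf{m}_{\Sscript{(\cH,\cK)}}=\lvert(\cG/\cK)^{\Sscript{\cH}}\rvert$ is the entry of a lower-triangular matrix $M_i$ with nonzero diagonal. Using the natural bijection $(\cG/\cK)^{\Sscript{\cH}}\simeq\GHom{\cG/\cH}{\cG/\cK}$ of Proposition~\ref{prop:Hom} and the additivity of $(-)^{\Sscript{\cH}}$ under disjoint unions (equation~\eqref{Eq:XHI}), the hypothesis $\lvert X^{\Sscript{\cH}}\rvert=\lvert Y^{\Sscript{\cH}}\rvert$ for all $\cH\in\SubG$ becomes the system of equations
\[
\sum_{\cK}\Sf{a}_X(\cK)\,\Sf{m}_{\Sscript{(\cH,\cK)}}
\;=\;
\sum_{\cK}\Sf{a}_Y(\cK)\,\Sf{m}_{\Sscript{(\cH,\cK)}},
\qquad\text{for all }\cH\in\rep(\SubG),
\]
i.e.\ $M^{\mathsf{t}}\,\mathbf{a}_X=M^{\mathsf{t}}\,\mathbf{a}_Y$ where $M$ is the block-diagonal table of marks. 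Since each block $M_i$ is triangular with nonzero diagonal entries it is invertible over $\mathbb{Q}$, hence $M$ is invertible, hence $\mathbf{a}_X=\mathbf{a}_Y$; plugging back into~\eqref{Eq:ISO} gives $X\cong Y$ as right $\cG$-sets. The last sentence of the theorem (``in particular, any strongly finite groupoid'') follows because a strongly finite groupoid is locally strongly finite, as remarked in Definition~\ref{def:finiteG}.

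\textbf{Main obstacle.} The conceptual steps are all in place from the earlier sections, so the only genuine difficulty is bookkeeping: one must make sure that when $\cH$ ranges over \emph{all} one-object subgroupoids of $\cG$ (not just representatives) the resulting system of equations is still equivalent to the one indexed by $\rep(\SubG)$ — this uses that conjugate $\cH$'s give the same $\lvert X^{\Sscript{\cH}}\rvert$ (the bijection $X^{\Sscript{\cH}}\simeq X^{\Sscript{\cH'}}$ from Corollary~\ref{coro:cuadro}) — and that passing through the equivalence of Theorem~\ref{tEquivCatTransGrpd} does not create or destroy fixed points, for which one checks that the equivalence sends $\cG/\cH$ to $\cG^{\Sscript{(a)}}/(\text{corresponding subgroup})$ compatibly with the $\GHom$-description. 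Once these compatibilities are recorded, the finiteness hypothesis is used exactly twice — to guarantee finitely many rows/columns so that $M$ is an honest finite matrix, and to guarantee $\Sf{a}_X,\Sf{a}_Y$ have finite support — and the proof closes.
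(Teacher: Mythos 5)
Your proposal is correct and follows essentially the same route as the paper: decompose $X$ and $Y$ via equation~\eqref{Eq:ISO} into disjoint unions of cosets $\cG/\cK$ over $\rep(\SubG)$, translate the fixed-point hypothesis through~\eqref{Eq:XHI} into the linear system indexed by the table of marks, and conclude $\Sf{a}_X=\Sf{a}_Y$ from the invertibility of the block-triangular matrix of Proposition~\ref{prop:tableofmarks}. The alternative reduction to the classical group case via Theorem~\ref{tEquivCatTransGrpd} that you sketch is also essentially how the paper establishes the triangularity of each block in the first place, so no genuinely different argument is involved.
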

\begin{proof}
\((1) \Rightarrow (2)\). Follows from Propositions \ref{prop:Hom} or \ref{prop:2to1}.

\((2) \Rightarrow (1)\). Using the isomorphisms given in equation \eqref{Eq:ISO}, we know that 
\[ 
X \, \cong \, \biguplus_{ \mathcal{K}  \, \in \,  \rep\big(\SubG\big)  } \Sf{a}_{\Sscript{X}}(\cK) \,  \cG/\cK
\qquad \text{and} \qquad
Y \, \cong \, \biguplus_{ \mathcal{K}  \, \in \,  \rep\big(\SubG\big)  } \Sf{a}_{\Sscript{Y}}(\cK) \, \cG/\cK .
\]
By hypothesis it is assumed that   \(\left\lvert{X^{\Sscript{\cH}}}\right\rvert= \left\lvert{Y^{\Sscript{\cH}}}\right\rvert\) for each subgroupoid \(\mathcal{H}\) of \(\mathcal{G}\) with a single object. Applying the bijections of equation \eqref{Eq:XHI} to the previous isomorphisms, we get the following  equalities
\begin{multline*}
\sum_{ \mathcal{K} \, \in  \, \rep \left({\mathcal{S}_{\mathcal{G} } }\right) } \Sf{a}_{\Sscript{X}}(\cK)  \left\lvert{ \left({{\mathcal{G}/\cK }}\right)^{\Sscript{\cH}} }\right\rvert = \left\lvert{ \biguplus_{\mathcal{K}  \, \in  \, \rep\left({\mathcal{S}_{\mathcal{G} } }\right) }  \Sf{a}_{\Sscript{X}}(\cK)  \left({{ \mathcal{G}/\mathcal{K} } }\right)^{\Sscript{\cH} } }\right\rvert  
= \left\lvert{ \left({\biguplus_{\mathcal{K}  \, \in  \, \rep\left({\mathcal{S}_{\mathcal{G} } }\right) } \Sf{a}_{\Sscript{X}}(\cK) \,  {\mathcal{G}/\mathcal{K} }  }\right)^{\Sscript{\cH} } }\right\rvert 
= \left\lvert{X^{\Sscript{\cH} } }\right\rvert  \\
= \left\lvert{Y^{\Sscript{\cH}} }\right\rvert  
=   \sum_{ \mathcal{K}  \, \in \,  \rep \left({\mathcal{S}_{\mathcal{G} } }\right) }  \Sf{a}_{\Sscript{Y}}(\cK)   \left\lvert{ \left({{\mathcal{G}/\cK }}\right)^{\Sscript{\cH}} }\right\rvert, 
\end{multline*}
for every subgroupoid $\cH \in \SubG$. 
Therefore, for each \(\mathcal{H} \in \rep\left({\mathcal{S}_{\mathcal{G}}}\right)\),  we have the equality
\begin{equation}\label{eBurnGrpdSys}
\sum_{ \mathcal{K}  \, \in \,  \rep \left({\mathcal{S}_{\mathcal{G} } }\right) } \left(  \Sf{a}_{\Sscript{X}}(\cK)- \Sf{a}_{\Sscript{Y}}(\cK) \right)   \left\lvert{ \left({{\mathcal{G}/\mathcal{K} }}\right)^{\Sscript{\cH}} }\right\rvert =0.
\end{equation} 
Now by Proposition \ref{prop:tableofmarks}  the entries  $\{\Sf{m}_{\Sscript{(\cH,\, \cK)}}\}$ form an  upper triangular square matrix, which is non-singular.  It follows  that, in the system \eqref{eBurnGrpdSys}, we  have $\Sf{a}_{\Sscript{X}}(\cH)= \Sf{a}_{\Sscript{Y}}(\cH)$ for each $\cH \in \rep(\SubG)$. Therefore $(X, \varsigma)$ and $(Y,\vartheta)$ are isomorphic as right $\cG$-sets. 
Lastly, the particular claim is clear and this finishes the proof.
\end{proof}

The Burnside Theorem implies a sort of cancellative property, with respect to the internal  operation $\Uplus$.

\begin{corollary}
\label{cCancPropRXGSets}
Given a locally strongly finite groupoid $\cG$,  let be \(\left({X, \varsigma}\right)\), \(\left({Y, \vartheta}\right)\) and \(\left({Z, \zeta}\right)\) finite right \(\Cat{G}\)-sets such that there is an isomorphism of right \(\Cat{G}\)-sets of the form 
\[ \left({X, \varsigma}\right) \Uplus \left({Z, \zeta}\right) \cong \left({Y, \vartheta}\right) \Uplus \left({Z, \zeta}\right).
\]
Then we have an isomorphism \(\left({X, \varsigma}\right) \cong \left({Y, \vartheta}\right)\) of $\cG$-sets.
\end{corollary}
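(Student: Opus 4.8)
The plan is to deduce this directly from the Burnside Theorem (Theorem~\ref{tThmBurnside}), which applies since $\cG$ is locally strongly finite and $(X,\varsigma)$, $(Y,\vartheta)$ are finite right $\cG$-sets. Concretely, it suffices to verify the hypothesis $(2)$ of that theorem, namely that $\left\lvert X^{\Sscript{\cH}}\right\rvert = \left\lvert Y^{\Sscript{\cH}}\right\rvert$ for every subgroupoid $\cH$ of $\cG$ with a single object; the conclusion $(X,\varsigma)\cong(Y,\vartheta)$ is then immediate.

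First I would fix such an $\cH$ and invoke the compatibility of the fixed point functor with disjoint unions recorded in Proposition~\ref{prop:Hom}, equation~\eqref{Eq:XHI}: applied to the two-term coproducts $X\Uplus Z$ and $Y\Uplus Z$ this yields natural bijections
\[
(X\Uplus Z)^{\Sscript{\cH}} \;\simeq\; X^{\Sscript{\cH}} \Uplus Z^{\Sscript{\cH}}
\qquad\text{and}\qquad
(Y\Uplus Z)^{\Sscript{\cH}} \;\simeq\; Y^{\Sscript{\cH}} \Uplus Z^{\Sscript{\cH}}.
\]
Next, the assumed isomorphism of right $\cG$-sets $(X,\varsigma)\Uplus(Z,\zeta)\cong(Y,\vartheta)\Uplus(Z,\zeta)$ induces, again by Proposition~\ref{prop:Hom} (the functoriality part: a $\cG$-equivariant isomorphism restricts to a bijection on fixed points), a bijection $(X\Uplus Z)^{\Sscript{\cH}}\simeq(Y\Uplus Z)^{\Sscript{\cH}}$. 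Chaining these three bijections gives $X^{\Sscript{\cH}}\Uplus Z^{\Sscript{\cH}}\simeq Y^{\Sscript{\cH}}\Uplus Z^{\Sscript{\cH}}$.

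Since $X$, $Y$ and $Z$ have finite underlying sets, the subsets $X^{\Sscript{\cH}}$, $Y^{\Sscript{\cH}}$ and in particular $Z^{\Sscript{\cH}}$ are finite; hence the cardinality identity $\left\lvert X^{\Sscript{\cH}}\right\rvert + \left\lvert Z^{\Sscript{\cH}}\right\rvert = \left\lvert Y^{\Sscript{\cH}}\right\rvert + \left\lvert Z^{\Sscript{\cH}}\right\rvert$ lets us cancel the finite number $\left\lvert Z^{\Sscript{\cH}}\right\rvert$ to conclude $\left\lvert X^{\Sscript{\cH}}\right\rvert = \left\lvert Y^{\Sscript{\cH}}\right\rvert$. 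As $\cH$ was an arbitrary one-object subgroupoid, Theorem~\ref{tThmBurnside} applies and gives $(X,\varsigma)\cong(Y,\vartheta)$. There is no real obstacle here: the argument is routine once~\eqref{Eq:XHI} and the Burnside Theorem are in hand, and the only point requiring a word of care is to note that the finiteness of $Z$ is what makes the cancellation of $\left\lvert Z^{\Sscript{\cH}}\right\rvert$ legitimate (and that local strong finiteness of $\cG$ is exactly the hypothesis needed to invoke Theorem~\ref{tThmBurnside}).
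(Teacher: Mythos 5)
Your argument is exactly the paper's proof: apply the fixed-point bijection of equation \eqref{Eq:XHI} to both sides of the given isomorphism, cancel the finite cardinal $\left\lvert Z^{\Sscript{\cH}}\right\rvert$, and invoke Theorem~\ref{tThmBurnside}. The proposal is correct and only adds a bit more detail (the explicit chain of bijections and the remark on finiteness) to what the paper states in one line.
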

\begin{proof}
Given $\cH \in \SubG$, using the bijection of equation \eqref{Eq:XHI}, we obtain
\[ 
\left\lvert{\left({X, \varsigma}\right)^{\Sscript{\cH} } }\right\rvert + \left\lvert{\left({Z, \zeta}\right)^{\Sscript{\cH}}}\right\rvert = \left\lvert{\left({Y, \vartheta}\right)^{\Sscript{\cH}}}\right\rvert + \left\lvert{\left({Z, \zeta}\right)^{\Sscript{\cH}}}\right\rvert 
\]
therefore  \(\left\lvert{\left({X, \varsigma}\right)^{\Sscript{\cH} } }\right\rvert = \left\lvert{\left({Y, \vartheta}\right)^{\Sscript{\cH}}}\right\rvert\).
As a consequence, thanks to Theorem \ref{tThmBurnside}, we get \(\left({X, \varsigma}\right) \cong \left({Y, \vartheta}\right)\) as $\cG$-sets.
\end{proof}

\section{Burnside functor for groupoids: coproducts and products}\label{sec:}

In this section we introduce the Burnside ring attached to a groupoid with finitely many objects, whose construction is based on  the skeleton of the category of the right $\cG$-sets with underlying finite sets.  For the convenience of an inexperienced audience we recall in the Appendices \ref{ssec:Rig} and \ref{ssec:GrothF}, with very elementary arguments,  the general notion of Grothendieck construction as well as that of the category of rigs. Both are crucial in performing the construction of the Burnside ring functor.  
The compatibility of this functor with coproducts and product is needed in order to establish the main result of this section, which asserts that the Burnside ring of a given (finite) groupoid is the product  of the  Burnside rings of its isotropy groups, where the product is taken over  the set of the connected components (see Theorem \ref{tBurnTransCompProd}).

We assume, in this section,   that all handled groupoids have a finite set of objects. This condition is in fact needed to have a unit for the Burnside ring we are planing to introduce, since we will make use of the skeletally small category of  finite groupoid-sets to perform this construction. We also assume that functors between groupoid-sets preserve objects with finite underlying sets, and transform an empty groupoid-set to an empty one, as the induction  functors do. 
Given a groupoid $\cG$, we denote by $\frGsets$ the full subcategory of right $\cG$-sets with finite underlying sets.

\subsection{Burnside rig functor and coproducts.}\label{ssec:Brig}

Given a groupoid $\cG$, let \(\left({X, \varsigma}\right)\) be a finite right \(\Cat{G}\)-set, that is, an object in $\frGsets$, and denote by \(\left[{\left({X, \varsigma}\right)}\right]\) its equivalence class modulo the isomorphism relation.
Consider $\LG{G}$,   the \emph{quotient set of all finite right \(\Cat{G}\)-sets modulo  the isomorphism of right \(\Cat{G}\)-sets equivalence relation}. This means that elements of $\LG{G}$ are classes $[(X, \varsigma)]$ represented by $\cG$-sets $(X,\varsigma)$ with finite underlying set $X$. We  endow the set $\lL(\cG)$  with an addition and a multiplication operations: for every \(\left({X, \varsigma}\right), \left({Y, \vartheta}\right) \in \rset{G}\), we define
\[ \left[{\left({X, \varsigma}\right) }\right] + \left[{\left({Y, \vartheta}\right) }\right] := \left[{\left({X , \varsigma}\right) \Uplus \left({Y, \vartheta}\right) }\right] 
=  \left[{\left({X \Uplus Y, \varsigma \Uplus \vartheta}\right) }\right] 
\]
and
\[ \left[{\left({X, \varsigma}\right) }\right] \cdot \left[{\left({Y, \vartheta}\right) }\right] := \left[{\left({X, \varsigma}\right) \fpro{\Cat{G}_{ \scriptscriptstyle{0} } }  \left({Y, \vartheta}\right) }\right]
= \left[{\left({X  \, {{}_{ \scriptscriptstyle{\varsigma} } {\times}_{ \scriptscriptstyle{\vartheta} }\,}  Y, \varsigma \vartheta}\right) }\right],
\]
(see subsection \ref{ssec:SMC} for the notations).
It is immediate to check that these operations are well defined and that, in this way, \(\lL\left({\Cat{G}}\right)\) becomes a rig with additive neutral element \(\left[{\left({\emptyset, \emptyset}\right)}\right]\) and multiplicative neutral element \(\left[{\left({\Cat{G}_{ \scriptscriptstyle{0} }, \Id_{\Cat{G}_{ \scriptscriptstyle{0} } } }\right) }\right] \) (see Definition \ref{def:rig} for more details). 

The rig construction is a functorial one, as one can prove using general monoidal category theory.
We give, in our case, an elementary proof.

\begin{lemma}
\label{lBurnRigFu}
Given two groupoids \(\Cat{G}\) and \(\Cat{H}\), let \(F \colon \Rset{\cG} \longrightarrow \Rset{\cH}\) be a strong  monoidal  functor with respect to both monoidal structures: \(\Uplus\) and the fibered product.
Let us define
$$
{\Sf{h}}:  \LG{G}  \longrightarrow \LG{H}, \qquad \Big(  \left[{\left({X, \varsigma}\right) }\right] \longmapsto \left[{F\left({X, \varsigma }\right) }\right]  \Big).
$$
Then \(\Sf{h}\) is a homomorphism of rigs.
\end{lemma}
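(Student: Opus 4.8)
The plan is to verify directly that $\Sf{h}$ is well defined and that it respects the rig structure, using only the hypothesis that $F$ is strong monoidal for both $\Uplus$ and $\fpro{\Cat{G}_{\scriptscriptstyle 0}}$, together with the obvious fact that $F$, being a functor, sends isomorphic $\cG$-sets to isomorphic $\cH$-sets.

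First I would check that $\Sf{h}$ is well defined: if $(X,\varsigma) \cong (X',\varsigma')$ as right $\cG$-sets, then applying $F$ gives $F(X,\varsigma) \cong F(X',\varsigma')$ as right $\cH$-sets, so $[F(X,\varsigma)] = [F(X',\varsigma')]$ in $\LG{H}$; hence the assignment on equivalence classes is independent of the chosen representative. Also $F$ preserves finiteness of the underlying set by our standing assumption, so $\Sf{h}$ does land in $\LG{H}$.

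Next I would verify the three algebraic conditions. For additivity, given $[(X,\varsigma)], [(Y,\vartheta)] \in \LG{G}$, we compute
\[
\Sf{h}\big([(X,\varsigma)] + [(Y,\vartheta)]\big) = \Sf{h}\big([(X,\varsigma) \Uplus (Y,\vartheta)]\big) = [F((X,\varsigma) \Uplus (Y,\vartheta))] = [F(X,\varsigma) \Uplus F(Y,\vartheta)] = \Sf{h}([(X,\varsigma)]) + \Sf{h}([(Y,\vartheta)]),
\]
where the middle equality uses that $F$ is strong monoidal for $\Uplus$ (so the two objects are isomorphic in $\Rset{\cH}$). Similarly, $\Sf{h}([(\emptyset,\emptyset)]) = [F(\emptyset,\emptyset)] = [(\emptyset,\emptyset)]$ since $F$ is strong monoidal, hence sends the unit object for $\Uplus$ to the unit object. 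The multiplicativity argument is the same word for word, replacing $\Uplus$ by $\fpro{\Cat{G}_{\scriptscriptstyle 0}}$: one has $\Sf{h}([(X,\varsigma)] \cdot [(Y,\vartheta)]) = [F((X,\varsigma) \fpro{\Cat{G}_{\scriptscriptstyle 0}} (Y,\vartheta))] = [F(X,\varsigma) \fpro{\Cat{H}_{\scriptscriptstyle 0}} F(Y,\vartheta)] = \Sf{h}([(X,\varsigma)]) \cdot \Sf{h}([(Y,\vartheta)])$, and $\Sf{h}$ sends the class of $(\Cat{G}_{\scriptscriptstyle 0}, \Id_{\Cat{G}_{\scriptscriptstyle 0}})$, the multiplicative unit of $\lL(\cG)$, to the class of $F(\Cat{G}_{\scriptscriptstyle 0},\Id_{\Cat{G}_{\scriptscriptstyle 0}}) \cong (\Cat{H}_{\scriptscriptstyle 0},\Id_{\Cat{H}_{\scriptscriptstyle 0}})$, which is the multiplicative unit of $\lL(\cH)$. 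This exhausts the definition of a rig homomorphism (Definition \ref{def:rig}), so the proof is complete.

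There is essentially no obstacle here — the statement is a formality once one unwinds the definitions, and indeed the paper flags that it follows from ``general monoidal category theory''; the only thing to be careful about is to consistently pass to isomorphism classes, since $F$ strong monoidal only guarantees natural isomorphisms $F(A) \Uplus F(B) \cong F(A \Uplus B)$ and $F(A) \fpro{} F(B) \cong F(A \fpro{} B)$ rather than equalities, but that is exactly why the construction is performed at the level of $\LG{-}$ rather than on $\Rset{-}$ itself.
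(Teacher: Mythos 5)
Your proof is correct and follows essentially the same route as the paper's: well-definedness via functoriality, then additivity, multiplicativity, and preservation of both units via the strong monoidal structure for $\Uplus$ and the fibered product, all at the level of isomorphism classes. Nothing is missing.
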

\begin{proof}
Clearly $\Sf{h}$ is a well defined map, since any functor preserves isomorphisms. 
Now, for every \(\left({X, \varsigma}\right), \left({Y, \vartheta}\right) \in \Rset{G}\) we have the following  isomorphisms of right \(\Cat{H}\)-sets
\[ 
F \left({ X \Uplus Y, \varsigma \Uplus \vartheta}\right) = F\left({\left({X, \varsigma}\right) \Uplus \left({Y, \vartheta}\right) }\right) \,\cong \, F\Big(\left(X, \varsigma\right) \Big) \Uplus F\Big(\left(Y, \vartheta\right) \Big)  \, \cong\,  F\left({X, \varsigma}\right) + F\left({Y, \vartheta}\right), 
\]
and 
\[ 
F \Big({ X  \, {{}_{ \scriptscriptstyle{\varsigma} } {\times}_{ \scriptscriptstyle{\vartheta} }\,}  Y, \varsigma \vartheta}\Big) = F\left({\left({X, \varsigma}\right) \fpro{\Cat{G}_{ \scriptscriptstyle{0} } }  \left({Y, \vartheta}\right) }\right)  \, \cong \,   F\left({X, \varsigma}\right) \underset{\Go}{\times} F\left({Y, \vartheta}\right)
\, \cong \,  F\left({X, \varsigma}\right) \cdot F\left({Y, \vartheta}\right). 
\]
Passing to the isomorphism classes and applying $\Sf{h}$, leads to the the equalities
\[ 
\begin{aligned}
&\Sf{h} \left({\left[{\left({X, \varsigma}\right) }\right] + \left[{\left({Y, \vartheta}\right) }\right] }\right) = \Sf{h} \left({\left[{\left({X \Uplus Y, \varsigma \Uplus \vartheta}\right) }\right] }\right) 
= \left[{F\left({X \Uplus Y, \varsigma \Uplus \vartheta}\right) }\right] \\
&= \left[{F \left({X, \varsigma}\right) + F \left({Y, \vartheta}\right) }\right] 
= \left[{F\left({X, \varsigma}\right) }\right] + \left[{F\left({Y, \vartheta}\right) }\right] 
= \Sf{h}\left({\left[{\left({X, \varsigma}\right) }\right] }\right) + \Sf{h}\left({\left[{\left({Y, \vartheta}\right) }\right] }\right)     
\end{aligned}
\]
and
\[ \begin{aligned}
& \Sf{h} \left({\left[{\left({X, \varsigma}\right) }\right] \cdot \left[{\left({Y, \vartheta}\right) }\right] }\right) = \Sf{h} \left({\left[{\left({X  \, {{}_{ \scriptscriptstyle{\varsigma} } {\times}_{ \scriptscriptstyle{\vartheta} }\,}  Y, \varsigma \vartheta}\right) }\right] }\right) 
= \left[{F\left({X  \, {{}_{ \scriptscriptstyle{\varsigma} } {\times}_{ \scriptscriptstyle{\vartheta} }\,}  Y, \varsigma \vartheta}\right) }\right] \\
&= \left[{F \left({X, \varsigma}\right) \cdot F \left({Y, \vartheta}\right) }\right] 
= \left[{F\left({X, \varsigma}\right) }\right] \cdot \left[{F\left({Y, \vartheta}\right) }\right] 
= \Sf{h} \left({\left[{\left({X, \varsigma}\right) }\right] }\right) \cdot \Sf{h} \left({\left[{\left({Y, \vartheta}\right) }\right] }\right)     .
\end{aligned}
\]
On the other hand,  we have the isomorphisms of right \(\Cat{H}\)-sets \(F\left({\emptyset, \emptyset}\right) \cong \left({\emptyset, \emptyset}\right)\) and \(F\left({\Cat{G}_{ \scriptscriptstyle{0} }, \Id_{\Cat{G}_{ \scriptscriptstyle{0} } }  }\right) \cong \left({\Cat{H}_{ \scriptscriptstyle{0} }, \Id_{\Cat{H}_{ \scriptscriptstyle{0} } } }\right)\). We then obtain
\[ \Sf{h} \left({\left[{\left({\emptyset, \emptyset}\right) }\right] }\right) = \left[{F\left({\emptyset, \emptyset }\right) }\right] 
= \left[{\left({\emptyset, \emptyset}\right) }\right] \quad 
\text{ and } \quad 
 \Sf{h} \left({\left[{\left({\Cat{G}_{ \scriptscriptstyle{0} }, \Id_{\Cat{G}_{ \scriptscriptstyle{0} } }  }\right) }\right] }\right) = \left[{F\left({\Cat{G}_{ \scriptscriptstyle{0} }, \Id_{\Cat{G}_{ \scriptscriptstyle{0} } } }\right) }\right] 
= \left[{\left({\Cat{H}_{ \scriptscriptstyle{0} }, \Id_{\Cat{H}_{ \scriptscriptstyle{0} } } }\right) }\right].
\]
As a consequence we have proved that \(\Sf{h}\) is a homomorphism of rigs as desired.
\end{proof}

Now, let  \(\phi \colon \Cat{H} \longrightarrow \Cat{G}\) be a homomorphism of groupoids. By Proposition~\(\ref{pMorGrpdIndFuRSets}\) we can consider the induced functor \(\phi^\ast \colon \Rset{\cG} \longrightarrow \Rset{\cH}\) and, thanks to Lemma~\(\ref{lBurnRigFu}\), from this functor we obtain a homomorphism of rigs from \(\lL\left({\Cat{G}}\right)\) to \(\lL\left(\Cat{H}\right)\), induced by \(\phi^\ast\), which we denote by \(\lL(\phi)\).
More precisely, we have 
\[ \begin{aligned}{\lL(\phi)}  \colon & {\lL\left({\Cat{G}}\right)} \longrightarrow { \lL\left({\Cat{H} }\right) } \\ & {\left[{\left({X,\varsigma}\right) }\right] }  \longrightarrow {\left[{\phi^\ast\left({X, \varsigma}\right) }\right]  .}\end{aligned} 
\]

\begin{propositiondef}\label{propdef:}
The correspondence  \(\lL\) defines a contravariant functor from the category of groupoids \(\Grou\) to the category of rigs \(\mathbf{Rig}\) which we call, inspired by \cite[page~\(381\)]{SchaNegEulCharDim}, the \emph{Burnside rig functor}.
\end{propositiondef}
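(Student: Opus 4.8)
The plan is to verify that $\lL$ satisfies the two axioms of a contravariant functor: identity morphisms are sent to identity maps, and composition is reversed. Both facts will follow almost formally from the results already established, in particular Proposition~\ref{pIndFuIsomCompUn} together with Lemma~\ref{lBurnRigFu} and the functoriality of passing to isomorphism classes.

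First I would check well-definedness and that $\lL(\phi)$ really is a morphism of rigs. The functor $\phi^\ast$ is monoidal with respect to both $\Uplus$ and $-\fpro{\Cat{G}_{ \scriptscriptstyle{0} } }-$ by Proposition~\ref{pMorGrpdIndFuRSets}, so Lemma~\ref{lBurnRigFu} applies verbatim and yields that $\lL(\phi)\colon \lL(\Cat{G}) \to \lL(\Cat{H})$ is a homomorphism of rigs; the assignment is independent of the chosen representative because any functor preserves isomorphisms. Next, for the identity: by Proposition~\ref{pIndFuIsomCompUn} there is a (Laplaza) natural isomorphism $\beta\colon (\Id_{\Cat{G}})^\ast \longrightarrow \Id_{\Rset{\Cat{G}}}$, hence for every finite right $\Cat{G}$-set $(X,\varsigma)$ we have $(\Id_{\Cat{G}})^\ast(X,\varsigma) \cong (X,\varsigma)$, so $\lL(\Id_{\Cat{G}})\big([(X,\varsigma)]\big) = [(\Id_{\Cat{G}})^\ast(X,\varsigma)] = [(X,\varsigma)]$, i.e. $\lL(\Id_{\Cat{G}}) = \Id_{\lL(\Cat{G})}$.

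For compositionality, given $\psi\colon \Cat{K} \to \Cat{H}$ and $\phi\colon \Cat{H} \to \Cat{G}$, Proposition~\ref{pIndFuIsomCompUn} provides a natural isomorphism $\gamma\colon (\phi\psi)^\ast \longrightarrow \psi^\ast\phi^\ast$. Evaluating at a finite right $\Cat{G}$-set $(X,\varsigma)$ gives $(\phi\psi)^\ast(X,\varsigma) \cong \psi^\ast\big(\phi^\ast(X,\varsigma)\big)$ in $\rset{\Cat{K}}$, and passing to isomorphism classes we obtain
\[
\lL(\phi\psi)\big([(X,\varsigma)]\big) = \big[(\phi\psi)^\ast(X,\varsigma)\big] = \big[\psi^\ast(\phi^\ast(X,\varsigma))\big] = \lL(\psi)\Big(\big[\phi^\ast(X,\varsigma)\big]\Big) = \lL(\psi)\big(\lL(\phi)([(X,\varsigma)])\big),
\]
so $\lL(\phi\psi) = \lL(\psi)\circ\lL(\phi)$, which is precisely contravariance. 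Finally, one should note that $\phi^\ast$ preserves finite underlying sets, so it restricts to $\frGsets \to \frHsets$ and $\lL(\phi)$ is well defined on $\LG{G}$; this is the running assumption on functors between groupoid-sets recalled at the start of the section, and it is immediate for $\phi^\ast$ since its underlying set $X \fpro{\Cat{G}_{ \scriptscriptstyle{0} }} \Cat{H}_{ \scriptscriptstyle{0} }$ is finite whenever $X$ and $\Cat{H}_{ \scriptscriptstyle{0} }$ are (recall all groupoids here have finitely many objects).

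There is no real obstacle here: the statement is a bookkeeping consequence of earlier propositions. The only points requiring a word of care are (i) that $\lL(\phi)$ should be recorded as a rig homomorphism, not merely a map of sets, which is handled by Lemma~\ref{lBurnRigFu}, and (ii) that everything stays within the finite subcategory, which is automatic under the standing hypotheses. Hence the proof is essentially a two-line verification citing Proposition~\ref{pIndFuIsomCompUn} and Lemma~\ref{lBurnRigFu}, and it may reasonably be left as \emph{straightforward} or given in the compact form above.
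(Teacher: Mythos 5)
Your proof is correct and follows essentially the same route as the paper: both arguments reduce functoriality to the natural isomorphisms $(\phi\psi)^\ast \cong \psi^\ast\phi^\ast$ and $(\Id_{\Cat{G}})^\ast \cong \Id$ supplied by Proposition~\ref{pIndFuIsomCompUn}, which become equalities after passing to isomorphism classes, with Lemma~\ref{lBurnRigFu} guaranteeing that each $\lL(\phi)$ is a rig homomorphism. Your additional remarks on well-definedness and preservation of finiteness are correct minor elaborations of the same argument.
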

\begin{proof}
Let \(\psi \colon \Cat{K} \longrightarrow \Cat{H}\) and \(\phi \colon \Cat{H} \longrightarrow \Cat{G}\) be morphisms of groupoid.
Using Proposition \ref{pIndFuIsomCompUn}, for each \(\left[{\left({X, \varsigma}\right) }\right] \in \lL\left({\Cat{G}}\right)\), we obtain
\[ \begin{aligned}
\lL\left({\psi}\right)L\left({\phi}\right)\Big( \left[{\left({X, \varsigma}\right) }\right] \Big) &= \lL\left({\psi}\right) \Big( \left[{\left({\phi^\ast\left({X,\varsigma}\right) }\right) }\right] 
= \left[{\psi^\ast \phi^\ast\left({X, \varsigma}\right) }\right] 
= \left[{\left({\phi \psi}\right)^\ast\left({X, \varsigma}\right) }\right]  \\
&= \lL\left({ \phi \psi }\right) \Big( \left[{\left({X, \varsigma}\right) }\right] \Big).
\end{aligned}
\]
Thus the following diagram is commutative
\[ \xymatrix{
\lL\left({\Cat{G} }\right)  \ar[d]_{\lL\left({\phi}\right) }  \ar[rr]^{\lL\left({ \phi \psi }\right) } &  & \lL\left({\Cat{K} }\right)   \\
\lL\left({\Cat{H} }\right).   \ar[rru]_{\lL\left({\psi}\right)}   && }
\]
Moreover, for each groupoid \(\Cat{G}\) we calculate,  thanks again  to Proposition \ref{pIndFuIsomCompUn},
\[ 
\lL\left({\Cat{G} }\right) \Big( \left[{\left({X, \varsigma}\right) }\right] \Big) = \left[{\left({\Id_{\Cat{G} }}\right)^\ast\left({X, \varsigma}\right) }\right] 
= \left[{\left({X, \varsigma}\right)}\right] 
= \Id_{\lL\left({\Cat{G} }\right) } \Big( \left[{\left({X, \varsigma}\right) }\right] \Big)
\]
thus \(\lL\left({\Cat{G} }\right) = \Id_{\lL\left({\Cat{G} }\right) }\). This shows that $\lL$ is a well defined functor as desired. 
\end{proof}

We finish this subsection by discussing the compatibility of the Burnside rig functor with coproduct.

\begin{proposition}
\label{pBuRigFunFinCopro}
The Burnside rig functor \(\lL\) sends coproduct to product.
In particular, given a family of groupoids  \(\left(\Cat{G}_j \right)_{j \, \in \,  I}\), let \(\left(i_j \colon \Cat{G}_j  \longrightarrow  \Cat{G}\right)_{j \, \in \,  I }\) be their coproduct in \(\Grou\).
Then
\[ \left(\lL\left({i_j}\right) \colon \lL\left({\Cat{G}}\right)  \longrightarrow \lL\left({\Cat{G}_j}\right) \right)_{j \in I} 
\]
is the product of the family \(\left(\lL\left({ \Cat{G}_j}\right)\right)_{j \, \in \,  I}\) in the category \(\mathbf{Rig}\).
\end{proposition}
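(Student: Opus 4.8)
The plan is to reduce the statement to the category‑level decomposition of Proposition~\ref{pSetsCoprGrpd} and then pass to isomorphism classes. First I would set up the comparison map: by Proposition and Definition~\ref{propdef:} each $\lL(i_j)\colon\lL(\cG)\to\lL(\cG_j)$ is a homomorphism of rigs, so the family $\left(\lL(i_j)\right)_{j\in I}$ induces a homomorphism of rigs
\[
\Theta\colon \lL(\cG)\longrightarrow \prod_{j\in I}\lL(\cG_j),\qquad \left[(X,\varsigma)\right]\longmapsto \Big(\left[i_j^\ast(X,\varsigma)\right]\Big)_{j\in I},
\]
into the product rig, which composed with the $j$-th projection returns $\lL(i_j)$. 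Since a bijective homomorphism of rigs is an isomorphism, and since any rig which is isomorphic to a product over the given projections is again a product over them, it then suffices to prove that $\Theta$ is bijective.

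For the bijectivity I would invoke Proposition~\ref{pSetsCoprGrpd}. Recall from its proof that the isomorphism of categories $S\colon\Rset{\cG}\to\prod_{j\in I}\Rset{\cG_j}$ sends $(X,\varsigma)$ to the family $\big(\varsigma^{-1}((\cG_j)_{\Sscript{0}}),\,\varsigma|\big)_{j\in I}$, where $\varsigma|$ denotes the restriction of $\varsigma$. On the other hand, since $i_j$ is the inclusion of the subgroupoid $\cG_j$ into $\cG$ — so $(i_j)_{\Sscript{0}}$ is an inclusion and the fibre product defining the induction functor degenerates — there is a canonical isomorphism $i_j^\ast(X,\varsigma)\cong\big(\varsigma^{-1}((\cG_j)_{\Sscript{0}}),\,\varsigma|\big)$ of right $\cG_j$-sets, natural in $(X,\varsigma)$ (this is also the content of Proposition~\ref{pRestrGrpdRSet} for the subgroupoid $\cG_j$). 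Hence, passing to isomorphism classes, the isomorphism of categories $S$ induces a bijection between the isomorphism classes of $\Rset{\cG}$ and the product of the sets of isomorphism classes of the $\Rset{\cG_j}$, and under the canonical identifications this bijection is exactly $\Theta$. Restricting $S$ to objects with finite underlying set, I would conclude that $\Theta$ is a bijection onto the isomorphism classes of those families $(X_j,\varsigma_j)_{j\in I}$ for which $\biguplus_j X_j$ is finite.

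The one point needing care — and the step I expect to be the real obstacle — is matching this with the full product $\prod_{j\in I}\lL(\cG_j)$ on the right, which a priori contains arbitrary, not necessarily finitely supported, families of isomorphism classes. Here I would use the running assumption of the section: $\cG$ has a finite set of objects, and $\Go=\biguplus_{j}(\cG_j)_{\Sscript{0}}$, so all but finitely many $\cG_j$ are the empty groupoid; for such indices $\lL(\emptyset)$ is the zero rig, which is the terminal object of $\mathbf{Rig}$ and contributes trivially to the product, the map $\lL(i_j)$ to it being the unique homomorphism. Consequently every family $(X_j,\varsigma_j)_{j\in I}$ of finite right $\cG_j$-sets automatically has $\biguplus_j X_j$ finite, the isomorphism classes of such families form precisely $\prod_{j\in I}\lL(\cG_j)$, and $\Theta$ is the required bijection of rigs. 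Combining with the first paragraph, $\left(\lL(i_j)\colon\lL(\cG)\to\lL(\cG_j)\right)_{j\in I}$ is a product in $\mathbf{Rig}$, which also gives the unqualified assertion that $\lL$ carries coproducts to products. I would note explicitly that this finiteness bookkeeping is genuinely necessary: for an infinite coproduct of non‑empty groupoids, $\lL$ of the coproduct would only be the sub‑rig of finitely supported families of $\prod_{j}\lL(\cG_j)$, not the whole product.
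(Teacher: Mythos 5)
Your argument is correct, and it reaches the conclusion by a genuinely different route from the paper. The paper verifies the universal property of the product directly: given an arbitrary rig $A$ and a cone $\left(f_j\colon A\to\lL(\cG_j)\right)_{j\in I}$, it builds the mediating morphism by $f(a)=\bigl[\,\biguplus_{j}\widehat{(X_j,\varsigma_j)}\,\bigr]$ where $f_j(a)=[(X_j,\varsigma_j)]$, checks commutativity by computing $(i_l)^\ast$ on the disjoint union (zero for $l\neq j$, the identity up to the $\pr_1$ isomorphism for $l=j$), verifies additivity and multiplicativity via Lemma~\ref{lRightGSetCoprod}, and proves uniqueness by decomposing an arbitrary competitor. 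You instead form the canonical comparison homomorphism $\Theta$ into the product rig (whose existence is just Proposition~\ref{pProdRigs}) and reduce everything to showing $\Theta$ is a bijection, which you obtain by passing the category isomorphism $S$ of Proposition~\ref{pSetsCoprGrpd} to isomorphism classes; the identification $i_j^\ast(X,\varsigma)\cong(\varsigma^{-1}((\cG_j)_0),\varsigma|)$ that makes the two maps agree is exactly the $\pr_1$ isomorphism the paper also uses. Your version is shorter and more conceptual, and it has the virtue of making explicit the finiteness bookkeeping (all but finitely many $\cG_j$ are empty because $\Go$ is finite, so $\lL(\cG_j)$ is the terminal rig for those indices and every family of finite $\cG_j$-sets has finite total disjoint union) — a point the paper's construction of $f(a)$ silently relies on as well. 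The paper's version buys self-containedness at the rig level without invoking the comparison-map formalism. One small caveat: in your closing aside, the finitely supported families inside an infinite product of nontrivial Burnside rigs would not form a sub-rig in the sense of Definition~\ref{def:rig}, since the multiplicative unit of the product is not finitely supported; the substantive point that the statement fails without the finiteness hypothesis is nevertheless right.
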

\begin{proof}
Let \(\left(f_j \colon A \longrightarrow \lL\left({\Cat{G}_j}\right)\right)_{j \,  \in \,  I}\) be a  family of homomorphisms of rigs.
We have to prove that there is a unique homomorphism \(f \colon A \longrightarrow \lL\left({\Cat{G}}\right)\) of rigs such that the following diagram commutes for every \(j \in I\):
\begin{equation}\label{dBurnRigCopr}
\begin{gathered}
\xymatrix{  A \ar@{->}^-{f}[rr]   \ar@{->}_-{f_j}[drr]  && \lL(\cG)  \ar@{->}^-{\lL(i_j)}[d] \\      & & \lL(\cG_j). }
\end{gathered}
\end{equation}
Let \(a \in A\): for every \(j \in I\) there is \(\left({X_j, \varsigma_j}\right) \in \rset\cG_{j}\) such that \(f_j(a)=\left[{\left({X_j, \varsigma_j}\right) }\right] \in \lL\left({\Cat{G}_j}\right)\). Henceforth,  we define, thanks to the proof of Proposition \ref{pSetsCoprGrpd},
\[ 
f(a)= \left[{\biguplus_{j \, \in \, I} \widehat{\left({X_j, \varsigma_j}\right) } }\right] ,
\]
obtaining, in this way,  a function \(f \colon A \longrightarrow \lL\left({\Cat{G}}\right)\).
 Furthermore, for every \(l \in I\) we have
\begin{equation}
\label{eBurnRigCopr}
\lL\left({i_l}\right)f(a)= \left[{ \left({i_l}\right)^\ast \left( \biguplus_{j \, \in \, I} \widehat{\left({X_j, \varsigma_j}\right) } \right) }\right] 
=\left[{  \biguplus_{j \, \in \, I}  \left({i_l}\right)^\ast \left( \widehat{\left({X_j, \varsigma_j}\right) } \right) }\right]  .
\end{equation} 
For every \(l, j \in J\) such that \(l \neq j\) we clearly have
\[ \left({i_l}\right)^\ast \left({ \widehat{\left({X_j, \varsigma_j}\right)  } }\right) = \left({X_j  \, {{}_{ \scriptscriptstyle{\widehat{\varsigma_j} } } {\times}_{ \scriptscriptstyle{\left({i_l}\right)_{ \scriptscriptstyle{0} } } }\,} \left({\Cat{G}_l}\right)_{ \scriptscriptstyle{0} } , \pr_{ \scriptscriptstyle{2} }  }\right)  
= \left({\emptyset , \emptyset}\right)
\]
because \(\left({\Cat{G}_j}\right)_{ \scriptscriptstyle{0} } \cap \left({\Cat{G}_l}\right)_{ \scriptscriptstyle{0} } = \emptyset\).
Instead, for every \(j \in I\) we have the following isomorphism of right \(\Cat{G}_j\)-sets:
\[ 
\begin{aligned}{\pr_{ \scriptscriptstyle{1} } }  \colon & {\left({i_j}\right)^\ast \left({ \widehat{\left({X_j, \varsigma_j}\right)  } }\right) =\left({X_j  \, {{}_{ \scriptscriptstyle{\widehat{\varsigma_j} } } {\times}_{ \scriptscriptstyle{\left({i_j}\right)_{ \scriptscriptstyle{0} } } }\,} \left({\Cat{G}_j}\right)_{ \scriptscriptstyle{0} } , \pr_{ \scriptscriptstyle{2} } }\right) } \longrightarrow { \left({X_j, \varsigma_j}\right) } \\ & {(x,c)}  \longrightarrow {x.}
\end{aligned} 
\]
Continuing from formula~\(\eqref{eBurnRigCopr}\) we obtain that
\[ 
\lL\left({i_l}\right)f(a) = \left[{\left({X_l, \varsigma_l}\right) }\right] 
= f_l(a), \; \text{ for every }\; l \in I, \text{ and } a \in A.
\]
This shows that the diagram~\(\eqref{dBurnRigCopr}\) commutes.

The fact that $f$ is a morphism of rigs, that is, $f$ is compatible with addition and the multiplication, is  proved by direct computations using in part Lemma \ref{lRightGSetCoprod}.   Lastly, if  \(\gamma \colon A \longrightarrow \lL\left({\Cat{G}}\right)\) is another homomorphism of rigs which turns commutative diagrams \eqref{dBurnRigCopr}, then for a 
given \(a \in A\), let be \(\left({X, \varsigma }\right) \in \rset{\cG}\) such that \(\gamma(a) = \left[{\left({X, \varsigma}\right)}\right]\).
Setting, for every \(j \in I\), \(X_j = \varsigma_j^{-1}\left({\Cat{G}_{l\, 0}}\right)\) and \(\varsigma_l = \left.{\varsigma}\right|_{\varsigma^{-1}\left({\Cat{G}_l}\right)} \colon X_l \longrightarrow \left(\Cat{G}_l\right)_{ \scriptscriptstyle{0} }\), and restricting appropriately the action, we have
\[ 
\gamma(a)= \left[{\biguplus_{j \, \in \, I} \widehat{\left({X_j, \varsigma_j}\right) } }\right]. 
\]
Thus, using properties of \(\left({i_j}\right)^\ast\) already proved in this proof, we get
\[ 
f_i(a)= \lL\left({i_j}\right) \left({\gamma(a)}\right) 
= \left[{\left({i_j}\right)^\ast \left({X, \varsigma}\right) }\right]
= \left[{\left({X_j, \varsigma_j}\right) }\right] 
\]
Therefore, by definition of \(f\), we obtain that  $f(a) =\gamma(a)$, for every $a \in A$, and this shows that $f$ is unique and finishes the proof. 
\end{proof}

\subsection{Classical Burnside ring functor and product decomposition.}\label{ssec:Bring}
Now we introduce, using the Burnside rig functor,  the classical Burnside ring functor, and give our main result dealing with the decomposition of the Burnside ring of a given groupoid as a product of the classical Burnside rings of the isotropy groups, which  somehow justifies the terminology.
\begin{definition}\label{def:BF}
We define the \emph{classical Burnside ring functor} \(\mathscr{B}\) as the composition of the Burnside rig functor \(\lL\) with the Grothendieck functor \(\gG\), that is, \(\bB= \gG \lL\).
\end{definition}

The situation is explained in the following commutative diagrams of functors:
\[ \xymatrix{
\Grou \ar[d]_{\lL}  \ar[rr]^{\bB} &  & \mathbf{CRing}  \\
\mathbf{Rig}  \ar[rru]_{\gG} &&    }
\]
where $\bf{CRing}$ denotes the category of commutative rings.
Of course, since \(\lL\) is contravariant functor and \(\gG\) is a covariant one, \(\bB\) is a contravariant functor.

\begin{theorem}
\label{tEquivCatsIsomBurnRings}
Let be \(\mathcal{G}\) and \(\mathcal{A}\) be groupoids such that there is a symmetric strong monoidal  equivalence of categories
\[ 
\Rset{\cG} \simeq \Rset{\cA}
\]
with respect to both \(\Uplus\) and the fibered product.
Then there is an isomorphism of commutative rings
\[ 
\bB\left({\mathcal{G} }\right) \cong \bB\left({\mathcal{A} }\right).
\]
In particular two weakly equivalent groupoids have isomorphic classical Burnside rings.  
\end{theorem}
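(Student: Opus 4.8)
The plan is to deduce the isomorphism of rings from the isomorphism of rigs, and the latter from the given monoidal equivalence of categories. The key point is already encapsulated in Lemma \ref{lBurnRigFu}: any strong monoidal functor between categories of right groupoid-sets that is monoidal with respect to \emph{both} monoidal structures (the disjoint union $\Uplus$ and the fibered product $-\fpro{\Cat{G}_{ \scriptscriptstyle{0} }}-$) induces a homomorphism of rigs on the level of isomorphism classes. Concretely, let $F \colon \Rset{\cG} \longrightarrow \Rset{\cA}$ and $G \colon \Rset{\cA} \longrightarrow \Rset{\cG}$ be the quasi-inverse symmetric strong monoidal equivalences. First I would note that $F$ and $G$ both restrict to the full subcategories of finite right sets (a monoidal equivalence preserves finite coproducts, and the unit object of the fibered product structure, namely $(\Cat{G}_{ \scriptscriptstyle{0} }, \Id)$, has finite underlying set since $\Cat{G}_{ \scriptscriptstyle{0} }$ is finite; more carefully, the generators $\cG/\cH$ of the finite part are built from finitely many copies of transitive pieces, so a monoidal equivalence carries finite right $\cG$-sets to finite right $\cA$-sets). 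Then Lemma \ref{lBurnRigFu} applies verbatim to give rig homomorphisms
\[
\Sf{h}_F \colon \lL(\cG) \longrightarrow \lL(\cA), \qquad \Sf{h}_G \colon \lL(\cA) \longrightarrow \lL(\cG),
\]
sending $[(X,\varsigma)] \mapsto [F(X,\varsigma)]$ and $[(Y,\vartheta)] \mapsto [G(Y,\vartheta)]$ respectively.

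Next I would check that $\Sf{h}_F$ and $\Sf{h}_G$ are mutually inverse. Since $F$ and $G$ form an equivalence of categories, there are natural isomorphisms $GF \cong \Id_{\Rset{\cG}}$ and $FG \cong \Id_{\Rset{\cA}}$; passing to isomorphism classes, this gives $\Sf{h}_G \circ \Sf{h}_F = \Id_{\lL(\cG)}$ and $\Sf{h}_F \circ \Sf{h}_G = \Id_{\lL(\cA)}$, so $\Sf{h}_F$ is an isomorphism of rigs. Therefore $\lL(\cG) \cong \lL(\cA)$.

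The final step is to apply the Grothendieck functor $\gG$, which is functorial (Appendix \ref{ssec:GrothF}), to the isomorphism of rigs $\Sf{h}_F$. Since $\bB = \gG\lL$ by Definition \ref{def:BF}, and $\gG$ carries an isomorphism of rigs to an isomorphism of rings, we obtain
\[
\bB(\cG) = \gG\lL(\cG) \;\cong\; \gG\lL(\cA) = \bB(\cA),
\]
an isomorphism of commutative rings. For the last sentence of the statement, recall (from the discussion preceding, and from Theorem \ref{tEquivCatTransGrpd} together with Propositions \ref{pSetsCoprGrpd}, \ref{pIsoGrpdIsoCatR}) that if $\cG$ and $\cA$ are weakly equivalent groupoids then their categories of right sets are equivalent as Laplaza categories, i.e.\ via a symmetric strong monoidal equivalence with respect to both structures; hence the hypothesis of the theorem is met and $\bB(\cG) \cong \bB(\cA)$ follows.

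The only genuine obstacle I anticipate is the bookkeeping in the first step: verifying that the monoidal equivalence genuinely restricts to the \emph{finite} subcategories in both directions, i.e.\ that $F$ and $G$ preserve finiteness of the underlying set. This is where the standing assumption $|\Cat{G}_{ \scriptscriptstyle{0} }| < \infty$ (and hence $|\cA_{ \scriptscriptstyle{0} }| < \infty$, which follows since the unit objects correspond under the equivalence) is used; everything else is a routine transcription of Lemma \ref{lBurnRigFu} and the functoriality of $\gG$. One could alternatively phrase the argument entirely at the level of Laplaza categories and invoke a general statement that a Laplaza equivalence induces an isomorphism of the associated Burnside rigs, but spelling it out via Lemma \ref{lBurnRigFu} as above is the most self-contained route.
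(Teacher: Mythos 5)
Your proposal is correct and follows essentially the same route as the paper's proof: reduce to an isomorphism of rigs via Lemma \ref{lBurnRigFu} applied to the quasi-inverse strong monoidal functors, check that the induced rig maps are mutually inverse, and then apply the Grothendieck functor (Proposition \ref{pGrotFuIsoToIso}). Your extra remark about verifying that the equivalence restricts to the subcategories of \emph{finite} right sets is a point the paper passes over silently, and is a worthwhile addition rather than a deviation.
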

\begin{proof}
Let us denote by
\[ 
F \colon \Rset{\cG} \longrightarrow \Rset{\cA} 
\qquad \text{and} \qquad
G \colon \Rset{\cA} \longrightarrow \Rset{\cG}
\]
the strong monoidal functors which give the stated equivalence. 
Thanks to Proposition~\(\ref{pGrotFuIsoToIso}\), it is enough to prove that there is an isomorphism of rigs \(\lL\left({\Cat{G}}\right) \cong \lL\left({\Cat{A}}\right)\). By  Lemma~\(\ref{lBurnRigFu}\),  we have the following 
\[ 
\begin{aligned}
{\Sf{f}}  \colon & {\lL\left({\mathcal{G} }\right) } \longrightarrow {\lL\left({\mathcal{A} }\right) } \\ & {\left[{\left({X, \varsigma}\right)}\right] }  \longrightarrow {\left[{F(\left({X, \varsigma}\right))}\right] }
\end{aligned}   \qquad  \qquad \begin{aligned}
{\Sf{g}}  \colon & {\lL\left({\mathcal{A} }\right) } \longrightarrow {\lL\left({\mathcal{G} }\right) } \\ & {\left[{\left({Y, \vartheta}\right)}\right] }  \longrightarrow {\left[{G(\left({Y, \vartheta}\right))}\right] }
\end{aligned} 
\]
well defined homomorphism of rigs. It is left to reader to check that $\Sf{f}$ and $\Sf{g}$ are mutually inverse. 
\end{proof}

\begin{remark}
Observe that, for every finite right \(\Cat{G}\)-sets \(\left({X, \varsigma}\right)\), \(\left({Y, \vartheta}\right)\), \(\left({Z, \zeta}\right)\) and \(\left({W, \omega}\right)\), we have that
\[ 
\Big[ \left[{\left({X, \varsigma}\right) }\right], \left[{\left({Y, \vartheta}\right) }\right]  \Big] = \Big[ \left[{\left({Z, \zeta}\right) }\right], \left[{\left({W, \omega}\right) }\right]  \Big] 
\]
if and only if there is a finite right \(\Cat{G}\)-set \((U, u)\) such that
\[  \left[{\left({X, \varsigma}\right) }\right] + \left[{\left({W, \omega}\right) }\right] + \left[{\left({U, u}\right) }\right] = \left[{\left({Z, \zeta}\right) }\right] + \left[{\left({Y, \vartheta}\right) }\right] + \left[{\left({U, u}\right) }\right]
\]
as elements in $\bB(\cG)$, where the notation is the one adopted in  Appendix \ref{ssec:GrothF}. 
If the groupoid \(\Cat{G}\) is strongly finite, thanks to Corollary~\(\ref{cCancPropRXGSets}\), this is equivalent to say that \(\left[{\left({X, \varsigma}\right) }\right] + \left[{\left({W, \omega}\right) }\right]  = \left[{\left({Z, \zeta}\right) }\right] + \left[{\left({Y, \vartheta}\right) }\right]\).
\end{remark}

\begin{corollary}\label{cBurnFuCoprToProd}
The Burnside ring functor \(\bB\) sends coproduct to product.
In particular, given a family of groupoids  \(\left(\Cat{G}_j \right)_{j \, \in \, I}\), let \(\left(i_j \colon \Cat{G}_j  \longrightarrow  \Cat{G}\right)_{j \, \in \, I }\) be their coproduct in \(\Grou\).
Then
\[ \left( \bB\left({i_j}\right) \colon \bB\left({\Cat{G}}\right)  \longrightarrow \bB\left({\Cat{G}_j}\right) \right)_{j \, \in \,  I} 
\]
is the product of the family \(\left(\bB\left({ \Cat{G}_j}\right)\right)_{j \, \in \, I}\) in \(\mathbf{CRing}\).
\end{corollary}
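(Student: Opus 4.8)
The plan is to reduce the statement to the level of rigs, where the analogous fact is already available, and then to push it through the Grothendieck functor. Recall that $\bB = \gG\lL$, with $\lL$ contravariant and $\gG$ covariant, so that a coproduct of groupoids is sent by $\lL$ to a product of rigs and it remains only to see that $\gG$ preserves this product. Before doing so, I would first dispose of the empty factors: if some of the $\cG_j$ are empty, then $\lL(\cG_j)$ is the zero rig and $\bB(\cG_j)$ is the terminal (zero) ring, which may be omitted from the product without changing it; after discarding them we are reduced to the case in which every $\cG_j$ is nonempty and hence, since $\cG = \coprod_{j\in I}\cG_j$ has a finite set of objects, the index set $I$ is finite.

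Next I would invoke Proposition~\ref{pBuRigFunFinCopro}, which tells us that $\bigl(\lL(i_j)\colon \lL(\cG)\to\lL(\cG_j)\bigr)_{j\in I}$ is a product of the family $\bigl(\lL(\cG_j)\bigr)_{j\in I}$ in $\mathbf{Rig}$. Applying $\gG$ and using that $\bB(i_j) = \gG\lL(i_j)$, the corollary will follow once we know that $\gG\colon \mathbf{Rig}\to\mathbf{CRing}$ carries finite products to finite products. This is where the actual work lies; concretely I would argue that finite products in $\mathbf{Rig}$ and in $\mathbf{CRing}$ are created by the forgetful functors to $\Sets$ (componentwise operations), and that for a finite family of rigs $(R_j)_{j\in I}$ the canonical comparison ring homomorphism
\[
\gG\Bigl(\,\prod_{j\in I} R_j\Bigr)\longrightarrow \prod_{j\in I}\gG(R_j)
\]
is an isomorphism: on underlying additive monoids it expresses the fact that the Grothendieck group of a finite product of commutative monoids is, canonically, the product of the Grothendieck groups, since the Grothendieck congruence on $\bigl(\prod_j R_j\bigr)^{2}$ is the coordinatewise product of the Grothendieck congruences on the $R_j^{2}$; the multiplication and the unit are componentwise on both sides. (Alternatively this preservation property can simply be quoted from Appendix~\ref{ssec:GrothF}, in the same spirit as Proposition~\ref{pGrotFuIsoToIso}.)

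The main --- and in fact only --- delicate point is precisely this last verification, namely that passing from a rig to its ring of formal differences does not introduce identifications \emph{across} the coordinates of a product. Here the finiteness of $I$ is essential: it allows a common ``witness'' element to be chosen simultaneously in all coordinates, so that the coordinatewise description of the Grothendieck congruence really does hold; for an infinite family this would genuinely fail, but in our situation it cannot arise because $\coprod_{j\in I}\cG_j$ has only finitely many objects. Once this is in place, combining it with Proposition~\ref{pBuRigFunFinCopro} yields the corollary, including the assertion that the structure maps of the resulting product are exactly the $\bB(i_j)$.
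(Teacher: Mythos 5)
Your argument is essentially the paper's: the paper deduces the corollary immediately from Proposition~\ref{pBuRigFunFinCopro} (the rig-level statement you invoke) together with Proposition~\ref{pGrothFuPresProd.}, which asserts that the Grothendieck functor $\gG$ preserves products. So the route is the same, and your proof is correct as far as the corollary is concerned. One remark, though: your claim that the finiteness of $I$ is \emph{essential} here, and that for an infinite family the comparison map $\gG\bigl(\prod_j R_j\bigr)\to\prod_j\gG(R_j)$ ``would genuinely fail'' to be an isomorphism, is mistaken. No single common witness is needed: if $[x_j,y_j]=[z_j,w_j]$ in $\gG(R_j)$ for every $j$, one picks a witness $e_j\in R_j$ coordinatewise and the tuple $(e_j)_{j\in I}$ is already an element of $\prod_j R_j$ witnessing the identification there; surjectivity is likewise obtained by choosing representatives coordinatewise. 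This is exactly what the proof of Proposition~\ref{pGrothFuPresProd.} does for an arbitrary index set, so your preliminary reduction to finitely many nonempty factors, while harmless, is not needed.
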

\begin{proof}
Immediate from Proposition~\(\ref{pBuRigFunFinCopro}\) and Proposition~\(\ref{pGrothFuPresProd.}\).
\end{proof}

Our main result of  this section is the following:
\begin{theorem}\label{tBurnTransCompProd}
Given a groupoid \(\Cat{G}\), fix a set of representative objects $\rep(\Go)$ representing  the set of connected components $\pi_{\Sscript{0}}(\cG)$. For each \(a \, \in \, \rep(\Go)  \), let  \(\Cat{G}^{\lar{a}}\) be the connected component of \(\Cat{G}\) containing $a$, which we consider as a groupoid.
Then we have the following isomorphism of rings:
\[ 
\bB\left({\mathcal{G}}\right) \cong \prod_{a \, \in \, \rep(\Go) } \bB\left({ \Cat{G}^{\lar{a}}  }\right).
\]
\end{theorem}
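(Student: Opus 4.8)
The plan is to exhibit $\cG$ as a \emph{finite} coproduct of its connected components in the category $\Grou$, and then simply feed this decomposition into Corollary~\ref{cBurnFuCoprToProd}. No genuinely new computation is needed; the content is entirely a matter of assembling results already proved.

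First I would recall the elementary fact, noted in Definition~\ref{def:IsotropyConj}(3) and in the discussion of coproduct groupoids in Section~\ref{sec:Grpd}, that every groupoid is the coproduct of its connected components: with the fixed set of representatives $\rep(\Go)$ of $\pi_{\Sscript{0}}(\cG)$, one has
\[
\cG \;=\; \coprod_{a \, \in \, \rep(\Go)} \cG^{\lar{a}},
\]
where $\cG^{\lar{a}}$ is the (transitive, possibly empty) subgroupoid on the connected component of $a$, together with the inclusion morphisms $i_{\Sscript{a}} \colon \cG^{\lar{a}} \hookrightarrow \cG$. I would then check that this family $(i_{\Sscript{a}})_{a \in \rep(\Go)}$ is indeed the coproduct cocone in $\Grou$: a morphism of groupoids $\phi \colon \cG \to \cH$ is uniquely determined by its restrictions $\phi \circ i_{\Sscript{a}}$, since there are no arrows between distinct connected components of $\cG$, and conversely any family $(\phi_{\Sscript{a}} \colon \cG^{\lar{a}} \to \cH)_a$ glues to a morphism out of $\cG$; this is the routine verification that I would leave to the reader. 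Crucially, since $\Go$ is assumed finite, $\pi_{\Sscript{0}}(\cG)$ is finite, so $\rep(\Go)$ is finite and each $\cG^{\lar{a}}$ again has a finite set of objects, so that $\bB(\cG)$ and each $\bB(\cG^{\lar{a}})$ are defined in the sense of Definition~\ref{def:BF}.

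Next I would apply Corollary~\ref{cBurnFuCoprToProd}, which asserts precisely that the Burnside ring functor $\bB$ carries coproducts to products: from the coproduct $(i_{\Sscript{a}} \colon \cG^{\lar{a}} \to \cG)_{a \in \rep(\Go)}$ in $\Grou$ we obtain that
\[
\bigl(\bB(i_{\Sscript{a}}) \colon \bB(\cG) \longrightarrow \bB(\cG^{\lar{a}})\bigr)_{a \, \in \, \rep(\Go)}
\]
is the product of the family $\bigl(\bB(\cG^{\lar{a}})\bigr)_{a \in \rep(\Go)}$ in $\mathbf{CRing}$. Since products in $\mathbf{CRing}$ are computed componentwise, the canonical comparison morphism $\bB(\cG) \to \prod_{a \in \rep(\Go)} \bB(\cG^{\lar{a}})$ is then an isomorphism of rings, which is the claimed statement.

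The main (very mild) obstacle is simply making sure the bookkeeping is consistent: that the coproduct used here is the same coproduct $\Grou$-construction to which Corollary~\ref{cBurnFuCoprToProd} applies, and that the finiteness hypothesis on $\Go$ is what legitimates both the definition of all the Burnside rings involved and the passage to a finite product. There is no analytic or combinatorial difficulty; everything reduces to Corollary~\ref{cBurnFuCoprToProd} (hence ultimately to Proposition~\ref{pBuRigFunFinCopro} and Proposition~\ref{pGrothFuPresProd.}) applied to the connected-component decomposition of $\cG$.
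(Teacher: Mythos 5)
Your proposal is correct and follows exactly the paper's own route: the paper's proof is likewise "immediate from Corollary~\ref{cBurnFuCoprToProd}, since $\{\cG^{\lar{a}} \to \cG\}_{a \in \rep(\Go)}$ is a coproduct in $\Grou$." Your additional remarks verifying the coproduct property and the finiteness bookkeeping are just the details the paper leaves implicit.
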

\begin{proof}
Immediate from Corollary~\(\ref{cBurnFuCoprToProd}\), since we already know that $\{ \cG^{\lar{a}} \to \cG \}_{a\, \in \, \rep(\Go)}$ is a coproduct in the category of groupoids.
\end{proof}

Each connected component of a given groupoid is clearly a transitive groupoid, and the Burnside ring of transitive groupoids is given as follows. First notice that any group when considered as groupoid with only one object, its classical Burnside ring, as introduced in \cite{Solomon:1967} see also \cite{TomDieck:1979}, coincides with the ring hereby introduced (see Proposition \ref{pGrotFuIsoToIso} for the proof).

\begin{proposition}\label{pTransGrpdBurnRing}
Given a transitive groupoid \(\mathcal{G}\), let \(a \in \mathcal{G}_{ \scriptscriptstyle{0} }\) and \(G= \mathcal{G}^{ \scriptscriptstyle{a} }\).
Let \(\mathcal{A}\) be the subgroupoid of \(\mathcal{G}\) such that \(\mathcal{A}_{ \scriptscriptstyle{0} }= \Set{a}\) and \(\mathcal{A}_{ \scriptscriptstyle{1} }= \mathcal{G}^{ \scriptscriptstyle{a} }\).
Then we have a chain of isomorphism of rings 
$$
\bB(\cG) \, \cong \, \bB(\cA) \, \cong \, \bB(G)
$$
where $\bB(G)$ is the classical Burnside ring of the group $G$ introduced in \cite{Solomon:1967} see also \cite{TomDieck:1979}.
\end{proposition}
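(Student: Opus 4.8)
The plan is to obtain both isomorphisms by assembling results already available, so that no fresh computation with $\cG$-sets is required.

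For the first isomorphism $\bB(\cG)\cong\bB(\cA)$ I would invoke Theorem~\ref{tEquivCatTransGrpd}: since $\cG$ is transitive and non-empty (an object $a$ is given), that theorem produces an equivalence of Laplaza categories $\Rset{\cG}\simeq\Rset{\cG^{\Sscript{(a)}}}$, and by the notation fixed just before Theorem~\ref{tEquivCatTransGrpd} the groupoid $\cG^{\Sscript{(a)}}$ is exactly the one-object subgroupoid $\cA$ with $\cA_{\Sscript{0}}=\{a\}$ and $\cA_{\Sscript{1}}=\cG^{\Sscript{a}}$. A Laplaza equivalence is in particular a symmetric strong monoidal equivalence with respect to both monoidal structures $\Uplus$ and $-\underset{\Go}{\times}-$ (see Appendix~\ref{ssec:Laplaza}); moreover the two functors realizing it in the proof of Theorem~\ref{tEquivCatTransGrpd}, namely $F\colon(X,\varsigma)\mapsto\bigl(\varsigma^{-1}(a),\varsigma|_{\varsigma^{-1}(a)}\bigr)$ and $G\colon(X,\varsigma)\mapsto(X\times\Go,\pr_{\Sscript{2}})$, send finite groupoid-sets to finite groupoid-sets — here one uses the standing assumption of this section that $\Go$ is finite. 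Hence Theorem~\ref{tEquivCatsIsomBurnRings} applies verbatim and delivers an isomorphism of commutative rings $\bB(\cG)\cong\bB(\cA)$.

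For the second isomorphism $\bB(\cA)\cong\bB(G)$ the point is that $\cA$ is nothing but the group $G$ regarded as a one-object groupoid, so the fibered product $-\underset{\cA_{\Sscript{0}}}{\times}-$ over the singleton $\cA_{\Sscript{0}}=\{a\}$ is the ordinary Cartesian product of $G$-sets, and the rig $\lL(\cA)$ is precisely the rig of isomorphism classes of finite right $G$-sets under disjoint union and Cartesian product. Consequently $\bB(\cA)=\gG\lL(\cA)$ is the Grothendieck ring of that rig, which is the classical Burnside ring $\bB(G)$ of Solomon. This identification is exactly Proposition~\ref{pGrotFuIsoToIso}, equivalently the remark recalled just before the present proposition that for a group the classically defined Burnside ring coincides with the one constructed here via the Grothendieck functor.

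Composing the two steps yields the asserted chain $\bB(\cG)\cong\bB(\cA)\cong\bB(G)$. There is no genuinely hard step: the proposition is almost immediate once the monoidal machinery of Sections~\ref{sec:II}--\ref{sec:} is in place, and the only points needing attention are (a) that the equivalence of Theorem~\ref{tEquivCatTransGrpd} restricts to the full subcategories of finite groupoid-sets before it is fed into Theorem~\ref{tEquivCatsIsomBurnRings}, which is where the finiteness of $\Go$ is used, and (b) that each arrow in the chain is an isomorphism of \emph{rings}, not merely of some weaker structure — but this is automatic, since $\lL$ takes values in $\mathbf{Rig}$, $\gG$ in $\mathbf{CRing}$, and every intermediate map is induced by a strong monoidal (Laplaza) functor.
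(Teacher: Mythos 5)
Your proposal is correct and follows essentially the same route as the paper: the paper's proof simply combines Theorem~\ref{tEquivCatTransGrpd} with Theorem~\ref{tEquivCatsIsomBurnRings} for the first isomorphism, and relies on the identification of $\cA$ with the one-object groupoid $G$ (together with the remark, stated just before the proposition and justified via Proposition~\ref{pGrotFuIsoToIso}, that the construction recovers Solomon's classical Burnside ring) for the second. Your additional attention to finiteness of $\Go$ and to the restriction to finite groupoid-sets is a reasonable elaboration of details the paper leaves implicit.
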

\begin{proof}
It is  immediately obtained by combining  Theorems \ref{tEquivCatTransGrpd} and  \ref{tEquivCatsIsomBurnRings}.
\end{proof}
The following corollary can be deduced directly from the non canonical equivalence of categories between a given groupoid and a disjoint union of its isotropy group types. It also shows that the Burnside functor, as defined in Definition \ref{def:BF}, does not distinguishes the arrows of  a given groupoid.  
Keep the notations of Theorem \ref{tBurnTransCompProd} and Proposition \ref{pTransGrpdBurnRing}:
\begin{corollary}\label{coro:main}
Given a groupoid \(\mathcal{G}\), we have the following isomorphism of rings:
\[ \bB\left({\mathcal{G}}\right) \cong \prod_{a \, \in \, \rep(\Go)} \bB\left({\mathcal{G}^{ \scriptscriptstyle{a} }  }\right),
\]
where the right hand side term is the product of commutative rings. 
\end{corollary}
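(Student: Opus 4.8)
The plan is to obtain the statement as an immediate consequence of the two results that precede it, Theorem \ref{tBurnTransCompProd} and Proposition \ref{pTransGrpdBurnRing}, so that no new argument is needed beyond an honest bookkeeping of the chosen representatives.

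First I would invoke Theorem \ref{tBurnTransCompProd}: having fixed the set $\rep(\Go)$ of representative objects for $\pi_{\Sscript{0}}(\cG)$, it supplies the ring isomorphism
\[
\bB(\cG) \,\cong\, \prod_{a \,\in\, \rep(\Go)} \bB\big(\cG^{\lar{a}}\big),
\]
where $\cG^{\lar{a}}$ is the connected component of $\cG$ through $a$, regarded as a transitive and nonempty groupoid. Next, for each $a \in \rep(\Go)$ I would observe that $a$ is an object of $\cG^{\lar{a}}$ and that the isotropy group of $\cG^{\lar{a}}$ at $a$ is literally $\cG^{\Sscript{a}}$: every loop of $\cG$ based at $a$ lies in the connected component through $a$, so $(\cG^{\lar{a}})^{\Sscript{a}} = \cG^{\Sscript{a}}$ as subsets of $\Ga$. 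Then Proposition \ref{pTransGrpdBurnRing}, applied to the transitive groupoid $\cG^{\lar{a}}$ with distinguished object $a$ and isotropy group $\cG^{\Sscript{a}}$, yields a ring isomorphism $\bB(\cG^{\lar{a}}) \cong \bB(\cG^{\Sscript{a}})$, the right-hand side being the classical Burnside ring of the group $\cG^{\Sscript{a}}$. Substituting these component-wise isomorphisms into the product above gives
\[
\bB(\cG) \,\cong\, \prod_{a \,\in\, \rep(\Go)} \bB\big(\cG^{\Sscript{a}}\big),
\]
which is the assertion.

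There is no genuine obstacle here; the only points deserving a word of care are (i) that the Corollary uses the same choice of representatives $\rep(\Go)$ as Theorem \ref{tBurnTransCompProd}, so the two decompositions are literally compatible, and (ii) that the finite family of isomorphisms from Proposition \ref{pTransGrpdBurnRing} assembles into a single ring isomorphism of product rings, which is automatic since a product of ring isomorphisms is a ring isomorphism of the corresponding products. If desired one could also record explicitly that $\rep(\Go)$ is finite (this section assumes $\Go$ finite), though this is not strictly needed once Theorem \ref{tBurnTransCompProd} is available. As stressed in the text following the statement, the conceptual upshot is that the Burnside functor $\bB$ of Definition \ref{def:BF} only detects the connected components and their isotropy types, that is, it does not distinguish the arrows of $\cG$.
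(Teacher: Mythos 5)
Your proposal is correct and is exactly the paper's argument: the Corollary is obtained by combining Theorem \ref{tBurnTransCompProd} with Proposition \ref{pTransGrpdBurnRing} applied to each connected component $\cG^{\lar{a}}$, whose isotropy group at $a$ is $\cG^{\Sscript{a}}$. Your additional remarks on the compatibility of the choice of representatives and the assembly of the component-wise isomorphisms are just careful bookkeeping of what the paper leaves implicit.
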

\begin{proof}
It follows from Proposition \ref{pTransGrpdBurnRing} and Theorem \ref{tBurnTransCompProd}.
\end{proof}

\begin{remark}
\label{rBurnFree}
It was proved in \cite{Solomon:1967} that the Burnside ring of a group \(G\) is isomorphic to a ring that is a free abelian group over the set of conjugacy classes \(\mathcal{S}_{G} /\simc\).
Therefore, thanks to Corollary~\(\ref{coro:main}\), the Burnside ring of a groupoid is a free abelian group.
\end{remark}

\begin{examples}\label{exam:Bexample}
We expound examples of the Burnside ring of certain groupoids.
\begin{enumerate}[(1)]
\item It clear that if $G=\{\star\}$ is a trivial group, then $\bB(G)$ is  the ring of integers $\mathbb{Z}$. Therefore, the Burnside ring of any groupoid whose isotropy groups are trivial is the product ring  $\mathbb{Z}^{I}$   for some set $I$.    This is the case for instance of all the relation equivalence groupoids expounded in Example \ref{exam:X}. 
\item Let \(G\) be a cyclic group of order a prime number \(p \ge 2\).
Thanks to Remark~\(\ref{rBurnFree}\), we have the isomorphisms of abelian groups \(\bB(G) \cong \mathbb{Z} v \oplus \mathbb{Z} w\), where \(v= \left[{G/G}\right] =\left[{1}\right]\) and \(w =\left[{G/1}\right]= \left[{G}\right]\).
Now we have to study the multiplicative structure of \(\bB(G)\).
It is immediate to see that \(v^2= v\) and \(vw = wv = w\).
Since \(\left\lvert{G \times G}\right\rvert = p^2\), we deduce that either \(w^2= \left[{G \times G}\right] = p^2 v\) or \(w^2 = p w\).
Considering that \(G \times G\) can be decomposed into the \(p\) orbits \(\Set{ \left({a^{i+j}, a^j}\right) | {j}\in \Set{{0},\dots,{p-1}}  }\) for \({i}\in \Set{{0},\dots,{p-1}}\), we obtain \(w^2= pw\).
Now it is easy to deduce that we have the following isomorphism of rings
\[ \bB(G) \cong \frac{\mathbb{Z}[X]}{\Braket{X^2 - pX} }.
\]
\item Given not empty sets \(S_1\), \(S_2\) and \(S_3\), we denote with \(G_1\) the trivial group, with \(G_2\) a cyclic group or order a prime \(p \ge 2\) and with \(G_3\) the alternating group \(A_5\).
We consider the groupoid \(\Cat{G}\) with the following three connected component: \(\trg{S_1}{G_1}\), \(\trg{S_2}{G_2}\) and \(\trg{S_3}{G_3}\).
If follows from Corollary~\(\ref{coro:main}\), the two previous examples and \cite[page 10]{TomDieck:1979}, that we have the following isomorphism of rings
\[ \bB\left(\Cat{G}\right) \cong \bB\left(G_1\right) \times \bB\left(G_2\right) \times \bB\left(G_3\right) \cong \mathbb{Z} \times \frac{\mathbb{Z}[X]}{\Braket{X^2 - pX} } \times R ,
\]
where \(R\) is the Burnside ring of the group \(A_5\) described in \cite[page 10]{TomDieck:1979}.
\end{enumerate}
\end{examples}

\begin{remark}\label{rem:finitecase}
It was proved in \cite{DressSolvGrps} that a group \(G\) is solvable if and only if the prime ideal spectrum of \(\bB(G)\) is connected.
Since it is a known fact that the prime ideal spectrum of a direct product of commutative rings is the disjoint union of their spectrums, we deduce, thanks to Corollary~\(\ref{coro:main}\), that the prime ideal spectrum of the Burnside ring of a groupoid \(\Cat{G}\) is connected if and only if \(\Cat{G}\) is transitive and it has a solvable isotropy group type.
\end{remark}

\begin{remark}
\label{rIsomProdRig}
Now let \(\left(G_j\right)_{j \in J}\) be the connected components of the groupoid \(\Cat{G}\).
Let be \(A = \prod_{l \in J} \lL\left({\Cat{G}_l}\right)\) and \(R= \lL\left({\Cat{G} }\right)\): the families
\[ \left(\lL\left({i_j}\right) \colon  \lL\left({\Cat{G}}\right) \longrightarrow \lL\left({\Cat{G}_j}\right) \right)_{j \in J} 
\qquad \text{and} \qquad
\left(\pi_j \colon \prod_{l \in J} \lL\left({\Cat{G}_l}\right) \longrightarrow \lL\left({\Cat{G}_j}\right) \right)_{j \in J} 
\]
are products in the category \(\mathbf{Rig}\) therefore there are homomorphism of rigs \(f \colon A \longrightarrow R\) and \(h \colon R \longrightarrow A\) such that the following diagrams commute for every \(j \in J\):
\[ \xymatrix{
A \ar[rrd]_{\pi_j}  \ar[rr]^{f} &  & R \ar[d]^{L\left({i_j}\right) }  \\
&& \lL\left({\Cat{G}_j}\right)      }
\qquad \text{and} \qquad
\xymatrix{
R \ar[rrd]_{L\left({i_j}\right) }  \ar[rr]^{h} &  & A \ar[d]^{\pi_j}  \\
&& \lL\left({\Cat{G}_j}\right) .     }
\]
Using the universal property of the product of rings and the notations of the proof of Proposition~\(\ref{pSetsCoprGrpd}\) we obtain that the following homomorphism of rigs
\[ \begin{aligned}{f}  \colon & {\prod_{j \in J} \lL\left({\Cat{G}_j}\right) } \longrightarrow {\lL\left({\Cat{G} }\right) } \\ & {\left(\left[{\left({X_j, \varsigma_j }\right) }\right] \right)_{j \in J} }  \longrightarrow {\left[{ \biguplus_{j \in J} \widehat{\left({X_j, \varsigma_j}\right) } }\right] }\end{aligned} 
\]
and
\[
\begin{aligned}{h}  \colon & {\lL\left({\Cat{G} }\right) } \longrightarrow {\prod_{j \in J} \lL\left({\Cat{G}_j}\right) } \\ & {\left[{\left({X, \varsigma}\right) }\right] }  \longrightarrow { \left(\left[{ \left(  X  \, {{}_{ \scriptscriptstyle{\varsigma} } {\times}_{ \scriptscriptstyle{\Id_{\left({\Cat{G}_j}\right)_0} } }\,} \left({\Cat{G}_j}\right)_0 , \pr_{ \scriptscriptstyle{2} } \right) }\right] \right)_{j \in J } = \left(\left[{ \left(  \varsigma^{-1}\left({\left({\Cat{G}_j}\right)_0}\right) , \left.{\varsigma}\right|_{\varsigma^{-1}\left({\left({\Cat{G}_j}\right)_0}\right)} \right) }\right] \right)_{j \in J }  }\end{aligned}
\]
are isomorphism such that \(h = f^{-1}\).
It is now obvious that \(\gG(f)\) and \(\gG(h)\) are isomorphism of rings between \(\bB(\Cat{G})\) and \(\prod_{j \in J} \bB \left({\Cat{G}_j}\right)\).
\end{remark}

\section{The Burnside algebra of a groupoid and the ghost map}\label{sec:Idem}

In this section we will continue to assume that \(\Cat{G}\) is a groupoid with a finite set of object \(\Cat{G}_{ \scriptscriptstyle{0} }\).
We define the \emph{Burnside algebra of \(\Cat{G}\) over \(\mathbb{Q}\)} as \(\mathbb{Q} \bB\left({\Cat{G}}\right) = \mathbb{Q} \otimes_{\mathbb{Z}} \bB\left({\Cat{G} }\right)\) and, given a group \(G\), its Burnside algebra over \(\mathbb{Q}\) is defined as \(\mathbb{Q} \bB\left({G}\right) = \mathbb{Q} \otimes_{\mathbb{Z}} \bB\left({G }\right)\) (see \cite[Page 31]{Bouc:2010}).
Thanks to Corollary~\(\ref{coro:main}\) we have
\[ \bB\left({\Cat{G}}\right) \cong \prod_{a \, \in \, \rep\left({\Cat{G}_{ \scriptscriptstyle{0} } }\right) } \bB\left({\Cat{G}^{a} }\right) 
\]
and, tensoring with \(\mathbb{Q}\), we obtain, since over a finite set the direct product and the direct sum of \(\mathbb{Z}\)-modules coincide,
\[ \mathbb{Q} \bB\left({\Cat{G} }\right) = \mathbb{Q} \otimes_{\mathbb{Z}} \bB\left({\Cat{G} }\right) 
\cong \mathbb{Q} \otimes_{\mathbb{Z}} \prod_{ a \, \in \, \rep\left({\Cat{G}_{ \scriptscriptstyle{0} } }\right) } \bB\left({\Cat{G}^{a} }\right) 
\cong \prod_{ a \, \in \, \rep\left({\Cat{G}_{ \scriptscriptstyle{0} } }\right) } \left({ \mathbb{Q} \otimes_{\mathbb{Z}}  \bB\left({\Cat{G}^{a} }\right)  }\right) 
= \prod_{a \, \in \, \rep\left({\Cat{G}_{ \scriptscriptstyle{0} } }\right) }  \mathbb{Q}  \bB\left({\Cat{G}^{a} }\right)   .
\]
Therefore also the Burnside algebra \(\mathbb{Q} \bB\left({\Cat{G}}\right)\) is a split semi simple commutative \(\mathbb{Q}\)-algebra, exactly like the Burnside algebra of a group.
As a consequence the idempotents of \(\mathbb{Q} \bB\left({\Cat{G}}\right)\) are in a bijective correspondence with the set of elements \(\left(x_a\right)_{a \, \in \, \rep\left({\Cat{G}_{ \scriptscriptstyle{0} } }\right)}\), where \(x_a\) is an idempotent of \(\mathbb{Q} \bB\left({\Cat{G}^{a} }\right)\) for each \(a \in rep\left({\Cat{G}}\right)\).
We recall that the idempotents of the Burnside algebra \(\mathbb{Q} \bB\left({G}\right)\) of a group \(G\) were completely characterized in \cite[Theorem~\(2.5.2\)]{Bouc:2010}.

\begin{examples}
\begin{enumerate}[(1)]
\item It has been stated in Example \ref{exam:Bexample} that the Burnside ring of the trivial group is \(\mathbb{Z}\) therefore, of course, its Burnside algebra is \(\mathbb{Q}\) whose only idempotents are \(0\) and \(1\).
This implies that the Burnside algebra of a groupoid \(\Cat{G}\) with all isotropy group types trivial and a finite set of objects is \(\prod_{a \, \in \, \rep\left({\Cat{G}_{ \scriptscriptstyle{0} } }\right) }  \mathbb{Q}\). Therefore, this can be applied to any of the groupoids given in Example \ref{exam:X}.
\item Let \(G\) be a cyclic group of order a prime \(p \ge 2\).
Thanks to Example \ref{exam:Bexample} we know that \(\bB(G) \cong \mathbb{Z} v \oplus \mathbb{Z} w\), where 
$$
v= \left[{G/G}\right] =\left[{1}\right], \, w =\left[{G/1}\right]= \left[{G}\right], \, v^2= v, \;  vw = wv = w \text{ and  }  w^2 = pw.
$$
We will use \cite[Theorem~\(2.5.2\)]{Bouc:2010}: since the only subgroups of \(G\) are only \(G\) itself and \(1\), we have that \(\mu(1,1)= \mu(G,G)=1\) and \(\mu(1,G)=-1\) where \(\mu\) is the Moebius function on the poset of subgroups of \(G\).
Applying the quoted theorem and computing, we obtain
\[ e_1^G = \frac{1}{p} \mu(1,1) \left[{\frac{G}{1} }\right] = \frac{1}{p}w
\]
and
\[ e_G^G = \frac{1}{p} \left({\mu(1,G) \left[{\frac{G}{1} }\right] + p \mu(G,G) \left[{\frac{G}{G} }\right] }\right) 
= \frac{-1}{p} w + v,
\]
the two primitive idempotents of \(\mathbb{Q}\bB\left({G}\right)\).
Notice that, in the case of a cyclic group of order \(p\), we can, by abuse of notation, avoid distinguishing a subgroup of \(G\) from its conjugacy class.
Trying to rewrite \(e_1^G\) and \(e_G^G\) with the notations used in this paper we obtain
\[ e_1^G = \frac{1}{p} \otimes_{\mathbb{Z}} \left[{\left[{G}\right], 0}\right]
\]
and
\[ e_G^G = \frac{-1}{p} \otimes_{\mathbb{Z}} \left[{\left[{G}\right], 0}\right] + 1 \otimes_{\mathbb{Z}}  \left[{\left[{1}\right],0}\right] .
\]
\item Now let's consider a groupoid \(\Cat{G}\) with two connected components such that \(\Cat{G}_{ \scriptscriptstyle{0} }=\Set{a, b}\), \(\Cat{G}^{a}\) is a trivial group and \(\Cat{G}^{b}\) is the cyclic group of order \(p\).
We consider the subgroupoids \(\Cat{A}\) and \(\Cat{B}\) such that \(\Cat{A}_{ \scriptscriptstyle{0} }= \Set{a}\), \(\Cat{A}_{ \scriptscriptstyle{1} } = \Set{\iota_{ \scriptscriptstyle{a} }}\), \(\Cat{B}_{ \scriptscriptstyle{0} }= \Set{b}\) and \(\Cat{B}_{ \scriptscriptstyle{1} } = \Set{\iota_{ \scriptscriptstyle{b} }}\).
We denote with \(\varsigma \colon \Cat{A} \longrightarrow \Cat{G}_{ \scriptscriptstyle{0} }\) a structure map with image \(\Set{a}\) and with \(\vartheta \colon \Cat{G}/\Cat{B} \longrightarrow \Cat{G}_{ \scriptscriptstyle{0} }\) and \(\gamma \colon \Cat{G}/\Cat{B}^{(b)} \longrightarrow \Cat{G}_{ \scriptscriptstyle{0} }\) two structures maps with image \(\Set{b}\).
Thanks to Remark~\(\ref{rIsomProdRig}\) we deduce that the Burnside algebra \(\mathbb{Q} \bB\left({\Cat{G}}\right)\) has the following four primitive idempotents:
\[ \begin{aligned}
e_1 &= 1 \otimes_{\mathbb{Z}} \left[{0,0}\right]= 0, \\
e_2 &= 1 \otimes_{\mathbb{Z}} \left[{\left[{(\Cat{A}_{ \scriptscriptstyle{0} }, \varsigma)}\right], 0}\right] ,\\
e_3 &= \frac{1}{p} \otimes_{\mathbb{Z}} \left[{\left[{\left({\frac{\Cat{G} }{\Cat{B} }, \vartheta}\right) }\right], 0}\right] ,\\
e_4 &= \frac{-1}{p} \otimes_{\mathbb{Z}} \left[{\left[{\left({\frac{\Cat{G} }{\Cat{B} }, \vartheta}\right) }\right], 0}\right] + 1 \otimes_{\mathbb{Z}} \left[{\left[{\left({\frac{\Cat{G} }{\Cat{G}^{(b)}  }, \gamma}\right) }\right], 0}\right] .
\end{aligned}
\]
\end{enumerate}
\end{examples}

In the subsequent, we are going to construct the Ghost map of the groupoid \(\Cat{G}\) and prove that is injective.
Let \(\Cat{H} \in rep\left({\Cat{S}_{\Cat{G} } }\right)\) be a subgroupoid of \(\Cat{G}\) with only one object \(a\).
We want to prove that the function
\[ \begin{aligned}{\phi_{\Cat{H} } }  \colon & {\lL \left({\Cat{G} }\right) } \longrightarrow {\mathbb{N}} \\ & {\left[{ \left( X, \varsigma \right) }\right] }  \longrightarrow {\left\lvert{X^{\Sscript{\Cat{H} }} }\right\rvert }\end{aligned} 
\]
is a homomorphism of rigs.
We have \(\phi_{\Cat{H} }\left({ \left[{\emptyset}\right]  }\right)=0\) and \(\phi_{\Cat{H} }\left({\left[{\Cat{G}_{ \scriptscriptstyle{0} }}\right]  }\right)= \left\lvert{\Cat{G}_{ \scriptscriptstyle{0} }^{\Sscript{\Cat{H} }} }\right\rvert= \left\lvert{\Set{a} }\right\rvert=1\).
Given finite right \(\Cat{G}\)-sets \(\left({X,\varsigma}\right)\) and \(\left({Y, \vartheta}\right)\), we calculate
\[ \begin{aligned}
\phi_{\Cat{H} }\left({\left[{X, \varsigma}\right]+ \left[{Y, \vartheta}\right] }\right) 
&= \phi_{\Cat{H} }\left({\left[{X \Uplus Y, \varsigma \Uplus \vartheta}\right] }\right) 
= \left\lvert{\left({X \Uplus Y, \varsigma \Uplus \vartheta}\right)^{\Sscript{\Cat{H} }} }\right\rvert  \\
&= \left\lvert{\left({X, \varsigma}\right)^{\Cat{H} } }\right\rvert  +  \left\lvert{\left({Y, \vartheta}\right)^{\Cat{H} } }\right\rvert 
= \phi_{\Cat{H} }\left({\left[{X, \varsigma}\right] }\right)  + \phi_{\Cat{H} }\left({ \left[{Y, \vartheta}\right] }\right) 
\end{aligned}
\]
and
\[ \begin{aligned}
\phi_{\Cat{H} }\left({\left[{X, \varsigma}\right] \left[{Y, \vartheta}\right] }\right) 
&= \phi_{\Cat{H} }\left({\left[{X \fpro{\Cat{G}_{ \scriptscriptstyle{0} } }  Y, \varsigma  \vartheta}\right] }\right) 
= \left\lvert{\left({X \fpro{\Cat{G}_{ \scriptscriptstyle{0} } }  Y, \varsigma  \vartheta}\right)^{\Sscript{\Cat{H}} } }\right\rvert  
= \left\lvert{\left({ \left({\varsigma \vartheta}\right)^{-1} (a) }\right)^{\Sscript{\Cat{H}} } }\right\rvert \\
&=\left\lvert{\left({ \varsigma^{-1} (a) }\right)^{\Sscript{\Cat{H}} } \times \left( \vartheta^{-1} (a) \right) ^{\Sscript{\Cat{H}} } }\right\rvert 
=\left\lvert{\left({ \varsigma^{-1} (a) }\right)^{\Sscript{\Cat{H}} } }\right\rvert  \left\lvert{ \left( \vartheta^{-1} (a) \right) ^{\Sscript{\Cat{H} }} }\right\rvert \\ 
&= \left\lvert{\left({X, \varsigma}\right)^{\Sscript{\Cat{H}} } }\right\rvert    \left\lvert{\left({Y, \vartheta}\right)^{\Sscript{\Cat{H} }} }\right\rvert 
= \phi_{\Cat{H} }\left({\left[{X, \varsigma}\right] }\right)   \phi_{\Cat{H} }\left({ \left[{Y, \vartheta}\right] }\right) .
\end{aligned}
\]
Applying the Grothendieck functor \(\gro\) we obtain the homomorphism of rings
\[ 
\begin{aligned}{\gro\left({\phi_{\Cat{H} } }\right) }  \colon & {\bB\left({\Cat{G} }\right)= \gro\left({\lL\left({\Cat{G} }\right) }\right) } \longrightarrow {\gro\left({\mathbb{N}}\right) = \mathbb{Z}} \\ & { \left[ \left[{X}\right] , \left[{Y}\right] \right] }  \longrightarrow {\left\lvert{X^{\Sscript{\Cat{H} }} }\right\rvert- \left\lvert{Y^{\Sscript{\Cat{H}} } }\right\rvert .}\end{aligned} 
\]
Using the universal property of the direct product, we are able to define the following homomorphism of rings, which is called the \emph{Ghost map of the groupoid \(\Cat{G}\)}:
\begin{equation}\label{Eq:ghost}
\begin{gathered}
\begin{aligned}{\fk{g}  }  \colon & {\bB\left({\Cat{G} }\right) } \longrightarrow {\prod_{\Cat{H} \, \in  \, \rep \left({\mathcal{S}_{\mathcal{G} } }\right) }  \mathbb{Z}} \\ & { \left[ \left[{X}\right], \left[{Y}\right] \right] }  \longrightarrow {\left(\left\lvert{X^{\Sscript{\Cat{H}} } }\right\rvert- \left\lvert{Y^{\Sscript{\Cat{H}} } }\right\rvert\right)_{\Cat{H} \, \in  \, \rep \left({\mathcal{S}_{\mathcal{G} } }\right) }  .}\end{aligned} 
\end{gathered}
\end{equation}
Now let's suppose that there are \(\left[ \left[{X}\right], \left[{Y}\right] \right] ,  \left[\left[{A}\right], \left[{B}\right]  \right] \in B\left({\Cat{G} }\right)\) such that \(\fk{g}\left({\left[ \left[{X}\right], \left[{Y}\right] \right] }\right)= \fk{g}\left({\left[ \left[{A}\right],\left[{B}\right] \right] }\right)\).
Then that for each \(\Cat{H} \in rep\left({\Cat{S}_{\Cat{G} } }\right)\) we obtain
\[ \left\lvert{\left({X \Uplus B}\right)^{\Sscript{\Cat{H}} } }\right\rvert = \left\lvert{X^{\Sscript{\Cat{H} }} }\right\rvert + \left\lvert{B^{\Sscript{\Cat{H} }} }\right\rvert 
= \left\lvert{A^{\Sscript{\Cat{H} }} }\right\rvert + \left\lvert{Y^{\Cat{H} } }\right\rvert 
= \left\lvert{\left({A \Uplus Y}\right)^{\Sscript{\Cat{H} }} }\right\rvert .
\]
which, thanks to the Burnside Theorem, implies \(X \Uplus B \cong A \Uplus Y\).
As a consequence we have \(\left[{X}\right] + \left[{B}\right] = \left[{A}\right]+ \left[{Y}\right]\), therefore \(\left[ \left[{X}\right], \left[{Y}\right] \right] = \left[ \left[{A}\right], \left[{B}\right] \right]\).  Summing up:
\begin{proposition}\label{prop:fkG}
The Ghost map 
$$
\fk{g}: \bB(\cG) \longrightarrow  {\prod_{\Cat{H} \, \in  \, \rep \left({\mathcal{S}_{\mathcal{G} } }\right) }  \mathbb{Z}}
$$ 
explicitly given by equation \eqref{Eq:ghost}, is actually a monomorphism of rings. 
\end{proposition}

%%%%%%%%%%%%%%%%Appendix%%%%%%%%%%%%%%

\appendix

\section{Laplaza  categories and the Grothendieck functor}\label{asec:laplaza}

\subsection{Laplaza categories and their functors}\label{ssec:Laplaza}

When there are two monoidal structures, \(\oplus\) and \(\boxplus\) on a category one of them, \(\boxplus\), can distribute over the other, that is, given objects \(A\), \(B\) and \(C\), there are natural isomorphisms
\[ A \boxplus (B \diamond C) \cong A \boxplus B \diamond A \boxplus C
\qquad \text{and} \qquad
(A \diamond B) \boxplus C \cong A \boxplus C \diamond B \boxplus C.
\]
This situation was foreshadowed in \cite{LaplazaCoCatAsCoDist} and studied in \cite[page~\(29\)]{LaplazaCohCatDistrib}, where a complete set of coherency conditions is provided.
We will call such a category, for lack of a better name, a \textit{Laplaza category} and we will denote the category of small Laplaza categories by \(\mathbf{LPZCat}\).
An example of a small Laplaza category is the category \(\rset{\cG}\) of the finite right \(\mathcal{G}\)-sets over a groupoid \(\Cat{G}\) where \(\oplus\) is the coproduct, i.e., the disjoint union \(\Uplus\), and \(\boxplus\) is the fibered product $\underset{\Sscript{\Go}}{\times}$.

\begin{definition}\label{defApp:Laplaza}
Let \(\left({\Cat{C}_1, \diamond_1, \boxplus_1}\right)\) and \(\left({\Cat{C}_2, \diamond_2, \boxplus_2}\right)\) be Laplaza categories.
A \emph{Laplaza functor} 
$$
F \colon \left({\Cat{C}_1, \diamond_1, \boxplus_1}\right) \longrightarrow \left({\Cat{C}_2, \diamond_2, \boxplus_2}\right)
$$
is simultaneously both a strong monoidal functor \(F \colon \left({\Cat{C}_1, \diamond_1}\right) \longrightarrow \left({\Cat{C}_2, \diamond_2}\right)\) and a strong monoidal functor \(F \colon \left({\Cat{C}_1, \boxplus_1}\right) \longrightarrow \left({\Cat{C}_2, \boxplus_2}\right)\).
\end{definition}

\begin{definition}
Let \(\left({\Cat{C}_1, \diamond_1, \boxplus_1}\right)\) and \(\left({\Cat{C}_2, \diamond_2, \boxplus_2}\right)\) be Laplaza categories.
A Laplaza functor 
$$
F \colon \left({\Cat{C}_1, \diamond_1, \boxplus_1}\right) \longrightarrow \left({\Cat{C}_2, \diamond_2, \boxplus_2}\right) 
$$
is said to be an \emph{isomorphism of Laplaza categories} if it is an isomorphism of categories and the inverse functor \(F^{-1}\) is also a Laplaza functor.
\end{definition}

\begin{definition}
Given two Laplaza categories \(\left({\Cat{C}_1, \diamond_1, \boxplus_1}\right)\) and \(\left({\Cat{C}_2, \diamond_2, \boxplus_2}\right)\) (the units are omitted for brevity), let
\[ F, \, G \colon \left({\Cat{C}_1, \diamond_1, \boxplus_1}\right) \longrightarrow \left({\Cat{C}_2, \diamond_2, \boxplus_2}\right)
\]
be two strong monoidal functors.
A \textbf{Laplaza natural transformation}\index{Laplaza!natural transformation}\index{Natural transformation!Laplaza} (respectively, a \textbf{Laplaza natural isomorphism})\index{Laplaza!natural isomorphism}\index{Natural isomorphism!Laplaza}
\[ \varphi \colon  F  \longrightarrow G \colon \left({\Cat{C}_1, \diamond_1, \boxplus_1}\right) \longrightarrow \left({\Cat{C}_2, \diamond_2, \boxplus_2}\right)
\]
is simultaneously both a monoidal natural transformation (respectively, a monoidal natural isomorphism)
\[ \varphi \colon  F  \longrightarrow G \colon \left({\Cat{C}_1, \diamond_1}\right) \longrightarrow \left({\Cat{C}_2, \diamond_2}\right)
\]
and a monoidal natural transformation (respectively, a monoidal natural isomorphism)
\[ \varphi \colon  F  \longrightarrow G \colon \left({\Cat{C}_1,  \boxplus_1}\right) \longrightarrow \left({\Cat{C}_2, \boxplus_2}\right).
\]
\end{definition}

\begin{definition}
Given \(\left({\Cat{C}_1, \diamond_1, \boxplus_1}\right)\) and \(\left({\Cat{C}_2, \diamond_2, \boxplus_2}\right)\) two Laplaza categories, we say that a \textbf{Laplaza adjunction}\index{Laplaza!adjunction}\index{Adjunction!Laplaza} is a couple of Laplaza functors
\[ F \colon \left({\Cat{C}_1, \diamond_1, \boxplus_1}\right) \longrightarrow \left({\Cat{C}_2, \diamond_2, \boxplus_2}\right)
\qquad \text{and} \qquad
G \colon \left({\Cat{C}_2, \diamond_2, \boxplus_2}\right) \longrightarrow \left({\Cat{C}_1, \diamond_1, \boxplus_1}\right)
\]
such that there are Laplaza transformations
\[ \eta \colon \Id_{\Cat{C}_1} \longrightarrow GF
\qquad \text{and} \qquad
\varepsilon \colon FG \longrightarrow \Id_{\Cat{C}_2}
\]
sucth that \(\varepsilon F \circ F \eta = \Id_{F}\) and \(G\varepsilon \circ \eta G = G\).
\end{definition}

\begin{definition}
Given \(\left({\Cat{C}_1, \diamond_1, \boxplus_1}\right)\) and \(\left({\Cat{C}_2, \diamond_2, \boxplus_2}\right)\) two Laplaza categories, let
\[ F \colon \left({\Cat{C}_1, \diamond_1, \boxplus_1}\right) \longrightarrow \left({\Cat{C}_2, \diamond_2, \boxplus_2}\right)
\]
be a Laplaza functor.
We say that \(F\) realizes a \textbf{Laplaza equivalence of categories}\index{Laplaza!equivalence of categories}\index{Equivalence of categories!Laplaza} if there is a Laplaza functor
\[ G \colon \left({\Cat{C}_2, \diamond_2, \boxplus_2}\right) \longrightarrow \left({\Cat{C}_1, \diamond_1, \boxplus_1}\right)
\]
and Laplaza natural isomorphisms
\[ \eta \colon \Id_{\Cat{C}_1} \longrightarrow GF
\qquad \text{and} \qquad
\varepsilon \colon FG \longrightarrow \Id_{\Cat{C}_2}.
\]
\end{definition}

\begin{corollary}\label{cAdjEquivInvLaplaza}
Let be \(\left({\Cat{D}, \otimes, I}\right)\) and \(\left({\Cat{C}, \boxplus, J}\right)\) be Laplaza categories and let \(F \colon \Cat{C} \longrightarrow \Cat{D}\) be a Laplaza functor which is an ordinary equivalence of categories.
Then there is a Laplaza functor \(G \colon \Cat{D} \longrightarrow \Cat{C}\) such that there are Laplaza isomorphisms 
\[ \eta \colon \Id_{\Cat{C} } \longrightarrow GF
\qquad \text{and} \qquad
\varepsilon \colon FG \longrightarrow \Id_{\Cat{D}}
\]
and such that \((F,G)\) realises an adjunction with unit \(\eta\) and counit \(\varepsilon\).
In particular, the inverse of a Laplaza functor is a Laplaza functor.
\end{corollary}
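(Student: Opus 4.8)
The plan is to deduce the statement from the classical fact that a quasi-inverse of a strong monoidal equivalence carries a canonical strong monoidal structure, applied once for each of the two monoidal structures that a Laplaza category carries. First I would replace the given ordinary equivalence by an adjoint equivalence. Since $F$ is an equivalence of categories, choose some quasi-inverse $G\colon\mathcal{D}\to\mathcal{C}$ together with natural isomorphisms $\eta_0\colon\mathrm{Id}_{\mathcal{C}}\to GF$ and $\varepsilon_0\colon FG\to\mathrm{Id}_{\mathcal{D}}$, and then apply the standard modification (replace $\varepsilon_0$ by $\varepsilon$ obtained from $\varepsilon_0$, $G$ and one of the isomorphisms) so that $(F,G,\eta,\varepsilon)$ satisfies the two triangle identities, with $\eta=\eta_0$. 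This step concerns only the underlying categories and does not see either monoidal structure, so it will serve simultaneously for both.

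Next, fix one of the two monoidal structures, with tensor $\boxplus$ and unit $J$ on $\mathcal{C}$ and tensor $\otimes$ and unit $I$ on $\mathcal{D}$. Being strong monoidal, $F$ comes with coherent natural isomorphisms $\varphi_{A,B}\colon F(A)\otimes F(B)\xrightarrow{\ \sim\ }F(A\boxplus B)$ and $\varphi_0\colon I\xrightarrow{\ \sim\ }F(J)$. I would equip $G$ with the isomorphisms
\[
\psi_{X,Y}\colon G(X)\boxplus G(Y)\xrightarrow{\ \eta\ }GF\bigl(G(X)\boxplus G(Y)\bigr)\xrightarrow{\ G(\varphi^{-1})\ }G\bigl(FG(X)\otimes FG(Y)\bigr)\xrightarrow{\ G(\varepsilon\otimes\varepsilon)\ }G(X\otimes Y)
\]
and $\psi_0\colon J\xrightarrow{\ \eta\ }GF(J)\xrightarrow{\ G(\varphi_0^{-1})\ }G(I)$. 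That $(G,\psi,\psi_0)$ is strong monoidal — the associativity coherence and the two unit coherences — is a routine diagram chase using the coherence of $(F,\varphi,\varphi_0)$, naturality of $\eta$ and $\varepsilon$, and the triangle identities; I would invoke the standard references on monoidal categories here rather than reproduce it. A further short chase shows that $\eta$ and $\varepsilon$ are monoidal natural isomorphisms with respect to $(\boxplus,\otimes)$. I would then run this argument verbatim for the second pair of monoidal structures: the same underlying $G$, $\eta$, $\varepsilon$ are reused, and the two monoidal structure maps placed on $G$ are built independently of one another, so there is nothing to reconcile between them.

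Assembling the pieces, $G$ is strong monoidal for both monoidal structures, hence a Laplaza functor in the sense of Definition~\ref{defApp:Laplaza}; $\eta$ and $\varepsilon$ are Laplaza natural isomorphisms; and $(F,G)$ with unit $\eta$ and counit $\varepsilon$ is a Laplaza adjunction (indeed a Laplaza equivalence). For the final assertion, if $F$ is moreover an isomorphism of categories one takes $G=F^{-1}$ with $\eta$ and $\varepsilon$ the identities, and the above construction exhibits $F^{-1}$ as a Laplaza functor. The only place demanding genuine care — and thus the main obstacle — is the bookkeeping of the coherence diagrams for $(G,\psi,\psi_0)$ and the monoidality of $\eta$ and $\varepsilon$; everything else is formal. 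This remains manageable precisely because a Laplaza functor here is asked only to be strong monoidal for each of the two structures separately, so no interaction with the distributivity isomorphisms ever enters the verification.
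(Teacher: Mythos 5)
The paper states Corollary~\ref{cAdjEquivInvLaplaza} without any proof, so there is no argument of the authors' to compare yours against; what can be judged is only whether your proof is correct, and it is. Your route is the standard one: first promote the equivalence to an adjoint equivalence, then transport the strong monoidal structure of $F$ to the quasi-inverse $G$ via the mate construction $\psi_{X,Y}=G(\varepsilon_X\otimes\varepsilon_Y)\circ G(\varphi_{G(X),G(Y)}^{-1})\circ\eta_{G(X)\boxplus G(Y)}$ (doctrinal adjunction), and check that $\eta$ and $\varepsilon$ become monoidal natural isomorphisms using the triangle identities --- which is exactly why the adjoint-equivalence step must come first. Your closing observation is the one point genuinely specific to this paper and worth making explicit: since Definition~\ref{defApp:Laplaza} asks a Laplaza functor only to be strong monoidal for each of the two structures \emph{separately}, with no compatibility imposed on the distributivity isomorphisms, the argument can simply be run twice with the same underlying $G$, $\eta$, $\varepsilon$, and nothing needs to be reconciled between the two runs. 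The treatment of the final assertion (taking $G=F^{-1}$ with identity unit and counit when $F$ is an isomorphism of categories) is also correct.
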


\subsection{The Category \textbf{Rig}}\label{ssec:Rig} 
One of the essential notion to introduce a Burnside ring is that of \emph{rig}.
\begin{definition}\label{def:rig}
Let \(S\) be a set with two associative and commutative internal operations  \(\cdot\) and \(+\).
We call \(S\) a \emph{rig} if the following conditions are satisfied:
\begin{enumerate}
\item \(+\) has a neutral element \(0\);
\item \(\cdot\) has a neutral element \(1\);
\item \(\cdot\) distributes over \(+\) on the right and on the left that is, for each \(a, b, c \in S\),
\[ a(b+c)=ab+ac
\qquad \text{and} \qquad
(a+b)c = ac+ ab;
\]
\item \(S\) respect the absorption/annihilation laws that is, for each \(a \in S\) we have \(a \cdot 0 = 0 = 0 \cdot a\).
\end{enumerate}
A homomorphism of rigs \(f \colon S \longrightarrow T\) is a function which is a homomorphism of monoids both as \(f \colon (S, +) \longrightarrow (T, +)\) and as \(f \colon (S, \cdot) \longrightarrow (T, \cdot)\).
The category of rigs will be denoted by \(\mathbf{Rig}\).
\end{definition}

With this definition we choose to follow the definitions given by \cite[page~\(7\)]{GlaLitSemiringsInfSci}, and \cite[page~\(379\)]{SchaNegEulCharDim}.
The reader should know, however, that what we called a rig is called a semiring by other authors (\cite[page~\(1\)]{GolSemRingsAppl}).
Nevertheless, in analogy with the word semigroup which describes a monoid without a neutral element, we think that the word semiring should be reserved to a ring which lacks both the negative elements (i.e., the inverses wrt to the addition) and the additive neutral element.

\begin{proposition}
\label{pProdRigs}
Given a family of rigs \(\left(S_i\right)_{i \in I}\), set \(S= \prod_{i \in I} S_i\) and let \(\pi_i \colon S \longrightarrow S_i\) be the canonical projection.
Then \(\left(\pi_i \colon S \longrightarrow S_i\right)_{ i \in I}\) is the product of the family \(\left(S_i\right)_{ i \in I}\) in the category \(\mathbf{Rig}\).
\end{proposition}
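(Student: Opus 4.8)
This is a very standard universal-property statement, so the proof is essentially a matter of unwinding definitions and checking that the rig axioms lift componentwise. Here is my plan.

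\medskip

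The plan is to verify directly that $\left(\pi_i \colon S \longrightarrow S_i\right)_{i \in I}$ satisfies the universal property of the product in $\mathbf{Rig}$. First I would observe that $S = \prod_{i \in I} S_i$ carries a natural rig structure: define $+$ and $\cdot$ componentwise by $(a_i)_{i} + (b_i)_{i} = (a_i + b_i)_{i}$ and $(a_i)_{i} \cdot (b_i)_{i} = (a_i \cdot b_i)_{i}$, with $0 = (0_{S_i})_{i}$ and $1 = (1_{S_i})_{i}$. Associativity and commutativity of both operations, the neutrality of $0$ and $1$, the two-sided distributivity, and the absorption law $a \cdot 0 = 0 = 0 \cdot a$ all hold in $S$ precisely because they hold in each $S_i$ and every operation is evaluated coordinatewise; this is the routine part I would not grind through. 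Next I would note that each projection $\pi_i$ is a homomorphism of rigs, since it commutes with the componentwise operations and sends $0$ to $0_{S_i}$ and $1$ to $1_{S_i}$ by construction.

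\medskip

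Then I would establish the universal property. Given any rig $T$ together with a family of rig homomorphisms $\left(f_i \colon T \longrightarrow S_i\right)_{i \in I}$, the only possible candidate for a mediating map is the map $f \colon T \longrightarrow S$ defined by $f(t) = \left(f_i(t)\right)_{i \in I}$, which is forced by the requirement $\pi_i \circ f = f_i$ for all $i$. I would check that this $f$ is indeed a homomorphism of rigs: since each $f_i$ preserves $+$, $\cdot$, $0$ and $1$, and the operations and constants of $S$ are componentwise, we get $f(t + t') = \left(f_i(t+t')\right)_i = \left(f_i(t) + f_i(t')\right)_i = f(t) + f(t')$, and similarly for multiplication, and $f(0_T) = 0_S$, $f(1_T) = 1_S$. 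Uniqueness is immediate from the defining relations $\pi_i \circ f = f_i$: if $g$ also satisfies them, then the $i$-th coordinate of $g(t)$ equals $f_i(t)$ for every $i$, hence $g = f$.

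\medskip

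There is no real obstacle here; the statement is a soft categorical fact whose entire content is that the category $\mathbf{Rig}$ is concrete over $\mathbf{Set}$ with products created by the forgetful functor, or equivalently that rigs form a variety of algebras in the sense of universal algebra. If anything, the only point demanding a shred of care is being explicit that \emph{all} the rig axioms are equational (including the annihilation law $a\cdot 0 = 0 = 0\cdot a$, which is not automatic from the monoid/distributivity axioms and must be listed separately as in Definition~\ref{def:rig}), so that they transfer to the product coordinatewise. I would therefore simply remark that the proof is a direct verification, perhaps spelling out the mediating map $f(t) = \left(f_i(t)\right)_{i\in I}$ and leaving the axiom-chasing to the reader, exactly in the spirit of the other "immediate verification" proofs in this paper.
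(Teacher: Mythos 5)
Your proposal is correct and follows essentially the same route as the paper: define the mediating map $f(a) = \left(f_j(a)\right)_{j \in I}$, check it is a rig homomorphism componentwise, verify $\pi_j \circ f = f_j$, and deduce uniqueness from the coordinates. Your added remark that all rig axioms (including annihilation) are equational and hence transfer to the product is a nice clarification but does not change the argument.
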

\begin{proof}
Given a rig \(A\), let \(\left(f_j \colon A \longrightarrow S_j\right)_{j \in I}\) be a family of rigs.
We have to prove that there is only one morphism \(f \colon A \longrightarrow S\) such that the following diagram commutes for every \(j \in I\):
\[ \xymatrix{
A   \ar[rr]^{f} \ar[rrd]_{f_j}  &  & S \ar[d]^{\pi_j} \\
&& S_j .    }
\]
For each \(a \in A\) we define \(f(a)=\left(f_j(a)\right)_{j \in I}\): obviously, \(f\) is a homomorphism of rigs because so is \(f_j\) for every \(j \in I\).
Regarding the commutativity of the diagram, for every \(j \in I\) and for every \(a \in A\) we have:
\[ \pi_j\left(f(a)\right) = \pi_j \left({\left(f_i(a)\right)_{i \in I} }\right) 
= f_j(a).
\]
Now let \(g \colon A \longrightarrow S\) be another morphism of rigs such that, for every \(j \in I\), the following diagram commutes:
\[ \xymatrix{
A   \ar[rr]^{g} \ar[rrd]_{f_j}  &  & S \ar[d]^{\pi_j} \\
&& S_j .    }
\]
Then for every \(j \in I\) and \(a \in A\) we have
\[ \pi_j\left({g(a)}\right) = f_j(a)= \pi_j\left({f(a)}\right) 
\]
thus \(f(a)=g(a)\) and \(f=g\).
\end{proof}

\subsection{The Grothendieck functor}\label{ssec:GrothF}
We will denote by \(\gG\) the \emph{Grothendieck functor} which sends a rig \(S\) to the ring \(\gG(S)\) constructed as follows.
We define a equivalence relation \(\sim\) on \(S \times S\) such that for every \((a, b), (c,d) \in S \times S\), \((a,b) \sim (c,d)\) if and only if there is \(e \in S\) such that \(a + d + e = c + b + e\).
The equivalence class of the couple \((a,b) \in S \times S\) will be denoted with \(\left[{(a,b)}\right]\), or simply by \([a,b]\) to make the notations more clear, and the quotient set of \(S \times S\) with \(\gG(S)\).
We will define an addition and a multiplication on \(\gG(S)\) as follows: for every \([a,b], [c,d] \in \gG(S)\), 
\[ [a,b]+ [c,d] = [a+c, b+d]
\qquad \text{and} \qquad
[a,b] \cdot [c,d] = [ ac + bd, ad + bc ].
\]
In this way \(\gG(S)\) becomes a commutative rings with \([0,0]\) as neutral element with respect to \(+\) and \([1,0]\) as neutral element with respect to \(\cdot\).

Given a rig \(S\), the ring \(\gG(S)\) has the following universal property.

\begin{proposition}\label{pUnivPropGrothFu}
Given a rig \(S\), for any ring \(H\) and for any homomorphism of rigs \(\psi \colon S \longrightarrow H\), there is a unique homomorphism of rings \(\theta \colon \gG(S) \longrightarrow H\) such that \(\psi = \theta \varphi\), that is, such that the following diagram is commutative:
\[ \xymatrix{
S \ar[d]_{\varphi}  \ar[rr]^{\psi} &  & H  \\
\gG(S).  \ar@{.>}[rru]_{\theta} & &   }
\]
\end{proposition}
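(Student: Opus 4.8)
The statement to prove is the universal property of the Grothendieck functor (Proposition \ref{pUnivPropGrothFu}): given a rig $S$ and a homomorphism of rigs $\psi\colon S\to H$ into a ring $H$, there is a unique homomorphism of rings $\theta\colon\gG(S)\to H$ with $\psi=\theta\varphi$, where $\varphi\colon S\to\gG(S)$ is the canonical map $a\mapsto[a,0]$.

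\medskip

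The plan is first to pin down $\varphi$ explicitly: it sends $a\in S$ to the class $[a,0]\in\gG(S)$; one checks quickly from the definitions of $+$ and $\cdot$ on $\gG(S)$ that $\varphi$ is a homomorphism of rigs (this is routine and I would state it without grinding through it). Next I would \emph{define} $\theta$ by the only possible formula forced by the requirement that it be an additive homomorphism extending $\psi$: since in any ring containing the image one must have $[a,b]=[a,0]-[b,0]=\varphi(a)-\varphi(b)$, set
\[
\theta\colon\gG(S)\longrightarrow H,\qquad \theta([a,b])=\psi(a)-\psi(b).
\]
The first real step is to show $\theta$ is \textbf{well defined} on equivalence classes: if $(a,b)\sim(c,d)$, i.e.\ $a+d+e=c+b+e$ for some $e\in S$, then applying the rig-homomorphism $\psi$ gives $\psi(a)+\psi(d)+\psi(e)=\psi(c)+\psi(b)+\psi(e)$ in $H$, and since $H$ is a ring we may cancel $\psi(e)$ and rearrange to get $\psi(a)-\psi(b)=\psi(c)-\psi(d)$, i.e.\ $\theta([a,b])=\theta([c,d])$. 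Here the presence of additive inverses in $H$ is exactly what makes the argument work, so this is the conceptual heart of the proof.

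\medskip

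After that I would verify that $\theta$ is a homomorphism of rings: additivity is immediate from $\theta([a,b]+[c,d])=\theta([a+c,b+d])=\psi(a+c)-\psi(b+d)=(\psi(a)-\psi(b))+(\psi(c)-\psi(d))$; it sends $[0,0]$ to $0$ and $[1,0]$ to $\psi(1)=1_H$; and for multiplicativity one computes
\[
\theta([a,b]\cdot[c,d])=\theta([ac+bd,\ ad+bc])=\psi(ac)+\psi(bd)-\psi(ad)-\psi(bc),
\]
which, using that $\psi$ is multiplicative, equals $(\psi(a)-\psi(b))(\psi(c)-\psi(d))=\theta([a,b])\theta([c,d])$ after expanding in the ring $H$. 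Compatibility $\theta\varphi=\psi$ is clear since $\theta(\varphi(a))=\theta([a,0])=\psi(a)-\psi(0)=\psi(a)$.

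\medskip

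Finally, uniqueness: if $\theta'$ is any ring homomorphism with $\theta'\varphi=\psi$, then for every $[a,b]\in\gG(S)$ we have $[a,b]=[a,0]+[0,b]=\varphi(a)+(-\varphi(b))$ because $[a,0]+[0,b]=[a,b]$ and $[0,b]+[b,0]=[b,b]=[0,0]$ shows $[0,b]=-\varphi(b)$; hence $\theta'([a,b])=\theta'(\varphi(a))-\theta'(\varphi(b))=\psi(a)-\psi(b)=\theta([a,b])$, so $\theta'=\theta$. I do not anticipate a genuine obstacle here — the only subtlety worth care is the well-definedness check, where one must use that the cancellation element $e$ is killed by passing to the group $(H,+)$; everything else is bookkeeping with the explicit operations on $\gG(S)$.
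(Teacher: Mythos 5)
Your proof is correct and complete. Note that the paper states Proposition \ref{pUnivPropGrothFu} without proof (it is treated as a standard fact in the appendix), so there is nothing to compare against; your argument is the standard one — define $\theta([a,b])=\psi(a)-\psi(b)$, check well-definedness by cancelling $\psi(e)$ in the ring $H$, verify the ring-homomorphism axioms against the explicit operations on $\gG(S)$, and derive uniqueness from $[a,b]=\varphi(a)-\varphi(b)$ — and it correctly supplies all the details the paper omits.
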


Using the universal property of Proposition~\(\ref{pUnivPropGrothFu}\), given a homomorphism of rigs \(f \colon S \longrightarrow T\) we can define
\[ \begin{aligned}{\gG(f)}  \colon & {\gG(S)} \longrightarrow {\gG(T)} \\ & {[a,b]}  \longrightarrow {\left[{f(a), f(b)}\right]. }\end{aligned} 
\]
It is possible to prove that \(\gG(f)\) is a homomorphism of rings and that, with these definitions, \(\gG\) becomes a covariant functor from the category of rigs \(\mathbf{Rig}\) to the category of commutative rings \(\mathbf{CRing}\).

\begin{proposition}
\label{pGrotFuIsoToIso}
Given an isomorphism of rigs \(f \colon S \longrightarrow T\) we obtain an isomorphism of rings
\[ 
\gG(S) \colon \gG(S) \longrightarrow \gG(T).
\]
\end{proposition}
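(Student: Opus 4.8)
The plan is to use the universal property of the Grothendieck functor (Proposition~\ref{pUnivPropGrothFu}) together with functoriality of $\gG$ to produce a two-sided inverse for $\gG(f)$. Since $f\colon S\to T$ is an isomorphism of rigs, it has a two-sided inverse $f^{-1}\colon T\to S$, which is again a homomorphism of rigs; indeed $f^{-1}f=\Id_S$ and $ff^{-1}=\Id_T$.

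First I would recall that $\gG$ is a covariant functor from $\mathbf{Rig}$ to $\mathbf{CRing}$, as stated just before this proposition. Hence $\gG$ preserves identities and composition: $\gG(\Id_S)=\Id_{\gG(S)}$ and $\gG(g\circ h)=\gG(g)\circ\gG(h)$ whenever $g,h$ are composable homomorphisms of rigs. Applying $\gG$ to the two relations $f^{-1}f=\Id_S$ and $ff^{-1}=\Id_T$ yields
\[
\gG(f^{-1})\circ\gG(f)=\gG(f^{-1}f)=\gG(\Id_S)=\Id_{\gG(S)}
\]
and
\[
\gG(f)\circ\gG(f^{-1})=\gG(ff^{-1})=\gG(\Id_T)=\Id_{\gG(T)}.
\]
Therefore $\gG(f)\colon\gG(S)\to\gG(T)$ is a homomorphism of rings admitting $\gG(f^{-1})$ as a two-sided inverse, so it is an isomorphism of rings. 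One may also record the explicit description: on classes, $\gG(f)$ sends $[a,b]\mapsto[f(a),f(b)]$ and its inverse sends $[c,d]\mapsto[f^{-1}(c),f^{-1}(d)]$, and a direct check that these compose to the identity on $\gG(S)$ and $\gG(T)$ is an alternative to invoking functoriality; but invoking functoriality is cleaner.

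There is essentially no obstacle here: the statement is a formal consequence of $\gG$ being a functor. The only point worth a sentence is that $f^{-1}$ is genuinely a rig homomorphism (it is automatically compatible with both operations and both units since $f$ is), so that $\gG(f^{-1})$ is defined. I would keep the proof to two or three lines, citing functoriality of $\gG$ and Proposition~\ref{pUnivPropGrothFu} if a reference to the construction of $\gG$ on morphisms is wanted.
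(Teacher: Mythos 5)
Your proof is correct and is exactly the standard functoriality argument the paper has in mind: the paper's own proof is simply ``Immediate,'' and applying the functor $\gG$ to the relations $f^{-1}f=\Id_S$ and $ff^{-1}=\Id_T$ is the evident way to make that precise. No issues.
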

\begin{proof}
Immediate.
\end{proof}

\begin{proposition}\label{pGrothFuPresProd.}
The Grothendieck functor \(\gG\) preserves all products.
In particular, given a family of rigs  \(\left(S_j \right)_{j \in I}\), let be \(\left(\pi_j \colon S \longrightarrow S_j\right)_{j \in I }\) their product in \(\mathbf{Rig}\).
Then
\[ \left(\gG\left({\pi_j}\right) \colon \gG(S) \longrightarrow \gG\left({S_j}\right) \right)_{j \in I} 
\]
is the product of the family \(\left(\gG\left({S_j}\right)\right)_{j \in I}\) in \(\mathbf{CRing}\).
\end{proposition}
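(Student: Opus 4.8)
The plan is to produce the canonical comparison morphism from $\gG$ of the rig-product to the ring-product of the $\gG(S_j)$'s and to show it is an isomorphism, after which the universal property of products transfers for free; it suffices to prove the concrete ``In particular'' statement, the general one being then immediate. First I would set $S=\prod_{j\in I}S_j$ with its rig projections $\pi_j\colon S\to S_j$ — this is the product in $\mathbf{Rig}$ by Proposition~\ref{pProdRigs} — and let $P=\prod_{j\in I}\gG(S_j)$ be the cartesian product ring with componentwise operations, equipped with the ring projections $\pr_j\colon P\to\gG(S_j)$; this is, as usual, the product of the family $\bigl(\gG(S_j)\bigr)_{j\in I}$ in $\mathbf{CRing}$. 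Since each $\pi_j$ is a homomorphism of rigs, each $\gG(\pi_j)\colon\gG(S)\to\gG(S_j)$ is a homomorphism of rings, so the universal property of $P$ yields a unique ring homomorphism $\Phi\colon\gG(S)\to P$ with $\pr_j\circ\Phi=\gG(\pi_j)$ for every $j$. Writing an element of $\gG(S)$ as $[a,b]$ with $a=(a_j)_{j\in I}$ and $b=(b_j)_{j\in I}$ in $S$, this $\Phi$ is given explicitly by $\Phi\bigl([a,b]\bigr)=\bigl([a_j,b_j]\bigr)_{j\in I}$.

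Next I would check that $\Phi$ is bijective. For surjectivity, an arbitrary element of $P$ has the form $\bigl([a_j,b_j]\bigr)_{j\in I}$ with $a_j,b_j\in S_j$; putting $a=(a_j)_{j\in I}$ and $b=(b_j)_{j\in I}$ in $S$ gives $\Phi\bigl([a,b]\bigr)=\bigl([a_j,b_j]\bigr)_{j\in I}$. For injectivity, suppose $\Phi\bigl([a,b]\bigr)=\Phi\bigl([c,d]\bigr)$; then $[a_j,b_j]=[c_j,d_j]$ in $\gG(S_j)$ for every $j$, so for each $j$ there is $e_j\in S_j$ with $a_j+d_j+e_j=c_j+b_j+e_j$. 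Since the operations on $S$ are componentwise, the element $e=(e_j)_{j\in I}\in S$ then satisfies $a+d+e=c+b+e$, whence $[a,b]=[c,d]$ in $\gG(S)$. Thus $\Phi$ is an isomorphism of rings.

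Finally, since $\Phi$ is an isomorphism with $\pr_j\circ\Phi=\gG(\pi_j)$ and $(\pr_j)_{j\in I}$ realises the product in $\mathbf{CRing}$, the family $\bigl(\gG(\pi_j)\bigr)_{j\in I}$ is a product as well: given a commutative ring $H$ and ring homomorphisms $h_j\colon H\to\gG(S_j)$, the map $\Phi^{-1}\circ h$, where $h\colon H\to P$ is the unique morphism with $\pr_j\circ h=h_j$, mediates the cone, and any competitor $k\colon H\to\gG(S)$ with $\gG(\pi_j)\circ k=h_j$ must satisfy $\Phi\circ k=h$, hence $k=\Phi^{-1}\circ h$. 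The general statement that $\gG$ preserves all products follows. I do not expect a genuine obstacle here; the only point deserving attention is that the defining equivalence relation of the Grothendieck construction is compatible with products, i.e.\ that a common witness $e$ for an identity $[a,b]=[c,d]$ in $\gG(S)$ can be assembled componentwise from witnesses in the factors — precisely the step used in the injectivity of $\Phi$, and immediate from the componentwise structure of $S$.
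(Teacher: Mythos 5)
Your proof is correct, and it is organised differently from the paper's. The paper verifies the universal property of $\gG(S)$ directly: given a cone $\left(f_j \colon A \longrightarrow \gG(S_j)\right)_{j \in I}$ it writes down the mediating map $a \longmapsto \left[{\left(x_j\right)_{j}, \left(y_j\right)_{j}}\right]$ by hand, then checks well-definedness, additivity, multiplicativity, preservation of $0$ and $1$, commutativity of the triangles, and uniqueness, each by explicit componentwise computation. You instead build the canonical comparison homomorphism $\Phi \colon \gG(S) \longrightarrow \prod_{j}\gG(S_j)$ from the universal property of the cartesian product ring, prove $\Phi$ is bijective, and transfer the universal property along the isomorphism. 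The mathematical heart is the same in both arguments --- the equivalence relation defining $\gG$ is compatible with products because a family of witnesses $e_j$ assembles into a single witness $e=\left(e_j\right)_{j\in I}$ in $S$; this is exactly your injectivity step and exactly the paper's well-definedness and uniqueness steps. What your route buys is economy: since the cartesian product of rings is already known to be the product in $\mathbf{CRing}$, you never have to re-verify that the mediating map is a ring homomorphism, and uniqueness comes for free from the isomorphism; the paper pays for its directness with several pages of elementary verification. (For infinite $I$ your choice of the family $\left(e_j\right)_{j\in I}$ uses the axiom of choice, but the paper's argument does so equally, so this is not a point of difference.)
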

\begin{proof}
Given a ring \(A\), let \(\left(A \longrightarrow G\left({S_j}\right) \right)_{j \, \in \,  I}\) be a family of morphisms in \(\mathbf{CRing}\).
We have to prove that there is a unique homomorphism of rings \(f \colon A \longrightarrow \gG(S)\) such that for every \(j \in I\) the following diagram commutes:
\[ \xymatrix{
A   \ar[rr]^{f} \ar[rrd]_{f_j}  &  & \gG(S) \ar[d]^{\gG\left({\pi_j}\right) } \\
&& \gG\left({S_j}\right) .    }
\]
Thanks to Proposition~\(\ref{pProdRigs}\), we will assume that \(S = \prod_{j \in I} S_j\) and that \(\pi_j \colon S \longrightarrow S_j\) is the canonical projection for every \(j \in I\) (the categorical product is unique up to isomorphism in every category so there is no loss of generality in this choice).
Let \(a \in A\): for every \(j \in I\) there are \(x_j, y_j \in S_j\) such that \(f_j(a)= \left[{x_j, y_j}\right]\) thus we can define
\[ f(a) = \left[{\left(x_j\right)_{j \in I}, \left(y_j\right)_{ j \in J} }\right] .
\]
We have to prove that this is a good definition.
For every \(j \in I\) let be \(z_j, w_j \in S\) such that \(\left[{x_j, y_j}\right] = \left[{z_j, w_j}\right]\): then there is \(e_j \in S\) such that \(x_j + w_j + e_j = z_j + y_j + e_j\).
As a consequence we have
\[ \left(x_j\right)_{j \in I} + \left(w_j\right)_{j \in I} + \left(e_j\right)_{j \in I} = \left(z_j\right)_{j \in I} + \left(y_j\right)_{j \in I} + \left(e_j\right)_{j \in I} 
\]
thus
\[ \left[{\left(x_j\right)_{j \in I}, \left(y_j\right)_{j \in I} }\right] = \left[{\left(z_j\right)_{j \in I}, \left(w_j\right)_{j \in I} }\right] 
\]
and \(f\) is well defined.

Now we have to prove that \(f\) is a homomorphism of rings.
Given \(a,b \in A\), for every \(j \in I\) let be \(a_j, \alpha_j, b_j, \beta_j \in S_j\) such that \(f_j(a)= \left[{a_j, \alpha_j}\right]\) and \(f_j(b)= \left[{b_j, \beta_j}\right]\).
We have
\[ f_j(a+b) = f_j(a) + f_j(b) 
= \left[{a_j, \alpha_j}\right] + \left[{b_j , \beta_j}\right] 
= \left[{a_j + b_j, \alpha_j + \beta_j}\right] 
\]
and
\[ f_j(ab) = f_j(a)  f_j(b) 
= \left[{a_j, \alpha_j}\right] \left[{b_j , \beta_j}\right] 
= \left[{a_j  b_j + \alpha_j \beta_j, a_j \beta_j + \alpha_j b_j}\right] 
\]
thus
\[ \begin{aligned}
f(a) + f(b) &= \left[{\left(a_j\right)_{j \in I}, \left(\alpha_j\right)_{j \in I} }\right] + \left[{\left(b_j\right)_{j \in I}, \left(\beta_j\right)_{j \in I} }\right]  \\
&= \left[{\left(a_j\right)_{j \in I} + \left(b_j\right)_{j \in I}, \left(\alpha_j\right)_{j \in I} + \left(\beta_j\right)_{j \in I} }\right] \\
&= \left[{\left(a_j + b_j\right)_{j \in I}, \left(\alpha_j + \beta_j\right)_{j \in I} }\right] \\
&= f(a+b)
\end{aligned}
\]
and
\[ \begin{aligned}
f(a) f(b) &= \left[{\left(a_j\right)_{j \in I}, \left(\alpha_j\right)_{j \in I} }\right]  \left[{\left(b_j\right)_{j \in I}, \left(\beta_j\right)_{j \in I} }\right]  \\
&= \left[{\left(a_j\right)_{j \in I}  \left(b_j\right)_{j \in I} + \left(\alpha_j\right)_{j \in I} \left(\beta_j\right)_{j \in I} , \left(a_j\right)_{j \in I}  \left(\beta_j\right)_{j \in I} + \left(\alpha_j\right)_{j \in I} \left(b_j\right)_{j \in I}}\right] \\
&= \left[{\left(a_j  b_j + \alpha_j \beta_j \right)_{j \in I}, \left(a_j \beta_j + \alpha_j b_j\right)_{j \in I} }\right] \\
&= f(ab).
\end{aligned}
\]
Moreover, for each \(j \in I\) we have \(f_j(0)=[0,0]\) and \(f_j(1)=[1,0]\) thus
\[ f(0)=\left[{\left(0_j\right)_{j \in I}, \left(0_j\right)_{j \in I}}\right]
\qquad \text{and} \qquad
f(1)=\left[{\left(1_j\right)_{j \in I}, \left(0_j\right)_{j \in I}}\right]
\]
therefore we have proved that \(f\) is a homomorphism of rings.
Regarding the commutativity of the diagrams, for every \(j \in I\) and every \(a \in A\) let be \(x_j, y_j \in S_j\) such that \(f_j(a)=\left[{x_j, y_j}\right]\).
Then \(f(a)=\left[{\left(x_i\right)_{i \in I}, \left(y_i\right)_{i \in I} }\right]\) thus
\[ G\left({\pi_j}\right)\left({f(a)}\right) = \left[{ \pi_j\left(x_i\right)_{i \in I}, \pi_j\left(y_i\right)_{i \in I} }\right]
= \left[{x_j, y_j}\right] = f_j
\]
therefore \(G\left({\pi_j}\right) f = f_j\) and the commutativity of the diagrams is proved.

Now let be \(g \colon A \longrightarrow G(S)\) another homomorphism of rings such that, for every \(j \in I\), the following diagram commutes:
 \[ \xymatrix{
A   \ar[rr]^{g} \ar[rrd]_{f_j}  &  & G(S) \ar[d]^{G\left({\pi_j}\right) } \\
&& G\left({S_j}\right) .    }
\]
For every \(j \in I\) and for every \(a \in A\) we have
\[ G\left({\pi_j}\right)\left({g(a)}\right) = f_j(a)
= G\left({\pi_j}\right)\left({f(a)}\right) 
\]
Let be \(x_j, y_j \in S\) such that \(f_j(a) = \left[{x_j, y_j}\right]\) and let be \(\left(e_i\right)_{i \in I}, \left(f_i\right)_{i \in I} \in S\) such that \(g(a)=\left[{\left(e_i\right)_{i \in I}, \left(f_i\right)_{i \in i}}\right]\).
We calculate:
\[ \begin{aligned}
 \left[{e_j, f_j}\right] &= \left[{\pi_j\left[{\left(e_i\right)_{i \in i} }\right], \pi_j\left({\left(f_i\right)_{i \in I} }\right) }\right] 
= G\left({\pi_j}\right)\left({g(a)}\right) 
= G\left({\pi_j}\right)\left({f(a)}\right)  \\
&= G\left({\pi_j}\right) \left( \left[{\left(x_i\right)_{i \in I}, \left(y_i\right)_{i \in I} }\right]  \right)
= \left[{\pi_j\left({\left(x_i\right)_{i \in I} }\right), \pi_j\left({\left(y_i\right)_{i \in I} }\right) }\right] 
= \left[{x_j, y_j}\right]
\end{aligned}
\]
thus we obtain that there is \(\varepsilon_j \in S_j\) such that
\[ e_j + y_j + \varepsilon_j = x_j + f_j + \varepsilon_j
\]
therefore
\[ \left(e_j\right)_{j \in I} + \left(y_j\right)_{j \in I} + \left(\varepsilon_j\right)_{j \in I} = \left(x_j\right)_{j \in I} + \left(f_j\right)_{j \in I} + \left(\varepsilon_j\right)_{j \in I} 
\]
and \(g(a)= \left[{\left(x_i\right)_{i \in I}, \left(y_i\right)_{i \in I} }\right] = f(a)\).
We have now proved that \(f=g\).
\end{proof}

\bigskip
\noindent{}
\textbf{Acknowledgement.} 
The second author wants to thank the departement of algebra of the University of Granada, Campus of Ceuta, for the marvellous experience, the friendly and welcoming ambience, as well as for the partial support of the grant MTM2016-77033-P, which enabled him to undertake the long journey there.


\begin{thebibliography}{}





\bibitem{Bouc:2010}
S.~Bouc, \emph{Biset Functors for finite Groups}, LNM, vol.~1999. Springer-Verlag Berlin Heidelberg 2010.





\bibitem{Brown:1987}
R.~Brown, \emph{From groups to groupoids: A brief survey.},
Bull. London Math. Soc. \textbf{19} (1987), no.~2,  113–134.

\bibitem{Burnside:1911}
W.~Burnside,  \emph{Theory of groups of finite order}. Cambridge University Press, second
edition, 1911.

\bibitem{Cartier:2008}
P.~Cartier,
\emph{Groupo\"ides de Lie et leurs Algébro\"ides},
S\'eminaire Bourbaki $60^e$ ann\'ee, 2007-2008, num.~987,  165--196.


\bibitem{Connes:1994}
A.~Connes, \emph{Noncommutative Geometry}, Academic Press, Inc., San Diego, CA, 1994.



\bibitem{Diaz/Libman:2009}
A.~D\'iaz and A.~Libman, \emph{The Burnside ring of fusion systems}. Advances in Math. \textbf{222}, (2009), 1943--1963.



\bibitem{DemGab:GATIGAGGC}
M.~Demazure and P.~Gabriel, \emph{Groupes alg\'ebriques. {T}ome {I}:
{G}\'eom\'etrie alg\'ebrique, g\'en\'eralit\'es, groupes commutatifs}, Masson
\& Cie, \'Editeur, Paris; North-Holland Publishing Co., Amsterdam, 1970, Avec
un appendice {\em Corps de classes local} par Michiel Hazewinkel.


\bibitem{DressSolvGrps}
A.~Dress, \emph{A Characterisation of Solvable Groups},  Math. Z. 110, 1969, 213–217.


\bibitem{GlaLitSemiringsInfSci}
K. Głazek, A guide to the literature on semirings and their applications in mathematics and information sciences. With complete bibliography. Kluwer Academic Publishers, Dordrecht, 2002. viii+392 pp.


\bibitem{GolSemRingsAppl}
J. S. Golan, Semirings and their applications, Updated and expanded version of The theory of semirings, with applications to mathematics and theoretical computer science (Longman Sci. Tech., Harlow, 1992). Kluwer Academic Publishers, Dordrecht, 1999. xii+381 pp.


\bibitem{GunnellsEtall}
P.~Gunnells, A.~Rose and D.~Rumynin, \emph{Generalised Burnside Rings, G-Categories and Module Categories} J. Algebra \textbf{358},  (2012), 33--50.

\bibitem{Hartman/Yalcin:2007}
R.~Hartmann and E.~Yal\c{c}in, \emph{Generalized Burnside rings and group cohomology}. J. Algebra \textbf{310}, (2007), 917--944.


\bibitem{Higgins:1971}
P.~J.~Higgins, \emph{Notes on categories and groupoids}, Van Nostrand  Reinhold, Mathematical Studies~32, London 1971. 

\bibitem{Jelenc:2013}
B.~Jelenc. \emph{Serre fibrations in the Morita category of topological groupoids}. Topol.  Appl.  \textbf{160} (2003), 9--13.

\bibitem{Kaoutit/Spinosa:2018}
L.~El Kaoutit and L.~Spinosa, \emph{Mackey formula for bisets over groupoids}. J. Algebra and Appl. \textbf{18}, No. 6 (2019) 1950109 (35 pages).

\bibitem{Kaoutit/Spinosa:2019}
L.~El Kaoutit and L.~Spinosa, \emph{Categorified groupoid-sets and their Burnside ring}. Turk. J. Math. 
\textbf{43} (2019)  2069--2096.

\bibitem{ElKaoutit:2017}
L.~El Kaoutit, \emph{On geometrically transitive Hopf algebroids}. J. Pure Appl. Algebra \textbf{222} (2018),  3483--3520.


\bibitem{Kaoutit/Kowalzig:14} 
L.~El Kaoutit and N.~Kowalzig,
\emph{Morita theory for Hopf algebroids, principal bibundles, and weak equivalences}.
Doc. Math. \textbf{22} (2017),  551–609.

\bibitem{LaplazaCoCatAsCoDist} 
M.~L.~Laplaza, \emph{Coherence for categories with associativity, commutativity and distributivity}. Bull. Amer. Math. Soc. \textbf{78} (1972),  220–222.

\bibitem{LaplazaCohCatDistrib} 
M. L. Laplaza, Coherence for distributivity, Coherence in categories, pp. 29–65. Lecture Notes in Mathematics Vol. 281, Springer, Berlin, 1972.




\bibitem{Mackenzie:2005} K.~C.~H.~Mackenzie, \emph{General Theory of Lie Groupoids and Lie Algebroids},
London Math. Soc. Lecture Note Ser., vol.~213, Cambridge Univ. Press, Cambridge, 2005.





\bibitem{Renault:1980}
J.~Renault, \emph{A groupoid approach to $C^*$-algebras},
Lecture Notes in Mathematics Vol. 793, Springer Verlag, 1980.

\bibitem{Segal:1970}
G.~B.~Segal, \emph{Equivariant stable homotopy theory}. Actes Congr\`es intern. Math. Tome 2, (1970), 59-63.



\bibitem{SchaNegEulCharDim}
S. H. Schanuel, Negative sets have Euler characteristic and dimension. Category theory (Como, 1990), pp. 379–385, Lecture Notes in Math., 1488, Springer, Berlin, 1991.

\bibitem{Solomon:1967}
L.~Solomon, \emph{The Burnside algebra of a finite group}. J. Combin. Theory \textbf{2} (1967), 603--615.



\bibitem{TomDieck:1979}
T.~ Tom Dieck, \emph{Transformation Groups and Representation Theory}.  Lecture Notes in
Mathematics 766, Springer Verlag,  1979.



\bibitem{WeiGrpdUnInExtSym}
A.~Weinstein,
\emph{Groupoids: unifying internal and external symmetry. A tour through some examples.}
Notices Amer. Math. Soc. \textbf{43} (1996), no.~7, 744–752.

\end{thebibliography}
\end{document}